\title[Completion of Choice]
{Completion of Choice}
\author[V.\ Brattka]{Vasco Brattka}
\address{Faculty of Computer Science, Universit\"at der Bundeswehr M\"unchen, Germany and 
             Department of Mathematics \& Applied Mathematics, University of Cape Town, South Africa\footnote{Vasco Brattka has received funding from the National Research Foundation of South Africa}} 
\email{Vasco.Brattka@cca-net.de}
\author[G.\ Gherardi]{Guido Gherardi}
\address{Dipartimento di Filosofia e Comunicazione, Universit\`{a} di Bologna, Italy}
\email{Guido.Gherardi@unibo.it}
\def\AA{{\mathcal A}}
\def\CC{{\mathcal C}}
\def\IN{{\mathbb{N}}}
\def\IR{{\mathbb{R}}}
\def\IS{{\mathbb{S}}}
\def\TO{\Longrightarrow}
\def\In{\subseteq}
\def\prefix{\sqsubseteq}
\def\mto{\rightrightarrows}
\def\id{{\rm id}}
\def\dom{{\rm dom}}
\def\range{{\rm range}}
\def\t{\mathrm t}
\def\r{\mathrm r}
\def\LPO{\text{\rm\sffamily LPO}}
\def\LLPO{\text{\rm\sffamily LLPO}}
\def\WKL{\text{\rm\sffamily WKL}}
\def\RCA{\text{\rm\sffamily RCA}}
\def\BWT{\text{\rm\sffamily BWT}}
\def\B{\text{\rm\sffamily B}}
\def\C{\mbox{\rm\sffamily C}}
\def\ConC{\mbox{\rm\sffamily CC}}
\def\LPO{\mbox{\rm\sffamily LPO}}
\def\LLPO{\mbox{\rm\sffamily LLPO}}
\def\WBWT{\text{\rm\sffamily WBWT}}
\def\K{\text{\rm\sffamily K}}
\def\Low{\text{\rm\sffamily L}}
\def\J{\text{\rm\sffamily J}}
\def\T{\text{\rm\sffamily T}}
\def\ACC{\text{\rm\sffamily ACC}}
\def\MLR{\text{\rm\sffamily MLR}}
\def\WWKL{\text{\rm\sffamily WWKL}}
\def\PC{\text{\rm\sffamily PC}}
\def\PA{\text{\rm\sffamily PA}}
\def\PCC{\text{\rm\sffamily PCC}}
\def\SORT{\text{\rm\sffamily SORT}}
\def\WFT{\text{\rm\sffamily WFT}}
\def\INF{\text{\rm\sffamily INF}}
\def\NEG{\text{\rm\sffamily NEG}}
\def\ATR{\text{\rm\sffamily ATR}}
\def\leqW{\mathop{\leq_{\mathrm{W}}}}
\def\leqTW{\mathop{\leq_{\mathrm{tW}}}}
\def\equivW{\mathop{\equiv_{\mathrm{W}}}}
\def\leqSW{\mathop{\leq_{\mathrm{sW}}}}
\def\leqSTW{\mathop{\leq_{\mathrm{stW}}}}
\def\equivSW{\mathop{\equiv_{\mathrm{sW}}}}
\def\equivSTW{\mathop{\equiv_{\mathrm{stW}}}}
\def\nleqW{\mathop{\not\leq_{\mathrm{W}}}}
\def\nleqSW{\mathop{\not\leq_{\mathrm{sW}}}}
\def\lW{\mathop{<_{\mathrm{W}}}}
\def\lTW{\mathop{<_{\mathrm{tW}}}}
\def\lSW{\mathop{<_{\mathrm{sW}}}}
\def\nW{\mathop{|_{\mathrm{W}}}}
\date{\today}
\newtheorem{theorem}{Theorem}[section]
\newtheorem{proposition}[theorem]{Proposition}
\newtheorem{lemma}[theorem]{Lemma}
\newtheorem{corollary}[theorem]{Corollary}
\theoremstyle{definition}
\newtheorem{definition}[theorem]{Definition}
\newtheorem{example}[theorem]{Example}
\newtheorem{question}[theorem]{Question}
\begin{document}

\begin{abstract}
We systematically study the completion of choice problems in the Weihrauch lattice. 
Choice problems play a pivotal r\^{o}le in Weihrauch complexity.
For one, they can be used as landmarks that characterize important equivalences
classes in the Weihrauch lattice. 
On the other hand, choice problems also characterize several natural classes of computable problems, such as finite mind change computable problems, 
non-deterministically computable problems, Las Vegas computable problems
and effectively Borel measurable functions.
The closure operator of completion generates the concept of total
Weihrauch reducibility, which is a variant of Weihrauch reducibility with
total realizers.  Logically speaking, the completion of a problem is 
a version of the problem that is independent of its premise.
Hence, studying the completion of choice problems allows us to study simultaneously choice problems in the total Weihrauch lattice,
as well as the question which choice problems can be made independent
of their premises in the usual Weihrauch lattice.
The outcome shows that many important choice problems that are related to 
compact spaces are complete, whereas choice problems for 
unbounded spaces or closed sets of positive measure are typically not complete.
\  \bigskip \\
{\bf Keywords:} Weihrauch complexity, computable analysis, choice problems, classes of computable problems. \\
{\bf MSC classifications:} 03B30, 03D30, 03D78, 03F60.
\end{abstract}

\maketitle

\setcounter{tocdepth}{1}
\tableofcontents

\pagebreak

\section{Introduction}
\label{sec:introduction}

Choice problems play a crucial r\^{o}le in Weihrauch complexity.
A recent survey on the field can be found in~\cite{BGP18}.
A choice problem is a problem of the logical form
\[(\forall\mbox{ closed }A\In X)(A\in D\TO(\exists x\in X)\,x\in A).\]
Here $X$ is typically a computable metric space, the closed set $A\In X$ is typically given
by negative information in order to make the statement non-trivial, and the premise $D$ could be a property such
as non-emptiness, sometimes combined with additional properties, such as 
positive measure, connectedness, etc. The multi-valued Skolem function
of such a choice problem is a function of the form
\[\C_X:\In\AA_-(X)\mto X,A\mapsto A,\]
where $\AA_-(X)$ denotes the the space of closed subsets of $X$ with respect to negative information
and $\dom(\C_X)=\{A:A\not=\emptyset\}$ is a particular $D$.
If the domain is further restricted to sets of positive measure or connected sets,
then we denote the problem by $\PC_X$ and $\ConC_X$, respectively. 
By $\K_X$ we denote compact choice that considers compact sets with respect to negative information.
Many basic systems from reverse mathematics~\cite{Sim09} have
certain choice problems as uniform counterparts. 
Also some classes of problems that are computable in a certain sense can be characterized
as cones below certain choice problems in the Weihrauch lattice.
The Table~\ref{fig:reverse} gives a survey on such correspondences
(see \cite{BGP18} for further details).

\begin{table}[htb]
\begin{center}
\begin{tabular}{lll}
{choice problems\ } & {reverse mathematics} & {classes of problems}\\\hline
$\C_1$  & $\RCA_0^*$ & computable\\
$\K_\IN$ & ${\mathsf B}\Sigma^0_1$ & \\
$\C_\IN$  & ${\mathsf I}\Sigma^0_1$ & finite mind change computable\\
$\C_{2^\IN}$  & $\WKL^*$& non-deterministically computable\\
$\PC_{2^\IN}$  & $\WWKL^*$ & Las Vegas computable\\
$\C_{\IN^\IN}$ & $\ATR_0$& effectively Borel measurable functions\\[0.3cm]
\end{tabular}
\end{center}
\caption{Weihrauch complexity and reverse mathematics}
\label{fig:reverse}
\end{table}

We assume that the reader is familiar with Weihrauch reducibility $\leqW$.
The statement $f\leqW g$ roughly speaking expresses that the problem $f$ can be computably reduced to the problem
$g$ in the sense that each realizer of $g$ computes a realizer of $f$ in a uniform way (see \cite{BGP18}).
In \cite{BG20} we have introduced the closure operator of completion $f\mapsto\overline{f}$ that induces total Weihrauch reducibility $\leqTW$ by
\[f\leqTW g:\iff f\leqW\overline{g}.\]
Total Weihrauch reducibility $\leqTW$ is a variant of the usual concept
of Weihrauch reducibility $\leqW$ and it can be directly defined using
total realizers~\cite{BG20}. Our main motivation for studying this total variant of Weihrauch reducibility
and the completion operator $f\mapsto\overline{f}$ is that one can obtain a Brouwer algebra in this way.
More precisely, if the completion operator is combined with the closure operator $f\mapsto\widehat{f}$ of parallelization,
then the resulting lattice structure is a Brouwer algebra, i.e., a model of some intermediate logic that, like in the case of the
Medvedev lattice, turns out to be Jankov logic~\cite{BG20}.

Formally, the completion $\overline{f}:\overline{X}\mto\overline{Y}$
of a problem $f:\In X\mto Y$ is defined by
\[\overline{f}(x):=\left\{\begin{array}{ll}
   f(x) & \mbox{if $x\in\dom(f)$}\\
   \overline{Y} & \mbox{otherwise}
\end{array}\right.,\]
i.e., by a totalization of $f$ on the completions $\overline{X},\overline{Y}$ of the corresponding types.\footnote{The completion of types has interesting independent applications and was recently introduced and used by Dzhafarov~\cite{Dzh19} to show that a strong variant $\leqSW$ of Weihrauch reducibility
actually forms a lattice structure.}
Logically, the completion $\overline{f}$ of a problem $f$ can be seen as a way to make $f$ independent
of its premise. For choice problems this means to consider them in the form
\[(\forall A\in\overline{\AA_-(X)})(\exists x\in\overline{X})(A\in D\TO x\in A),\]
where the existence of $x$ is now independent of the premise $A\in D$.
If we use intuitionistic logic, then we cannot just export the quantifier without 
changing the meaning of the formula. Likewise, the computational content
of the formula with the exported quantifier is different from the original one.

\begin{figure}[tb]
\begin{center}
\begin{tikzpicture}[scale=0.8,every node/.style={fill=black!15}]

\node (TCNN) at (1.4,20.75) {$\T\C_{\IN^\IN}$};
\node (!CNN) at (1.4,19.4) {$\overline{\C_{\IN^\IN}}$};
\node (CNN) at (1.4,18.4) {$\C_{\IN^\IN}$};
\node (lim') at (1.4,17) {$\lim'$};
\node (lim) at (1.4,13.4) {$\lim$};
\node (L) at (1.4,10.8) {$\Low$};
\node (!L) at (1.4,11.8) {$\overline{\Low}$};
\node (!CR) at (3.2,11.8) {$\overline{\C_{\IR}}$};
\node (CR) at (3.2,10.8) {$\C_{\IR}$};
\node (!PCR) at (5.2,11.8) {$\overline{\PC_{\IR}}$};
\node (PCR) at (5.2,10.8) {$\PC_{\IR}$};
\node (WWKL) at (7,8.4) {$\PC_{2^\IN}$};
\node (!WWKL) at (7,9.4) {$\overline{\PC_{2^\IN}}$};
\node (PCC) at (9.05,6) {$\PCC_{[0,1]}$};
\node (!PCC) at (9.05,7.2) {$\overline{\PCC_{[0,1]}}$};
\node (WKL) at (3.2,9.4) {$\C_{2^\IN}$};
\node (IVT) at (3.2,7.2) {$\ConC_{[0,1]}$};
\node (SORT) at (9.1,13.4) {$\SORT$};
\node (!CN') at (10.8,17) {$\overline{\C_\IN'}$};
\node (CN') at (10.8,16) {$\C_\IN'$};
\node (TCN) at (10.8,13.4) {$\T\C_\IN$};
\node (!CN) at (10.8,11.8) {$\overline{\C_\IN}$};
\node (CN) at (10.8,10.8) {$\C_\IN$};
\node (KN) at (10.8,6) {$\K_\IN$};
\node(LPO) at (12.4,6) {$\LPO$};
\node(LPOS) at (12.4,13.4) {$\LPO'$};
\node(C2) at (10.8,4.85) {$\C_2$};
\node(C1) at (10.8,3.6) {$\C_1$};
\node(C0) at (10.8,2.6) {$\C_0$};
\node(WBWT2) at (14.4,6) {$\WBWT_2$};
\node[fill=none] at (14.4,5.3) {\tiny (c)};
\node(KN'KN') at (10.8,14.6) {$\K_\IN'*\K_\IN'$};

\draw [->] (TCNN) edge (!CNN);
\draw [->] (!CNN) edge (CNN);
\draw [->] (CNN) edge (lim');
\draw [->] (lim') edge  (lim) ;
\draw [->] (lim') edge (!CN');
\draw [->] (!CN') edge (CN');
\draw [->] (KN'KN') edge (TCN);
\draw [->] (TCN) edge (!CN);
\draw [->] (lim) edge (!L);
\draw [->] (lim) edge (SORT);
\draw [->] (SORT) edge node[left=0.4cm,fill=none] {\tiny (b)} (!CN);
\draw [->] (CN') edge (KN'KN');
\draw [->] (KN'KN') edge (SORT);
\draw [->] (SORT) edge (IVT);
\draw [->] (!L) edge (L);
\draw [->] (L) edge (CR);
\draw [->] (!L) edge (!CR);
\draw [->] (!CR) edge (CR);
\draw [->] (CR) edge (WKL);
\draw [->] (!CR) edge (!PCR);
\draw [->] (CR) edge (PCR);
\draw [->] (!PCR) edge (PCR);
\draw [->] (!PCR) edge (!CN);
\draw [->] (PCR) edge (CN);
\draw [->] (WKL) edge (!WWKL);
\draw [->] (!WWKL) edge (WWKL);
\draw [->] (WKL) edge (IVT);
\draw [->] (IVT) edge (!PCC);
\draw [->] (PCC) edge (KN);
\draw [->] (!CN) edge (CN);
\draw [->]  (KN) edge (C2);
\draw [->] (C2) edge (C1);
\draw [->] (C1) edge (C0);
\draw [->] (CN) edge (LPO);
\draw [->] (LPO) edge (C2);
\draw [->] (LPOS) edge (LPO);
\draw [->] (KN'KN') edge (LPOS);
\draw [->] (LPOS) edge (WBWT2);
\draw [->] (!CN) edge (WBWT2);
\draw [->] (WBWT2) edge (C1);
\draw [->] (!PCC) edge (PCC);

 \draw [->,looseness=1] (CN) to [out=270,in=30] (PCC);
 \draw [->,looseness=1] (!CN) to [out=210,in=90] (!PCC);
 \draw [->,looseness=1] (WWKL) to [out=270,in=180] (PCC);
 \draw [->,looseness=1] (!WWKL) to [out=0,in=90] (!PCC);
 \draw [->,looseness=1] (PCR) to [out=270,in=180] (WWKL);
 \draw [->,looseness=1] (!PCR) to [out=0,in=90] (!WWKL);
 
\draw [rounded corners=10pt,dashed]  (0.6,20.1) rectangle node[left=0.6cm,fill=none] {\tiny (a)} (2.2,17.75) ;
\draw [rounded corners=10pt,dashed]  (0.8,12.3) rectangle node[left=0.5cm,fill=none] {\tiny (c)}  (2,10.3);
\draw [rounded corners=10pt,dashed]  (2.5,12.45) rectangle  (3.9,10.25);
\draw [rounded corners=10pt,dashed]  (10.1,4.1) rectangle  (11.5,2.1);
\draw [rounded corners=10pt,dashed]  (4.4,12.4) rectangle  (6,10.2);
\draw [rounded corners=10pt,dashed] (10.1,12.5) rectangle node[right=0.6cm,fill=none] {\tiny (a)} (11.5,10.2);
\draw [rounded corners=10pt,dashed] (10,17.6) rectangle (11.6,15.4);
\draw [rounded corners=10pt,dashed] (6.2,10) rectangle (7.8,7.8);
\draw [rounded corners=10pt,dashed] (8,7.8) rectangle (10.1,5.4);
\end{tikzpicture}
\end{center}
\ \\[-0.5cm]
\caption{Basic problems and their completions in the Weihrauch lattice}
\label{fig:choice}
\end{figure}
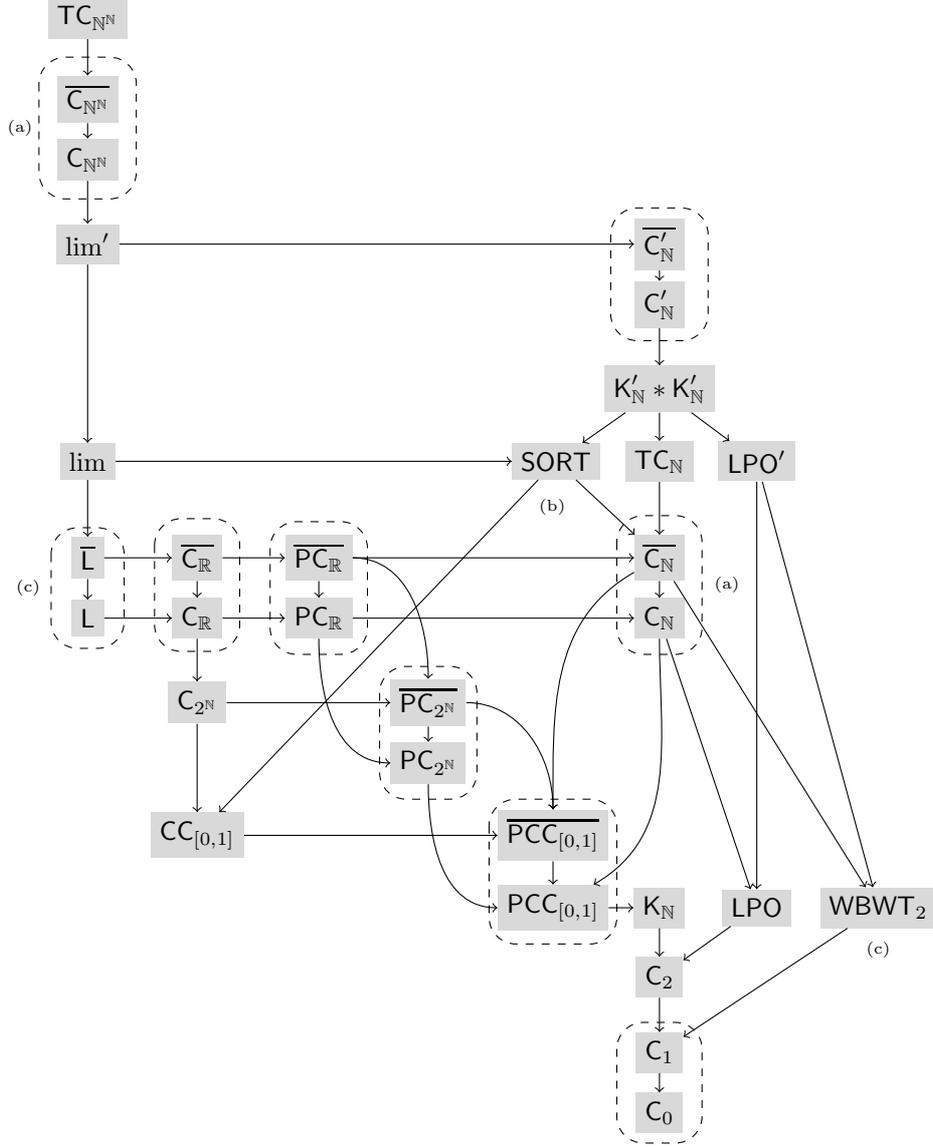

The main question that we study in this article is: which choice problems
$\C_X$ and their variants are complete, i.e., when do we obtain $\overline{\C_X}\equivW\C_X$?

Some examples of complete and incomplete choice problems are the following:

\begin{itemize}
\item Complete choice problems: $\C_n$ for $n\geq1$, $\K_\IN$, $\C_{2^\IN}$, $\ConC_{[0,1]}$.
\item Incomplete choice problems: $\C_0,\C_\IN,\C_\IR,\C_{\IN^\IN},\PC_{2^\IN},\PCC_{[0,1]}$.
\end{itemize}

From the perspective of a complete problem its lower cones in the 
Weihrauch lattice and in the total Weihrauch lattice coincide.
Together with the notion of completeness we also study the notion
of co-completeness. For co-complete problems the upper cones
in the two lattices coincide. Since many important problems are
either complete or co-complete or even both, we obtain
very similar reducibility relations between important choice problems
in the usual and the total Weihrauch lattice. The incomplete problems in Figure~\ref{fig:choice} are all
shown in dashed boxes together with their completions. If we disregard the completions, then
all relations between any two problems shown in Figure~\ref{fig:choice} are the same for ordinary
Weihrauch reducibility $\leqW$ and its total variant $\leqTW$, except those that involve the
weak Bolzano-Weierstra\ss{} theorem $\WBWT_2$ on the space $\{0,1\}$.

However, a certain amount of expressiveness is lost by 
the transition from the ordinary Weihrauch lattice to its total variant:

\begin{enumerate}
\item Finite mind change computable problems and 
         Las Vegas computable problems can be characterized
         as lower cones with Weihrauch reducibility $\leqW$, but not with
         total Weihrauch reducibility $\leqTW$.
\item Low problems can be characterized as lower cones with strong
        Weihrauch reducibility $\leqSW$, but not with strong total Weihrauch
        reducibility $\leqSTW$.
\item Limit computable problems and non-deterministically computable problems
        can be characterized as lower cones for all mentioned reducibilities. 
\end{enumerate}

We provide a list of some references for some crucial reductions and separations given in Figure~\ref{fig:choice}.
Several further references can be found in the survey \cite{BGP18}.

\begin{enumerate}
\item[(a)] In Corollary~\ref{cor:completion-choice} we prove that in general
$\C_X\leqSW\overline{\C_X}\leqSW\T\C_X\equivSW\overline{\T\C_X}$ holds.
In Theorem~\ref{thm:choice-Baire} we provide the necessary separations for $X=\IN^\IN$
and in Corollary~\ref{cor:choice-natural} the corresponding separations for $X=\IN$.
The reduction $\lim'\leqW\C_{\IN^\IN}$ follows, for instance, from \cite[Theorem~7.7]{BBP12}.
The reduction $\T\C_\IN\leqW\K_\IN'*\K_\IN'$ follows from Corollary~\ref{cor:CN-!CN} and Proposition~\ref{prop:KN-CN}.
\item[(b)] Neumann and Pauly introduced $\SORT$ and proved $\C_\IN\lW\SORT\lW\lim$ \cite[Proposition~24]{NP18}.
This is improved by Corollary~\ref{cor:CN-SORT}, which yields $\overline{\C_\IN}\leqW\SORT$.
The reduction
$\SORT\leqW\K_\IN'*\K_\IN'$ follows from Corollary~\ref{cor:CN-!CN} and Proposition~\ref{prop:KN-CN}.
The reduction $\ConC_{[0,1]}\leqW\SORT$ was proved in \cite[Proposition~16]{BHK17}.
\item[(c)] The reduction $\C_\IR\leqW\Low$ was proved in \cite[Corollary~4.9, Theorem 8.7]{BBP12}.
The separation of $\overline{\Low}$ and $\Low$ and, in fact, several other separations in the diagram follow,
since $\WBWT_2\nleqW\Low$ by Proposition~\ref{prop:WBWT-L} and $\WBWT_2\leqW\overline{\C_\IN}$
by Proposition~\ref{prop:WBWT}. The problem $\WBWT_2$ was introduced in \cite{BHK18}.
By \cite[Corollary~11.11]{BGM12} we have $\BWT_2\equivSW\LLPO'$ and hence
$\WBWT_2\leqW\BWT_2\leqW\LPO'$.
\end{enumerate}

In the following section~\ref{sec:precomplete} we continue the study of precomplete
representations that was started in \cite{BG20}. We characterize represented spaces
that admit total precomplete representations as spaces that allow computable
multi-valued retractions from their completions onto themselves. 
We call such spaces multi-retraceable. 
In section~\ref{sec:total} we briefly recall some basic facts about total
Weihrauch reducibility that were provided in \cite{BG20}. 
In section~\ref{sec:completion} we continue the study of completion of problems 
that was started in \cite{BG20} and we introduce the notion of co-completeness
and co-totality. In particular, we introduce a criterion that is sufficient to
guarantee co-completeness and co-totality for jumps of non-constant discrete
functions. In section~\ref{sec:choice} we start to study the main theme of this
article, namely the completion of choice problems. We formulate a number
of results that hold for general choice problems and in section~\ref{sec:compact}
we focus on choice on compact spaces. While choice on Cantor space,
on non-empty finite spaces and connected choice on the unit interval
are complete, most other choice principles that we study are incomplete.
In section~\ref{sec:positive-choice} we establish incompleteness of
choice problems for sets of positive measure and in section~\ref{sec:choice-natural}
we establish incompleteness of choice for natural numbers.
In section~\ref{sec:lowness} we briefly discuss lowness and we show
that the low problem $\Low:=\J^{-1}\circ\lim$ is not complete.
Finally, in section~\ref{sec:choice-Euclidean} we discuss variants of choice
on Euclidean space and in section~\ref{sec:choice-Baire} choice on Baire space.

\section{Precompleteness, Completeness and Retraceability}
\label{sec:precomplete}

We recall that a {\em represented space} $(X,\delta)$ is a set $X$ together with
a surjective (partial) map $\delta:\In\IN^\IN\to X$, called the {\em representation} of $X$.
For the purposes of our topic so-called {\em precomplete representations}
are important. They were introduced by Kreitz and Weihrauch \cite{KW85} following the
concept of a precomplete numbering introduced by Er\v{s}ov~\cite{Ers99}.
We recall some results on precomplete representations from \cite{BG20}.

\begin{definition}[Precompleteness]\rm
A representation $\delta:\In\IN^\IN\to X$ is called {\em precomplete},
if for any computable $F:\In\IN^\IN\to\IN^\IN$ there exists
a total computable $G:\IN^\IN\to\IN^\IN$ such that
$\delta F(p)=\delta G(p)$ for all $p\in\dom(F)$.
\end{definition}

We recall that for two representations $\delta_1,\delta_2$ of the same set $X$
we say that $\delta_1$ is {\em computably reducible} to $\delta_2$, in symbols $\delta_1\leq\delta_2$,
if and only if there is a computable $F:\In\IN^\IN\to\IN^\IN$ such that $\delta_1=\delta_2F$.
We denote the corresponding equivalence by $\equiv$. 

For $p\in\IN^\IN$ we denote by $p-1\in\IN^\IN\cup\IN^*$ the sequence or word that is formed as concatenation of 
$p(0)-1$, $p(1)-1$, $p(2)-1$,... with the understanding that $-1=\varepsilon$ is the empty word.
If  $(X,\delta_X)$ is a represented space, then the {\em precompletion} $\delta_X^\wp$ of
$\delta_X$ is defined by $\delta_X^\wp(p):=\delta_X(p-1)$ for all $p\in\IN^\IN$ such that
$p-1\in\dom(\delta_X)$.
In \cite[Proposition~3.4]{BG20} we proved that $\delta_X^\wp$ is always precomplete and satisfies $\delta_X^\wp\equiv\delta_X$.

There are also many natural examples for precomplete representations, for instance 
it is not hard to see that the
standard representation of a second-countable $T_0$--space is precomplete, if defined appropriately (see~\cite[Lemma~3.4.8~(6)]{Wei87}).

\begin{example}
\label{ex:T0} 
For every second-countable $T_0$--space $X$ with a countable subbase $(U_n)_{n\in\IN}$
we can define a representation $\delta_X:\In\IN^\IN\to X$ by
\[\delta_X(p)=x:\iff\{n\in\IN:x\in U_n\}=\{n\in\IN:n+1\in\range(p)\}.\]
The representation $\delta_X$ is precomplete.
\end{example}

We will also need the fact that other classes of functions can be extended to total
ones under precomplete representations. In \cite{BG20} we have introduced the following concept.

\begin{definition}[Respect for precompleteness]
We say that a set $P$ of functions $F:\In\IN^\IN\to\IN^\IN$ {\em respects precompleteness},
if for every precomplete representation $\delta$ and any function $F\in P$ there
exists a total function $G\in P$ such that $\delta F(p)=\delta G(p)$ for all $p\in\dom(F)$.
\end{definition}

In \cite[Proposition~3.6]{BG20} we proved that the classes of computable, continuous, limit computable, Borel measurable and non-uniformly computable partial functions $F:\In\IN^\IN\to\IN^\IN$ respect precompleteness.
Later in Corollaries~\ref{cor:finite-mind-change} and \ref{cor:lowness} we are going to prove that functions that are computable with finitely many mind changes
and low functions do not respect precompleteness.

If $(X,\delta_X)$ is a represented space, then its completion $(\overline{X},\delta_{\overline{X}})$ 
is defined by $\overline{X}:=X\cup\{\bot\}$, where $\bot\not\in X$ and $\delta_{\overline{X}}(p):=\delta_X^\wp(p)$ if $p\in\dom(\delta_X^\wp)$ and $\delta_{\overline{X}}(p):=\bot$ otherwise.
The concept of completion was introduced by Damir Dzhafarov in \cite{Dzh19} with a slightly different but equivalent construction.
The construction used here was introduced in \cite{BG20}.
We note that strictly speaking $\overline{X}$ does not only depend on $X$, but also on the underlying representation.
That is, the completions with respect to two computably equivalent representations are not necessarily computably equivalent.

By a {\em problem} $f:\In X\mto Y$ we mean a partial multi-valued map $f:\In X\mto Y$
on represented spaces $(X,\delta_X)$ and $(Y,\delta_Y)$. 
We recall that {\em composition} of problems $f:\In X\mto Y$ and $g:\In Y\mto Z$
is defined by 
\[g\circ f(x):=\{z\in Z:(\exists y\in f(x))\; z\in g(y)\}\]
for all $x\in\dom(g\circ f):=\{x\in\dom(f):f(x)\In\dom(g)\}$.
For two problems $f:\In X\mto Y$ and $g:\In X\mto Z$ with identical source space $X$ we define
the {\em juxtaposition} $(f,g):\In X\mto Y\times Z$ by $(f,g)(x):=f(x)\times g(x)$ and $\dom(f,g):=\dom(f)\cap\dom(g)$.
If $f,g:\In \IN^\IN\mto\IN^\IN$ are problems on Baire space, then we also call $\langle f,g\rangle:=\langle\;\rangle\circ(f,g)$ the
{\em juxtaposition} of $f$ and $g$.

We say that a function $F:\In\IN^\IN\to\IN^\IN$ is a {\em realizer} of $f$,
if $\delta_YF(p)\in f\delta_X(p)$ for all $p\in\dom(f\delta_X)$.
We denote this by $F\vdash f$. 
We say that $f$ is {\em computable} if it has a computable realizer.
Other notions, such as continuity, Borel measurability and so forth that are
well-defined for functions $F:\In\IN^\IN\to\IN^\IN$ are transferred in an
analogous manner to problems $f:\In X\mto Y$.

We also need the notion of a (multi-valued) retraction. 
For $Y\In X$ we call $r:X\mto Y$ a {\em retraction} ({\em onto} $Y$), if $r(x)=x$ for all $x\in Y$.
Often retractions are even single-valued. We call a represented space {\em retraceable} if it is a computable retract of its own completion.

\begin{definition}[Retraceability]
A represented space $(X,\delta_X)$ is called {\em multi-}\linebreak
{\em retraceable} if there is a computable retraction $r:\overline{X}\mto X$,
and $(X,\delta_X)$ is called {\em retraceable} if there is a single-valued computable retraction $r:\overline{X}\to X$.
\end{definition}

In \cite[Corollary~3.10]{BG20} we proved that $\delta_{\overline{X}}$ is always precomplete and the injection $\iota:X\to\overline{X}$ is a computable embedding.
We recall that a {\em computable embedding} is a map $f:X\to Y$ that is computable, injective
and has a partial computable inverse. A {\em computable isomorphism} is a computable embedding that is bijective.

\begin{corollary}[Completion]
\label{cor:completion}
For every represented space $(X,\delta_X)$ the completion $\delta_{\overline{X}}$ is a precomplete total representation
and $\iota:X\to\overline{X},x\mapsto x$ is a computable embedding.
\end{corollary}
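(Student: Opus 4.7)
The plan is to exploit the observation that $\delta_{\overline X}$ agrees with the precompletion $\delta_X^\wp$ on $\dom(\delta_X^\wp)$ and equals $\bot$ everywhere else; all three claims then reduce to direct bookkeeping on the stalling construction already used for $\delta_X^\wp$ in \cite[Proposition~3.4]{BG20}. Totality is immediate from the definition. For surjectivity, each $x\in X$ is named by $q\in\IN^\IN$ defined via $q(i):=p(i)+1$ for some $p\in\dom(\delta_X)$ with $\delta_X(p)=x$, since then $q-1=p$; and $\bot$ is named by the constantly zero sequence, whose decoding $q-1$ is the empty word and hence not in $\dom(\delta_X)$.

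For precompleteness, given a computable partial $F:\In\IN^\IN\to\IN^\IN$, I would construct a total computable $G:\IN^\IN\to\IN^\IN$ as follows. On input $p$, $G$ runs the simulation of $F(p)$ while maintaining a pointer $i$ initially set to $0$; at each round, if the simulation has by then produced $F(p)(i)$ then $G$ emits that value and increments $i$, and otherwise $G$ emits a stalling $0$. Because $0-1=\varepsilon$, every stalling zero contributes nothing under the decoding $q\mapsto q-1$, so $G(p)-1=F(p)-1$ as words in $\IN^\IN\cup\IN^*$. This yields $\delta_{\overline X}(G(p))=\delta_{\overline X}(F(p))$ for every $p\in\dom(F)$, regardless of whether $F(p)-1$ lies in $\dom(\delta_X)$: either both sides name the same element of $X$, or both equal $\bot$.

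For the embedding $\iota$, the map $p\mapsto q$ with $q(i):=p(i)+1$ is a realizer, since $q-1=p$ forces $\delta_{\overline X}(q)=\delta_X(p)=\iota(\delta_X(p))$; injectivity is trivial because $\iota$ is set-theoretically the identity; and a partial computable inverse is the routine that reads $q$ and outputs $q-1$, which diverges exactly when $q$ has only finitely many nonzero entries (so that $\delta_{\overline X}(q)=\bot\notin\range(\iota)$) and otherwise produces a genuine $\delta_X$-name for $\iota^{-1}(\delta_{\overline X}(q))$. The only delicate step in the plan is the precompleteness argument, whose entire delicacy lies in checking that inserting stalling zeros does not corrupt $q-1$---a fact secured once and for all by the convention $0-1=\varepsilon$.
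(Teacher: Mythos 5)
Your argument is correct and is essentially the standard stalling-symbol argument; the paper itself does not supply a proof here but delegates to \cite[Corollary~3.10, Proposition~3.4]{BG20}, where the same construction (inserting $0$s that decode to $\varepsilon$) is used, so you have reconstructed the intended proof rather than found a genuinely different route. Two small remarks. First, in the precompleteness step you do not need to shift the emitted digits: since $0-1=\varepsilon$ already, both your stalling zeros \emph{and} any zeros that $F(p)$ itself produces vanish under $q\mapsto q-1$, so $G(p)-1=F(p)-1$ holds exactly as you say. Second, your description of the partial inverse of $\iota$ is slightly overstated: $q\mapsto q-1$ converges whenever $q$ has infinitely many nonzero entries, but for such $q$ the output $q-1$ is a genuine $\delta_X$-name only when $q-1\in\dom(\delta_X)$; when it is not, $\delta_{\overline X}(q)=\bot\notin\dom(\iota^{-1})$, so the realizer condition imposes nothing and the map is still a valid realizer of $\iota^{-1}$ --- the conclusion stands, the phrase ``produces a genuine $\delta_X$-name'' just shouldn't be read as holding for every convergent input.
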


Sometimes we will have to work with the double completion $\overline{\overline{X}}$, which is not isomorphic
to $\overline{X}$, since it has an extra $\bot$--element. 
However, there is always a computable retraction $r:\overline{\overline{X}}\to\overline{X}$.
In fact, we can prove the following characterizations of multi-retraceable spaces.

\begin{proposition}[Multi-retraceability]
\label{prop:multi-retraceability}
Let $(X,\delta_X)$ be a represented space. Then the following are equivalent:
\begin{enumerate}
\item $X$ admits a precomplete total representation $\delta:\IN^\IN\to X$ with $\delta\equiv\delta_X$.
\item All computable $f:\In\IN^\IN\to X$ have total computable extensions
$g:\IN^\IN\to X$.
\item For all represented spaces $Y$ and all computable $f:\In Y\mto X$ there exists a total
computable $g:Y\mto X$ with $g(y)\In f(y)$ for all $y\in\dom(f)$.
\item $X$ is multi-retraceable, i.e., there is a computable retraction $r:\overline{X}\mto X$.
\end{enumerate}
\end{proposition}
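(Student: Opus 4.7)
The plan is to establish the cycle $(1)\Rightarrow(2)\Rightarrow(3)\Rightarrow(4)\Rightarrow(1)$, with the last implication being the main substance.

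For $(1)\Rightarrow(2)$, given a precomplete total $\delta\equiv\delta_X$ and a computable partial $f:\In\IN^\IN\to X$, I compose a $\delta_X$-realizer of $f$ with the reduction $\delta_X\leq\delta$ to obtain a computable $\delta$-realizer $F$; precompleteness of $\delta$ yields a total computable $G$ with $\delta G(p)=\delta F(p)=f(p)$ on $\dom(F)$, and $g:=\delta\circ G$ is the required total computable extension. For $(2)\Rightarrow(3)$, given computable $f:\In Y\mto X$ with realizer $F$, the partial function $\delta_X\circ F:\In\IN^\IN\to X$ is computable; I extend it via $(2)$ to a total computable $h:\IN^\IN\to X$ and define $g:Y\mto X$ by $g(y):=\{h(p):\delta_Y(p)=y\}$. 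On $\dom(f)$ every $p\in\delta_Y^{-1}(y)$ lies in $\dom(F)$, so $h(p)=\delta_X F(p)\in f(y)$, giving $g(y)\In f(y)$. For $(3)\Rightarrow(4)$, the partial inverse $\iota^{-1}:\In\overline{X}\to X$ of the computable embedding from Corollary~\ref{cor:completion} is itself computable, and applying $(3)$ with $Y=\overline{X}$ and $f=\iota^{-1}$ produces a total computable $r:\overline{X}\mto X$ with $r(x)\In\iota^{-1}(x)=\{x\}$ for $x\in X$, which is the desired retraction.

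The main implication $(4)\Rightarrow(1)$ I would split as $(4)\Rightarrow(2)\Rightarrow(1)$. For $(4)\Rightarrow(2)$: given computable partial $f:\In\IN^\IN\to X$, I combine a realizer of $f$ with a realizer of $\iota$ to obtain a computable realizer of $\iota\circ f:\In\IN^\IN\to\overline{X}$; by precompleteness of $\delta_{\overline{X}}$ (Corollary~\ref{cor:completion}) there is a total computable $G_0$ agreeing with this realizer on $\dom(f)$, and for a total computable realizer $R$ of $r$ the function $g(p):=\delta_X RG_0(p)$ is total computable. On $\dom(f)$ the point $\delta_{\overline{X}}G_0(p)=f(p)$ already lies in $X$, where $r$ acts as the identity, so $g(p)=f(p)$. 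For $(2)\Rightarrow(1)$: apply $(2)$ to the precomplete partial representation $\delta_X^\wp$ to obtain a total computable extension $\delta:\IN^\IN\to X$; surjectivity and $\delta\equiv\delta_X$ are immediate since $\delta$ agrees with $\delta_X^\wp\equiv\delta_X$ on $\dom(\delta_X^\wp)$. To verify precompleteness of $\delta$, given computable partial $F$ I extend $\delta\circ F:\In\IN^\IN\to X$ via $(2)$ to a total computable $h$, take a total computable $\delta_X$-realizer $H$ of $h$, and shift each coordinate of $H(p)$ up by one; this converts $H(p)$ into a $\delta_X^\wp$-name, which is automatically a $\delta$-name because $\delta$ extends $\delta_X^\wp$, yielding a total computable $G$ with $\delta G(p)=\delta F(p)$ on $\dom(F)$.

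The main obstacle is the precompleteness verification in $(2)\Rightarrow(1)$, where one has to carefully juggle the three representations $\delta_X$, $\delta_X^\wp$ and $\delta$ and use both that $\delta$ extends $\delta_X^\wp$ on the latter's domain and that $\delta_X\leq\delta_X^\wp$ is witnessed by the coordinate shift, in order to convert the $\delta_X$-realizer $H$ of the total extension $h$ into a $\delta$-name-producing total computable function of the form required by the definition of precompleteness.
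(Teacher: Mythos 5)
Your proof is correct, and the forward implications $(1)\Rightarrow(2)\Rightarrow(3)\Rightarrow(4)$ match the paper's argument essentially verbatim. The interesting difference is in closing the cycle: the paper proves $(4)\Rightarrow(1)$ directly by setting $\delta:=\delta_{\overline{X}}\circ S\circ R$ (where $R$ realizes the retraction $r$ and $S$ realizes the embedding $\iota$), and then checks precompleteness of $\delta$ by lifting a given $F$ through $S\circ R$ into a $\delta_{\overline{X}}$-name-producing function, invoking precompleteness of $\delta_{\overline{X}}$, and observing that the resulting total $G$ lands in names of points of $X\In\overline{X}$, where $\delta$ and $\delta_{\overline{X}}$ agree. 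You instead factor $(4)\Rightarrow(1)$ as $(4)\Rightarrow(2)\Rightarrow(1)$: from the retraction you first extract the totalization property $(2)$, then use $(2)$ to totalize $\delta_X^\wp$ into $\delta$, and use $(2)$ once more in the precompleteness check, converting the resulting $\delta_X$-realizer into a $\delta_X^\wp$-name by the coordinate shift $+1$ (which is automatically a $\delta$-name since $\delta$ extends $\delta_X^\wp$). Your route is logically slightly longer (you need the extra implication $(2)\Rightarrow(1)$, which the paper never states separately), but it is conceptually tidy: it makes visible that precompleteness-of-$\delta$ is really just statement $(2)$ applied twice, once to build $\delta$ and once to totalize the test function, and it avoids the bookkeeping in the paper's proof about which $\delta_{\overline{X}}$-names lie in $\dom(\delta_{\overline{X}}|^X)$. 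All the technical steps you flag as delicate (the $+1$ shift producing valid $\delta_X^\wp$-names, the fact that $r$ acts as the identity on points of $X$, and the domain verification $G(p)-1=H(p)\in\dom(\delta_X)$) do go through as you claim.
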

\begin{proof}
``(1)$\TO$(2)'' 
Let $\delta$ be a total and precomplete representation of $X$ with $\delta\equiv\delta_X$.
Let $F:\In\IN^\IN\to\IN^\IN$ be a computable realizer of $f:\In\IN^\IN\to X$.
Then there exists a total computable $G:\IN^\IN\to\IN^\IN$ with 
$\delta F(p)=\delta G(p)$ for all $p\in\dom(F)$, since $\delta$ is precomplete.
Since $\delta$ is total, $G$ is actually a realizer of a function $g:=\delta G:\IN^\IN\to X$ that extends $f$.\\
``(2)$\TO$(3)'' Let $f:\In Y\mto X$ be computable for some represented space $(Y,\delta_Y)$.
Then $f$ has a computable realizer $F:\In\IN^\IN\to\IN^\IN$.
Then $f':=\delta_X\circ F:\In\IN^\IN\to X$ is computable and by (2) it admits
a total computable extension $g':\IN^\IN\to X$. Then
$g:=g'\circ\delta_Y^{-1}:Y\mto X$ is computable and satisfies $g(y)\In f(y)$ for all $y\in\dom(f)$.\\
``(3)$\TO$(4)'' If we apply (3) to the partial computable inverse $\iota^{-1}:\In\overline{X}\to X$
that exists according to Corollary~\ref{cor:completion}, then we obtain the desired 
computable retraction $r:\overline{X}\mto X$.\\
``(4)$\TO$(1)'' 
Let $\delta_{\overline{X}}$ be the total completion of $\delta_X$.
Let $R:\IN^\IN\to\IN^\IN$ be a computable realizer of a retraction $r:\overline{X}\mto X$
and let $S:\In\IN^\IN\to\IN^\IN$ be a computable realizer of the embedding $\iota:X\to\overline{X}$ that
exists according to Corollary~\ref{cor:completion}.
Then $\delta:=\delta_{\overline{X}}\circ S\circ R$ is a total representation of $X$ that extends $\delta_{\overline{X}}|^X$.
Hence $\delta\equiv\delta_{\overline{X}}|^X$.
On the other hand, $\delta_{\overline{X}}|^X\equiv\delta_X$ by Corollary~\ref{cor:completion}.
We still need to prove that $\delta$ is precomplete. 
Let $F:\In\IN^\IN\to\IN^\IN$ be a computable function.
Since $\delta_{\overline{X}}$ is precomplete 
by Corollary~\ref{cor:completion}, it follows that there is a total computable $G:\IN^\IN\to\IN^\IN$
with $\delta_{\overline{X}}\circ S\circ R\circ F(p)=\delta_{\overline{X}}\circ G(p)$ for all
$p\in\dom(S\circ R\circ F)=\dom(F)$.
We note that $S\circ R\circ F(p)\in\dom(\delta_{\overline{X}}|^X)$ and hence 
$G(p)\in\dom(\delta_{\overline{X}}|^X)$.
We obtain $\delta\circ F(p)=\delta_{\overline{X}}\circ G(p)=\delta\circ G(p)$
for all $p\in\dom(F)$, since $\delta$ is an extension of $\delta_{\overline{X}}|^X$. Thus $\delta$ is precomplete.
\end{proof}

This result shows that the notion of multi-retraceability only depends on the equivalence class of $\delta_X$,
unlike the notion of completion $\overline{X}$ which also depends on the explicit underlying representation.
For multi-retraceable spaces $X$ there is not just a computable embedding $\iota:X\to\overline{X}$, but
also a computable retraction $r:\overline{X}\mto X$. Hence, these spaces are closer to being
isomorphic to their own completion than arbitrary represented spaces.

Since by Corollary~\ref{cor:completion}
$\overline{X}$ is always a represented space with a total precomplete representation,
it follows that this space is an example of a retraceable space.

\begin{corollary}[Retraction]
\label{cor:retraction-completion}
For every represented space $X$ there is a computable retraction $r:\overline{\overline{X}}\to\overline{X}$, i.e., $\overline{X}$ is retraceable.
\end{corollary}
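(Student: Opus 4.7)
The plan is to first invoke Proposition~\ref{prop:multi-retraceability} with $\overline{X}$ in place of $X$. By Corollary~\ref{cor:completion}, $\delta_{\overline{X}}$ is a total and precomplete representation of $\overline{X}$, so condition (1) of the proposition is satisfied, and therefore condition (4) holds, i.e., there is a computable multi-valued retraction $\overline{\overline{X}}\mto\overline{X}$ witnessing multi-retraceability of $\overline{X}$. The corollary, however, demands the stronger property of retraceability, namely that the retraction be single-valued.

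The main obstacle is exactly this strengthening. The abstract precompleteness argument extends a realizer of the partial computable inverse $\iota^{-1}:\In\overline{\overline{X}}\to\overline{X}$ to a total computable $G:\IN^\IN\to\IN^\IN$, but only agreement with $\iota^{-1}$ on names in $\dom(\iota^{-1})$ is guaranteed; on the names of the new bottom element $\bot'\in\overline{\overline{X}}$ the behaviour of $G$ is uncontrolled, so distinct names of $\bot'$ could be mapped to distinct elements of $\overline{X}$, which gives only multi-valuedness. To close this gap I would bypass the abstract argument and construct the realizer explicitly.

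Recall that $\delta_{\overline{X}}(p)=\delta_X(p-1)$ if $p-1\in\dom(\delta_X)$ and equals $\bot$ otherwise, and that $\delta_{\overline{\overline{X}}}(p)=\delta_{\overline{X}}(p-1)$ if $p-1\in\IN^\IN$ and equals $\bot'$ otherwise. I would define $G:\IN^\IN\to\IN^\IN$ by
\[
G(p)(n):=\max\{p(n)-1,\,0\}\qquad(n\in\IN).
\]
This $G$ is total and computable, and a direct inspection of the zero/non-zero pattern shows that $G(p)-1=(p-1)-1$ as a word or sequence (both consist of the values $p(n)-2$ for the indices $n$ with $p(n)\geq 2$, in order). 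It follows that $\delta_{\overline{X}}(G(p))=\delta_X((p-1)-1)$ when $(p-1)-1\in\dom(\delta_X)$, and equals $\bot$ otherwise.

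A three-case check then shows that $G$ realizes the desired retraction $r:\overline{\overline{X}}\to\overline{X}$ which is the identity on $\overline{X}\In\overline{\overline{X}}$ and sends $\bot'$ to $\bot$. If $p-1\in\IN^\IN$ and $(p-1)-1\in\dom(\delta_X)$, then $\delta_{\overline{X}}(G(p))=\delta_X((p-1)-1)=\delta_{\overline{\overline{X}}}(p)$; if $p-1\in\IN^\IN$ but $(p-1)-1\notin\dom(\delta_X)$, both values are $\bot$; and if $p-1$ is finite, then $(p-1)-1$ is a fortiori finite, hence $\delta_{\overline{X}}(G(p))=\bot$, which matches $r(\bot')=\bot$. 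Since the output depends in each case only on $\delta_{\overline{\overline{X}}}(p)$, the induced function $r$ is well-defined and single-valued, and by construction it is a retraction onto $\overline{X}$.
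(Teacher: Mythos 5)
Your proof is correct and takes essentially the same approach as the paper: both construct an explicit total computable realizer for a ``decrement'' map and verify that it realizes the retraction that is the identity on $\overline{X}$ and sends the new bottom element to $\bot$. The only difference is the choice of realizer --- the paper uses $p\mapsto p-1$ padded with zeros, whereas you use the coordinatewise $G(p)(n)=\max\{p(n)-1,0\}$, which is a slightly cleaner way to make totality manifest; the verification that $G(p)-1=(p-1)-1$ and the ensuing case analysis are sound.
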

\begin{proof}
The representation $\delta_{\overline{X}}$ of $\overline{X}$ is total. Hence the names of $\bot$ with respect to the
representation $\delta_{\overline{\overline{X}}}$ of $\overline{\overline{X}}=\overline{X}\cup\{\bot\}$ are exactly
those names $p\in\IN^\IN$ that contain at most finitely many digits different from zero. 
Hence, a retraction $r:\overline{\overline{X}}\to\overline{X}$ can be realized basically by the map $F:\In\IN^\IN\to\IN^\IN,p\mapsto p-1$,
where the output is filled up with zeros if not enough non-zero content in $p$ becomes available.
This extends $F$ to a total computable map. 
That is, names of the bottom element $\bot\in\overline{\overline{X}}$ are mapped to names of the bottom
element $\bot'\in\overline{X}=X\cup\{\bot'\}$ and otherwise the identity is realized, i.e., $r|_{\overline{X}}=\id_{\overline{X}}$.
Hence, $r$ is a computable retraction.
\end{proof}

In particular,  $\overline{X}$ is actually a retract of $\overline{\overline{X}}$ in the topological sense.

By Proposition~\ref{prop:multi-retraceability} multi-retraceability is a rather strong condition 
and we cannot expect that too many spaces satisfy this condition.
For instance, $\IN^\IN$ is not multi-retraceable, since there are partial computable
$F:\In\IN^\IN\to\IN^\IN$ that do not have total computable extensions. 
This also shows that there are spaces that admit total representations, but no
representation that is total and precomplete simultaneously. 
We assume that every represented space is endowed with the final topology of the representation.
The following result shows that multi-retraceable spaces are necessarily compact (i.e., 
every open cover has a finite subcover; we do not require Hausdorffness).
 
\begin{proposition}[Compactness]
\label{prop:compactness}
Let $(X,\delta_X)$ be a multi-retraceable space. 
Then there is a representation $\delta:2^\IN\to X$ such that $\delta_X\equiv\delta$.
In particular, $X$ is compact.
\end{proposition}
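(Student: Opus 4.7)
The plan is to combine Proposition~\ref{prop:multi-retraceability}, which supplies a total precomplete representation, with the standard trick of encoding Baire space inside Cantor space via unary notation, and then use precompleteness to turn the resulting partial decoding into a total one.

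More concretely, I would proceed as follows. By Proposition~\ref{prop:multi-retraceability} applied to the multi-retraceable space $X$, there is a total precomplete representation $\delta:\IN^\IN\to X$ with $\delta\equiv\delta_X$. Consider the partial computable unary decoding
\[F:\In 2^\IN\to\IN^\IN,\quad 0^{a_0}10^{a_1}10^{a_2}1\ldots\mapsto (a_0,a_1,a_2,\ldots),\]
whose domain consists of those $q\in 2^\IN$ with infinitely many $1$s. Regarding $F$ as a partial computable function $\IN^\IN\to\IN^\IN$ (noting $2^\IN\In\IN^\IN$), precompleteness of $\delta$ yields a total computable $G:\IN^\IN\to\IN^\IN$ with $\delta G(q)=\delta F(q)$ for all $q\in\dom(F)$. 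I would then define $\delta:=\delta\circ G|_{2^\IN}:2^\IN\to X$ (renaming mildly to avoid clash); this is a total representation of $X$, since for every $x\in X$ one picks $p\in\IN^\IN$ with $\delta(p)=x$ and the corresponding $q:=0^{p(0)}10^{p(1)}1\ldots\in\dom(F)\In 2^\IN$ satisfies $\delta G(q)=\delta F(q)=\delta(p)=x$.

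The equivalence $\delta_X\equiv \delta$ then reduces to showing $\delta\equiv \delta\circ G|_{2^\IN}$. The reduction $\delta\circ G|_{2^\IN}\leq\delta$ is witnessed directly by $G$. For the opposite reduction $\delta\leq\delta\circ G|_{2^\IN}$, I would use the total computable encoding $H:\IN^\IN\to 2^\IN\In\IN^\IN$, $p\mapsto 0^{p(0)}10^{p(1)}10^{p(2)}1\ldots$, which satisfies $F(H(p))=p$, so $\delta G(H(p))=\delta F(H(p))=\delta(p)$ for every $p\in\IN^\IN$. Hence $\delta\equiv\delta\circ G|_{2^\IN}$, which together with $\delta\equiv\delta_X$ gives the required $\delta_X\equiv\delta\circ G|_{2^\IN}$.

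Finally, compactness is immediate: every representation is continuous with respect to the final topology it induces, equivalent representations induce the same topology on $X$, and $\delta\circ G|_{2^\IN}$ is a continuous surjection of the compact space $2^\IN$ onto $X$, so $X$ is compact. The only place where some care is needed is the step that turns $F$ (partial on $2^\IN$) into the total $G$ on all of $\IN^\IN$ via precompleteness and then restricts back to $2^\IN$; the key observation making this work is simply that precompleteness of $\delta$ is formulated for all partial computable functions on $\IN^\IN$, and $F$ trivially extends to such a function by declaring its values outside $\dom(F)\cap 2^\IN$ to be undefined.
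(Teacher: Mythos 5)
Your proof is correct and follows essentially the same route as the paper: obtain a total precomplete representation via Proposition~\ref{prop:multi-retraceability}, use a computable unary encoding $2^\IN\to\IN^\IN$ and precompleteness to totalize its $\delta$-decoding, restrict to $2^\IN$, verify equivalence via a total computable section, and conclude compactness from continuity with respect to the shared final topology. The only difference is the cosmetic choice of encoding (blocks of $0$s separated by $1$s versus the paper's alternating $0/1$ blocks), which is immaterial.
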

\begin{proof}
Let us assume that $\delta_X$ is precomplete and total.
We consider the computable partial map $F:\In2^\IN\to\IN^\IN$ defined by 
\[F(0^{k_0}1^{n_0+1}0^{k_1+1}1^{n_1+1}0^{k_2+1}...):=n_0n_1n_2...\]
for all $n_i,k_i\in\IN$ with $i\in\IN$.
Since $\delta_X$ is precomplete, there is a total computable $G:\IN^\IN\to\IN^\IN$
with $\delta_XF(p)=\delta_XG(p)$ for all $p\in\dom(F)$.
We let $\delta:=\delta_XG|_{2^\IN}$. This defines a total representation $\delta:2^\IN\to X$,
since $\delta_X$ is total, and $\delta\leq\delta_X$ holds by definition.
For the inverse direction we consider the computable map $H:\IN^\IN\to2^\IN,p\mapsto1^{p(0)+1}01^{p(1)+1}01^{p(2)+1}...$,
which satisfies $\delta_X=\delta H$ and hence $\delta\equiv\delta_X$.
This implies that $\delta$ is continuous with respect to the final topology of $\delta_X$
and hence $X=\delta(2^\IN)$ is compact.
\end{proof}

We note that the space $2^\IN$ itself is not multi-retraceable by Proposition~\ref{prop:multi-retraceability},
since not every computable function $f:\In\IN^\IN\to2^\IN$ has a total computable extension. 
Hence, compactness is far from being sufficient for multi-retraceability.

Since every space $\overline{X}$ is retraceable, it is also compact. 
We can say more in this case. The spaces $\overline{X}$ are also connected and hence
they can be seen as simultaneous one-point compactification and connectification of $X$.
However, topologically, this is 
not all too interesting, as the topology of $\overline{X}$ is always the indiscrete topology.
By $\widehat{n}=nnn...\in\IN^\IN$ we denote the constant sequence with value $n\in\IN$.

\begin{proposition}[Indiscrete topology]
Let $X$ be a represented space. The topology of $\overline{X}$ is $\{\emptyset,\overline{X}\}$.
\end{proposition}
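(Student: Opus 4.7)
The plan is to show that the only open subsets of $\overline{X}$ (with respect to the final topology of $\delta_{\overline{X}}$) are $\emptyset$ and $\overline{X}$ itself. It suffices to verify that every non-empty open $U\In\overline{X}$ contains $\bot$, and then that once $\bot\in U$, necessarily $U=\overline{X}$. The essential feature driving both facts is that in the definition $\delta_{\overline{X}}(p)=\delta_X(p-1)$, a digit $0$ in $p$ contributes the empty word $\varepsilon$, so that names of $\bot$ and names of points in $X$ are topologically intertwined: prepending zeros to a name changes nothing, and any finite prefix can be completed either to a name of $\bot$ or to a name of an arbitrary point of $X$.

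First I would show that every non-empty open $U\In\overline{X}$ contains $\bot$. Pick any $x\in U$ and any $p\in\delta_{\overline{X}}^{-1}\{x\}$. Since $\delta_{\overline{X}}^{-1}(U)$ is open in $\IN^\IN$, it contains a basic cylinder $p|_n\IN^\IN$. Now the sequence $p|_n\widehat{0}$ lies in this cylinder, and $p|_n\widehat{0}-1=p|_n-1$ is a finite word (since every digit of $\widehat{0}$ contributes $\varepsilon$), hence not in $\dom(\delta_X)$. By the definition of $\delta_{\overline{X}}$ we therefore have $\delta_{\overline{X}}(p|_n\widehat{0})=\bot$, so $\bot\in U$.

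Second, assuming $\bot\in U$, I would argue $X\In U$. Since $\widehat{0}-1=\varepsilon\notin\dom(\delta_X)$, the sequence $\widehat{0}$ is a $\delta_{\overline{X}}$-name of $\bot$, so $\delta_{\overline{X}}^{-1}(U)$ contains a cylinder $0^n\IN^\IN$. Given any $x\in X$, surjectivity of $\delta_X$ provides some $r\in\dom(\delta_X)$ with $\delta_X(r)=x$; writing $r+1$ for the sequence obtained by adding $1$ to each component, we see that $0^n(r+1)\in 0^n\IN^\IN$ and $(0^n(r+1))-1=r\in\dom(\delta_X)$, so $\delta_{\overline{X}}(0^n(r+1))=\delta_X(r)=x$. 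Hence $x\in U$, and combined with $\bot\in U$ this gives $U=\overline{X}$.

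The only place where anything subtle happens is verifying that finite zero-blocks can be absorbed on either end without disturbing the decoded value; this is essentially immediate from the concatenation convention for $p-1$ with the interpretation $-1=\varepsilon$, so I do not foresee a genuine obstacle. No appeal to compactness, connectedness, or the specific structure of $\delta_X$ beyond surjectivity is required.
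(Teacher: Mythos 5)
Your proof is correct and follows essentially the same two-step argument as the paper: extend a finite prefix of a name of a point in $U$ by $\widehat{0}$ to obtain a name of $\bot$, and prepend a block of zeros to $r+1$ (for $r$ a $\delta_X$-name of $x$) to show every $x\in X$ has a name in the cylinder around $\widehat{0}$. Your version is slightly more explicit about why $0^n(r+1)$ decodes to $x$, but the underlying mechanism is identical.
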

\begin{proof}
On the one hand, the special point $\bot\in\overline{X}$ is a member of every non-empty open set, as any prefix of a name of a point
can be extended to a name of $\bot$ by extending it with zeros. 
More precisely, if $U\In\overline{X}$ is open and non-empty, then there exists a finite prefix $w\in\IN^*$ such that
$w\IN^\IN\In\delta_{\overline{X}}^{-1}(U)$ and hence $\bot=\delta_{\overline{X}}(w\widehat{0})\in U$.
On the other hand, there are names $p$ of $\bot$ that start just with zeros
and hence any finite prefix of $p$
can be extended to a name of any point in $X$. 
That is, if $\bot\in U$, then $\widehat{0}\in\delta_{\overline{X}}^{-1}(U)$ and hence
$0^np\in\delta_{\overline{X}}^{-1}(U)$ for every $p\in\IN^\IN$ and a suitable $n\in\IN$.
Since for every $x\in\overline{X}$ there is some $p$ with $\delta_{\overline{X}}(0^np)=x$, it follows 
that $x\in U$. 
Hence, the only possible non-empty open set is $\overline{X}$.
\end{proof}

For many represented spaces $X$ we cannot expect computable single-valued retractions $r:\overline{X}\to X$ to exist,
not even multi-valued ones.
Sometimes, there are at least retractions with weaker computability properties and we give two examples.

\begin{proposition}[Special retractions]
\label{prop:special-retractions}
Let $(X,\delta_X)$ be a represented space and $\delta_X$ total. 
There are retractions with the given properties:
\label{prop:retraction-N}
\begin{enumerate}
\item $r:\overline{\IN}\to\IN$ that is computable with finitely many mind changes,
\item $r:\overline{\IN^\IN}\to\IN^\IN$ that is limit computable, 
\item $r:\overline{X}\to X$ that is limit computable.
\end{enumerate}
\end{proposition}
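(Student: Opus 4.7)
The plan is to work directly with realizers on names, exploiting the explicit form of $\delta_{\overline{X}}$: a name $p\in\IN^\IN$ of a point $x\in\overline{X}$ is such that $p-1$ is a $\delta_X$-name of $x$ exactly when $p$ has infinitely many non-zero digits (in which case $x\in X$), and is a proper finite word (including the empty word) exactly when $x=\bot$. In all three items I simply compute $p-1$ and, when it threatens to be too short, pad it with zeros to obtain an infinite sequence on which $\delta_X$ can be evaluated. The differing computability strengths in (1)--(3) then reflect how quickly one can tell which case one is in.

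For (1), fix the standard representation $\delta_\IN(q)=q(0)$; under it, a $\delta_{\overline{\IN}}$-name of $n\in\IN$ is any $p$ whose first non-zero entry equals $n+1$, and a name of $\bot$ is any $p$ with only finitely many non-zero entries (for instance $\widehat{0}$). Define $g(p,s):=0$ if $p(i)=0$ for all $i<s$ and otherwise $g(p,s):=p(k)-1$ for the least $k<s$ with $p(k)\neq 0$. Then $g$ is computable, the sequence $s\mapsto g(p,s)$ changes its value at most once, and $\lim_s g(p,s)$ equals $n$ when $x=n\in\IN$ and equals $0$ when $x=\bot$. Setting $r(\bot):=0$ produces a single-valued retraction $r\colon\overline{\IN}\to\IN$ computable with at most one mind change.

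For (2) and (3) I pass to a limit computation. At stage $s$ compute the finite word $w_s:=(p\restriction s)-1\in\IN^*$ (using the ``$-1=\varepsilon$'' convention) and emit the stage-$s$ guess $q_s:=w_s\widehat{0}\in\IN^\IN$. Coordinate-wise $q_s(k)$ stabilizes: if $p$ has infinitely many non-zero digits then $|w_s|\to\infty$ and $q_s(k)$ is eventually equal to $(p-1)(k)$; otherwise $|w_s|$ is eventually constant and $q_s(k)$ is eventually $0$. Hence $q:=\lim_s q_s\in\IN^\IN$ exists and is produced by an effective limit procedure. For (2), taking $\delta_{\IN^\IN}=\id$, the sequence $q$ is already a name and $r$ defined by $p\mapsto q$ is limit computable; since $q=p-1$ whenever $p-1$ is infinite, $r$ restricts to the identity on $\IN^\IN$ and is therefore a retraction.

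For the general case (3), totality of $\delta_X$ guarantees $q\in\dom(\delta_X)$, so $r(x):=\delta_X(q)$ is well-defined; it is in general multi-valued at $\bot$ because different names of $\bot$ can yield different limits. On $x\in X$ we have $q=p-1$ and hence $\delta_X(q)=\delta_X^\wp(p)=\delta_{\overline{X}}(p)=x$, so $r$ fixes $X$ pointwise and is thus a retraction. The only real work is to verify that the stage-wise construction $(p,s)\mapsto q_s$ yields a coordinate-wise convergent effective sequence of points, after which limit computability of $r$ is immediate; I expect the only mildly delicate step to be the bookkeeping that matches ``a non-zero digit of $p$ consumed at stage~$s$'' with ``a digit of $q$ stabilized''.
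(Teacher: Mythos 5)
Your construction is exactly the paper's: in (1), output $0$ until the first non-zero digit $p(k)$ of the input appears and then change your mind to $p(k)-1$; in (2) and (3), emit $(p\restriction s)-1$ padded with trailing zeros and take the coordinatewise limit, which equals $p-1$ when $p$ has infinitely many non-zero digits and is otherwise some name-dependent point, and conclude (3) from totality of $\delta_X$. So the approach matches the paper in all essentials.

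There is, however, a factual slip in your (1) together with an internal inconsistency with your own (3). A name of $\bot$ is any $p$ with only finitely many non-zero digits, and such a $p$ may well contain a non-zero digit: for instance $p=(3,0,0,\dots)$ names $\bot$ (since $p-1=\langle 2\rangle$ is a finite word), yet your $g(p,\cdot)$ stabilizes at $2$, not at $0$. So the assertion ``$\lim_s g(p,s)$ equals $0$ when $x=\bot$'', and with it the conclusion that setting $r(\bot):=0$ yields a \emph{single-valued} retraction, is not supported by the algorithm. What your algorithm realizes is a retraction that is multi-valued at $\bot$, which is precisely the phenomenon you correctly describe in part (3); the same remark applies to (1) and (2). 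Indeed, no single-valued retraction $r:\overline{\IN}\to\IN$ can be computed with finitely many mind changes: a realizer with a fixed value $r(\bot)=n_0$ would have to distinguish names of $\bot$ from names of natural numbers, i.e.\ decide whether the input has infinitely many non-zero digits, and a diagonalization of the same flavour as the one in Proposition~\ref{prop:choice-natural} produces an input on which any such machine changes its mind infinitely often. So resolve the inconsistency in favour of your (3): the retractions constructed here are multi-valued, and that is what both the paper's proof and yours actually deliver.
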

\begin{proof}
(1) Given a name $p$ of a point in $\overline{\IN}$ we generate a name of $0\in\IN$ until we find
the first non-zero entry $n+1=p(i)$ for some $i\in\IN$, in which case we change our mind to a name of $n\in\IN$.
This describes a finite mind change computation of a retraction $r:\overline{\IN}\to\IN$.\\
(2) Given a name $p$ of a point in $\overline{\IN^\IN}$ we generate a name of $\widehat{0}\in\IN^\IN$ that we overwrite with
all digits of $p-1$ whenever we find non-zero content in $p$.
This describes a limit computation of a retraction $r:\overline{\IN^\IN}\to\IN^\IN$.\\
(3) Follows from (2) since $\delta_X$ is total. 
\end{proof}

\section{Total Weihrauch Reducibility}
\label{sec:total}

In this section we are going to recall the definition of Weihrauch reducibility and of total Weihrauch reducibility,
which was introduced in \cite{BG20}.
We write $F\vdash_\t f$, if $F$ is a total realizer of $f$.
We now recall the definition of ordinary and strong Weihrauch reducibility on problems $f,g$,
which is denoted by $f\leqW g$ and $f\leqSW g$, respectively, and we recall
the two new concepts of {\em total Weihrauch reducibility} and {\em strong total Weihrauch reducibility},
which are denoted by $f\leqTW g$ and $f\leqSTW g$, respectively.

\begin{definition}[Weihrauch reducibility]
\label{def:Weihrauch-reducibility}
Let $f:\In X\mto Y$ and $g:\In U\mto V$ be problems. We define:
\begin{enumerate}
\item $f\leqW g:\!\iff\!(\exists$ computable $H,K:\In\IN^\IN\to\IN^\IN)(\forall G\vdash g)\; H\langle \id,GK\rangle\vdash f$.
\item $f\leqSW g:\!\iff\!(\exists$ computable $H,K:\In\IN^\IN\to\IN^\IN)(\forall G\vdash g)\;HGK\vdash f$.
\item $f\leqTW g:\!\iff \!\!(\exists$ computable $H,K:\In\IN^\IN\to\IN^\IN)(\forall G\vdash_\t g)\; H\langle \id,GK\rangle\vdash_\t f$.
\item $f\leqSTW g:\!\iff (\exists$ computable $H,K:\In\IN^\IN\to\IN^\IN)(\forall G\vdash_\t g)\;HGK\vdash_\t f$.
\end{enumerate}
For (3) and (4) we assume that we replace each of the given representations of $X,Y,U$ and $V$ by a computably equivalent precomplete representation of the corresponding set. 
\end{definition}

We call the reducibilities $\leqW$ and $\leqSW$ {\em partial}
in order to distinguish them from their total counterparts $\leqTW$ and $\leqSTW$.
We note that precompleteness is not required or relevant in the partial case,
but it can be assumed without loss of generality since the concept of partial
(strong) Weihrauch reducibility is invariant under computably equivalent
representations~\cite[Lemma~2.11]{BG11}. 
In \cite[Corollary~4.3]{BG20} we have proved that due to precompleteness in the definition above
also $\leqTW$ and $\leqSTW$ are invariant under equivalent representations.
We have also proved that the definition does not depend on the choice of the precomplete
representation in the equivalence class and that it yields preorders $\leqTW$ and $\leqSTW$.

We have also proved in \cite[Corollary~4.7]{BG20} that the partial Weihrauch reductions imply their
total counterparts in the following sense.

\begin{corollary}[Partial and total Weihrauch reducibility]
\label{cor:partial-total}
Let $f$ and $g$ be problems. Then $f\leqW g\TO f\leqTW g$ and $f\leqSW g\TO f\leqSTW g$.
\end{corollary}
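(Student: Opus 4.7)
The plan is to bootstrap from computable witnesses of $f\leqW g$ to witnesses for $f\leqTW g$ by invoking precompleteness, which by the convention preceding Definition~\ref{def:Weihrauch-reducibility} we may assume holds for each of $\delta_X,\delta_Y,\delta_U,\delta_V$.

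Suppose $f\leqW g$ via computable $H, K : \In \IN^\IN \to \IN^\IN$. First I would apply precompleteness of $\delta_U$ to $K$ to produce a total computable $K^*:\IN^\IN\to\IN^\IN$ with $\delta_U K^*(p)=\delta_U K(p)$ on $\dom(K)$; analogously, precompleteness of $\delta_Y$ supplies a total computable $H^*$ agreeing $\delta_Y$-wise with $H$ on $\dom(H)$. The claim is that $H^*, K^*$ witness $f\leqTW g$. Fix a total $G\vdash_\t g$ and a name $p\in\dom(f\delta_X)$; since $f\leqW g$ requires $K(p)\in\dom(g\delta_U)$, one has $\delta_U K^*(p)=\delta_U K(p)\in\dom(g)$ and therefore $\delta_V GK^*(p)\in g(\delta_U K(p))$. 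The key step is then to verify that $\langle p,GK^*(p)\rangle\in\dom(H)$, from which precompleteness of $\delta_Y$ immediately yields $\delta_Y H^*\langle p,GK^*(p)\rangle=\delta_Y H\langle p,GK^*(p)\rangle\in f\delta_X(p)$; totality of the composite $H^*\langle\id,GK^*\rangle$ is then automatic from the totality of $H^*$ and $K^*$.

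To verify this domain inclusion I would introduce a (non-computable) auxiliary realizer of $g$. Fix any background realizer $G_0\vdash g$ (existence by choice) and set
\[\widetilde G(q):=\left\{\begin{array}{ll}GK^*(p)&\text{if }\delta_U(q)=\delta_U K(p),\\G_0(q)&\text{otherwise.}\end{array}\right.\]
Because $\delta_V GK^*(p)\in g(\delta_U K(p))$, the map $\widetilde G$ is a genuine realizer of $g$, and $\widetilde G K(p)=GK^*(p)$ by construction. Applying $f\leqW g$ to $\widetilde G$ then forces $H\langle p,GK^*(p)\rangle$ to be defined and to realize $f$ at $\delta_X(p)$, closing the argument. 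The strong case $f\leqSW g\TO f\leqSTW g$ runs verbatim with $\langle\id,GK^*\rangle$ replaced by $GK^*$ throughout. I expect the main obstacle to be exactly the above domain verification: the passage from $K$ to $K^*$ can substitute a completely different sequence for the same $\delta_U$-point, and $H$ is a priori sensitive to sequences and not merely to the points they name, so the $\widetilde G$-trick is essential in forcing $H$ to absorb every $\delta_V$-name of every element of $g(\delta_U K(p))$.
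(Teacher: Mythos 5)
Your argument is correct, and since the paper itself does not reprove this statement but simply recalls it from~\cite{BG20} (Corollary~4.7 there), I will assess the proposal on its own merits. You correctly isolate the two things that must be fixed up: totality of the witnesses (handled by precompleteness of $\delta_U$ and $\delta_Y$) and the domain verification for $H$ at the modified argument $\langle p,GK^*(p)\rangle$. The $\widetilde G$ construction is exactly the right device for the latter: since the inner extension $K^*$ may return a name of $\delta_U K(p)$ that is a completely different sequence from $K(p)$, one must use the universal quantification over realizers in the definition of $\leqW$ to conclude that $H$ absorbs every $\delta_V$--name of every point of $g(\delta_U K(p))$, and the auxiliary realizer makes this precise. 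Your justification that $K(p)\in\dom(g\delta_U)$ for names $p$ of $\dom(f)$ (via the realizer of $g$ that is undefined off $\dom(g\delta_U)$) is also the standard one and is needed here. Altogether this is the natural direct proof and I see no gaps; the strong variant indeed runs verbatim with $GK^*$ in place of $\langle\id,GK^*\rangle$.
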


This means that all positive results that hold for a partial version of Weihrauch
reducibility can be transferred to the corresponding total variant.

We note that the reducibilities $\leqTW$ and $\leqSTW$ share similar
properties as $\leqW$ and $\leqSW$ when it comes to the preservation of 
computability or other properties.
We say that a class $\CC$ of problems is {\em preserved downwards} by a reducibility $\leq$ for problems
if $f\leq g$ and $g\in\CC$ imply $f\in\CC$.
In \cite[Proposition~4.9]{BG20} we proved that 
computability, continuity, limit computability, Borel measurability and non-uniform computability are preserved
downwards by $\leqTW$.

A class $\CC$ of functions $F:\In\IN^\IN\to\IN^\IN$ constitutes a property of problems that is preserved 
downwards by total Weihrauch reducibility if the following conditions are satisfied:
$\CC$ contains the identity, is closed under composition with computable functions, is closed under juxtaposition with
the identity and $\CC$ respects precompleteness. Later we prove that finite mind change computability
and Las Vegas computability are not preserved downwards by $\leqTW$, whereas non-deterministic computability is preserved downwards.

\section{Completion, Totalization and Co-Completion}
\label{sec:completion}

In this section we recall the definition of the closure operation $f\mapsto\overline{f}$ on (strong) 
Weihrauch reducibility that was introduced in \cite{BG20}
and we prove some further properties of it.
For the definition of the completion $\overline{f}$ we use the completion $\overline{X}$ of a represented space.

\begin{definition}[Completion]
\label{def:completion}
Let $f:\In X\mto Y$ be a problem. We define the {\em completion} of $f$ by
\[\overline{f}:\overline{X}\mto\overline{Y},x\mapsto\left\{\begin{array}{ll}
   f(x) & \mbox{if $x\in\dom(f)$}\\
   \overline{Y} & \mbox{otherwise}
\end{array}\right.\]
\end{definition}

We note that the completion $\overline{f}$ is always {\em pointed}, i.e., it has a computable
point in its domain. This is because $\bot\in\overline{X}$ is always computable (as it has the constant zero sequence as a name).

In \cite[Lemma~5.2]{BG20} we have proved that completion generates total Weihrauch reducibility in the following sense.

\begin{lemma}[Completion and total Weihrauch reducibility]
\label{lem:completion-total}
For all problems $f,g$:
$f\leqW\overline{g}\iff\overline{f}\leqW\overline{g}\iff f\leqTW g$ 
and $f\leqSW\overline{g}\iff\overline{f}\leqSW\overline{g}\iff f\leqSTW g$.
\end{lemma}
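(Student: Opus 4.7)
The plan is to prove the chain by first establishing a tight correspondence between realizers of $\overline{g}$ and total realizers of $g$ with respect to suitable precomplete representations, and then using this correspondence together with the computable embedding $\iota: X\to\overline{X}$ from Corollary~\ref{cor:completion} to match the definitions.

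First I would observe that, by Corollary~\ref{cor:completion}, $\delta_{\overline{U}}$ is a total precomplete representation of $\overline{U}$, and the restriction $\delta_{\overline{U}}|^U$ is a precomplete representation of $U$ equivalent to $\delta_U$ (analogously for $V$). Invoking the invariance of $\leqTW$ and $\leqSTW$ under equivalent precomplete representations (\cite[Corollary~4.3]{BG20}), I may thus use $\delta_{\overline{U}}|^U, \delta_{\overline{V}}|^V$ as the precomplete representations witnessing $f\leqTW g$. The key observation is that a total function $G:\IN^\IN\to\IN^\IN$ realizes $\overline{g}$ with respect to $\delta_{\overline{U}},\delta_{\overline{V}}$ if and only if it is a total realizer of $g$ with respect to $\delta_{\overline{U}}|^U,\delta_{\overline{V}}|^V$. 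Indeed, for $p$ with $\delta_{\overline{U}}(p)\in\dom(g)$ we have $\overline{g}(\delta_{\overline{U}}(p))=g(\delta_{\overline{U}}(p))\In V$, so the realizer condition forces $\delta_{\overline{V}}G(p)\in V$ (hence $G(p)\in\dom(\delta_{\overline{V}}|^V)$); for other $p$, $\overline{g}(\delta_{\overline{U}}(p))=\overline{V}$ imposes no extra constraint. Moreover, realizers of $\overline{g}$ are automatically total since $\overline{g}$ and $\delta_{\overline{U}}$ are total.

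With this correspondence in place, the equivalence $f\leqW\overline{g}\iff f\leqTW g$ reduces to matching the output side of the two definitions. In one direction, any $G\vdash_\t g$ (wrt the precomplete reps) is a realizer of $\overline{g}$, so the witnessing $H,K$ for $f\leqW\overline{g}$ give $H\langle\id,GK\rangle\vdash f$; to upgrade to $\vdash_\t f$, I totalize $H$ and $K$ by appealing to precompleteness of the source and target representations (which does not alter the realizing behaviour), so that the composition becomes defined on all of $\IN^\IN$. Conversely, any realizer $G\vdash\overline{g}$ is a total realizer of $g$ by the correspondence, so witnesses of $f\leqTW g$ transfer directly.

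For the remaining equivalence $f\leqW\overline{g}\iff\overline{f}\leqW\overline{g}$, the backward direction is obtained by precomposing $K$ with a realizer of the computable embedding $\iota:X\to\overline{X}$, using that $\overline{f}|_X=f$. For the forward direction, given $H,K$ witnessing $f\leqW\overline{g}$, I extend $K$ to a total computable $K'$ using precompleteness of the domain representation of $g$, and note that on any $x\in\overline{X}\sm\dom(f)$ the value $\overline{f}(x)=\overline{Y}$ makes every output automatically correct, so the extended pair witnesses $\overline{f}\leqW\overline{g}$. The strong versions $f\leqSW\overline{g}\iff\overline{f}\leqSW\overline{g}\iff f\leqSTW g$ are handled by the identical argument, replacing $H\langle\id,GK\rangle$ by $HGK$ throughout. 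The main obstacle will be organizing the totalization step cleanly so that the output of the reduction really is a total realizer, rather than a merely partial one; this is where the precompleteness assumption built into the definition of $\leqTW$ carries its weight.
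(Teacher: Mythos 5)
Your proof is correct and follows the natural route. The core of the argument is exactly right: $\delta_{\overline{U}}$ and $\delta_{\overline{V}}$ are total and precomplete, $\delta_{\overline{U}}|^U=\delta_U^\wp$ and $\delta_{\overline{V}}|^V=\delta_V^\wp$ are precomplete representations equivalent to the originals, and with respect to these the realizers of $\overline{g}$ coincide with the total realizers of $g$. Combined with invariance of $\leqTW$ under equivalent precomplete representations, this settles $f\leqW\overline{g}\iff f\leqTW g$ once the witnesses are totalized, and the embedding $\iota$ together with the fact that $\overline{f}$ is trivially satisfied outside $\dom(f)$ gives the remaining equivalence; the strong case is the same argument without the juxtaposition with $\id$.

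The one step you flag as the ``main obstacle'' indeed deserves a sentence, since ``does not alter the realizing behaviour'' hides a genuine point. When you replace $K$ by a total $K'$ with $\delta_{\overline{U}}K'(p)=\delta_{\overline{U}}K(p)$ on $\dom(K)$, the strings $GK'(p)$ and $GK(p)$ differ in general, and the hypothesis only constrains $H$ on pairs of the form $\langle p, GK(p)\rangle$. What saves the day is the universal quantification over realizers $G$: for a fixed $p\in\dom(f\delta_X^\wp)$, the set of strings that can occur as $GK(p)$ while $G$ ranges over all realizers of $\overline{g}$ is exactly the set of $\delta_{\overline{V}}$-names of points in $\overline{g}(\delta_{\overline{U}}K(p))$, which depends only on the represented point $\delta_{\overline{U}}K(p)=\delta_{\overline{U}}K'(p)$. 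Hence $GK'(p)$ is of this form for some other realizer, and $H$ is already known to be defined and correct there. After that, totalizing $H$ via precompleteness of $\delta_{\overline{Y}}$ (resp.\ $\delta_Y^\wp$) is unproblematic. With this detail made explicit your argument is complete.
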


Thus, we could define total Weihrauch reducibility also using the completion operation and partial Weihrauch reducibility.

In \cite[Proposition~5.4]{BG20} we also proved that completion is a {\em closure operator} with respect to $\leqTW$,
i.e., $f\leqTW \overline{f}$, $\overline{\overline{f}}\leqTW \overline{f}$ and 
$f\leqTW g\TO \overline{f}\leqTW \overline{g}$. An analogous result holds for $\leqSTW$. 

It is clear that every $f$ is strongly totally equivalent to its completion.

\begin{corollary} 
\label{cor:completion-total}
$f\equivSTW\overline{f}$ for every problem $f$.
\end{corollary}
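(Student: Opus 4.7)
The plan is to derive both directions of the equivalence from Lemma~\ref{lem:completion-total} together with the strong version of the closure-operator property of completion that was recalled just above the corollary.

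The forward direction $f\leqSTW\overline{f}$ is one of the items of the closure-operator property of completion with respect to $\leqSTW$, stated as the analogue of \cite[Proposition~5.4]{BG20} just before the corollary. I would simply quote it.

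For the reverse direction $\overline{f}\leqSTW f$, I apply Lemma~\ref{lem:completion-total} with $\overline{f}$ in the role of ``$f$'' and $f$ in the role of ``$g$''. The strong version of the lemma then yields
\[\overline{f}\leqSW\overline{f}\iff\overline{\overline{f}}\leqSW\overline{f}\iff\overline{f}\leqSTW f,\]
and the left-hand side holds trivially by reflexivity of $\leqSW$. Combining the two reductions gives $f\equivSTW\overline{f}$.

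There is essentially no obstacle: the entire content has already been packaged into Lemma~\ref{lem:completion-total}, whose purpose is precisely to make the completion on the right-hand side of a strong Weihrauch reduction interchangeable with the passage from $\leqSW$ to $\leqSTW$. Hence the corollary collapses to the triviality that $\overline{f}$ strongly reduces to itself, together with the already-established closure property. Conceptually, this formalizes the slogan that in the strong total Weihrauch lattice a problem is indistinguishable from its completion, so completing $f$ does not change its $\equivSTW$-degree.
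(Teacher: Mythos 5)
Your proof is correct and follows the route the paper implicitly intends: the paper merely asserts the corollary is ``clear'' after recalling the closure-operator property and Lemma~\ref{lem:completion-total}, and your derivation makes precise exactly how these two facts combine. You correctly note that the closure-operator property alone only gives $f\leqSTW\overline{f}$ (a closure operator on a preorder does not in general satisfy $\overline{f}\leqSTW f$), so Lemma~\ref{lem:completion-total} is genuinely needed for the reverse direction, and the instantiation $\overline{f}\leqSW\overline{f}\iff\overline{f}\leqSTW f$ is the right one.
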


In the study of total Weihrauch reducibility the degrees that have identical cones
with respect to partial and total Weihrauch reducibility play an important r\^{o}le.
Hence, we introduce a name for such degrees.

\begin{definition}[Complete problems]
A problem $f$ is called {\em complete} if $f\equivW\overline{f}$ and
{\em strongly complete} if $f\equivSW\overline{f}$.
\end{definition}

It is straightforward to derive the following characterization of completeness,
which shows that from the perspective of a complete problem the lower cones in the 
Weihrauch lattice and the total Weihrauch lattice are indistinguishable (see also \cite[Theorem~5.7]{BG20}).

\begin{proposition}[Completeness]
\label{prop:completeness}
Let $g$ be a problem. Then the following hold:
\begin{enumerate}
\item $g$ complete $\iff (\forall$ problems $f)(f\leqW g\iff f\leqW \overline{g})$.
\item $g$ strongly complete $\iff (\forall$ problems $f)(f\leqSW g\iff f\leqSW \overline{g})$.
\end{enumerate}
\end{proposition}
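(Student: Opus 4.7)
The plan is to prove both equivalences by a routine unpacking of the definition, using only reflexivity and transitivity of $\leqW$ (respectively $\leqSW$), together with the closure property $f\leqW\overline{f}$ that is recorded just above.

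First I would handle the forward direction of (1). Assume $g$ is complete, i.e., $g\equivW\overline{g}$. For any problem $f$, if $f\leqW g$ then by transitivity and $g\leqW\overline{g}$ we get $f\leqW\overline{g}$; conversely, if $f\leqW\overline{g}$ then by transitivity and $\overline{g}\leqW g$ we get $f\leqW g$. This is just the standard use of equivalence in a preorder.

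For the backward direction of (1), assume $(\forall f)(f\leqW g\iff f\leqW\overline{g})$. The trick is to specialize $f$ to $g$ and to $\overline{g}$. Taking $f:=g$, reflexivity gives $g\leqW g$, so the biconditional yields $g\leqW\overline{g}$. Taking $f:=\overline{g}$, reflexivity gives $\overline{g}\leqW\overline{g}$, so the biconditional yields $\overline{g}\leqW g$. Together these give $g\equivW\overline{g}$, i.e., completeness.

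Part (2) follows by the same argument verbatim, replacing $\leqW$ with $\leqSW$ throughout; the reducibility $\leqSW$ is also a preorder, so the reflexivity/transitivity steps remain valid. No obstacle is expected: the statement is essentially a formal observation about closure operators in a preorder, and the content of completeness ($g\equivW\overline{g}$) has been set up precisely so that this equivalence is immediate. The only thing worth flagging explicitly in the write-up is that one should \emph{not} appeal to Lemma~\ref{lem:completion-total} to rewrite the right-hand condition as $f\leqTW g$, since the statement of the proposition is meant to characterize completeness in terms of cones of $\leqW$ itself.
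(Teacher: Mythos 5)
Your argument is correct and is exactly the ``straightforward'' preorder argument the paper has in mind (the paper gives no explicit proof, stating only that the characterization is straightforward to derive from the definition of completeness as $g\equivW\overline{g}$). One small note: you mention ``the closure property $f\leqW\overline{f}$ that is recorded just above'' in your preamble, but what the paper actually records above the proposition is $f\equivSTW\overline{f}$ and the $\leqTW$-closure property — and your proof never needs $f\leqW\overline{f}$ anyway, since the backward direction extracts $g\leqW\overline{g}$ and $\overline{g}\leqW g$ directly from the hypothesis via the specializations $f:=g$ and $f:=\overline{g}$.
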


Examples of complete problems are abundant. In \cite[Proposition~5.8]{BG20} completeness was proved, e.g., for
 the Turing jump operator $\J$ and and the binary sorting problem $\SORT$ that
was introduced and studied by Neumann and Pauly~\cite{NP18}.
It was also proved that problems such as $\WBWT_2,\ACC_X,\PA$ and $\MLR$ (see \cite{BHK17a} for definitions)
are complete.
We will see many further examples in form of complete choice problems that we study systematically
in section~\ref{sec:choice}.

\begin{proposition}[Complete problems]
\label{prop:complete-problems}
The following problems are all strongly complete:
\begin{enumerate}
\item $\J:\IN^\IN\to\IN^\IN,p\mapsto p'$,
\item $\lim:\In\IN^\IN\to\IN^\IN,\langle p_0,p_1,p_2,...\rangle\mapsto\lim_{n\to\infty} p_n$,
\item $\LPO:\IN^\IN\to\{0,1\},\LPO(p)=0:\iff(\exists n\in\IN)\;p(n)=0$,
\item $\SORT:2^\IN\to2^\IN$ with
   \[\SORT(p):=\left\{\begin{array}{ll}
    0^k\widehat{1} & \mbox{if $p$ contains exactly $k\in\IN$ zeros}\\
    \widehat{0}       & \mbox{if $p$ contains infinitely many zeros}
\end{array}\right..\]
\item $\WBWT_2:2^\IN\mto2^\IN,p\mapsto\{q\in2^\IN:\lim_{n\to\infty}q(n)$ is a cluster point of $p\}$.
\end{enumerate}
\end{proposition}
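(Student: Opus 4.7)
The plan is to establish $f\equivSW\overline{f}$ for each of the five problems by proving both directions. For $f\leqSW\overline{f}$ I would use a uniform construction: given a $\delta_X$-name $p$ of a point $x\in\dom(f)$, the sequence $K(p)$ with $K(p)(i):=p(i)+1$ is a $\delta_{\overline{X}}=\delta_X^\wp$-name of $x$ (since $K(p)-1=p$). Any realizer $G$ of $\overline{f}$ then returns a $\delta_{\overline{Y}}$-name of some $y\in f(x)\In Y\In\overline{Y}$, and the partial computable operation $H(r):=r-1$ extracts a $\delta_Y$-name of $y$.

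For $\overline{f}\leqSW f$ I would argue case by case. For $\J$, I would use the fact that there is a total computable $m:\IN\to\IN$ such that the $m(n)$-th Turing functional with oracle $q$ simulates the $n$-th functional with oracle $q-1$ via on-the-fly computation of $q-1$ from $q$; hence $(q-1)'(n)=q'(m(n))$ whenever $q$ has infinitely many non-zero entries. Setting $K(q):=q$ and $H(r)(n):=r(m(n))+1$ then yields $H(G(K(q)))-1=(q-1)'$ in that case, while the case $\delta_{\overline{X}}(q)=\bot$ is automatic since $\overline{\J}(\bot)=\overline{\IN^\IN}$. For $\lim$, I would invoke the strong equivalence $\lim\equivSW\J$ (stages-of-computation in one direction, uniform recovery of the pointwise limit from the jump in the other) together with the fact that $\equivSW$ preserves strong completeness (which follows from Lemma~\ref{lem:completion-total}). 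For $\LPO$ and $\SORT$, I would define $K(q)(i):=0$ if $q(i)=1$ and $K(q)(i):=1$ otherwise; then the zeros of $K(q)\in2^\IN$ correspond bijectively with the $1$'s of $q$, and hence with the zeros of $q-1$ when $q-1\in\IN^\IN$ (resp.\ $q-1\in2^\IN$), yielding $\LPO(K(q))=\LPO(q-1)$ and $\SORT(K(q))=\SORT(q-1)$.

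For $\WBWT_2$ I would define $K(q)(i):=(q(j_0)-1)\bmod 2$, where $j_0$ is the largest $j\leq i$ with $q(j)\ne 0$, defaulting to $K(q)(i):=0$ if no such $j$ exists; this "pad with the last decoded bit" makes $K$ a total computable function $\IN^\IN\to2^\IN$. When $q-1\in2^\IN$, each bit of $q-1$ contributes a non-empty finite run to $K(q)$, so a value $b\in\{0,1\}$ is a cluster point of $K(q)$ if and only if $\{j:(q-1)(j)=b\}$ is infinite---the same condition for $b$ to be a cluster point of $q-1$. Any realizer of $\WBWT_2$ applied to $K(q)$ therefore returns a sequence converging to a cluster point of $q-1$, and the $+1$ re-encoding by $H$ yields a valid $\delta_{\overline{Y}}$-name in $\overline{\WBWT_2}(q-1)$. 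The main obstacle will be this cluster-point analysis: I have to check that each padding run is of finite length---a property that holds precisely when $q$ has infinitely many non-zero entries---while the $\bot$-case is absorbed trivially since $\overline{\WBWT_2}(\bot)=\overline{2^\IN}$.
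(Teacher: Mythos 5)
Your proposal is correct, and it supplies an explicit proof where the paper itself only cites \cite[Proposition~5.8]{BG20}; so there is nothing in this paper's text to compare against directly. A few remarks on how your route sits relative to the machinery developed here.

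The easy direction $f\leqSW\overline{f}$ need not be argued by hand at all: it is an instance of Lemma~\ref{lem:completion-total} (take $g=f$ and use reflexivity of $\leqSTW$), or of Corollary~\ref{cor:completion-total} plus the obvious implication $\leqSW\TO\leqSTW$. Your explicit $K(p):=p+1$, $H(r):=r-1$ is exactly a pair of realizers for the embedding $\iota:X\to\overline{X}$ and its partial inverse from Corollary~\ref{cor:completion}, so it is correct, just redundant. The substance of the statement is $\overline{f}\leqSW f$, and there each of your case constructions is sound: for $\J$ you build, via the smn theorem, an index transformation $m$ with $\varphi_{m(n)}^q(m(n))$ simulating $\varphi_n^{q-1}(n)$ by decoding the oracle on the fly, which gives $q'(m(n))=(q-1)'(n)$ whenever $q-1\in\IN^\IN$ (the divergent case is absorbed by $\overline{\J}(\bot)=\overline{\IN^\IN}$); for $\lim$ the transfer via $\lim\equivSW\J$ and Lemma~\ref{lem:completion-total} is clean and correct; for $\LPO$ and $\SORT$ the bit-flip $K$ turns names of points in $\overline{\IN^\IN}$ (resp.\ $\overline{2^\IN}$) into elements of $2^\IN$ with the same zero count as $q-1$ when $q-1$ is a genuine point, which is exactly what both $\LPO$ and $\SORT$ depend on; and for $\WBWT_2$ the padding-with-last-bit construction produces finite runs whenever $q-1\in 2^\IN$, so cluster points are preserved, and the $\bot$ case is again vacuous.

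Two small points worth being explicit about. First, note that none of $\IN^\IN$, $2^\IN$, $\{0,1\}$ is multi-retraceable, so Corollary~\ref{cor:completeness} and Lemma~\ref{lem:completion-totalization} give you no shortcut here; a hands-on argument like yours really is needed, which the authors presumably also do in \cite{BG20}. Second, in the $\LPO$/$\SORT$ case it is worth saying explicitly that $K(q)(i)=1$ for all positions with $q(i)\in\{0\}\cup\{2,3,\dots\}$, and that when $q-1\notin 2^\IN$ (e.g.\ some $q(i)\geq 3$, or $q$ eventually zero) the argument makes no claim at all because $\delta_{\overline{2^\IN}}(q)=\bot$; you say ``otherwise,'' which covers it, but spelling out the two regimes makes it clearer that the equality $\SORT(K(q))=\SORT(q-1)$ is only asserted (and only needed) when $q-1\in 2^\IN$.
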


These results show that the cones below the given problems are identical in the total and partial Weihrauch lattices.
It is known, for instance, that $f$ is limit computable if and only if $f\leqW\lim$ \cite{BGP18}. 
Hence, an analogous statement holds for $\leqTW$, since $\lim$ is complete.

One should not come to the incorrect conclusion that all functions are complete. 
Here is a counter example, which is based on the fact that $\J^{-1}:\In\IN^\IN\to\IN^\IN$ has no computable points in its domain.

\begin{example}
$\J^{-1}\lW\overline{\J^{-1}}$.
\end{example}

The operation of completion is somewhat related to totalization\footnote{The totalization was studied by Brattka, Le Roux and Pauly (unpublished work 2012).}.
Totalization is not a closure operator, but it is sometimes easier to handle because it does not involve completions of spaces.

\begin{definition}[Totalization]
For every problem $f:\In X\mto Y$ we denote by 
\[\T f:X\mto Y,x\mapsto\left\{\begin{array}{ll}
   f(x) & \mbox{if $x\in\dom(f)$}\\
   Y & \mbox{otherwise}
\end{array}\right.\]
the {\em total version} or {\em totalization} of $f$. 
\end{definition}

If $(X,\delta_X)$ and $(Y,\delta_Y)$ are represented spaces and $f:\In X\mto Y$ is a problem,
then we call $f^\r:=\delta_Y^{-1}\circ f\circ\delta_X:\In\IN^\IN\mto\IN^\IN$ the
{\em realizer version} of $f$. It satisfies $f^\r\equivSW f$ since $f^\r$ has exactly
the same realizers as $f$. So, $f^\r$ can be seen as the Baire space version of $f$.
It is clear that totalization is closely related to the completion, as we have the following obvious result.

\begin{lemma}[Completion and totalization]
\label{lem:completion-totalization-realizer}
$\overline{f}\equivSW \T f^\r$ holds for every problem $f:\In X\mto Y$ provided $f^\r$ is formed with respect
to the precompletions of the original representations of $X$ and $Y$.
\end{lemma}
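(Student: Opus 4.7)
The plan is to show that the sets of realizers of $\overline{f}$ and of $\T f^\r$ coincide exactly (as subsets of functions $\IN^\IN\to\IN^\IN$), and then to conclude $\overline{f}\equivSW\T f^\r$ by taking the identity as both pre- and post-processor in Definition~\ref{def:Weihrauch-reducibility}(2).

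First I would note that since $\overline{f}$ is total and $\delta_{\overline{X}}$ is a total representation by Corollary~\ref{cor:completion}, any realizer of $\overline{f}$ must be defined on all of $\IN^\IN$; likewise $\T f^\r$ is total on $\IN^\IN$ (with the identity as representation on both sides), so its realizers are total as well. Hence it suffices to show that for every total $F:\IN^\IN\to\IN^\IN$, we have $F\vdash\overline{f}$ iff $F\vdash\T f^\r$.

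The core step is a pointwise case distinction on $p\in\IN^\IN$. If $p\in\dom(f^\r)$, i.e., $p\in\dom(\delta_X^\wp)$ and $\delta_X^\wp(p)\in\dom(f)$, then $\delta_{\overline{X}}(p)=\delta_X^\wp(p)$ and $\overline{f}(\delta_{\overline{X}}(p))=f(\delta_X^\wp(p))\In Y$. The condition $\delta_{\overline{Y}}(F(p))\in f(\delta_X^\wp(p))$ forces $F(p)\in\dom(\delta_Y^\wp)$ (since $\bot\notin Y$) together with $\delta_Y^\wp(F(p))\in f(\delta_X^\wp(p))$, which is exactly the condition $F(p)\in f^\r(p)=\T f^\r(p)$; conversely this latter condition clearly implies the former. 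If $p\notin\dom(f^\r)$, then either $p\notin\dom(\delta_X^\wp)$ (so $\delta_{\overline{X}}(p)=\bot\notin\dom(f)$) or $\delta_X^\wp(p)\in X\sm\dom(f)$; in both subcases $\overline{f}(\delta_{\overline{X}}(p))=\overline{Y}$, so any $F(p)$ qualifies; symmetrically $\T f^\r(p)=\IN^\IN$, so again any $F(p)$ qualifies.

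Combining the two cases, a total $F:\IN^\IN\to\IN^\IN$ realizes $\overline{f}$ iff it realizes $\T f^\r$, so picking $H=K=\id$ yields reductions in both directions, establishing $\overline{f}\equivSW\T f^\r$. The only subtle point is the implication $\delta_{\overline{Y}}(F(p))\in Y\TO F(p)\in\dom(\delta_Y^\wp)$ in the first case; this is not really an obstacle but rests on the fact that $\delta_{\overline{Y}}$ extends $\delta_Y^\wp$ by sending the remaining Baire space names precisely to $\bot$, which is not a member of $Y$.
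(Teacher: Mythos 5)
Your proof is correct and takes essentially the same route as the paper: the paper computes ${\overline{f}\,}^\r=\delta_{\overline{Y}}^{-1}\circ\overline{f}\circ\delta_{\overline{X}}$ via the same case distinction on whether $p\in\dom(f\circ\delta_X^\wp)$ and observes that this equals $\T f^\r$ as a multi-valued map, whence $\overline{f}\equivSW{\overline{f}\,}^\r=\T f^\r$; you phrase the identical pointwise analysis as the statement that the realizer sets coincide and then take $H=K=\id$, which is the same content.
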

\begin{proof}
We consider the represented spaces $(X,\delta_X)$ and $(Y,\delta_Y)$.
We obtain the realizer version ${\overline{f}\,}^\r:\IN^\IN\mto\IN^\IN$, given by
\[{\overline{f}\,}^\r(p)=\delta_{\overline{Y}}^{-1}\circ\overline{f}\circ\delta_{\overline{X}}(p)
=\left\{\begin{array}{ll}
  (\delta_Y^\wp)^{-1}\circ f\circ\delta_X^\wp(p) & \mbox{if $p\in\dom(f\circ\delta_X^\wp)$}\\
  \IN^\IN & \mbox{otherwise}
\end{array}\right..\]
If  $f^\r=(\delta_Y^\wp)^{-1}\circ f\circ\delta_X^\wp$, then we obtain
$\overline{f}\equivSW{\overline{f}\,}^\r=\T f^\r$.
\end{proof}

In other words, completion can be seen as a totalization of the realizer version with respect to precomplete representations.
More generally, the two operations of completion and totalization coincide under certain relatively special assumptions.

\begin{lemma}[Completion and totalization]
\label{lem:completion-totalization}
Let $f:\In X\mto Y$ be a problem. Then:
\begin{enumerate}
\item $\overline{f}\leqSW\T f$ if $X$ is multi-retraceable,
\item $\T f\leqSW\overline{f}$ if $Y$ is multi-retraceable.
\end{enumerate}
\end{lemma}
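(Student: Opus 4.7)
The plan is to handle both parts symmetrically, using in each case the characterization of multi-retraceability from Proposition~\ref{prop:multi-retraceability}~(4)---which supplies a computable retraction $r:\overline{Z}\mto Z$ whenever $Z$ is multi-retraceable---together with the canonical computable embedding $\iota:Z\to\overline{Z}$ from Corollary~\ref{cor:completion}. These two ingredients let me translate between $\delta_Z$-names and $\delta_{\overline{Z}}$-names in either direction, which is exactly what is needed to pass between $\T f$ and $\overline{f}$ under a strong reduction.

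For (1), assume $X$ is multi-retraceable. I fix a computable realizer $K$ of a retraction $r:\overline{X}\mto X$ and a computable realizer $H$ of the embedding $\iota:Y\to\overline{Y}$, and claim $HGK\vdash\overline{f}$ for every $G\vdash\T f$. Unwinding: a $\delta_{\overline{X}}$-name $p$ of $x\in\overline{X}$ is sent by $K$ to a $\delta_X$-name of some $x'\in r(x)\In X$, then by $G$ to a $\delta_Y$-name of some $y\in\T f(x')\In Y$, then by $H$ to a $\delta_{\overline{Y}}$-name of the same $y$. If $x\in\dom(f)$, the retraction identity $r|_X=\id_X$ forces $x'=x$, so $y\in\T f(x)=f(x)=\overline{f}(x)$; otherwise $\overline{f}(x)=\overline{Y}$ contains $y$ trivially. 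For (2), assume $Y$ is multi-retraceable; this time let $K$ realize $\iota:X\to\overline{X}$ and $H$ realize a retraction $r:\overline{Y}\mto Y$. For $G\vdash\overline{f}$ and a $\delta_X$-name $p$ of $x\in X$, the output $HGK(p)$ is a $\delta_Y$-name of some $y'\in r(y)$, where $y\in\overline{f}(x)$. If $x\in\dom(f)$, then $y\in f(x)\In Y$, so $r(y)=\{y\}$ and hence $y'=y\in f(x)=\T f(x)$; otherwise $\T f(x)=Y$ automatically contains $y'$.

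There is no serious obstacle here: the whole argument reduces to the observation that a multi-retraceable space behaves on the computable level as if its completion were identified with itself, so the only point to verify is that in the case $x\in\dom(f)$ the retraction collapses to the identity on the underlying space, preventing a spurious $\bot$ from leaking into $f(x)$. The only mildly delicate aspect is the reliance on the \emph{computable} retraction provided by Proposition~\ref{prop:multi-retraceability}, rather than on the weaker retractions of Proposition~\ref{prop:special-retractions}; without this, neither $H$ nor $K$ could be guaranteed computable, which is precisely what the strong reduction $\leqSW$ requires.
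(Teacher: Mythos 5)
Your proof is correct and uses the same idea as the paper: sandwich $\T f$ (resp.\ $\overline{f}$) between the computable retraction on the multi-retraceable side and the canonical computable embedding on the other side, then observe that the retraction acts as the identity on $\dom(f)$ so that the composition selects a valid value of $\overline{f}$ (resp.\ $\T f$). You merely phrase the paper's composition $\iota_Y\circ\T f\circ r_X$ (resp.\ $r_Y\circ\overline{f}\circ\iota_X$) at the realizer level; the only slight imprecision is the closing remark that ``neither $H$ nor $K$'' would be computable without the retraction, when in each part only one of the two depends on it.
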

\begin{proof}
We note that $\iota_X:X\to\overline{X},x\mapsto x$ and $\iota_Y:Y\to\overline{Y},y\mapsto y$ are computable by Corollary~\ref{cor:completion}.
If $X$ is multi-retraceable, then by definition there is a computable retraction
$r_X:\overline{X}\mto X$.
It is clear that $\iota_Y\circ\T f\circ r_X(x)\In\overline{f}(x)$ for all $x\in\overline{X}$ and hence $\overline{f}\leqSW\T f$.
If $Y$ is multi-retraceable, then there is a computable retraction
$r_Y:\overline{Y}\mto Y$.
In this case we obtain $r_Y\circ\overline{f}\circ\iota_X(x)=\T f(x)$ for all $x\in X$ and hence $\T f\leqSW\overline{f}$.
\end{proof}

The second condition can also be converted into a characterization of multi-retraceability.
This follows if one applies it to the partial computable function $f:=\iota_{Y}^{-1}:\In \overline{Y}\to Y$,
since $\T f:\overline{Y}\mto Y$ is a retraction.

\begin{corollary}[Multi-retraceability]
A represented space $Y$ is multi-retraceable if and only if $\T f\leqW\overline{f}$ holds
for all problems $f:\In X\mto Y$.
\end{corollary}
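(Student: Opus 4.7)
The plan is to prove the equivalence by handling the two directions separately. The forward implication is an immediate consequence of the preceding Lemma~\ref{lem:completion-totalization}(2): if $Y$ is multi-retraceable, then $\T f\leqSW\overline{f}$ for every problem $f\colon\In X\mto Y$, and since strong Weihrauch reducibility implies ordinary Weihrauch reducibility, we obtain $\T f\leqW\overline{f}$.

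For the reverse direction, the strategy is to apply the hypothesis to one cleverly chosen instance. I will take $f:=\iota_Y^{-1}\colon\In\overline{Y}\to Y$, the partial inverse of the computable embedding provided by Corollary~\ref{cor:completion}. The totalization $\T f\colon\overline{Y}\mto Y$ then satisfies $\T f(y)=\{y\}$ for $y\in Y$ and $\T f(\bot)=Y$, so it is precisely a multi-valued retraction onto $Y$. Consequently, once I establish that $\T f$ is computable, $Y$ will be multi-retraceable by definition.

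To transfer computability through the hypothesis $\T f\leqW\overline{f}$, I need $\overline{f}$ itself to be computable. Here the key input is Corollary~\ref{cor:retraction-completion}, which supplies a computable retraction $r\colon\overline{\overline{Y}}\to\overline{Y}$. On points $y\in Y\In\overline{\overline{Y}}$ the map $r$ acts as the identity, and hence $r(y)=y\in\{y\}=\overline{f}(y)$, while on the two bottom elements of $\overline{\overline{Y}}$ any value in $\overline{Y}$ is admissible for $\overline{f}$. Therefore $r$ realizes $\overline{f}$, so $\overline{f}$ is computable. Since Weihrauch reducibility preserves computability downwards, the reduction $\T f\leqW\overline{f}$ then forces $\T f$ to be computable, completing the proof.

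The main subtlety is handling the double completion $\overline{\overline{Y}}$, which carries an extra $\bot$-element not present in $\overline{Y}$; this is exactly the situation Corollary~\ref{cor:retraction-completion} is designed to resolve, so once that lemma is invoked the remainder is a direct unpacking of the definitions of totalization, completion, and retraction.
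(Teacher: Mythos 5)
Your proof is correct and follows the same approach the paper indicates in the remark preceding the corollary: specialize the hypothesis to $f=\iota_Y^{-1}$, observe that $\T f$ is a retraction $\overline{Y}\mto Y$, and use downward preservation of computability under $\leqW$. You also correctly supply the one step the paper leaves implicit, namely that $\overline{f}$ is itself computable because the retraction of Corollary~\ref{cor:retraction-completion} furnishes a computable selection for it.
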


If a problem $g$ is already total, then $\T g=g$ and hence we obtain the following
characterization of completeness. The given conditions are necessary since $g=\overline{f}$
satisfies them (because the completion $\overline{X}$ of any space $X$ is retraceable by Corollary~\ref{cor:completion}).

\begin{corollary}[Completeness]
\label{cor:completeness}
A problem $f$ is complete if and only if 
$f\equivW g$ for some total problem $g:X\mto Y$ 
on multi-retraceable spaces $X$ and $Y$.
\end{corollary}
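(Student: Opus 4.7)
The plan is to prove the two directions separately, each via a short calculation using the completion/totalization infrastructure developed above. The forward direction (necessity) simply witnesses $g$ via the completion $\overline{f}$, while the backward direction (sufficiency) is a three-step chain of reductions that rests on the monotonicity of completion together with Lemma~\ref{lem:completion-totalization}.

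For necessity, I would assume $f$ is complete, so $f\equivW\overline{f}$, and then set $g:=\overline{f}:\overline{X}\mto\overline{Y}$. By Definition~\ref{def:completion} this $g$ is total. By Corollary~\ref{cor:retraction-completion}, the spaces $\overline{X}$ and $\overline{Y}$ are retraceable and hence, in particular, multi-retraceable. Thus $g$ is a total problem on multi-retraceable spaces with $f\equivW g$, which is exactly the required witness.

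For sufficiency, assume $f\equivW g$ for some total $g:X\mto Y$ with $X$ and $Y$ multi-retraceable. One direction, $f\leqW\overline{f}$, holds in general since completion is a closure operator (as recalled right after Lemma~\ref{lem:completion-total}). For the converse $\overline{f}\leqW f$ I would chain three reductions. First, $f\leqW g$ implies $f\leqTW g$ by Corollary~\ref{cor:partial-total}, and by Lemma~\ref{lem:completion-total} this is equivalent to $\overline{f}\leqW\overline{g}$. Second, since $X$ is multi-retraceable and $g$ is total (so $\T g=g$), part~(1) of Lemma~\ref{lem:completion-totalization} yields $\overline{g}\leqSW g$, hence $\overline{g}\leqW g$. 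Third, $g\leqW f$ holds by assumption. Composing the three reductions gives $\overline{f}\leqW f$, and so $f\equivW\overline{f}$ as required.

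There is no real obstacle here; the only subtle point worth flagging is that sufficiency uses only the multi-retraceability of the source space $X$ of $g$, while the hypothesis on $Y$ is used only to make the ``iff'' read symmetrically, because in the forward direction the codomain $\overline{Y}$ of the witness $g=\overline{f}$ is automatically retraceable by Corollary~\ref{cor:retraction-completion}.
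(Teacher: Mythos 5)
Your proof is correct and follows essentially the same route the paper sketches: for necessity the witness $g=\overline{f}$ is total and lives on retraceable (hence multi-retraceable) spaces, and for sufficiency Lemma~\ref{lem:completion-totalization}~(1) with $\T g=g$ gives $\overline{g}\leqSW g$, which you combine with Corollary~\ref{cor:partial-total} and Lemma~\ref{lem:completion-total} into the chain $\overline{f}\leqW\overline{g}\leqW g\leqW f$. The only cosmetic point is that the closure-operator facts recalled after Lemma~\ref{lem:completion-total} concern $\leqTW$, so the cleanest justification for $f\leqW\overline{f}$ is Lemma~\ref{lem:completion-total} itself applied to the trivial $f\leqTW f$; this does not affect the argument.
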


An analogous result holds for strong completeness and $\equivSW$.
The characterization of completeness given in Proposition~\ref{prop:completeness} suggests
a dual notion of co-completeness that we define together with the related notion of co-totality.
From the perspective of a co-complete problem the upper cones in the Weihrauch lattice and
the total Weihrauch lattice are indistinguishable.

\begin{definition}[Co-completeness and co-totality]
\label{def:co-complete-total}
Let $f$ be a problem.
\begin{enumerate}
\item $f$ is called {\em co-complete} if $f\leqW\overline{g}\iff f\leqW g$ holds for all problems $g$.
\item $f$ is called {\em co-total} if $f\leqW\T g\iff f\leqW g$ holds for all problems $g$.
\end{enumerate}
Likewise we define {\em strongly co-complete} and {\em strongly co-total} problems $f$ with the
help of $\leqSW$ instead of $\leqW$.
\end{definition}

Due to Lemma~\ref{lem:completion-totalization-realizer} we obtain that co-totality implies co-completeness.

\begin{corollary}[Co-completeness and co-totality]
\label{cor:co-completeness-totality}
Every (strongly) co-total problem $f:\In X\mto Y$ is (strongly) co-complete.
If $Y$ is multi-retraceable, then the inverse implication holds true as well.
\end{corollary}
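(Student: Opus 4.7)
The plan is to leverage Lemma~\ref{lem:completion-totalization-realizer}, which equates $\overline{g}$ with $\T g^\r$ up to strong Weihrauch equivalence, as the main bridge between co-completeness and co-totality.

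For the forward implication (co-totality implies co-completeness), fix any problem $g$ and chain the equivalences \[ f\leqW\overline{g}\iff f\leqW\T g^\r\iff f\leqW g^\r\iff f\leqW g. \] The first equivalence is Lemma~\ref{lem:completion-totalization-realizer}, the second applies co-totality of $f$ to the realizer version $g^\r$, and the third uses $g\equivSW g^\r$. The analogous argument with $\leqSW$ in place of $\leqW$ handles the strong case.

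For the converse under the hypothesis that $Y$ is multi-retraceable, the plan is to prove the implication $f\leqW\T g\Rightarrow f\leqW\overline{g}$; once this is in place, co-completeness of $f$ immediately yields $f\leqW g$. Starting from witnesses $H,K$ for $f\leqW\T g$, I would set $K':=\iota_U\circ K$ by composing $K$ with the canonical embedding, and construct $H'$ by first attempting to decode the second input $q'$ via the natural map $q'\mapsto q'-1$ from $\delta_V^\wp$-names to $\delta_V$-names and then feeding the result to $H$. Multi-retraceability of $Y$ enters through Proposition~\ref{prop:multi-retraceability}, which guarantees that the partial computable map into $Y$ thus obtained admits a total computable extension, so $H'$ can be taken total.

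The main obstacle is verifying that $H'$'s output lies in $f(x)$ even when $q'$ encodes $\bot\in\overline{V}$; such a $q'$ can only arise when $K(x)\notin\dom(g)$, in which case $\T g(K(x))=V$ and hence the original reduction $H$ maps every $\delta_V$-name, through its second argument, into $\delta_Y^{-1}(f(x))$. The extension provided by Proposition~\ref{prop:multi-retraceability} can therefore be arranged to mimic $H\langle p, q\rangle$ for a suitable default $\delta_V$-name on such inputs, keeping the output in $\delta_Y^{-1}(f(x))$. Once $f\leqW\overline{g}$ is secured, co-completeness of $f$ delivers $f\leqW g$, and the strong variant proceeds analogously throughout with $\leqSW$ in place of $\leqW$.
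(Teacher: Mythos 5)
Your treatment of the forward implication (co-totality implies co-completeness) is correct and follows the same route the paper indicates: by Lemma~\ref{lem:completion-totalization-realizer}, $\overline{g}\equivSW\T g^\r$ (for $g^\r$ taken with respect to precomplete representations), so $f\leqW\overline{g}\iff f\leqW\T g^\r$; co-totality of $f$ applied to $g^\r$ gives $f\leqW g^\r$, and $g^\r\equivSW g$ finishes. The strong variant is identical.

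The converse is where the attempt breaks down. You aim to show $f\leqW\T g\Rightarrow f\leqW\overline{g}$ using multi-retraceability of $Y$ (the codomain of $f$) through Proposition~\ref{prop:multi-retraceability}~(2) to totalize the outer function. But that proposition merely asserts that \emph{some} total computable extension of a partial computable map into $Y$ exists; it gives no control whatsoever over the values of the extension on inputs outside the original domain. On an input $\langle p,q'\rangle$ where $q'$ is a $\delta_{\overline{Z}}$-name of $\bot$ (with $Z$ the codomain of $g$), the extension returns an arbitrary element of $Y$, and nothing forces that element into $f(\delta_X(p))$. The remedy you sketch --- arranging the extension to ``mimic $H\langle p,q\rangle$ for a suitable default $\delta_V$-name'' --- is not available: there need not exist a computable default name of a point of $Z$, one cannot computably detect that $q'$ names $\bot$ so as to branch to a default, and the cited proposition provides no means to prescribe the extension's behavior. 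What would actually license $f\leqW\T g\Rightarrow f\leqW\overline{g}$ is multi-retraceability of the \emph{codomain of $g$}, via Lemma~\ref{lem:completion-totalization}~(2) ($\T g\leqSW\overline{g}$), not of $Y$. Indeed, multi-retraceability of $Y$ alone cannot suffice: $\IS$ is multi-retraceable (as observed in the proof of Corollary~\ref{cor:wellfoundedness}), yet by Proposition~\ref{prop:wellfoundedness} we have $\WFT_\IS\leqW\T\C_{\IN^\IN}$ while $\WFT_\IS\nleqW\overline{\C_{\IN^\IN}}$, so $\WFT_\IS:\AA_-(\IN^\IN)\to\IS$ is co-complete without being co-total --- exactly the situation your argument would have to rule out.
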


In Corollaries~\ref{cor:LPOS-co-total} and \ref{cor:WFTS-co-total} we will see examples that witness that the inverse implication does not hold
in general.
There is an obvious relation between co-completeness of the completion and completeness.

\begin{lemma}[Completeness and co-completeness]
\label{lem:complete-co-complete}
$\overline{f}$ co-complete $\TO f$ complete and $\overline{f}$ strongly co-complete $\TO f$ strongly complete  hold for every problem $f$.
\end{lemma}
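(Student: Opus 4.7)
The plan is very short: the conclusion drops out by applying the definition of co-completeness of $\overline{f}$ to the specific test problem $g := f$.

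First I would dispatch the easy direction of the target equivalence $f \equivW \overline{f}$ for free. By Lemma~\ref{lem:completion-total}, applied with $g := f$, we have $f \leqW \overline{f} \iff f \leqTW f$, and the right-hand side is trivially true; so $f \leqW \overline{f}$. The analogous reasoning with Lemma~\ref{lem:completion-total} in its strong form gives $f \leqSW \overline{f}$ for the second statement.

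For the nontrivial direction I would use co-completeness directly. By Definition~\ref{def:co-complete-total}, co-completeness of $\overline{f}$ says that for every problem $g$, $\overline{f} \leqW \overline{g} \iff \overline{f} \leqW g$. Specializing $g := f$ yields $\overline{f} \leqW \overline{f} \iff \overline{f} \leqW f$, and the left-hand side is trivial; hence $\overline{f} \leqW f$. Combined with $f \leqW \overline{f}$ this gives $f \equivW \overline{f}$, so $f$ is complete. For the strong implication the identical argument, with $\leqSW$ and strong co-completeness in place of $\leqW$ and co-completeness, yields $f \equivSW \overline{f}$.

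There is no real obstacle: the whole point of co-completeness is stated as a universal quantification over problems $g$, so instantiating at $g = f$ is legal, and both trivial reductions used ($f \leqTW f$ and $\overline{f} \leqW \overline{f}$) hold by reflexivity of the respective preorders.
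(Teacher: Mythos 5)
Your proof is correct and matches what the paper evidently has in mind (the lemma is stated without proof, introduced as an "obvious relation"): instantiate the universal quantifier in the definition of co-completeness of $\overline{f}$ at $g := f$ to get $\overline{f} \leqW f$ from reflexivity of $\leqW$, then combine with the always-available $f \leqW \overline{f}$, and argue identically with $\leqSW$ for the strong version.
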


There is a useful condition that implies strong co-completeness.
We call a problem $f:\In X\mto Y$ {\em diverse} if for all $x\in\dom(f)$ there exists a $y\in\dom(f)$
such that $f(x)\cap f(y)=\emptyset$.

\begin{proposition}[Diversity]
\label{prop:diversity}
Every diverse problem $f:\In X\mto Y$ is strongly co-complete, and if the representation of $Y$ is total,
then $f$ is also strongly co-total.
\end{proposition}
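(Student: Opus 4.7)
The easy directions $f\leqSW g\TO f\leqSW\overline{g}$ and $f\leqSW g\TO f\leqSW\T g$ follow formally: the first from Corollary~\ref{cor:partial-total} combined with Lemma~\ref{lem:completion-total}, the second from the observation that any realizer of $\T g$ restricts to a realizer of $g$ on $\dom(g\circ\delta_U)$. The content is therefore in the two reverse implications, which I plan to handle by a common diversity-based contradiction.

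Suppose $f\leqSW\overline{g}$ via computable $H$ and $K$. The key claim is that for every $p\in\dom(f\circ\delta_X)$, the sequence $K(p)$ is a $\delta_{\overline{U}}$-name of some element of $\dom(g)$. I argue by contradiction: if $K(p_0)$ names some $u_0\in\overline{U}\sm\dom(g)$, then $\overline{g}(u_0)=\overline{V}$. Let $x_0=\delta_X(p_0)$ and use diversity to pick $x_1\in\dom(f)$ with $f(x_0)\cap f(x_1)=\emptyset$, together with a name $p_1$ of $x_1$. Since $\delta_{\overline{V}}$ is total, every $q\in\IN^\IN$ is an admissible realizer output at $K(p_0)$, and I choose such a $q$ that is simultaneously a $\delta_{\overline{V}}$-name of some element of the nonempty set $\overline{g}(\delta_{\overline{U}}(K(p_1)))$. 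If $K(p_0)=K(p_1)$ any realizer agrees on the two inputs automatically; otherwise I define a total realizer $G\vdash\overline{g}$ by setting $G(K(p_0))=G(K(p_1))=q$ and choosing arbitrary valid outputs elsewhere. Either way $\delta_Y(H(q))$ would have to lie in both $f(x_0)$ and $f(x_1)$, contradicting disjointness. With the claim established, $f\leqSW g$ follows because every realizer of $g$ converts, via the canonical computable equivalences between $\delta_U,\delta_V$ and the restrictions of $\delta_{\overline{U}},\delta_{\overline{V}}$ to $U,V$ (the dash-minus map and its inverse), into a realizer of $\overline{g}$ that agrees with $H,K$ on the relevant inputs.

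For co-totality under the hypothesis that $\delta_Y$ is total, I run the parallel argument with $\T g$ replacing $\overline{g}$ and with $\delta_U,\delta_V$ replacing $\delta_{\overline{U}},\delta_{\overline{V}}$. The corresponding claim $\delta_U(K(p))\in\dom(g)$ is proved identically, with $q$ now chosen from $\dom(\delta_V)$, which is nonempty. No representation conversion is required: the same $H,K$ already witness $f\leqSW g$, because every realizer $G_g\vdash g$ extends to some $G\vdash\T g$ by picking an arbitrary valid $\delta_V$-name on $\dom(\delta_U)\sm\dom(g\circ\delta_U)$, and $HG_gK$ agrees with $HGK$ on $\dom(f\circ\delta_X)$. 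The totality of $\delta_Y$ is exploited here to ensure that $\delta_Y(H(q))$ is unconditionally defined, which makes the final contradiction immediate. The main delicate step throughout is the explicit construction of a valid realizer $G$ with $G(K(p_0))=G(K(p_1))=q$; this relies on the fact that $\overline{g}(u_0)$ (respectively $\T g(u_0)$) is the entire target space, so that any $q$ is admissible at $K(p_0)$, combined with nonemptiness of the image at $K(p_1)$ to secure a simultaneously admissible $q$.
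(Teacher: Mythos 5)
Your proof is essentially the same diversity-based argument as the paper's, but organized differently. The paper takes a realizer $G$ of $g$ restricted to exactly the names of $\dom(g)$, fixes a total extension $G'\vdash\overline{g}$, and proves $HGK\vdash f$ directly by contradiction: if $HGK(p)$ were a wrong output then $K(p)\not\in\dom(G)$, and then a suitably chosen total extension $G''$ produces, on the same $K(p)$, a name of a point in $f(y)$ with $f(x)\cap f(y)=\emptyset$, contradicting $HG''K\vdash f$. You instead establish the intermediate claim that $K(p)$ is always a name of a point in $\dom(g)$, with the same contradiction mechanism (a realizer that forces the same $q$, hence the same $H(q)$, at two diversity-separated inputs), and then convert back to a reduction to $g$. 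Both are correct; the paper's route is slightly more economical because it never needs your final representation-conversion step — by working from the start with realizers of $g$ relative to the precompletions $\delta_W^\wp,\delta_Z^\wp$, the conclusion $HGK\vdash f$ already yields $f\leqSW g$ outright, so the dash-minus shifts you invoke to match $\delta_{\overline{U}}$-names with $\delta_U$-names are never needed. Your conversion paragraph is correct in spirit but stated loosely; to make it precise you would want to spell out the computable functions $p\mapsto p-1$ and its section, verify that they carry $\delta_{\overline{U}}$-names of points of $\dom(g)$ to $\delta_U$-names of the same points, and exhibit the realizer $G\vdash\overline{g}$ whose values at each $K(p)$ match the shifted $G_g$-values, which is precisely what the paper sidesteps by its choice of representations. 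One smaller discrepancy: your account of where totality of $\delta_Y$ enters (``to ensure $\delta_Y(H(q))$ is unconditionally defined'') is not the bottleneck, since $H(q)\in\dom(\delta_Y)$ already follows from $HGK\vdash f$; the paper's own explanation instead locates the role of the hypothesis in guaranteeing that the chosen extensions select valid names for a realizer of $\T g$, which is the more relevant concern when $\delta_Z$ and $\delta_Y$ are not total.
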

\begin{proof}
Let $g:\In W\mto Z$ be an arbitrary problem.
It is clear that $f\leqSW g$ implies $f\leqSW\overline{g}$ since $g\leqSW\overline{g}$.
We now consider computable witnesses $H,K:\In\IN^\IN\to\IN^\IN$ for $f\leqSW\overline{g}$.
Now, let us suppose that $G\vdash g$ holds with respect to the precompletions of the underlying representations of $W$ and $Z$, and let us assume that $\dom(G)$ contains exactly all names of points in $\dom(g)$.
Every total extension $G'$ of $G$ satisfies $G'\vdash\overline{g}$ and hence $HG'K\vdash f$.
Let $G'$ be such a total extension. We claim that also $HGK\vdash f$. 
Let us assume that this is not the case. Then there is some name $p$ of a point $x\in\dom(f)$ such that $HGK(p)$ is not a name of a point in $f(x)$.
If $K(p)\in\dom(G)$, then $G'K(p)=GK(p)$.
Since $K(p)$ is a name of a point in $\dom(g)$, this implies that $HGK(p)=HG'K(p)$ is a name of a point in $f(x)$, which is a contradiction. 
This implies that $K(p)\not\in\dom(G)$. But then there is a total extension
$G''$ of $G$ such that $HG''K(p)$ is a name of some point in $f(y)$ for some $y\in\dom(f)$ such that $f(x)\cap f(y)=\emptyset$,
since $f$ is diverse.
This is a contradiction to $HG''K\vdash f$.
Hence, the assumption was wrong and we actually have $HGK\vdash f$.
This proves $f\leqSW g$ and altogether $f$ is strongly co-complete.
Almost the same proof shows that $f$ is also strongly co-total, provided that the representation of $Y$ is total,
since this ensures that the extensions $G'$ and $G''$ select valid names. 
\end{proof}

Since single-valued problems that are not constant are diverse, we obtain the following corollary.

\begin{corollary}[Single-valuedness]
\label{cor:single-valuedness}
Let $f:\In X\to Y$ be a single-valued problem that is not constant, then $f$ is strongly co-complete,
and if the representation of $Y$ is total, then $f$ is also strongly co-total.
\end{corollary}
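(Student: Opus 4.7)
The plan is to deduce this corollary directly from Proposition~\ref{prop:diversity} by showing that every non-constant single-valued problem is diverse. Once diversity is established, both conclusions (strong co-completeness, and strong co-totality under the assumption of a total representation of $Y$) follow immediately from that proposition without any further work.

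The verification of diversity is the only step, and it is essentially unpacking the definitions. Let $f:\In X\to Y$ be single-valued and non-constant. Fix an arbitrary $x\in\dom(f)$; I need to produce $y\in\dom(f)$ with $f(x)\cap f(y)=\emptyset$. Since $f$ is not constant, its image on $\dom(f)$ contains at least two distinct values, so there exists $y\in\dom(f)$ with $f(y)\neq f(x)$. Because $f$ is single-valued, the sets $f(x)$ and $f(y)$ are singletons consisting of these two distinct elements, hence disjoint. This establishes diversity.

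There is no real obstacle: the argument is purely set-theoretic and uses only the definition of ``non-constant'' together with single-valuedness. Applying Proposition~\ref{prop:diversity} to the diverse problem $f$ then yields strong co-completeness in general, and strong co-totality in the case where the representation of $Y$ is total, completing the proof.
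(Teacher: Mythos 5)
Your proof is correct and takes essentially the same approach as the paper, which deduces the corollary from Proposition~\ref{prop:diversity} with the one-line observation that non-constant single-valued problems are diverse. Your unpacking of that observation (fix $x$, use non-constancy to find $y$ with $f(y)\neq f(x)$, use single-valuedness to conclude the singletons are disjoint) is exactly what is needed.
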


We note that constant computable problems $f$ are not (strongly) co-complete (except for the nowhere defined problems),
because if $p$ is a name of a point in the domain of $f$, then there is a problem $g$ that has no point in the 
domain that can be computed from $p$ and hence $f\nleqW g$, while $f\leqW\overline{g}$.
Likewise, $\id$ is not co-complete and hence diversity is not sufficient for co-completeness in the non-strong case.

We close this section with some remarks on the completion of jumps.
As a preparation we study composition.
It is easy to see that the completion of a composition is equal to 
the composition of the completions. 

\begin{lemma}[Composition]
\label{lem:composition}
Let $f:\In X\mto Y$ and $g:\In Y\mto Z$ be problems.
Then $\overline{g}\circ\overline{f}=\overline{g\circ f}$.
\end{lemma}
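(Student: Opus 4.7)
The plan is to verify the equality pointwise on $\overline{X}$, noting first that both sides have domain $\overline{X}$ in the sense of being defined everywhere: $\overline{f}$ is total on $\overline{X}$ with image in $\overline{Y}=\dom(\overline{g})$, so $\overline{g}\circ\overline{f}$ is defined on all of $\overline{X}$, and $\overline{g\circ f}$ is likewise total on $\overline{X}$ by Definition~\ref{def:completion}. So the problem reduces to computing the values of both sides at an arbitrary $x\in\overline{X}$ and carrying out a case analysis on where $x$ sits relative to $\dom(f)$ and $\dom(g\circ f)$.

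First I would handle $x\in\dom(g\circ f)$. By definition of composition this means $x\in\dom(f)$ and $f(x)\In\dom(g)$. Then $\overline{f}(x)=f(x)\In\dom(g)$, so $\overline{g}(y)=g(y)$ for every $y\in\overline{f}(x)$, giving $\overline{g}\circ\overline{f}(x)=\{z:(\exists y\in f(x))\,z\in g(y)\}=(g\circ f)(x)=\overline{g\circ f}(x)$.

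Next I would treat the two failure cases. If $x\in\dom(f)$ but $x\notin\dom(g\circ f)$, there exists $y_0\in f(x)=\overline{f}(x)$ with $y_0\notin\dom(g)$, so $\overline{g}(y_0)=\overline{Z}$; therefore $\overline{g}\circ\overline{f}(x)\supseteq\overline{Z}$, hence equals $\overline{Z}=\overline{g\circ f}(x)$. If instead $x\notin\dom(f)$, then $\overline{f}(x)=\overline{Y}$ contains $\bot\notin\dom(g)$, so again $\overline{g}(\bot)=\overline{Z}$ forces $\overline{g}\circ\overline{f}(x)=\overline{Z}$; and since $\dom(g\circ f)\In\dom(f)$ we also have $x\notin\dom(g\circ f)$, so $\overline{g\circ f}(x)=\overline{Z}$.

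No step is genuinely hard; the only subtlety worth flagging is making sure the analysis covers the case $x\in\dom(f)$ with $f(x)=\emptyset$ (where vacuously $x\in\dom(g\circ f)$ and both sides equal $\emptyset$, so it is absorbed into the first case), and being explicit about why the presence of a single ``bad'' $y$ in $\overline{f}(x)$ suffices to make the whole image under $\overline{g}$ swell up to $\overline{Z}$.
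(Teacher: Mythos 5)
Your proof is correct. The paper states Lemma~\ref{lem:composition} without a proof (it is introduced by ``It is easy to see that\ldots''), so there is no explicit argument in the paper to compare against, but your pointwise case analysis is exactly the intended direct verification. Each case is handled properly: in particular you correctly observe that both sides are total on $\overline{X}$, that a single $y_0\in\overline{f}(x)\setminus\dom(g)$ forces $\overline{g}\circ\overline{f}(x)=\overline{Z}$ because the composition of multi-valued maps is a union $\bigcup_{y\in\overline{f}(x)}\overline{g}(y)$, and that $\bot\in\overline{Y}\setminus\dom(g)$ handles the $x\notin\dom(f)$ case. The remark about $f(x)=\emptyset$ folding vacuously into the first case is a nice catch and shows you are reading $\dom(g\circ f):=\{x\in\dom(f):f(x)\In\dom(g)\}$ as stated.
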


We recall that the {\em jump} $f':\In X\mto Y$ of a problem $f:\In X\mto Y$ is defined
exactly as $f$, but the represented space $(X,\delta_X)$ on the input side is replaced
by $X'=(X,\delta_{X}')$, where $\delta_X':=\delta_X\circ\lim$.
The jump was defined in \cite{BGM12} and it is easy to see that for problems
of type $f:\In\IN^\IN\mto\IN^\IN$ we have $f'\equivSW f\circ\lim$ (see also \cite[Lemma~5.2]{BGM12} for the
single-valued case).
We can now draw some conclusions on the completion of a jump.
In particular, the jumps of strongly complete problems are strongly complete, which yields
many further examples of complete problems.

\begin{proposition}[Jumps]
\label{prop:jumps}
$\overline{f'}\leqSW{\overline{f}\,}'\equivSW\overline{{\overline{f}\,'}}$ holds for all problems $f$.
In particular, the jump $f'$ of every strongly complete problem $f$ is strongly complete again.
\end{proposition}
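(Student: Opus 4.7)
The plan is to obtain the reduction $\overline{f'}\leqSW\overline{f}\,'$ and the equivalence $\overline{f}\,'\equivSW\overline{\overline{f}\,'}$ via Lemma~\ref{lem:completion-total}, and then to derive the ``in particular'' clause using the monotonicity of the jump with respect to $\leqSW$.

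First I would show $\overline{f}\,'\equivSW\overline{\overline{f}\,'}$. The direction $\overline{f}\,'\leqSW\overline{\overline{f}\,'}$ is the standard embedding $h\leqSW\overline{h}$ induced by $\iota$ from Corollary~\ref{cor:completion}. For the reverse direction I would apply Lemma~\ref{lem:completion-total} with $f:=\overline{g}$: the equivalence $\overline{\overline{g}}\leqSW\overline{g}\iff\overline{g}\leqSTW g$ degenerates to the trivial $\overline{g}\leqSW\overline{\overline{g}}$. Hence $\overline{\overline{g}}\leqSW\overline{g}$ holds for every problem $g$; taking $g:=\overline{f}\,'$ yields the claimed equivalence.

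Next, for $\overline{f'}\leqSW\overline{f}\,'$: the embedding $\iota:X\to\overline{X}$ gives $f\leqSW\overline{f}$, and the monotonicity of the jump with respect to $\leqSW$ (a standard fact going back to \cite{BGM12}) upgrades this to $f'\leqSW\overline{f}\,'$. Combining with the first step gives $f'\leqSW\overline{\overline{f}\,'}$. Now Lemma~\ref{lem:completion-total} applied with $f:=f'$ and $g:=\overline{f}\,'$ yields $\overline{f'}\leqSW\overline{\overline{f}\,'}\equivSW\overline{f}\,'$, as required. For the ``in particular'' clause, if $f$ is strongly complete then $\overline{f}\equivSW f$, so by monotonicity of the jump $\overline{f}\,'\equivSW f'$; together with the reduction just proved one obtains $\overline{f'}\leqSW f'$, and combined with the universal $f'\leqSW\overline{f'}$ this gives $\overline{f'}\equivSW f'$, i.e., $f'$ is strongly complete.

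The main obstacle is the reduction $\overline{f'}\leqSW\overline{f}\,'$. The route outlined leverages Lemma~\ref{lem:completion-total} to transfer a reduction from $f'$ into one from $\overline{f'}$, using the equivalence $\overline{f}\,'\equivSW\overline{\overline{f}\,'}$ to bring the target into the form $\overline{g}$ demanded by the lemma. A more direct alternative would build a computable preprocessor $K$ by a lazy shift-by-one scheme that turns a total $\delta_{\overline{X'}}$-name into a convergent sequence of $\delta_{\overline{X}}$-names (reading only bounded prefixes of the input so that even invalid inputs still yield convergent outputs); this approach works but is technically more delicate than the abstract argument above.
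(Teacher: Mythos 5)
The main gap is in your claimed derivation of $\overline{\overline{f}\,'}\leqSW\overline{f}\,'$. You correctly extract the double-completion fact $\overline{\overline{g}}\leqSW\overline{g}$ for all problems $g$ from Lemma~\ref{lem:completion-total}. But substituting $g:=\overline{f}\,'$ into that fact yields $\overline{\overline{\overline{f}\,'}}\leqSW\overline{\overline{f}\,'}$, not the statement $\overline{\overline{f}\,'}\leqSW\overline{f}\,'$ that you need. To make the double-completion fact land where you want, you would have to exhibit $\overline{f}\,'$ itself in the form $\overline{g}$ for some $g$; but $\overline{f}\,'$ is a jump of a completion, not a completion of anything a priori, and the assertion that it is (up to $\equivSW$) a completion --- equivalently, that $\overline{f}\,'$ is strongly complete --- is precisely the ``in particular'' conclusion of the proposition applied to the trivially complete problem $\overline{f}$. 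So this branch of the argument is circular. Your derivation of the weaker $\overline{f'}\leqSW\overline{\overline{f}\,'}$ via jump monotonicity, the embedding, and Lemma~\ref{lem:completion-total} is fine, but it stops one step short of the claim $\overline{f'}\leqSW\overline{f}\,'$ and cannot be closed without the missing inequality.

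The missing ingredient is exactly the substantive content of the proposition: the interaction between completion and the $\lim$ that sits inside the jump. The paper handles it for $f:\In\IN^\IN\mto\IN^\IN$ by writing $f'\equivSW f\circ\lim$, pushing the completion through composition via $\overline{f\circ\lim}=\overline{f}\circ\overline{\lim}$ (Lemma~\ref{lem:composition}), and then crucially invoking the strong completeness of $\lim$ (Proposition~\ref{prop:complete-problems}): since $\overline{\lim}\equivSW\lim$, $\overline{\lim}$ has a realizer of the form $\lim\circ K$ with $K$ computable, which lets one reassemble a bona fide jump ${\overline{f}\,}^\r\circ\lim$ on the right-hand side. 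Nothing in your abstract chain plays this role. The ``lazy shift-by-one preprocessor'' you sketch as a fallback is essentially the same device as this $K$, so it is not a ``more delicate alternative'' to a working abstract proof --- it is where the real work lives. Once $\overline{f'}\leqSW{\overline{f}\,}'$ is in hand (by this or any other argument), the remaining steps you give --- the equivalence $\overline{f}\,'\equivSW\overline{\overline{f}\,'}$ via applying the reduction to $\overline{f}$ together with the closure property, and the ``in particular'' clause via jump monotonicity --- are correct and match the paper.
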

\begin{proof}
We first prove $\overline{f'}\leqSW{\overline{f}\,}'$ for problems of type $f:\In\IN^\IN\mto\IN^\IN$.
For such problems we have $f'\equivSW f\circ\lim$ and hence 
$\overline{f'}\equivSW\overline{f\circ\lim}=\overline{f}\circ\overline{\lim}$ by Lemma~\ref{lem:composition}.
By Proposition~\ref{prop:complete-problems} $\overline{\lim}\equivSW\lim$ and hence $\overline{\lim}$
is limit computable, i.e., it has a realizer of the form $\lim\circ K$ with a computable $K$.
This means that $\delta_{\overline{\IN^\IN}}\circ\lim\circ K(p)\in\overline{\lim}\circ\delta_{\overline{\IN^\IN}}(p)$
for all $p\in\IN^\IN$.
We also consider $\overline{f}^\r=\delta_{\overline{\IN^\IN}}^{-1}\circ\overline{f}\circ\delta_{\overline{\IN^\IN}}$ and we obtain
${\overline{f}\,}^\r\circ\lim\circ K(p)=\delta_{\overline{\IN^\IN}}^{-1}\circ\overline{f}\circ\delta_{\overline{\IN^\IN}}\circ\lim\circ K(p)\in\delta_{\overline{\IN^\IN}}^{-1}\circ\overline{f}\circ\overline{\lim}\circ\delta_{\overline{\IN^\IN}}(p)=(\overline{f\circ\lim})^\r(p)$, which implies
$\overline{f'}\equivSW{\overline{f\circ\lim}\,}^\r\leqSW{\overline{f}\,}^\r\circ\lim\equivSW{{\overline{f}\,}^\r}'\equivSW{\overline{f}\,}'$.
For a general problem $f:\In X\mto Y$ we also obtain $\overline{f'}\leqSW{\overline{f}\,}'$, since we can apply
the result above to $f^\r:\In\IN^\IN\mto\IN^\IN$, where we use that jumps and completions are monotone with respect to $\leqSW$.
This reduction also implies $\overline{{\overline{f}\,}'}\leqSW{\overline{\overline{f}}\,}'\leqSW{\overline{f}\,}'$, since completion
is a closure operator. The inverse reduction holds for the same reason, i.e., ${\overline{f}\,}'\equivSW\overline{{\overline{f}\,'}}$.
This shows that the jump $f'$ of a strongly complete problem $f\equivSW\overline{f}$ is strongly complete.
\end{proof}

This result requires strong completeness, since jumps are not monotone with respect to $\leqW$ in general.
We also note that the jump of a total problem is an upper bound of its completion, provided that the
input space has a total representation. This follows from Proposition~\ref{prop:special-retractions}~(3).

\begin{corollary}
\label{cor:completion-jump}
$\overline{f}\leqSW f'$ for every total problem $f:X\mto Y$ such that $X$ has a total representation.
\end{corollary}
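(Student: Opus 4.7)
The plan is to realize the reduction $\overline{f}\leqSW f'$ by exploiting the limit computable retraction $r:\overline{X}\to X$ furnished by Proposition~\ref{prop:special-retractions}~(3), whose availability is exactly what the total-representation hypothesis on $X$ buys us. Since $r$ is limit computable, it admits a realizer of the form $\lim\circ\, K$ for some computable $K:\IN^\IN\to\IN^\IN$. Viewed the right way, $K$ sends a $\delta_{\overline{X}}$-name of a point $\overline{x}\in\overline{X}$ to a $\delta_X\circ\lim=\delta_X'$-name of $r(\overline{x})\in X$, i.e.\ it is a $(\delta_{\overline{X}},\delta_X')$-realizer of $r$ itself.

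With $K$ in hand, the forward step is straightforward: for any realizer $G\vdash f'$, the composition $GK(p)$ is a $\delta_Y$-name of some $y\in f(r(\overline{x}))$, which is non-empty because $f$ is total on $X$ and $r(\overline{x})\in X$. To finish, I would post-compose with a computable $H$ realizing the embedding $\iota:Y\to\overline{Y}$ guaranteed by Corollary~\ref{cor:completion}; concretely $H$ can be taken to act as $q\mapsto\langle q(0)+1,q(1)+1,\ldots\rangle$ so that $H(q)-1=q$. Then $HGK(p)$ is a $\delta_{\overline{Y}}$-name of $y\in Y\subseteq\overline{Y}$.

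The verification that $HGK\vdash\overline{f}$ splits into two cases. If $\overline{x}\in X$, then $r(\overline{x})=\overline{x}$ because $r$ is a retraction, so $y\in f(\overline{x})=\overline{f}(\overline{x})$. If $\overline{x}=\bot\in\overline{X}\setminus X$, then $\overline{f}(\overline{x})=\overline{Y}$ by definition of the completion, and this trivially contains $y$. In either case $HGK(p)$ is a valid output name, yielding $\overline{f}\leqSW f'$ with computable forward and backward maps $K$ and $H$.

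The only slightly delicate point — and the step I would double-check most carefully — is the translation between the ``limit computable'' form of the retraction provided by Proposition~\ref{prop:special-retractions}~(3) and the specific form $\lim\circ\,K$ in which $K$ serves as a $(\delta_{\overline{X}},\delta_X')$-realizer usable as the pre-composition witness for strong Weihrauch reducibility. The totality of $\delta_X$ is used here to ensure that $\lim\circ\,K(p)$ always lies in $\dom(\delta_X)$, so that $K(p)$ is indeed a genuine $\delta_X'$-name rather than merely a name of something in a completion, which is what makes the reduction strong rather than merely total.
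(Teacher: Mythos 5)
Your proof is correct and takes exactly the route the paper intends: the paper's own proof is simply the citation ``This follows from Proposition~\ref{prop:special-retractions}~(3),'' and what you have written out is precisely the verification that the limit computable retraction $r:\overline{X}\to X$ from that proposition, realized in the form $\lim\circ K$, serves as the inner reduction map (so $K$ is a $(\delta_{\overline{X}},\delta_X')$-realizer of $r$), with the embedding $\iota:Y\to\overline{Y}$ from Corollary~\ref{cor:completion} providing the outer map. Your case split on $\overline{x}\in X$ versus $\overline{x}=\bot$ and the observation that totality of $\delta_X$ guarantees $K(p)\in\dom(\lim)$ are exactly the points that need checking.
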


It follows from Theorem~\ref{thm:choice-Baire} that this result cannot be generalized to partial problems. 
With the help of jumps we can also express a sufficient criterion that guarantees co-completeness and co-totality
for certain single-valued maps.
We use Sierpi\'nski space $\IS=\{0,1\}$ that is represented by $\delta_\IS(p)=0:\iff p=\widehat{0}$.
We mention that $\delta_\IS$ is an example of a precomplete total representation.

\begin{proposition}[Single-valuedness]
\label{prop:single-valuedness}
Let $f:\In X\to\IN$, $f_\IS:\In X\to\IS$ be single-valued problems that are not constant. Then:
\begin{enumerate}
\item $f'$ is co-complete and co-total.
\item $f_\IS'$ is co-complete.
\end{enumerate}
\end{proposition}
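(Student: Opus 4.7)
My plan is to deduce Proposition~\ref{prop:single-valuedness} from Corollary~\ref{cor:single-valuedness} applied to the jumps $f'$ and $f_\IS'$, and then upgrade the resulting strong co-completeness and strong co-totality to their non-strong counterparts via a continuity/density argument tailored to jump inputs.

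First, I would verify that $f'$ and $f_\IS'$ are single-valued and non-constant; both properties are inherited from $f$ and $f_\IS$, since the jump only alters the input representation. The target spaces $\IN$ (standard representation) and $\IS$ (with $\delta_\IS(p)=0:\iff p=\widehat{0}$) both carry total representations. Corollary~\ref{cor:single-valuedness} then yields that $f'$ and $f_\IS'$ are strongly co-complete and strongly co-total.

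To upgrade, suppose $f'\leqW\overline{g}$ via witnesses $H,K$ and let $G_0\vdash g$. For a jump-name $p$ of $x\in\dom(f)$ with $K(p)\in\dom(G_0)$, the reduction succeeds directly: any total extension $G'\vdash\overline{g}$ of $G_0$ agrees with $G_0$ at $K(p)$, so $H\langle p,G_0K(p)\rangle$ names $f(x)$. In the remaining case $K(p)\notin\dom(G_0)$, I would exploit the rich family of alternative names of $x$ provided by the jump: by prepending finitely many terms to $p$ without changing its limit, one obtains a name $p^*$ of $x$ agreeing with an arbitrarily long initial segment of a name $p_y$ of any chosen $y\in\dom(f)$. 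Under the assumption (for contradiction) that no such $p^*$ lands in $K^{-1}(\dom(G_0))$, the multi-valuedness of $\overline{g}$ at $K(p^*)$ forces $H\langle p^*,v\rangle$ to name $f(x)$ for every $v\in\IN^\IN$. Choosing $y$ with $f(y)\neq f(x)$ and comparing with $H\langle p_y,v\rangle$ (which names $f(y)$), continuity of $H$ produces agreement on arbitrarily long output prefixes, forcing $f(x)=f(y)$ by discreteness of $\IN$, or a digit contradiction via the $\widehat{0}$-structure of $\IS$ when $f_\IS(x)=0$. Hence the assumption fails, and for some finite prefix modification $\tilde p$ of $p$ we do have $K(\tilde p)\in\dom(G_0)$, which can be found semi-decidably and used to compute a name of $f(x)$. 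The template for co-totality in~(1) is identical with $\T g$ in place of $\overline{g}$.

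The main obstacle is the case in~(2) where $f_\IS(x)=1$: here a name of $1\in\IS$ admits its non-zero digit at arbitrary depth, so the finite continuity argument does not immediately yield a contradiction. To close this case for co-completeness, I would use a parallel-simulation argument, enumerating computable $v$ and computing $H\langle p,v\rangle$ on each in parallel, outputting a non-zero digit as soon as any simulation produces one. If $f_\IS(x)=1$ some valid $v$ (arising from a realizer $G'\vdash\overline{g}$) gives a non-$\widehat{0}$ output, and by continuity of $H$ a sufficiently accurate computable approximation to that $v$ will trigger a non-zero digit. The absence of a co-totality claim in~(2) reflects that with $\T g$ the totalization no longer supplies the freedom that $\overline{g}$'s extra bottom element does, and this parallel-simulation device no longer compensates on the $1$-side of $\IS$.
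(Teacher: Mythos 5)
Your plan starts from Corollary~\ref{cor:single-valuedness}, but that only yields \emph{strong} co-completeness and strong co-totality, i.e., statements about $\leqSW$; Proposition~\ref{prop:single-valuedness} concerns ordinary co-completeness and co-totality, i.e., statements about $\leqW$, and these are formally distinct conditions. Your ``upgrade'' is therefore not a refinement of the corollary but an attempt at an independent argument, and it has two real gaps. First, you build the new reduction in a way that depends on a chosen realizer $G_0$ and relies on a ``semi-decidable'' search for a finite prefix modification $\tilde p$ with $K(\tilde p)\in\dom(G_0)$. The definition of $f'\leqW g$ requires a single pair of computable witnesses that works simultaneously for \emph{all} realizers $G\vdash g$, and membership in $\dom(G_0)$ is not semi-decidable (the set of names of points in $\dom(g)$ need not be open), so that search is not a well-defined computation. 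The paper instead establishes a uniform fact: a finite amount of continuity bookkeeping produces one word $v$ such that for \emph{every} name $t\in v\IN^\IN$ of a point in $\dom(f')$, $K(t)$ already names a point in $\dom(g)$; then a single computable shift $F$ of jump-names into the cone $v\IN^\IN$ gives $K'=KF$ and $H'=H\langle F(\cdot),\cdot\rangle$, correct for all realizers. That uniformity is the crux, and your per-input, realizer-dependent search does not supply it.

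Second, the parallel-simulation device for the $f_\IS(x)=1$ case of part~(2) is unsound. The procedure you describe -- enumerate computable $v$ and emit a non-zero digit as soon as some $H\langle p,v\rangle$ does -- does not consult the realizer of $g$ at all, so if it worked it would show $f_\IS'$ computable. The function $H$ is only guaranteed correct on inputs of the form $\langle p,GK(p)\rangle$ with $G\vdash\overline{g}$; on arbitrary $\langle p,v\rangle$ it can misbehave and in particular emit a non-zero digit even when $f_\IS'(x)=0$, producing a false positive. The paper's proof of part~(2) runs the same cone argument in two stages: first it uses the fact that any finite prefix $v\prefix\widehat{0}$ of a $\delta_{\overline{Z}}$-name of $\bot$ can be extended to a name of every point of $\overline{Z}$, together with the density of jump-names, to show that all names of value-$1$ points are sent by $K$ into $\dom(g)$; then, starting from such a name and its realizer output $r$, it re-runs the continuity argument to obtain the uniform prefix $w$ and the computable shift $F$ exactly as in part~(1). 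That appeal to the structure of $\overline{Z}$-names of $\bot$, not an enumeration over candidate $v$, is what makes the value-$1$ side of $\IS$ tractable.
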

\begin{proof}
(1) It suffices to prove that $f'$ is co-total as this implies co-completeness by Corollary~\ref{cor:co-completeness-totality}.
Hence, let $f'\leqW\T g$ hold for some problem $g:\In W\mto Z$ via computable $H,K:\In\IN^\IN\to\IN^\IN$.
We consider a name $p$ of a point $x\in\dom(f')$, and we let $n:=f'(x)$.
Then there is name $r$ of a point in $Z$ such that $H\langle p,r\rangle$ is a name of $n$.
Hence by continuity of $H$, a finite prefix $w\prefix p$ is sufficient to guarantee that any extension of $w$ is mapped
with $r$ by $H$ to $n$. Since $f'$ is not constant, there is a $y\in\dom(f')$ with $k:=f'(y)\not=n$
and since we use the jump of a representation for $X'$, we have that there is a name
$q\in w\IN^\IN$ of $y$. Suppose $q$ is mapped by $K$ to a point outside of $\dom(g)$.
Then there is a realizer $G\vdash\T g$ with $GK(q)=r$ and hence $H\langle q,GK(q)\rangle=n$, which is incorrect.
Hence, all names $q\in w\IN^\IN$ of points $y\in\dom(f')$ with $f'(y)\not=n$ are mapped
by $K$ to points inside of $\dom(g)$. With a similar argument as before there is also
some name $s$ of a point in $Z$ such that $H\langle q,s\rangle$ is a name of $k$
and by continuity of $H$ a finite prefix $v$ of $q$ is sufficient to guarantee that $H$ maps
any extension of $v$ with $s$ to a name of $k$. We can assume $w\prefix v$. 
As before there cannot be any
name $t\in v\IN^\IN$ of a point $z\in\dom(f')$ with $f'(z)\not=k$ that is mapped by $K$ to a point
outside of $\dom(g)$. Altogether, there is no name $t\in v\IN^\IN$ of some point $z\in\dom(f')$
whatsoever that is mapped by $K$ to a point outside of $\dom(g)$.
Since there is a computable
function $F:\In\IN^\IN\to\IN^\IN$ that maps every name $t$ of a point inside of $\dom(f')$ to a name $F(t)\in v\IN^\IN$
of the same point, we obtain that $H\langle F,GKF\rangle\vdash f'$ for every $G\vdash g$, i.e., $f'\leqW g$.
This proves that $f'$ is co-total.\\
(2) Let $f_\IS'\leqW\overline{g}$ for some problem $g:\In W\mto Z$ hold via computable functions $H,K:\In\IN^\IN\to\IN^\IN$.
We consider a name $p$ of a point $x\in\dom(f_\IS')$ with $f_\IS'(x)=1$. Such a point must exist since $f_\IS'$ is not constant.
Let us assume that $K(p)$ is a name of a point outside of $\dom(g)$. 
Then there is a realizer $G\vdash\overline{g}$ such that $GK(p)=\widehat{0}$, which is a name of $\bot\in\overline{Z}$.
Now $H\langle p,GK(p)\rangle$ is a name of $1\in\IS$ and by continuity of $H$ finite prefixes $w\prefix p$ and $v\prefix \widehat{0}$
suffice to ensure that $H$ maps the corresponding extensions to $1\in\IS$.
We note that  $v\prefix\widehat{0}$ can be extended to a name of any point in $\overline{Z}$, given the way $\overline{Z}$ is represented, 
and $w\prefix p$ can also be extended to a name of any point in $X'$, as we use the jump of a representation.
Suppose now that $q\in w\IN^\IN$ is a name of a point $y\in\dom(f_\IS')$ such that $f_\IS'(y)=0\in\IS$.
Such a point exists since $f_\IS'$ is not constant.
Then there is a realizer $G_1\vdash\overline{g}$ such that $v\prefix G_1K(q)$ and hence
$H\langle q,G_1K(q)\rangle$ is a name of $1\in\IS$, which is incorrect.
Hence all names $p$ of points $x\in\dom(f_\IS')$ with $f_\IS'(x)=1\in\IS$
are mapped by $K$ to names $K(p)$ of points inside of $\dom(g)$.
In particular there exists a name $p$ of a point $x\in\dom(f'_\IS)$ with $f_\IS'(x)=1$
and a corresponding name $r$ of a point in $\overline{Z}$ such that $H\langle p,r\rangle=1$
and finite prefixes $w\prefix p$ and $v\prefix r$ suffice to ensure that $H$ maps
all extensions to $1\in\IS$. With a similar argument as above one can show
that no name $q\in w\IN^\IN$ of a point $y\in\dom(f_\IS')$ with $f_\IS'(y)=0$ can be mapped
by $K$ to a name $K(q)$ of a point outside of $\dom(g)$.
Otherwise, there would be a realizer $G_2\vdash\overline{g}$ with
$G_2K(q)=r$ and hence $H\langle q,G_2K(q)\rangle$ is a name of $1$, which is incorrect.
Hence altogether, each name $q\in w\IN^\IN$ of a point in $\dom(f_\IS')$ is mapped by $K$ to a name $K(q)$ of a point inside of $\dom(g)$.
Using a computable function $F$ that maps any name $p$ of a point in $\dom(f'_\IS)$ to
a name $q\in w\IN^\IN$ of the same point,
we obtain that $H\langle F,GKF\rangle\vdash f_\IS'$ for every $G\vdash g$, i.e., $f_\IS'\leqW g$.
This proves that $f_\IS'$ is co-complete.
\end{proof}

Using this result it is now easy to provide some interesting examples of co-complete and co-total problems.
We note that $\lim_\IN\equivSW\id_\IN'$.

\begin{corollary}[Co-complete and co-total problems]
\label{cor:co-complete-co-total}\
\begin{enumerate}
\item $\lim_\IN:\In\IN^\IN\to\IN,p\mapsto\lim_{n\to\infty}p(n)$ is co-total and co-complete,
\item $\LPO':(\IN^\IN)'\to\{0,1\}$ is co-total and co-complete,
\item $\LPO_\IS':(\IN^\IN)'\to\IS,p\mapsto\LPO(p)$ is co-complete.
\end{enumerate}
\end{corollary}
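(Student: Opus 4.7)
The plan is to derive all three items as direct applications of Proposition~\ref{prop:single-valuedness}, after a small preliminary observation: both co-totality and co-completeness are invariant under strong Weihrauch equivalence. Indeed, if $f_1 \equivSW f_2$ and $f_2$ is, say, co-total, then for any problem $g$ we have $f_1 \leqW \T g \iff f_2 \leqW \T g \iff f_2 \leqW g \iff f_1 \leqW g$, so $f_1$ is co-total; the same argument works for co-completeness and for the strong variants. Hence it suffices to exhibit each of the three problems as (equivalent to) the jump of a single-valued non-constant problem whose codomain fits the hypothesis of Proposition~\ref{prop:single-valuedness}.

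For item~(1), the identity $\id_\IN : \IN \to \IN$ is a single-valued problem that is obviously not constant and has codomain $\IN$. Proposition~\ref{prop:single-valuedness}~(1) therefore tells us that $\id_\IN'$ is both co-complete and co-total. Since $\lim_\IN \equivSW \id_\IN'$ (noted just before the corollary), the preliminary observation transfers both properties to $\lim_\IN$.

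For item~(2), we view $\LPO : \IN^\IN \to \IN$ as a single-valued problem taking values in $\{0,1\} \subseteq \IN$; it is not constant since it attains both values. Applying Proposition~\ref{prop:single-valuedness}~(1) directly to $f = \LPO$ yields that $\LPO'$ is co-total and co-complete. For item~(3) we instead take $f_\IS = \LPO_\IS : \IN^\IN \to \IS$, which is single-valued, not constant, and has Sierpi\'nski codomain, so Proposition~\ref{prop:single-valuedness}~(2) gives co-completeness of $\LPO_\IS'$ (note that this part of the proposition does not claim co-totality, which is consistent with the statement of the corollary).

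The entire argument is essentially a verification that the three problems fit the template of the preceding proposition, so there is no real obstacle. The only point deserving attention is the transfer of the two properties along $\equivSW$ in item~(1); this is immediate from the definitions, as noted above, and requires no additional assumption on the problems involved.
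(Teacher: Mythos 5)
Your proof is correct and follows exactly the route the paper intends: it reads off all three items from Proposition~\ref{prop:single-valuedness}, using $\lim_\IN\equivSW\id_\IN'$ for item~(1), $\LPO$ (with discrete codomain) for item~(2), and $\LPO_\IS$ for item~(3). The explicit preliminary remark that co-totality and co-completeness are $\equivW$-invariant (hence $\equivSW$-invariant) is a reasonable thing to spell out, since both notions are defined via $\leqW$, and the paper leaves this step implicit.
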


In Corollary~\ref{cor:LPOS-co-total} we will see that $\LPO_\IS'$ is not co-total.
Hence co-totality and co-completeness are actually not equivalent conditions
and Proposition~\ref{prop:single-valuedness}~(2) cannot be strengthened to co-totality.

There is a useful characterization of $\LPO'$ as the infinity problem. By 
\[\INF:\IN^\IN\to\{0,1\},p\mapsto\left\{\begin{array}{ll}
1 & \mbox{if $p(n)=0$ for infinitely many $n\in\IN$}\\
0 & \mbox{otherwise}
\end{array}\right.\]
we denote the {\em infinity problem}. By $\INF_\IS:\IN^\IN\to\IS$ we denote the analogous problem with output space $\IS$.
These two problems were already studied under the names ${\text{\rm\sffamily isInfinite}}$ and ${\text{\rm\sffamily isInfinite}_\IS}$
by Neumann and Pauly~\cite{NP18}.
The following is easy to see.

\begin{lemma}[Infinity problem]
\label{lem:infinity}
$\LPO'\equivSW\INF$ and $\LPO'_\IS\equivSW\INF_\IS$.
\end{lemma}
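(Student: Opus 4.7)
The plan is to exhibit strong Weihrauch reductions in both directions by explicit input transformations on Baire space. The same constructions will handle the $\{0,1\}$-valued and the Sierpi\'nski-valued versions at once, since the output names produced always lie in $\{0,1\}^\IN$ and are correctly interpreted under both the discrete representation of $\{0,1\}$ and the Sierpi\'nski representation $\delta_\IS$.

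For the reduction $\INF\leqSW\LPO'$ (and $\INF_\IS\leqSW\LPO'_\IS$), from an input $q\in\IN^\IN$ of $\INF$ I would construct a jump-name $\langle p_n\rangle$ of some $p\in\IN^\IN$ by setting
\[
p_n(k):=\begin{cases}1 & \text{if $q(m)=0$ for some $m$ with $k\leq m\leq n$,}\\ 0 & \text{otherwise.}\end{cases}
\]
For each fixed $k$ the map $n\mapsto p_n(k)$ is non-decreasing in $\{0,1\}$, so $p:=\lim_n p_n$ exists pointwise and $\langle p_n\rangle$ is a valid name of $p$ in the jump representation of $\IN^\IN$. Moreover, $p(k)=0$ iff $q(m)\neq 0$ for every $m\geq k$, so $p$ has a zero iff $q$ has only finitely many zeros, yielding $\LPO(p)=\INF(q)$.

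For the converse $\LPO'\leqSW\INF$ (and $\LPO'_\IS\leqSW\INF_\IS$), given a jump-name $\langle p_n\rangle$ of $p\in\IN^\IN$, I would use the standard $\Sigma^0_2$ trick to translate ``$p$ has a zero'' into ``the output has only finitely many zeros''. Enumerate the potential witness pairs $(k,N)\in\IN^2$ as $(k_j,N_j)_{j\in\IN}$ and maintain at each stage $t$ a current guess $j_t\in\IN$ with $j_0:=0$: if $t\geq N_{j_t}$ and $p_t(k_{j_t})\neq 0$, set $j_{t+1}:=j_t+1$ and $q(t):=0$; otherwise set $j_{t+1}:=j_t$ and $q(t):=1$. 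Since $p_n(k)$ is eventually equal to $p(k)$ for every $k$, the $j$-th witness can never be eventually refuted iff $p(k_j)=0$; since $j_t$ is non-decreasing, once it reaches such a valid $j$ it stays there. Hence $j_t$ stabilizes iff $p$ has a zero, and by construction this is exactly the case when $q$ has only finitely many zeros, giving $\INF(q)=\LPO(p)$.

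Both transformations are uniformly computable, and in each case the output of the oracle is already a name of the required value, so the post-processing map can be taken to be the identity; the reductions are therefore strong. The main verification required is in the second construction, where one must carefully check that stabilization of the guess $j_t$ is equivalent to the existence of a zero in $p$; this uses essentially the pointwise convergence $p_n\to p$ provided by the jump representation. The Sierpi\'nski variants follow without extra work since the produced values $0$ and $1$ in $\{0,1\}^\IN$ are read correctly under $\delta_\IS$ as well.
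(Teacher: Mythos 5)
Your proof is correct and follows essentially the same $\Sigma^0_2$-translation strategy as the paper, with only cosmetic differences in the encodings (e.g., for $\INF\leqSW\LPO'$ the paper sets $(\lim K(p))(k)=0$ iff $p$ has fewer than $k$ zeros, while you set $p(k)=0$ iff $q$ has no zero at position $\geq k$; for the converse the paper searches for a refuting $p_i(n)\neq 0$ with $i\geq k$ pair by pair, while you maintain a monotone guess $j_t$, but both emit a zero per refuted witness and stall on a valid one). The observation that the Sierpi\'nski variants come for free because the inner reduction already suffices is made the same way in both.
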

\begin{proof}
Given $p\in\IN^\IN$, we let $K(p)\langle n,k\rangle=0$ if the word $p(0)...p(n)$ contains the digit $0$ less than $k$ times
and $K(p)\langle n,k\rangle=1$ otherwise. Then 
\[(\forall k\in\IN)\;(\lim K(p))(k)=\lim_{n\to\infty}K(p)\langle n,k\rangle\not=0\] 
if and only if $0$ appears infinitely often in $p$. Hence $\LPO'\circ K(p)=\INF(p)$.
Since $K$ is computable, this proves $\INF\leqSW\LPO'$ and $\INF_\IS\leqSW\LPO'_\IS$.
Vice versa, given $p:=\langle p_0,p_1,p_2,...\rangle\in\dom(\lim)$ we can enumerate the numbers $1,2,3,...,$ into $K(p)$, 
and for each $\langle n,k\rangle=0,1,2,...$ after the other we do the following: whenever we find an $i\geq k$ such that $p_i(n)\not=0$, then
we enumerate $0$ into $K(p)$, and only in this case we move to the next $\langle n,k\rangle$. 
Hence $K(p)$ contains infinitely many zeros 
if and only if $(\forall n,k\in\IN)(\exists i\geq k)\;p_i(n)\not=0$,
which holds if and only if $(\forall n\in\IN)(\lim_{i\to\infty} p_i)(n)\not=0$, i.e., $\LPO'(p)=\INF\circ K(p)$.
This proves $\LPO'\leqSW\INF$ and $\LPO_\IS'\leqSW\INF_\IS$.
\end{proof}

In particular, we can conclude that $\INF$ is co-total and $\INF_\IS$ is co-complete.
$\INF$ is clearly not limit computable, since 
\[\INF^{-1}\{1\}=\{p\in\IN^\IN:(\forall k\in\IN)(\exists n\geq k)\;p(n)=0\}\] 
is known to be $\Pi^0_2$--complete (see, e.g., \cite[Exercise~23.1]{Kec95}).

\begin{lemma}
\label{lem:LPO-lim}
$\LPO'\nleqW\lim$.
\end{lemma}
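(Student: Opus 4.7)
The plan is to argue by contradiction using the characterization of $\LPO'$ via the infinity problem together with a standard descriptive-complexity obstruction. Suppose for contradiction that $\LPO'\leqW\lim$. By Lemma~\ref{lem:infinity} we have $\LPO'\equivSW\INF$, so then also $\INF\leqW\lim$. Since limit computability is preserved downwards by $\leqW$ (as recalled in Section~\ref{sec:total}) and since $\lim$ is itself limit computable, it would follow that the single-valued function $\INF:\IN^\IN\to\{0,1\}$ is limit computable.

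Next I would translate limit computability of $\INF$ into a descriptive-set-theoretic statement about the preimages $\INF^{-1}\{i\}$. A limit computable realizer $F:\IN^\IN\to\IN^\IN$ of $\INF$ is, at each coordinate, the limit of a computable double sequence, so $F$ is $\Delta^0_2$-measurable. The standard representation of $\{0,1\}$ has a clopen preimage of each point $i\in\{0,1\}$, and thus $\INF^{-1}\{i\}=F^{-1}(\delta_{\{0,1\}}^{-1}\{i\})$ is a $\DO{2}$ subset of $\IN^\IN$; in particular $\INF^{-1}\{1\}$ would be $\SO{2}$.

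To close the argument I would invoke the fact, already recorded in the paragraph preceding the lemma, that $\INF^{-1}\{1\}=\{p\in\IN^\IN:(\forall k)(\exists n\geq k)\;p(n)=0\}$ is $\PO{2}$--complete (cf.\ \cite[Exercise~23.1]{Kec95}). Since $\SO{2}\neq\PO{2}$, no $\PO{2}$--complete set is $\SO{2}$, so $\INF^{-1}\{1\}$ cannot be $\SO{2}$. This contradicts the conclusion of the previous paragraph and hence disproves $\LPO'\leqW\lim$.

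The only real subtlety, and the place where one must be mildly careful, is the translation step from "limit computable realizer" to "$\Delta^0_2$ preimages of clopen sets of $\{0,1\}$". This is standard but depends on the chosen representation of the output space; once one fixes the usual discrete representation of $\{0,1\}$, everything goes through, and the descriptive-complexity lower bound for $\INF^{-1}\{1\}$ does the rest of the work.
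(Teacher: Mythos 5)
Your proposal is correct and follows the same route the paper intends: the lemma is stated without a displayed proof because the preceding paragraph already observes that $\LPO'\equivSW\INF$ (Lemma~\ref{lem:infinity}) and that $\INF$ is not limit computable since $\INF^{-1}\{1\}$ is $\PO{2}$--complete, combined with the fact recalled in section~\ref{sec:completion} that $f\leqW\lim$ is equivalent to $f$ being limit computable. You have merely spelled out the descriptive-set-theoretic bookkeeping (a total limit computable realizer is Baire class~1, so preimages of clopen sets are $\DO{2}$, while a $\PO{2}$--complete set is not $\SO{2}$), which is exactly the content the paper compresses into the phrase ``clearly not limit computable.''
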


It is also easy to see that there is a retraction $r:\overline{\IN^\IN}\to\IN^\IN$
that is computable with the help of $\INF$. This is because $\INF$ can detect
whether a name $p$ of a point in $\overline{\IN^\IN}$ is actually a name of a point in $\IN^\IN$, i.e., whether $p(n)\not=0$ for infinitely many $n$.

\begin{lemma}
\label{lem:retraction-LPO}
There is a retraction $r:\overline{\IN^\IN}\to\IN^\IN$ with $r\leqW\LPO'$.
\end{lemma}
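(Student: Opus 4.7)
The plan is to define $r:\overline{\IN^\IN}\to\IN^\IN$ by $r(y):=y$ for $y\in\IN^\IN$ and $r(\bot):=\widehat{0}$, and then to realize it computably with a single oracle call to $\INF$, which suffices by Lemma~\ref{lem:infinity} since $\LPO'\equivSW\INF$. The key observation is that in the representation $\delta_{\overline{\IN^\IN}}$, a name $p\in\IN^\IN$ encodes the point $p-1\in\IN^\IN$ whenever $p$ contains infinitely many non-zero entries, and it encodes $\bot$ otherwise, so the case distinction we need to make is exactly the one that $\INF$ decides.

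First I would define the computable function $K:\IN^\IN\to\IN^\IN$ by letting $K(p)(n):=0$ if $p(n)\not=0$ and $K(p)(n):=1$ otherwise. Then $\INF\circ K(p)=1$ iff $K(p)$ has infinitely many zeros iff $p$ contains infinitely many non-zero entries, which is equivalent to $\delta_{\overline{\IN^\IN}}(p)\in\IN^\IN$. Next I would describe the computable function $H:\In\IN^\IN\to\IN^\IN$ on an input of the form $\langle p,b\rangle$, where $b\in\{0,1\}$ is the answer returned by $\INF$: if $b=0$, output $\widehat{0}$; if $b=1$, output $p-1$. The output $p-1$ is produced incrementally, reading successive non-zero entries of $p$; this procedure never blocks, because the answer $b=1$ certifies that such entries occur infinitely often, so $p-1$ is a genuine element of $\IN^\IN$.

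Finally I would verify that $H\langle\id,\INF\circ K\rangle$ realizes $r$. If $p$ is a name of some $y\in\IN^\IN$, then $y=p-1$ and $\INF\circ K(p)=1$, so $H\langle p,1\rangle=p-1=y$, which is a name of $r(y)=y$. If $p$ is a name of $\bot$, then $\INF\circ K(p)=0$, so $H\langle p,0\rangle=\widehat{0}$, which is a name of $r(\bot)=\widehat{0}$. This yields $r\leqW\INF\equivSW\LPO'$, as required. There is no real obstacle; the argument merely unpacks the definition of $\delta_{\overline{\IN^\IN}}$ together with the identification of $\LPO'$ with $\INF$ provided by Lemma~\ref{lem:infinity}.
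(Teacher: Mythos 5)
Your proposal is correct and follows exactly the line of reasoning the paper indicates (the paper gives no explicit proof of this lemma, only the one-sentence observation preceding it that $\INF$ can detect whether a $\delta_{\overline{\IN^\IN}}$--name $p$ codes a genuine point of $\IN^\IN$, i.e., whether $p(n)\neq 0$ for infinitely many $n$). You have simply unpacked that observation: the pre-processing $K$ turns the question into an instance of $\INF$, the post-processing $H$ branches on the answer, and $\LPO'\equivSW\INF$ from Lemma~\ref{lem:infinity} completes the reduction. One minor formal point worth noting: the second argument to $H$ is really a $\delta_{\{0,1\}}$--name of $b$ rather than $b$ itself, so $H$ should read off $b$ from the first entry of that name; this is the usual abuse of notation and does not affect correctness. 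Also note that, since $H$ genuinely needs access to the original input $p$ to emit $p-1$, your construction yields $\leqW$ rather than $\leqSW$, which is exactly what the lemma asserts.
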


\section{Choice Problems}
\label{sec:choice}

Choice principles form the backbone of the Weihrauch lattice, and many other problems
can be classified by proving their equivalence to a suitable choice problem~\cite{BGP18}.
Hence, it is important to understand which choice principles are complete
in order to see how the picture for the total Weihrauch lattice changes
compared to the partial version.

In order to recall the definition of choice we need to introduce the set $\AA(X)$ of closed subsets
of a topological space $X$. Typically, we will consider computable metric spaces $(X,d,\alpha)$ that are represented by their Cauchy representation~\cite{Wei00,BGP18}.
We denote by $B(x,r):=\{y\in X:d(x,y)<r\}$ the open ball with center $x\in X$ and radius $r\geq0$. 
More specifically, we denote by $B_{\langle n,\langle i,k\rangle\rangle}:=B(\alpha(n),\frac{i}{k+1})$
a basic open ball. A representation $\delta_{\AA_-(X)}$ of the set $\AA(X)$ can now be defined by
$\delta_{\AA_-(X)}(p):=X\setminus \bigcup_{n=0}^\infty B_{p(n)}$.
We denote the represented space $(\AA(X),\delta_{\AA_-(X)})$ for short by $\AA_-(X)$, where the ``$-$'' refers to negative information.
The computable points in $\AA_-(X)$ are known as {\em co-c.e.\ closed sets} and also as {\em $\Pi^0_1$--classes} in the case of $X=\IN^\IN$.
Since there are numbers $n\in\IN$ with $B_n=\emptyset$, it is easy to see that the representation $\delta_{\AA_-(X)}$ is precomplete and it is also total.

\begin{lemma}
\label{lem:precomplete-closed}
Let $X$ be a computable metric space. Then $\delta_{\AA_-(X)}$ is a precomplete and total representation of $\AA_-(X)$.
In particular, $\AA_-(X)$ is multi-retraceable.
\end{lemma}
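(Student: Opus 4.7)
The plan is to unpack the definition of $\delta_{\AA_-(X)}$ and use the padding trick with empty basic balls that the lemma explicitly points to; then deduce multi-retraceability as a corollary via Proposition~\ref{prop:multi-retraceability}.

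First, totality is immediate once we inspect the defining formula $\delta_{\AA_-(X)}(p)=X\setminus\bigcup_{n=0}^\infty B_{p(n)}$: for any $p\in\IN^\IN$ the set $\bigcup_n B_{p(n)}$ is open (as a union of open balls, with the convention that ill-formed or $\emptyset$-coded entries simply contribute nothing), and hence its complement is a well-defined closed subset of $X$. So no $p$ gets rejected and $\dom(\delta_{\AA_-(X)})=\IN^\IN$.

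For precompleteness, I first fix an index $e\in\IN$ with $B_e=\emptyset$, which exists because the chosen enumeration includes the balls $B(\alpha(n),0)=\emptyset$ (take $e=\langle n,\langle 0,k\rangle\rangle$ for any $n,k$). Now let $F:\In\IN^\IN\to\IN^\IN$ be computable. I build a total computable $G$ by the standard dovetailing trick: $G(p)$ is produced in stages, and at stage $s$ the machine simulates $s$ steps of the computation of $F(p)$. If during this stage a new value $F(p)(k)$ has become available that has not yet been written, $G$ writes it; otherwise $G$ writes the empty-ball code $e$. Because $G$ always has something to write, $G$ is total computable. If $p\in\dom(F)$, then every digit of $F(p)$ eventually appears in $G(p)$, and the extra $e$'s contribute $\emptyset$ to the union, so
\[\bigcup_{n\in\IN}B_{G(p)(n)}=\bigcup_{n\in\IN}B_{F(p)(n)}\cup\emptyset=\bigcup_{n\in\IN}B_{F(p)(n)},\]
giving $\delta_{\AA_-(X)}G(p)=\delta_{\AA_-(X)}F(p)$, as required.

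The only mild subtlety is making sure $G$ is \emph{defined} even when $F(p)$ diverges after producing only finitely many values; but that is exactly what the $e$-padding handles, so there is no real obstacle. Finally, having established that $\delta_{\AA_-(X)}$ is a precomplete total representation of $\AA_-(X)$ equivalent to itself, multi-retraceability follows at once from the implication (1)$\Rightarrow$(4) in Proposition~\ref{prop:multi-retraceability}.
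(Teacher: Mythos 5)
Your proof is correct and takes exactly the approach the paper points to (the paper leaves the lemma without a displayed proof, but its preceding sentence ``Since there are numbers $n\in\IN$ with $B_n=\emptyset$, it is easy to see that the representation $\delta_{\AA_-(X)}$ is precomplete and it is also total'' is precisely the padding-with-empty-balls idea you spell out). The reduction of multi-retraceability to Proposition~\ref{prop:multi-retraceability}(1)$\Rightarrow$(4) is also the intended route.
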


The choice problem $\C_X$ of a given space $X$ is the problem of finding a point in a given closed $A\In X$.
By choosing appropriate spaces $X$ one obtains several important Weihrauch degrees. 
There are ways of extending the definition of $\delta_{\AA_-(X)}$ to other represented spaces
than computable metric ones~\cite{BBP12}. We are not going to use these extensions here, hence
the following definition is typically used for computable metric spaces $X$.

\begin{definition}[Choice]
\label{def:choice}
The problem
$\C_X:\In\AA_-(X)\mto X,A\mapsto A$, defined on $\dom(\C_X):=\{A:A\not=\emptyset\}$ is called the {\em choice problem} of the represented space $X$.
\end{definition}

Here the description $A\mapsto A$ of the map is to be read such that on the input side $A\in\AA_-(X)$ is a point of the input space, whereas on the output side it is a subset $A\In X$ of possible results.
Many restrictions of the choice problem have been considered. For instance, $\ConC_X$ denotes {\em connected choice}, i.e., $\C_X$ restricted to non-empty connected closed subsets $A\In X$.

One of our goals is to understand the completions $\overline{\C_X}$ of choice problems. 
Fortunately, the conditions given in Lemma~\ref{lem:completion-totalization}~(1)
are satisfied by Lemma~\ref{lem:precomplete-closed} for all choice principles of computable metric spaces.
We even obtain the following.

\begin{corollary}[Completion of choice]
\label{cor:completion-choice}
$\C_X\leqSW\overline{\C_X}\leqSW\T\C_X\equivSW\overline{\T\C_X}$ for all computable metric spaces $X$.
\end{corollary}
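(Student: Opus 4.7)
The plan is to assemble each link in the chain from general results already established; the pivotal observation is that the input space $\AA_-(X)$ of every choice problem on a computable metric space $X$ is multi-retraceable by Lemma~\ref{lem:precomplete-closed}, which is exactly the hypothesis needed to invoke Lemma~\ref{lem:completion-totalization}~(1). Since no such condition is imposed on the output space, there is no symmetric obligation to check, and every step reduces to a direct citation.

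For the first reduction $\C_X \leqSW \overline{\C_X}$ I would appeal to the general fact that $f \leqSW \overline{f}$ holds for every problem $f$. This follows from Lemma~\ref{lem:completion-total} by taking $g := f$: the equivalence $f \leqSW \overline{f} \iff f \leqSTW f$ gives the conclusion by reflexivity of $\leqSTW$. For the second reduction $\overline{\C_X} \leqSW \T\C_X$ I would apply Lemma~\ref{lem:completion-totalization}~(1) to the problem $\C_X$; since its input space $\AA_-(X)$ is multi-retraceable, the hypothesis of the lemma is met and the reduction is immediate.

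The remaining equivalence $\T\C_X \equivSW \overline{\T\C_X}$ splits identically. The reduction $\T\C_X \leqSW \overline{\T\C_X}$ is once more the general fact $f \leqSW \overline{f}$ applied to $f := \T\C_X$. For the reverse $\overline{\T\C_X} \leqSW \T\C_X$, I apply Lemma~\ref{lem:completion-totalization}~(1) again, now with $f := \T\C_X$, whose input space is still $\AA_-(X)$ and hence multi-retraceable; the conclusion $\overline{\T\C_X} \leqSW \T(\T\C_X)$ collapses to $\overline{\T\C_X} \leqSW \T\C_X$ since the totalization of an already total problem is itself. There is no genuine obstacle here: the statement is essentially a specialization of the completion-versus-totalization dichotomy to the situation where the input type is multi-retraceable, a situation that is guaranteed uniformly for the hyperspace $\AA_-(X)$ of any computable metric space.
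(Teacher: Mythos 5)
Your argument is correct, and it reaches the result by a genuinely different route than the paper. The paper's proof first establishes strong completeness of $\T\C_X$ by invoking Corollary~\ref{cor:completeness}, which requires a representative total problem on multi-retraceable spaces on \emph{both} sides; since the output space $X$ need not be multi-retraceable, the paper composes with the embedding $\iota:X\to\overline{X}$ to get $\iota\circ\T\C_X:\AA_-(X)\mto\overline{X}$, where both $\AA_-(X)$ (Lemma~\ref{lem:precomplete-closed}) and $\overline{X}$ (Corollary~\ref{cor:completion}, Proposition~\ref{prop:multi-retraceability}) are multi-retraceable. The remaining inequalities are then deduced from the closure-operator properties of completion. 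You instead use Lemma~\ref{lem:completion-totalization}~(1), which only requires the \emph{input} space to be multi-retraceable, so you never need the detour through $\iota$ or the output-space check. Applying that lemma once to $\C_X$ and once to $\T\C_X$ (using $\T\T f=\T f$), and noting that $f\leqSW\overline{f}$ for all $f$ (via Lemma~\ref{lem:completion-total} and reflexivity of $\leqSTW$), gives each link of the chain as a separate direct citation. Your version is arguably the cleaner assembly; the paper's phrasing has the side benefit of isolating the claim "$\T\C_X$ is strongly complete'' via the self-contained characterization in Corollary~\ref{cor:completeness}, but the conclusions are the same.
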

\begin{proof}
By Corollary~\ref{cor:completion} there is a computable embedding $\iota:X\to\overline{X}$. Hence, we obtain
$f:=\iota\circ\T\C_X\equivSW\T\C_X$. Moreover, $f:\AA_-(X)\mto\overline{X}$ has a total precomplete
representation on the input side by Lemma~\ref{lem:precomplete-closed} and a total precomplete
representation on the output side by Corollary~\ref{cor:completion}.
Hence, $\T\C_X$ is strongly complete by Corollary~\ref{cor:completeness} and Proposition~\ref{prop:multi-retraceability}.
The reduction $\C_X\leqSW\T\C_X$ is obvious, the other reductions follow since completion is a closure operator.
\end{proof}

An analogous statement holds true if $\C_X$ is replaced by any restriction such as $\ConC_X$ in all occurrences.
The advantage of this result is that $\T\C_X$ is conceptually
simpler than $\overline{\C_X}$, as it does only involve the original spaces and no completions.
We can also characterize the completion of the jump of choice.

\begin{proposition}[Completion of jumps of choice]
\label{prop:completion-jump-choice}
$\overline{\C_X'}\equivSW{\overline{\C_X}\,}'$ holds for every computable metric space $X$.
\end{proposition}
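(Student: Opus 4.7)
The plan is to prove the two reductions separately. The direction $\overline{\C_X'}\leqSW(\overline{\C_X})'$ is immediate from Proposition~\ref{prop:jumps} applied to $f=\C_X$. The substantive content is the reverse direction $(\overline{\C_X})'\leqSW\overline{\C_X'}$, which I would prove by constructing a direct computable translation of input names.

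The first observation is that, as set-theoretic multi-valued maps $\overline{\AA_-(X)}\mto\overline{X}$, both problems coincide: both send $A\in\AA_-(X)$ with $A\neq\emptyset$ to $A$ and assign $\overline{X}$ to $\emptyset$ and to $\bot$. Hence only the input representations differ and the post-processor $H$ can be taken as the identity. The input representation of $(\overline{\C_X})'$ is the jump of the completion, so a valid name $p$ satisfies $\delta_1(p)=\delta_{\AA_-(X)}(\lim(p)-1)$ when $\lim p$ exists (with $\bot$ assigned when $\lim(p)-1$ is only a finite word). The input representation of $\overline{\C_X'}$ is the completion of the jump, so $\delta_2(q)=\delta_{\AA_-(X)}(\lim(q-1))$ when $\lim(q-1)$ exists (and $\bot$ otherwise). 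Given $p=\langle p_0,p_1,p_2,\ldots\rangle$, I would define the computable $K$ by requiring $q:=K(p)$ to satisfy $q-1=\langle r_0,r_1,r_2,\ldots\rangle$, where $r_n:=(p_n-1)\widehat{0}$, i.e., the decrement of $p_n$ padded by zeros to ensure $r_n\in\IN^\IN$. This is computable digit by digit, since $q(k)$ may simply be chosen as the $k$-th digit of $\langle r_0,r_1,\ldots\rangle$ incremented by one.

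The key verification is that $r_n\to p^{*}-1$ in Baire space, where $p^{*}:=\lim p$, provided $p^{*}-1$ is infinite. Given any $k$, I would choose $I$ large enough so that $p^{*}(0),\ldots,p^{*}(I-1)$ already produce at least the first $k+1$ digits of $p^{*}-1$; since $p_n\to p^{*}$ digit-wise, eventually $p_n$ agrees with $p^{*}$ on the first $I$ positions, so the first $k+1$ digits of $p_n-1$ match those of $p^{*}-1$, and hence $r_n(k)=(p^{*}-1)(k)$. This yields $\delta_2(K(p))=\delta_{\AA_-(X)}(p^{*}-1)=\delta_1(p)$ whenever the latter is a non-empty element of $\AA_-(X)$, so any realizer of $\overline{\C_X'}$ applied to $K(p)$ produces an acceptable output for $(\overline{\C_X})'(p)$. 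In the remaining cases (where $\delta_1(p)=\bot$ or $\delta_1(p)=\emptyset$) the output $(\overline{\C_X})'(p)$ is all of $\overline{X}$, so no further verification is required. The principal obstacle is that $p\mapsto p-1$ does not commute with $\lim$ in general; the padding construction $r_n=(p_n-1)\widehat{0}$ is precisely what keeps each $r_n$ inside $\IN^\IN$ while preserving digit-wise convergence to $p^{*}-1$ in the case that matters.
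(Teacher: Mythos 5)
The reduction $\overline{\C_X'}\leqSW(\overline{\C_X})'$ and the high-level plan for the converse (take $H=\id$, translate names on the input side) are both correct and match the paper's strategy. However, the inner reduction function $K$ you propose is not computable, and this gap is exactly where the substantive work lies.

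You set $r_n:=(p_n-1)\widehat{0}$ and claim that $q(k)$ "may simply be chosen as the $k$-th digit of $\langle r_0,r_1,\ldots\rangle$ incremented by one." But the map $p_n\mapsto(p_n-1)\widehat{0}$ is discontinuous, hence not computable. Consider $r_n(0)$: to produce it you must decide whether $p_n$ contains a nonzero entry. If $p_n=\widehat{0}$ you must output $0$; if $p_n=0^m\,(\ell+1)\,\widehat{0}$ you must output $\ell$; no finite prefix of $p_n$ distinguishes these cases, so no Type-2 machine can commit to the first output digit. The underlying obstruction is that the operation $q\mapsto q-1$ \emph{deletes} positions, so you cannot know when to start padding, and this is precisely where a naive "undo $-1$" translation breaks down.

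The paper's proof circumvents this by not padding at all: it fixes an index $k$ with $B_k=\emptyset$ (such a $k$ exists for every computable metric space, e.g., a ball of radius $0$) and defines $K(p)$ by replacing every digit $0$ of $p$ with $k+1$. This is computable entry-wise, $K(p)$ has no zeros so $K(p)-1$ is infinite and position-aligned with $p$, and $\lim(K(p)-1)$ exists exactly when $\lim(p)$ does, with the new digits $k$ contributing only the empty ball to the union and therefore leaving the represented closed set unchanged whenever $\lim(p)-1$ is a valid name. Your convergence argument for $r_n\to p^*-1$ is sound and is morally the same calculation, but it only becomes usable once the filler value is chosen so that $K$ is genuinely continuous; the zero-padding choice does not have this property, while the empty-ball choice does.
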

\begin{proof}
We have $\overline{\C_X'}\leqSW{\overline{\C_X}\,}'$ by Proposition~\ref{prop:jumps}.
We need to prove the inverse reduction. Given a name $p=\langle p_0,p_1,p_2,...\rangle$ such that
$\lim(p)$ exists, we can compute $K(p)$ that replaces all numbers $0$
in $p$ by a fixed number $k+1$ such that $B_k=\emptyset$.
Then also $\lim(K(p)-1)$ exists and if $\lim(p)-1\in\dom(\delta_{\AA_-(X)})$, then
\[\delta_{\AA_-(X)}(\lim(p)-1)=\delta_{\AA_-(X)}(\lim(K(p)-1)).\] 
In the case that $\lim(p)-1$ is only a finite word, $\lim(K(p)-1)$ is a name of some set.
Since a realizer of ${\overline{\C_X}\,}'$ with such an input $p$ can produce any name of a point in $\overline{X}$ as an output,
the reduction also works in this case.
Altogether, this proves ${\overline{\C_X}\,}'\leqSW\overline{\C_X'}$.
\end{proof}

Again, an analogous statement holds if $\C_X$ is replaced by any restriction of it.

\section{Choice on Compact Spaces}
\label{sec:compact}

Even though the assumptions of Lemma~\ref{lem:completion-totalization}~(2) are not satisfied in many cases,
we can often even prove $\T\C_X\leqSW\C_X$ using a computable multi-valued retraction $r:\AA_-(X)\mto\dom(\C_X)$.
We illustrate this with choice on Cantor space $2^\IN$.

\begin{proposition}[Choice on Cantor space]
\label{prop:compact-choice}
$\C_{2^\IN}\equivSW\overline{\C_{2^\IN}}\equivSW\T\C_{2^\IN}$.
\end{proposition}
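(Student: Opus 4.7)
The reductions $\C_{2^\IN}\leqSW\overline{\C_{2^\IN}}\leqSW\T\C_{2^\IN}$ are already provided by Corollary~\ref{cor:completion-choice}, so the entire task is to establish the remaining reduction $\T\C_{2^\IN}\leqSW\C_{2^\IN}$. The plan is to exhibit a single computable map $r\colon\AA_-(2^\IN)\to\AA_-(2^\IN)$ with $r(A)$ always non-empty, $r(A)\supseteq A$ always, and $r(A)=A$ whenever $A\neq\emptyset$. Such a map immediately yields the strong reduction with $r$ as the inner realizer and the identity as the outer realizer: any $x\in\C_{2^\IN}(r(A))$ lies in $A$ when $A\neq\emptyset$, and otherwise trivially in $2^\IN=\T\C_{2^\IN}(\emptyset)$.

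To construct $r$ I pass to the equivalent tree presentation of $\AA_-(2^\IN)$, in which a name of $A$ is an enumeration $\sigma_0,\sigma_1,\ldots$ of finite binary words with $A=2^\IN\setminus\bigcup_n[\sigma_n]$. The feature of Cantor space I intend to exploit is that the clopen approximants $A_n:=2^\IN\setminus\bigcup_{i\leq n}[\sigma_i]$ have \emph{decidable} emptiness, since $A_n=\emptyset$ iff the finitely many cylinders $[\sigma_i]$ cover $2^\IN$, which is a finite combinatorial check on binary words.

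Then I process the input enumeration in stages while maintaining a state that starts out \emph{active}. At stage $n$, if the state is active and adjoining $[\sigma_n]$ to the exclusion list for $r(A)$ leaves a non-empty clopen remainder, I commit to this exclusion; otherwise I switch to \emph{frozen} and from then on output only harmless null indices (the existence of such indices is guaranteed by the totality and precompleteness of $\delta_{\AA_-(2^\IN)}$ from Lemma~\ref{lem:precomplete-closed}). This produces a total name of a closed set $r(A)$ that is computable from the input.

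Verification of the three required properties is then routine: if $A\neq\emptyset$ every $A_n\supseteq A$ is non-empty, freezing never occurs, and $r(A)=A$; if $A=\emptyset$ compactness of $2^\IN$ provides a least $n_0$ with $A_{n_0}=\emptyset$, the construction freezes at stage $n_0$, and $r(A)=A_{n_0-1}$ (or $r(A)=2^\IN$ when $n_0=0$), which is non-empty by minimality of $n_0$. The only step with any subtlety is the freezing mechanism itself and the need to continue outputting a valid total name after the freeze, which the null-index device handles; the decidability of clopen emptiness in $2^\IN$ is the genuine ingredient that drives the whole argument, and is precisely what will be unavailable for other spaces such as $\IR$ or $\IN^\IN$ treated later.
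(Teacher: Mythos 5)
Your argument is correct and is essentially the paper's own proof: the paper likewise builds a computable multi-valued retraction $r\colon\AA_-(2^\IN)\mto\dom(\C_{2^\IN})$ by monitoring the finite partial covers for the first stage at which the clopen remainder becomes empty (a decidable test thanks to effective compactness of $2^\IN$) and then ``freezing'' the enumeration at the last safe stage. Your tree/cylinder presentation and ``null index'' device are cosmetic variants of the paper's ball presentation and its repetition of $p(i-1)$; the decidability of whether a finite union of basic open sets covers the whole space is the common key ingredient.
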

\begin{proof}
We consider $X=2^\IN$.
By Corollary~\ref{cor:completion-choice} it suffices to prove $\T\C_{X}\leqSW\C_{X}$.
Firstly, we note that the set
\[B:=\{\langle k,\langle n_0,...,n_k\rangle\rangle\in\IN:B_{n_0}\cup...\cup B_{n_k}=X\}\]
is computable, as we can easily check whether there is a point $x\in X$ that is not covered by $B_{n_0}\cup...\cup B_{n_k}$.
Hence, given a list $p\in\IN^\IN$ of balls $B_{p(i)}$ with $A=X\setminus\bigcup_{i\in\IN}B_{p(i)}$ we can check
for every $i\in\IN$ whether $B_{p(0)}\cup...\cup B_{p(i-1)}\not=X$ and $B_{p(0)}\cup...\cup B_{p(i)}=X$.
Since $X$ is compact, this test will eventually be positive if and only if $A=\emptyset$.
As soon as this happens, we modify $p$ to $q$ such that $q(j):=p(j)$ for $j<i$ and $q(j):=p(i-1)$ for $j\geq i-1$  (where we assume that $B_{p(-1)}=\emptyset$ if $i=0$).
The map $p\mapsto q$ is a computable realizer for a multi-valued retraction $r:\AA_-(X)\mto\dom(\C_X)$ onto the non-empty sets,
i.e., $r(A)=A$ for $A\not=\emptyset$ and $r(\emptyset)\not=\emptyset$.
Such a retraction is all what is needed to prove $\T\C_X\leqSW\C_X$.
\end{proof}

The problems $f\leqW\C_{2^\IN}$ have been characterized in \cite{BBP12} exactly as the non-deter\-min\-istically
computable problems. Hence we obtain the following by Lemma~\ref{lem:completion-total}.

\begin{corollary}[Non-deterministic computability]
\label{cor:non-deterministic}
Non-deterministic computability is preserved downwards by total Weihrauch reducibility.
\end{corollary}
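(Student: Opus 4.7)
The plan is to chain together three ingredients already established in the paper: the characterization of non-deterministic computability via $\C_{2^\IN}$ from \cite{BBP12}, the strong completeness of $\C_{2^\IN}$ just established in Proposition~\ref{prop:compact-choice}, and the equivalence $f \leqTW g \iff f \leqW \overline{g}$ from Lemma~\ref{lem:completion-total}. Since all the heavy lifting is done by Proposition~\ref{prop:compact-choice}, what remains is essentially a formal derivation.

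Concretely, I would argue as follows. Suppose $g$ is non-deterministically computable, i.e., $g \leqW \C_{2^\IN}$, and suppose $f \leqTW g$. By Corollary~\ref{cor:partial-total}, from $g \leqW \C_{2^\IN}$ we obtain $g \leqTW \C_{2^\IN}$, and the second equivalence in Lemma~\ref{lem:completion-total} then yields $\overline{g} \leqW \overline{\C_{2^\IN}}$. Proposition~\ref{prop:compact-choice} gives $\overline{\C_{2^\IN}} \equivSW \C_{2^\IN}$, so $\overline{g} \leqW \C_{2^\IN}$. On the other hand, $f \leqTW g$ is, by the first equivalence of Lemma~\ref{lem:completion-total}, the same as $f \leqW \overline{g}$. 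Composing the two reductions gives $f \leqW \overline{g} \leqW \C_{2^\IN}$, so $f$ itself is non-deterministically computable.

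There is no real obstacle in this step; the main work was the construction of the computable multi-valued retraction $r : \AA_-(2^\IN) \mto \dom(\C_{2^\IN})$ in Proposition~\ref{prop:compact-choice}, which is what makes $\C_{2^\IN}$ strongly complete. The corollary is then a template argument that in fact applies whenever one has a complete problem characterizing a class of problems as its lower cone; this same pattern will reappear for other complete choice principles such as $\C_n$, $\K_\IN$, and $\ConC_{[0,1]}$. The crucial contrast flagged just before the statement is that the analogous pattern fails for finite-mind-change and Las~Vegas computability precisely because the relevant upper bounds $\C_\IN$ and $\PC_{2^\IN}$ are \emph{not} complete, which is why those classes do not transfer to $\leqTW$.
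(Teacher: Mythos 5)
Your proof is correct and follows essentially the same route as the paper: both use the characterization of non-deterministic computability as the lower $\leqW$-cone of $\C_{2^\IN}$, the strong completeness of $\C_{2^\IN}$ from Proposition~\ref{prop:compact-choice}, and Lemma~\ref{lem:completion-total} to translate between $\leqTW$ and $\leqW$ of completions. The paper states the derivation even more tersely, but the underlying chain of reductions is the one you spelled out.
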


This proof of Proposition~\ref{prop:compact-choice} can be seen as a prototype of a completeness proof for a choice principle and several other choice
principles can be proved to be complete in a similar manner. There are three essential points that one needs
to check: whether the space $X$ is compact, whether a suitable set $B\In\IN$ is computable and whether
there is a suitable computable retraction $r$.
This is the case, for instance, for all non-empty finite spaces $X=n=\{0,...,n-1\}$ with $n\in\IN$ with essentially the same
proof as above.

\begin{proposition}[Finite choice]
\label{prop:finite-choice}
$\C_n\equivSW\overline{\C_n}\equivSW\T\C_n$ for all $n\geq1$.
\end{proposition}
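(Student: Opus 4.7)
The plan is to follow essentially the same template as the proof of Proposition~\ref{prop:compact-choice}, verifying the three ingredients (compactness of $X$, computability of the ``cover set'' $B$, and existence of a suitable computable multi-valued retraction) for the finite space $X=n$. By Corollary~\ref{cor:completion-choice} we already have $\C_n\leqSW\overline{\C_n}\leqSW\T\C_n$, so the only reduction to establish is $\T\C_n\leqSW\C_n$.

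First I would fix $n=\{0,\dots,n-1\}$ endowed with the discrete metric (say $d(i,j)=1$ for $i\neq j$) and a computable numbering $\alpha$, making it a computable metric space. The basic open balls $B_k$ are then effectively enumerated finite subsets of $n$ (either singletons or the whole space, depending on the radius), and $n$ is trivially compact. Since equality of finite subsets of $n$ is decidable, the set
\[B:=\{\langle k,\langle n_0,\dots,n_k\rangle\rangle\in\IN:B_{n_0}\cup\dots\cup B_{n_k}=n\}\]
is computable. This gives us the second ingredient.

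Next I would construct the retraction. Given $p\in\IN^\IN$ describing $A=n\setminus\bigcup_{i\in\IN}B_{p(i)}\in\AA_-(n)$, I search in parallel, for each $i$, whether $B_{p(0)}\cup\dots\cup B_{p(i-1)}\neq n$ and $B_{p(0)}\cup\dots\cup B_{p(i)}=n$ (with the convention $B_{p(-1)}=\emptyset$). By compactness, such an $i$ appears if and only if $A=\emptyset$. If it never appears, I output $q:=p$; if it appears at index $i$, I output $q$ with $q(j):=p(j)$ for $j<i$ and $q(j):=p(i-1)$ for $j\geq i-1$. In either case $\bigcup_j B_{q(j)}\neq n$, so $q$ names a non-empty closed subset of $n$ which, whenever $A\neq\emptyset$, coincides with $A$. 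This $p\mapsto q$ is total computable and realizes a multi-valued retraction $r:\AA_-(n)\mto\dom(\C_n)$.

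Finally, composing this retraction with $\C_n$ on the input side and the identity on the output side yields a strong reduction $\T\C_n\leqSW\C_n$, completing the equivalence chain. There is no real obstacle here, since the finiteness of $n$ trivialises both the compactness argument and the decidability of the cover test; the proof is a direct miniature of Proposition~\ref{prop:compact-choice}. The only minor point worth noting is the boundary case $n=1$, for which $\T\C_1$ is simply the constant map to $0$ and the construction above still delivers a valid $q$ (one where no ball covers $\{0\}$).
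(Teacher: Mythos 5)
Your proof is correct and follows the paper's approach exactly: the paper itself gives no separate argument for this proposition, stating only that the proof of Proposition~\ref{prop:compact-choice} applies with $X=n$ essentially unchanged, and your writeup fills in precisely that transfer (compactness, decidability of the cover set $B$, and the computable multi-valued retraction onto nonempty sets).
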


We note that the above proof does not work in case of $n=0=\emptyset$, since then there is no possible retraction $r$.
In this case we have $\overline{\C_0}\equivSW\C_1$, which one can easily check directly.
We can conclude from this result that the principles $\C_n$ form a strictly increasing chain with respect to total Weihrauch reducibility (as they do with respect to partial Weihrauch reducibility by~\cite[Theorem~5.4]{Wei92c}).

\begin{corollary} $\C_n\lTW\C_{n+1}$ for all $n\geq1$.
\end{corollary}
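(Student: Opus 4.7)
The plan is to deduce this from the classical strict chain $\C_n \lW \C_{n+1}$ established by Weihrauch~\cite[Theorem~5.4]{Wei92c} (which the authors have just cited), combined with the completeness of $\C_n$ that was proved in Proposition~\ref{prop:finite-choice}.

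First I would establish the reduction $\C_n \leqTW \C_{n+1}$. Since $\C_n \leqW \C_{n+1}$ holds (this is the easy direction of the classical chain, realized by adding a spurious extra point and removing it after the choice), Corollary~\ref{cor:partial-total} immediately gives $\C_n \leqTW \C_{n+1}$.

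For strictness, I would assume $\C_{n+1} \leqTW \C_n$ and derive a contradiction with the known separation. By Lemma~\ref{lem:completion-total}, this assumption is equivalent to $\C_{n+1} \leqW \overline{\C_n}$. Proposition~\ref{prop:finite-choice} gives $\overline{\C_n} \equivSW \C_n$, so we obtain $\C_{n+1} \leqW \C_n$, which contradicts $\C_n \lW \C_{n+1}$. Hence $\C_{n+1} \nleqTW \C_n$, and combined with the reduction above this yields $\C_n \lTW \C_{n+1}$.

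No step is really an obstacle here, since all the heavy lifting has already been done: the classical separation $\C_n \lW \C_{n+1}$ is quoted, and the completeness of finite choice (Proposition~\ref{prop:finite-choice}) is what allows us to transport it from the partial to the total Weihrauch lattice. The only mild subtlety is to note that completeness of the \emph{lower} problem $\C_n$ (not the upper one) is exactly what collapses the lower cones in the two lattices (by Proposition~\ref{prop:completeness}), which is precisely the ingredient needed to upgrade a non-reduction in $\leqW$ to a non-reduction in $\leqTW$.
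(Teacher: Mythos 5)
Your proof is correct and is essentially the argument the paper intends (the paper leaves it implicit, stating only that the corollary is "concluded from" Proposition~\ref{prop:finite-choice} together with the cited classical chain). You correctly identify that completeness of $\C_n$ — the problem on the right-hand side of the would-be reduction $\C_{n+1}\leqTW\C_n$ — is what lets you transfer the separation, via Lemma~\ref{lem:completion-total} turning $\C_{n+1}\leqTW\C_n$ into $\C_{n+1}\leqW\overline{\C_n}\equivW\C_n$, contradicting Weihrauch's strict chain.
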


In the following we use the {\em parallelization} $\widehat{f}$ of a problem $f$ and the {\em finite parallelization} $f^*$
in a purely non-technical way. Hence we refer the reader to \cite{BGP18} for the definitions.
The choice principle $\C_2$ is also known as $\LLPO$ and hence Proposition~\ref{prop:finite-choice} shows that 
$\LLPO$ is strongly complete. We could also use the fact that the parallelization of a complete problem is complete
(by \cite[Proposition~6.3]{BG20}) to derive Proposition~\ref{prop:compact-choice},
as is known that $\C_{2^\IN}\equivSW\widehat{\C_2}$~\cite[Theorem~8.5]{BG11}.
Likewise it is known that the compact choice principle $\K_\IN$ can be characterized by $\K_\IN\equivSW\C_2^*$~\cite[Proposition~10.9]{BGM12}, which we take
as the definition of $\K_\IN$ for the purposes of this article, and hence, with the help of the fact that
finite parallelization and jumps preserves (strong) completeness (by \cite[Proposition~6.3]{BG20} and Proposition~\ref{prop:jumps}), we arrive at the following conclusion.

\begin{corollary} 
\label{cor:KN}
$\K_\IN\equivSW\overline{\K_\IN}$ and $\K_\IN'\equivSW\overline{\K_\IN'}$.
\end{corollary}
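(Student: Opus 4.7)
The plan is to derive both equivalences by chaining together previously established closure properties of strong completeness, without any new technical work. The key observation is that the statement is really two applications of the same schema: start from a known complete problem and apply operations that preserve strong completeness.

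For the first equivalence $\K_\IN\equivSW\overline{\K_\IN}$, I would first invoke Proposition~\ref{prop:finite-choice} with $n=2$ to obtain that $\C_2$ is strongly complete, i.e.\ $\C_2\equivSW\overline{\C_2}$. Next, I would use the defining identity $\K_\IN\equivSW\C_2^*$ (taken as the definition of $\K_\IN$ in this paper, following \cite[Proposition~10.9]{BGM12}). Finally, I would apply the fact from \cite[Proposition~6.3]{BG20} that finite parallelization preserves strong completeness: since $\C_2$ is strongly complete, so is $\C_2^*$, and hence so is $\K_\IN$.

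For the second equivalence $\K_\IN'\equivSW\overline{\K_\IN'}$, I would simply apply Proposition~\ref{prop:jumps} to the strongly complete problem $\K_\IN$ obtained above. That proposition states that the jump of a strongly complete problem is again strongly complete, which delivers $\K_\IN'\equivSW\overline{\K_\IN'}$ immediately.

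There is no substantive obstacle here; the corollary is stated precisely to package the two preservation results (finite parallelization and jumps) together. The only thing to be careful about is the order of operations: one must pass from $\C_2$ to $\C_2^*\equivSW\K_\IN$ first, and only then take the jump, rather than trying to argue about $\C_2'$ directly. Both preservation lemmas are used as black boxes, so the proof is essentially a one-line chain of implications.
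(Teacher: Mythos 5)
Your proposal is correct and matches the paper's own reasoning essentially verbatim: the paper derives strong completeness of $\K_\IN$ from $\K_\IN\equivSW\C_2^*$, strong completeness of $\C_2$ (Proposition~\ref{prop:finite-choice}), and preservation of strong completeness under finite parallelization \cite[Proposition~6.3]{BG20}, and then obtains the jump statement via Proposition~\ref{prop:jumps}. Nothing to add.
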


The proof of Proposition~\ref{prop:compact-choice} can also be transferred to the case of connected choice.

\begin{proposition}[Connected choice]
\label{prop:connected-choice}
$\ConC_{[0,1]}\equivSW\overline{\ConC_{[0,1]}}\equivSW\T\ConC_{[0,1]}$.
\end{proposition}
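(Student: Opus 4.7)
By Corollary~\ref{cor:completion-choice} applied with connected choice in place of choice, we already have $\ConC_{[0,1]}\leqSW\overline{\ConC_{[0,1]}}\leqSW\T\ConC_{[0,1]}$, so it will suffice to establish the reverse reduction $\T\ConC_{[0,1]}\leqSW\ConC_{[0,1]}$. Following the template of Proposition~\ref{prop:compact-choice}, my plan is to construct a computable multi-valued retraction $r:\AA_-([0,1])\mto\dom(\ConC_{[0,1]})$, i.e., a computable map sending each $A\in\AA_-([0,1])$ to a non-empty closed interval with $r(A)=A$ whenever $A$ is itself a non-empty closed interval.

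The candidate is $r(A):=[\inf A,\sup A]$ on non-empty inputs, since this coincides with $A$ exactly when $A$ is connected. The decisive computability ingredient is that, by compactness of $[0,1]$, for every rational $q\in[0,1]$ the predicate ``the basic open balls enumerated so far cover $[0,q]$'' is uniformly decidable in the input, and once it holds we know $q<\inf A$; the situation on the right, for $\sup A$, is symmetric. Dovetailing these certifications I will compute monotonically non-decreasing rationals $a_n$ and non-increasing rationals $b_n$ and enumerate, as negative information for the output set $B$, the balls covering $[0,a_n)$ and $(b_n,1]$. For $A=[\alpha,\beta]$ a non-empty closed interval every rational $q<\alpha$ and every rational $q>\beta$ will eventually be certified, so $a_n\to\alpha$, $b_n\to\beta$, and thus $B=[\alpha,\beta]=A$, confirming the retraction property. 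For $A$ non-empty but disconnected, $B\supseteq A$ is a possibly larger non-empty closed interval, which is a valid output because $A\notin\dom(\ConC_{[0,1]})$ means $\T\ConC_{[0,1]}(A)=[0,1]$.

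The single subtle point will be the case $A=\emptyset$: here naive enumeration would eventually drive the $a_n$ past the $b_n$ and yield $B=\emptyset$. To prevent this, I will perform the update $a_{n+1}:=\max(a_n,q)$ only when $q\leq b_n$, and symmetrically for $b_n$; if the proposed update would make the interval degenerate, it is simply skipped. The hard part of the argument will be to verify that this safety check is vacuous whenever $A$ is a genuine non-empty closed interval $[\alpha,\beta]$, which follows from the fact that any certified $[0,q]\In A^c$ forces $q<\alpha\leq\beta<b_n$, so the retraction property $r(A)=A$ is preserved. With these observations in place, the reduction $\T\ConC_{[0,1]}\leqSW\ConC_{[0,1]}$ is realized precisely as in Proposition~\ref{prop:compact-choice}, feeding $B=r(A)$ into $\ConC_{[0,1]}$ and returning the resulting point unchanged.
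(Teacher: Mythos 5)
Your proof is correct and takes essentially the same route as the paper: both establish $\T\ConC_{[0,1]}\leqSW\ConC_{[0,1]}$ by building a computable multi-valued retraction $r:\AA_-([0,1])\mto\dom(\ConC_{[0,1]})$ that outputs the convex-hull interval $[\inf A,\sup A]$ when $A$ is nonempty and connected, using the decidability (via compactness of $[0,1]$) of ``the finitely many balls seen so far cover $[0,q]$'' to drive monotone endpoint approximations, together with a guard that freezes the endpoints once the left- and right-covered regions would overlap, so that $A=\emptyset$ still yields a nonempty output. The paper phrases the endpoint computation as explicit rational values $l(m),r(m)$ obtained from the finite prefix $m$, while you dovetail over rational test points $q$ with a max/min update and a degeneracy check; these are equivalent presentations of the same retraction, so the argument is sound.
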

\begin{proof}
We proceed as in the proof of Proposition~\ref{prop:compact-choice} with $X=[0,1]$.
By Lemma~\ref{lem:completion-totalization} and analogously to Corollary~\ref{cor:completion-choice} it suffices to prove $\T\ConC_{[0,1]}\leqSW\ConC_{[0,1]}$. 
For $m=\langle k,\langle n_0,...,n_k\rangle\rangle\in\IN$ we define $l(m):=\sup\{x\in[0,1]:[0,x]\In B_{n_0}\cup...\cup B_{n_k}\}$
and $r(m):=\inf\{y\in[0,1]:[y,1]\In B_{n_0}\cup...\cup B_{n_k}\}$. 
Here we assume $l(m):=0$ and $r(m):=1$ if the respective sets are empty.
Then the set
\[B:=\{m\in\IN:l(m)\leq r(m)\}\]
is computable, as the values $l(m)$ and $r(m)$ can be computed as rational numbers.
Hence, given a list $p\in\IN^\IN$ of balls $B_{p(i)}$ with $A=X\setminus\bigcup_{i\in\IN}B_{p(i)}$ we generate
a list $q$ of numbers of open rational intervals $[0,l(m))$ and $(r(m),1]$ with $m=\langle i,\langle p(0),...,p(i)\rangle\rangle$ 
as long as $l(m)\leq r(m)$ and we indefinitely continue with the last rational intervals with this property if eventually $l(m)>r(m)$ (which means that $A=\emptyset$).
Due to compactness of $[0,1]$ it is guaranteed that $q$ is a name of the set $A$, if this set $A$ is a non-empty closed and connected set
and it is a name of some other non-empty closed and connected set, otherwise. That is 
The map $p\mapsto q$ is a computable realizer for a multi-valued retraction $r:\AA_-([0,1])\mto\dom(\ConC_{[0,1]})$ 
with $r(A)=A$ for non-empty connected $A\In[0,1]$, and $r(A)$ is some non-empty connected subset of $[0,1]$ otherwise.
Such a retraction is all what is needed to prove $\T\ConC_{[0,1]}\leqSW\ConC_{[0,1]}$.
\end{proof}

Some important choice problems are also co-complete.
We prove a rather technical but fairly general result about restrictions of choice first.

\begin{proposition}
\label{prop:C-co-total}
Let $D\In\AA_-[0,1]$ be such that $[a,b]\in D$ and $\C_{[0,1]}|_D\leqW\C_{[a,b]}|_D$ for all $[a,b]\In[0,1]$ with $a<b$.
Then $\C_{[0,1]}|_D$ is co-complete and co-total.
\end{proposition}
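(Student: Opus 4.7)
The plan is to show co-totality of $f := \C_{[0,1]}|_D$; co-completeness will then follow at once from Corollary~\ref{cor:co-completeness-totality}. So assume $f \leqW \T g$ via some computable $H, K$. By the hypothesis, $f \leqW \C_{[a,b]}|_D$ for any nondegenerate subinterval $[a,b] \In [0,1]$, so it is enough to produce such an $[a,b]$ for which $\C_{[a,b]}|_D \leqW g$ and then compose.

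I extract $[a,b]$ from a continuity argument for $H$ in the style of the proof of Proposition~\ref{prop:single-valuedness}. Fix a basis index $k$ with $B_k = \emptyset$ (possible by the same considerations as in the proof of Proposition~\ref{prop:completion-jump-choice}) and let $p_0 := \widehat{k}$, which is a $\delta_{\AA_-([0,1])}$-name of $[0,1] \in D$. Choose any realizer $G_0 \vdash \T g$, set $r_0 := G_0 K(p_0)$, and let $y_0 \in [0,1]$ be the point named by $H\langle p_0, r_0\rangle$. Continuity of $H$ produces $N_0, n_0 \in \IN$ and $\epsilon > 0$ such that whenever $k^{N_0} \prefix p'$ and $r_0|_{n_0} \prefix r'$ hold, the output $H\langle p', r'\rangle$ is a $\delta_{[0,1]}$-name of a point in $B(y_0, \epsilon)$. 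I then pick any nondegenerate $[a,b] \In [0,1]$ disjoint from $\overline{B(y_0,\epsilon)}$; the hypothesis guarantees $[a,b] \in D$.

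The crux of the argument is the claim that for every name $q$ of every $A \in D$ with $A \In [a,b]$, the sequence $K(k^{N_0} \cdot q)$ represents a point of $\dom(g)$. Since $k$ is the index of an empty ball, $k^{N_0} \cdot q$ is another name of $A$ and extends $k^{N_0}$. If the claim failed for some such $q$, I would modify $G_0$ to a realizer $G \vdash \T g$ differing from it only at the input $K(k^{N_0}\cdot q)$, where I would set $G(K(k^{N_0}\cdot q)) := r_0$; this remains a valid realizer because $r_0$ is a valid output name and $\T g$ is unconstrained on inputs outside $\dom(g)$. The continuity clause then forces $H\langle k^{N_0}\cdot q, r_0\rangle$ to be a name of a point in $B(y_0, \epsilon)$, yet this point must also lie in $A \In [a,b]$, which is disjoint from $B(y_0, \epsilon)$---a contradiction. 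Hence the claim holds, and the pair $K'(q) := K(k^{N_0}\cdot q)$, $H'\langle q, s\rangle := H\langle k^{N_0}\cdot q, s\rangle$ witnesses $\C_{[a,b]}|_D \leqW g$: for any realizer $G \vdash g$, trivially extending $G$ to a realizer of $\T g$ leaves $GK'(q)$ unchanged since $K'(q) \in \dom(g)$, and so $H'\langle q, GK'(q)\rangle$ names a point of $A$. Composing with the hypothesis yields $f \leqW g$, so $f$ is co-total, and therefore also co-complete by Corollary~\ref{cor:co-completeness-totality}.

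The main technical obstacle I anticipate is the edge case in which $K(k^{N_0}\cdot q)$ coincides with $K(p_0)$, since the modified realizer must still agree with $G_0$ there; but the value $r_0$ chosen for $GK(k^{N_0}\cdot q)$ is exactly $G_0 K(p_0) = r_0$, so no inconsistency arises and the contradiction goes through unchanged. A secondary minor point is the availability of an empty basis ball, which is arranged exactly as in the proof of Proposition~\ref{prop:completion-jump-choice}.
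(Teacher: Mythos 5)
Your proof is correct and follows essentially the same route as the paper's: fix a name of $[0,1]$ and a realizer of $\T g$, invoke continuity of $H$ to obtain a finite modulus, choose a nondegenerate subinterval $[a,b]$ whose members cannot serve as outputs (you exclude $\overline{B(y_0,\epsilon)}$; the paper instead uses the two reference intervals $[0,\tfrac13]$ and $[\tfrac23,1]$ together with a precision bound), and then show by a realizer-modification argument that every name of a set in $D$ contained in $[a,b]$ must be mapped by $K$ into $\dom(g)$, after which composing with the hypothesis $\C_{[0,1]}|_D\leqW\C_{[a,b]}|_D$ finishes the job. The only minor points to tighten are the phrasing ``continuity produces $N_0,n_0$ and $\epsilon$'' (you should first fix $\epsilon<\tfrac12$ and then let continuity produce $N_0,n_0$, so that a nondegenerate interval disjoint from $\overline{B(y_0,\epsilon)}$ exists) and an implicit restriction to non-empty $A$, which the paper makes explicit with a ``without loss of generality'' remark.
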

\begin{proof}
Without loss of generality, we assume that $D$ contains only non-empty sets.
We prove that $\C_{[0,1]}|_D$ is co-total. By Corollary~\ref{cor:co-completeness-totality} it follows that it is also co-complete.
Let $g:\In (X,\delta_X)\mto (Y,\delta_Y)$ be some problem.
We assume that $\C_{[0,1]}|_D\leqW\T g$ is witnessed by computable $H,K:\In\IN^\IN\to\IN^\IN$. 
Let $G\vdash\T g$ and let $p$ be a name of $[0,1]$.
Then $H\langle p,GK(p)\rangle$ determines a real number $x$ with precision $\varepsilon<\frac{1}{3}$ after reading only a finite prefix of $w\prefix p$.
We now consider the set
\[A:=\{J\in D:(\exists p\in w\IN^\IN)(\delta_{\AA_-([0,1])}(p)=J\mbox{ and }\delta_XK(p)\not\in\dom(g))\}.\]
We claim that there is some $[a,b]\In[0,1]$ with $a<b$ and such that for all $J\In[a,b]$ with $J\in D$ we have $J\not\in A$.
Let us assume the contrary. Then for $I_0:=[0,\frac{1}{3}]$ and $I_1:=[\frac{2}{3},1]$ there are $J_i\In I_i$ with $J_i\in D$ such that $J_i\in A$ for $i\in\{0,1\}$.
This implies that there are names $p_i\in w\IN^\IN$ of $J_i$ for $i\in\{0,1\}$ such that $K(p_i)$ is not a name of a point in $\dom(g)$.
Hence, there is a realizer $G_1$ of $\T g$ with $GK(p)=G_1K(p)=G_1K(p_i)$ for $i\in\{0,1\}$.
This is a contradiction since the distance between $I_0$ and $I_1$ is $\frac{1}{3}$.
Hence, we have proved the claim and there is a $[a,b]\In[0,1]$ with the desired properties.
That means that $K,H$ also witness $\C_{[a,b]}|_D\leqW g$, where we use that $\delta_{\AA_-([0,1])}|_{w\IN^\IN}\equiv\delta_{\AA_-([0,1])}$.
This implies $\C_{[0,1]}|_D\leqW\C_{[a,b]}|_D\leqW g$, which means that $\C_{[0,1]}|_D$ is co-total. 
\end{proof}

This result can be readily applied to several important variants of choice.
In particular, we obtain the following.
We note that $\C_{2^\IN}\equivSW\C_{[0,1]}$ by \cite[Corollary~4.6]{BBP12}. 

\begin{corollary}
\label{cor:CCI-co-total}
$\C_{2^\IN}$ and $\ConC_{[0,1]}$ are co-complete and co-total.
\end{corollary}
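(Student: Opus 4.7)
The plan is to apply Proposition~\ref{prop:C-co-total} in both cases by choosing the appropriate subset $D\In\AA_-[0,1]$ and verifying the two hypotheses: that every closed interval $[a,b]\In[0,1]$ with $a<b$ belongs to $D$, and that $\C_{[0,1]}|_D\leqW\C_{[a,b]}|_D$ holds. In both cases the second hypothesis will follow from a straightforward affine rescaling argument, and this rescaling is the only computation that actually needs to be carried out.

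First I would treat $\C_{2^\IN}$. Since $\C_{2^\IN}\equivSW\C_{[0,1]}$ by \cite[Corollary~4.6]{BBP12}, and since co-completeness and co-totality are clearly preserved under $\equivW$ (they are phrased purely in terms of reducibility and are symmetric in the problems involved), it suffices to prove the statement for $\C_{[0,1]}$. I would choose $D:=\{A\in\AA_-[0,1]:A\neq\emptyset\}$, so that $\C_{[0,1]}|_D=\C_{[0,1]}$ and the first hypothesis of Proposition~\ref{prop:C-co-total} is trivial. For the second hypothesis, given $[a,b]\In[0,1]$ with $a<b$, the affine bijection $\phi:[0,1]\to[a,b]$, $x\mapsto a+(b-a)x$, is computable with a computable inverse and lifts to a computable bijection $\phi_*:\AA_-[0,1]\to\AA_-[a,b]$ sending non-empty sets to non-empty sets. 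Hence given a name of $A\In[0,1]$ with $A\in D$, one computes a name of $\phi_*(A)$, applies $\C_{[a,b]}|_D$ to obtain some $y\in\phi_*(A)$, and returns $\phi^{-1}(y)\in A$. This witnesses $\C_{[0,1]}|_D\leqSW\C_{[a,b]}|_D$, so Proposition~\ref{prop:C-co-total} yields co-completeness and co-totality of $\C_{[0,1]}$, and therefore of $\C_{2^\IN}$.

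The case of $\ConC_{[0,1]}$ is entirely analogous, with $D:=\{A\in\AA_-[0,1]:A$ is non-empty and connected$\}$. Every $[a,b]\in D$, so the first hypothesis holds. Since $\phi$ is a homeomorphism it preserves connectedness, so $\phi_*$ restricts to a computable bijection between the non-empty connected closed subsets of $[0,1]$ and those of $[a,b]$; the same rescaling argument as above then witnesses $\ConC_{[0,1]}=\C_{[0,1]}|_D\leqSW\C_{[a,b]}|_D=\ConC_{[a,b]}$, and Proposition~\ref{prop:C-co-total} applies.

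There is no real obstacle here beyond ensuring that the affine rescaling is genuinely computable on the level of closed-set representations; this is routine since $\phi$ maps basic open balls in $[0,1]$ to basic open balls in $[a,b]$ with rational parameters. The main conceptual point, already encapsulated in Proposition~\ref{prop:C-co-total}, is that $\C_{[0,1]}$ behaves uniformly under passage to arbitrarily small subintervals, and both $D$'s above inherit this uniformity.
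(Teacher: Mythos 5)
Your proof is correct and takes essentially the same route the paper does: the paper presents this corollary as a direct application of Proposition~\ref{prop:C-co-total}, noting $\C_{2^\IN}\equivSW\C_{[0,1]}$ and leaving the choice of $D$ and the rescaling reduction implicit. Your writeup simply makes explicit the two natural choices of $D$ (all non-empty closed sets, respectively all non-empty connected closed sets) and verifies the hypothesis $\C_{[0,1]}|_D\leqW\C_{[a,b]}|_D$ by the intended affine rescaling argument, together with the (correct and easy) observation that co-completeness and co-totality are invariant under $\equivW$.
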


\section{Positive Choice}
\label{sec:positive-choice}

In this section we want to study $\PC_X$, which is $\C_X$ restricted to sets of positive measures.
This requires that we have a fixed Borel measure on $X$ and we are mostly interested in the
cases $X=2^\IN$, $X=[0,1]$ and $X=\IR$. In the first case we use the uniform measure $\mu$ and in the
second and third case the Lebesgue measure $\lambda$. It is known that $\PC_{2^\IN}\equivSW\PC_{[0,1]}\equivSW\WWKL$
(see \cite[Proposition~8.2]{BGH15a} for these results and the definition of $\WWKL$).
By $\PCC_X$ we denote the restriction of $\PC_X$ to connected sets.
The following observation is a direct consequence of Proposition~\ref{prop:C-co-total}.

\begin{corollary}
\label{cor:PCCI-co-total}
$\PC_{2^\IN}$ and $\PCC_{[0,1]}$ are co-complete and co-total.
\end{corollary}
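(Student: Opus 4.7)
The plan is to apply Proposition~\ref{prop:C-co-total} to each of $\PC_{[0,1]}$ and $\PCC_{[0,1]}$, and then transfer the conclusion for $\PC_{[0,1]}$ to $\PC_{2^\IN}$ via the equivalence $\PC_{2^\IN}\equivSW\PC_{[0,1]}$ quoted from \cite{BGH15a}. Since co-completeness and co-totality are properties of a Weihrauch degree (they are defined purely in terms of $\leqW$), this transfer is free.

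For $\PC_{[0,1]}$ I would take $D:=\{A\in\AA_-[0,1]:\lambda(A)>0\}$, so that $\PC_{[0,1]}=\C_{[0,1]}|_D$. Every interval $[a,b]\In[0,1]$ with $a<b$ lies in $D$, since $\lambda([a,b])=b-a>0$. To verify the reduction hypothesis $\C_{[0,1]}|_D\leqW\C_{[a,b]}|_D$, I would use the affine homeomorphism $\varphi_{a,b}:[0,1]\to[a,b]$, $x\mapsto a+(b-a)x$. Both $\varphi_{a,b}$ and $\varphi_{a,b}^{-1}$ are computable, and the induced pushforward on $\AA_-$ is computable in negative information (translation of basic rational balls by $\varphi_{a,b}^{-1}$). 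Moreover $\lambda(\varphi_{a,b}(A))=(b-a)\lambda(A)$, so $\varphi_{a,b}$ sends sets of positive measure in $[0,1]$ to sets of positive measure contained in $[a,b]$. The reduction then proceeds: given $A\in D$, compute $\varphi_{a,b}(A)$, apply $\C_{[a,b]}|_D$ to obtain some $y\in\varphi_{a,b}(A)$, and output $\varphi_{a,b}^{-1}(y)\in A$. Proposition~\ref{prop:C-co-total} then yields co-completeness and co-totality of $\PC_{[0,1]}$, and hence of $\PC_{2^\IN}$.

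For $\PCC_{[0,1]}$ the strategy is identical, with $D:=\{A\in\AA_-[0,1]:A\text{ connected and }\lambda(A)>0\}$, so that $\PCC_{[0,1]}=\C_{[0,1]}|_D$. Again $[a,b]\in D$, and the affine map $\varphi_{a,b}$ preserves both connectedness and positive measure, so the same rescaling argument shows $\C_{[0,1]}|_D\leqW\C_{[a,b]}|_D$ and Proposition~\ref{prop:C-co-total} applies. The only technical point worth spelling out in the actual proof is the computability of $A\mapsto\varphi_{a,b}(A)$ on negatively represented closed sets, which is routine since $\varphi_{a,b}$ is a computable affine bijection; there is no real obstacle, and the corollary is essentially a direct application.
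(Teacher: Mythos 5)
Your route is the one the paper intends: apply Proposition~\ref{prop:C-co-total} to the restrictions of $\C_{[0,1]}$ defining $\PC_{[0,1]}$ and $\PCC_{[0,1]}$, and then transfer the $\PC_{[0,1]}$ conclusion to $\PC_{2^\IN}$ through the equivalence $\PC_{2^\IN}\equivSW\PC_{[0,1]}$. The transfer is indeed free, since Definition~\ref{def:co-complete-total} is phrased purely in terms of $\leqW$, so co-completeness and co-totality are invariants of the Weihrauch degree.

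There is, however, one small but real slip in the way you verify the hypothesis $\C_{[0,1]}|_D\leqW\C_{[a,b]}|_D$. You assert that $\varphi_{a,b}$ and $\varphi_{a,b}^{-1}$ are computable, but this is true only when $a$ and $b$ are computable reals, whereas the hypothesis of Proposition~\ref{prop:C-co-total} quantifies over all $[a,b]\In[0,1]$ with $a<b$, including intervals with non-computable endpoints. For such $a,b$ the rescaling reduction you describe would need the inner and outer reduction functions to have oracle access to $a$ and $b$, which $\leqW$ does not permit. The fix is cheap: given arbitrary $a<b$, pick rationals $a',b'$ with $a\le a'<b'\le b$; then $\C_{[0,1]}|_D\leqW\C_{[a',b']}|_D$ by your affine argument (now with computable endpoints), and $\C_{[a',b']}|_D\leqW\C_{[a,b]}|_D$ because the former is literally a restriction of the latter (any $J\In[a',b']$ with $J\in D$ satisfies $J\In[a,b]$), so the identity witnesses the reduction. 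Alternatively, one can note that the interval produced inside the proof of Proposition~\ref{prop:C-co-total} is always one of the rational intervals $[0,\tfrac13]$ or $[\tfrac23,1]$, so only rational endpoints ever need to be handled. Either way, with this patch your argument is complete and matches the paper's intent.
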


The following result allows us to show that neither $\PC_{2^\IN}$ nor $\PCC_{[0,1]}$ are complete.

\begin{proposition}
\label{prop:PCC-PC}
$\overline{\PCC_{[0,1]}}\nleqW\PC_{[0,1]}$.
\end{proposition}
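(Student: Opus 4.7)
The plan is to argue by contradiction. Assume that $\overline{\PCC_{[0,1]}}\leqW\PC_{[0,1]}$ via computable $H,K$. The key observation is that a realizer of $\PC_{[0,1]}$ is only defined on names of closed sets of positive Lebesgue measure, so $K$ must produce such a name from every $p\in\IN^\IN$. Applied to $p=\widehat 0$ (which names $\bot\in\overline{\AA_-([0,1])}$ via the precompletion, since $\widehat 0-1=\varepsilon$), this yields a set $A_0$ with $\lambda(A_0)>0$ named by $K(\widehat 0)$.

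By continuity of $K$, for every $n$ there exists $N_n$ such that $K$ sends every $p\in 0^{N_n}\IN^\IN$ to an output sharing the first $n$ entries with $K(\widehat 0)$, and hence naming a subset of $C_n:=[0,1]\setminus\bigcup_{i<n}B_{K(\widehat 0)(i)}$. The sequence $(C_n)$ decreases to $A_0$. Crucially, prepending zeros to $p\in\IN^\IN$ does not change the point of $\overline{\AA_-([0,1])}$ it names, because the precompletion $p\mapsto p-1$ collapses zero entries to empty words. So for any non-degenerate interval $[a,b]\In[0,1]$ with $[a,b]\cap A_0=\emptyset$, one can take an arbitrary name $q$ of $[a,b]$ and form $p:=0^{N_n}q$, which also names $[a,b]$. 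For $n$ chosen large enough that $[a,b]\cap C_n=\emptyset$ (possible by compactness, since $C_n\searrow A_0$ and $[a,b]\cap A_0=\emptyset$), the set $B_p\In C_n$ named by $K(p)$ is then disjoint from $[a,b]$, and yet $H(p,\cdot)$ must map every name of a point in $B_p$ to a name of a point in $[a,b]$.

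To force a contradiction, I would pick two intervals $[a_1,b_1],[a_2,b_2]\In[0,1]\setminus A_0$ that are disjoint and separated by a distance much greater than the continuity modulus of $H$ on inputs sharing the prefix $0^{N_n}$, then take corresponding names $p_1,p_2\in 0^{N_n}\IN^\IN$. If one can arrange that $B_{p_1}\cap B_{p_2}$ contains a point $y$, then there exists a realizer $G$ of $\PC_{[0,1]}$ with $GK(p_1)=GK(p_2)$ equal to a fixed name $r$ of $y$. Continuity of $H$ then forces $H(p_1,r)$ and $H(p_2,r)$ to agree on a long prefix of their outputs; but these prefixes must name points in disjoint intervals $[a_1,b_1]$ and $[a_2,b_2]$, which is impossible once the agreement length is tighter than the separation.

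The main obstacle I anticipate is arranging $B_{p_1}\cap B_{p_2}\neq\emptyset$ with a common oracle name, which requires controlling $K$ beyond the initial agreement region. One natural route is to choose $p_1,p_2$ with a much longer common prefix beyond $0^{N_n}$, thus forcing $K(p_1)$ and $K(p_2)$ to agree further so that $B_{p_1}$ and $B_{p_2}$ share a substantial initial carving of positive measure. Alternatively, one can appeal to the Las Vegas characterization: if $\overline{\PCC_{[0,1]}}\leqW\PC_{[0,1]}$ then $\overline{\PCC_{[0,1]}}$ would be Las Vegas computable, and a measure-theoretic argument on the set of oracle answers forcing outputs into a chosen $[a,b]$ should preclude uniformity across the family of prepended names. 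The edge case $A_0=[0,1]$ (so every $C_n$ equals $[0,1]$ and no interval outside $C_n$ is available) must be treated separately by exploiting inputs of $\overline{\PCC_{[0,1]}}$ outside $\dom(\PCC_{[0,1]})$, for example names of singletons whose precompletions have only sparse non-zero content, on which $\PC_{[0,1]}$ is undefined; the need for $K$ to manufacture positive measure uniformly there should yield the inconsistency directly.
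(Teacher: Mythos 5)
There is a genuine gap in the central step of your sketch. The claim ``Continuity of $H$ then forces $H(p_1,r)$ and $H(p_2,r)$ to agree on a long prefix of their outputs'' does not follow from the setup. Continuity of $H$ only gives: if the inputs agree on a prefix, the outputs agree on some (possibly much shorter) prefix, with a modulus that is \emph{not} under your control. Here $\langle p_1,r\rangle$ and $\langle p_2,r\rangle$ agree only up to roughly the length of $0^{N_n}$ (plus any shared prefix of $q_1,q_2$ that you might arrange), and beyond that the names $q_1$ of $[a_1,b_1]$ and $q_2$ of $[a_2,b_2]$ diverge. Nothing prevents $H$ from reading far beyond this shared prefix before it emits a single non-zero digit of its $\overline{[0,1]}$-output, at which point it can legitimately commit to $[a_1,b_1]$ on the first branch and to $[a_2,b_2]$ on the second. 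So the announced contradiction does not materialize. Put differently, you are trying to choose $n$ large so that $C_n$ misses the two intervals, but enlarging $n$ does not lengthen the region where $p_1$ and $p_2$ agree relative to what $H$ is permitted to read, and the inner call to $\PC_{[0,1]}$ buys you nothing here because the common oracle answer $r$ does not prevent $H$ from inspecting $p_i$ directly.

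The paper does not run a bare continuity argument at all: it introduces the auxiliary total problem $P:\AA_-[0,1]\mto\overline{[0,1]}$ that acts like $\ConC_{[0,1]}$ on proper intervals $[a,b]$ and returns anything else otherwise, observes $P\leqW\overline{\PCC_{[0,1]}}$, and then appeals to the nontrivial separation $\ConC_{[0,1]}\nleqW\PC_{[0,1]}$ from Brattka--Gherardi--H\"olzl (Proposition~15.1 of that paper), noting that the same proof applies verbatim to $P$. That cited separation is a substantial measure-theoretic argument about Las Vegas computability, not a two-interval continuity trick; your sketch hints at this (``appeal to the Las Vegas characterization'') but does not supply it. The remaining issues you flag yourself --- arranging $B_{p_1}\cap B_{p_2}\neq\emptyset$ with a common name, and the degenerate case $A_0=[0,1]$ --- are real, but they are secondary to the continuity gap above. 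As it stands, the proposal is an outline with an unjustified key step, not a proof.
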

\begin{proof}
We consider the problem 
\[P:\AA_-[0,1]\mto\overline{[0,1]},A\mapsto\left\{\begin{array}{ll}
A & \mbox{if $A=[a,b]\In[0,1]$ with $a<b$}\\
\overline{[0,1]} & \mbox{otherwise}
\end{array}\right.\]
Clearly, $P\leqW\overline{\PCC_{[0,1]}}$ and hence it suffices to show $P\nleqW\PC_{[0,1]}$.
In \cite[Proposition~15.1]{BGH15a} we proved $\ConC_{[0,1]}\nleqW\PC_{[0,1]}$ and literally
the same proof can be used to show $P\nleqW\PC_{[0,1]}$.
This is because the proof does only exploit the values of $\ConC_{[0,1]}$ on non-singleton intervals
and the fact that $\ConC_{[0,1]}$ is also somehow defined on singletons. In both respects, $P$ behaves
like $\ConC_{[0,1]}$. The fact that the output is considered on $\overline{[0,1]}$ instead of $[0,1]$
also causes no changes, since we only exploit outputs that are actually in $[0,1]$.
\end{proof}

As a consequence of this result we obtain that positive choice is actually not complete.

\begin{corollary}
\label{cor:PC-PCC}
$\PC_{2^\IN}\lW\overline{\PC_{2^\IN}}$ and $\PCC_{[0,1]}\lW\overline{\PCC_{[0,1]}}$.
\end{corollary}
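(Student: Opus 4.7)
The plan is to derive both strict separations as direct consequences of Proposition~\ref{prop:PCC-PC}, which supplies the key non-reduction $\overline{\PCC_{[0,1]}}\nleqW\PC_{[0,1]}$. The $\leqW$ direction of each claim is free, since $f\leqW\overline{f}$ holds for every problem by the fact that completion is a closure operator (stated after Lemma~\ref{lem:completion-total}). Hence the work lies entirely in ruling out the reverse reductions.

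For $\PCC_{[0,1]}\lW\overline{\PCC_{[0,1]}}$, the first step is to observe that $\PCC_{[0,1]}\leqW\PC_{[0,1]}$, because connected positive choice is just positive choice restricted to a subclass of the domain, so a realizer of $\PC_{[0,1]}$ solves $\PCC_{[0,1]}$ without any preprocessing. Assuming for contradiction that $\overline{\PCC_{[0,1]}}\leqW\PCC_{[0,1]}$, composing with the previous reduction would give $\overline{\PCC_{[0,1]}}\leqW\PC_{[0,1]}$, contradicting Proposition~\ref{prop:PCC-PC}. This yields the strict separation.

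For $\PC_{2^\IN}\lW\overline{\PC_{2^\IN}}$ the plan is analogous, using in addition the equivalence $\PC_{2^\IN}\equivSW\PC_{[0,1]}$ recalled at the start of Section~\ref{sec:positive-choice} and the monotonicity of completion with respect to $\leqW$ (Proposition~5.4 of \cite{BG20}, as quoted in Section~\ref{sec:completion}). Starting from $\PCC_{[0,1]}\leqW\PC_{[0,1]}\equivW\PC_{2^\IN}$ and applying completion on both sides, we obtain $\overline{\PCC_{[0,1]}}\leqW\overline{\PC_{2^\IN}}$. If one had $\overline{\PC_{2^\IN}}\leqW\PC_{2^\IN}$, then chaining with $\PC_{2^\IN}\equivW\PC_{[0,1]}$ would yield $\overline{\PCC_{[0,1]}}\leqW\PC_{[0,1]}$, again contradicting Proposition~\ref{prop:PCC-PC}.

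There is no real obstacle here beyond making sure that the equivalences and restrictions invoked are already on the page: the equivalence $\PC_{2^\IN}\equivSW\PC_{[0,1]}$, the trivial reduction $\PCC_{[0,1]}\leqW\PC_{[0,1]}$, the monotonicity of completion, and the previously established Proposition~\ref{prop:PCC-PC}. The whole proof is thus a short diagram chase; the genuine mathematical content sits in Proposition~\ref{prop:PCC-PC}, where the separation technique from \cite[Proposition~15.1]{BGH15a} is adapted to the total setting.
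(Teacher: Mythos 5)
Your proof is correct and takes essentially the same route the paper intends: both separations are extracted from Proposition~\ref{prop:PCC-PC} by noting $\PCC_{[0,1]}\leqW\PC_{[0,1]}\equivW\PC_{2^\IN}$ and chasing reductions, with completion's monotonicity (which indeed follows from Lemma~\ref{lem:completion-total} together with Corollary~\ref{cor:partial-total}, even though the closure-operator statement you cite is phrased for $\leqTW$) closing the argument for the $\PC_{2^\IN}$ case.
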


The problems $f\leqW\PC_{2^\IN}$ have been characterized in \cite{BGH15a} exactly as the {\em Las Vegas computable} problems $f$.
Hence we obtain the following by Corollary~\ref{cor:completion-total}.

\begin{corollary}[Las Vegas computability]
\label{cor:Las-Vegas}
Las Vegas computability is not preserved downwards by (strong) total Weihrauch reducibility.
\end{corollary}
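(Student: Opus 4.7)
The plan is to derive the non-preservation by a direct contradiction that only assembles facts already in hand. First I would invoke the characterization from \cite{BGH15a} recalled in the paragraph preceding the statement: a problem $f$ is Las Vegas computable if and only if $f\leqW\PC_{2^\IN}$. In particular, $\PC_{2^\IN}$ itself is Las Vegas computable.

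Next, I would observe that by Corollary~\ref{cor:completion-total} we have $\overline{\PC_{2^\IN}}\equivSTW\PC_{2^\IN}$, and hence in particular $\overline{\PC_{2^\IN}}\leqTW\PC_{2^\IN}$ (and the same holds for $\leqSTW$). If Las Vegas computability were preserved downwards by (strong) total Weihrauch reducibility, then combining these two facts would force $\overline{\PC_{2^\IN}}$ to be Las Vegas computable, i.e., $\overline{\PC_{2^\IN}}\leqW\PC_{2^\IN}$.

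Finally, this would directly contradict Corollary~\ref{cor:PC-PCC}, which asserts the strict separation $\PC_{2^\IN}\lW\overline{\PC_{2^\IN}}$. Hence the assumption of downward preservation must fail, for both $\leqTW$ and $\leqSTW$ (the strong case reducing to the non-strong one via Corollary~\ref{cor:partial-total}, applied in the direction $f\leqSTW g\TO f\leqTW g$, which is immediate from the definitions).

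There is essentially no obstacle here: all the work has already been spent in establishing Proposition~\ref{prop:PCC-PC} and Corollary~\ref{cor:PC-PCC}, which provide the strict separation between $\PC_{2^\IN}$ and its completion, together with the completeness-characterization machinery developed in Section~\ref{sec:completion}. The only care needed is to make sure the characterization of Las Vegas computability by lower cones of $\PC_{2^\IN}$ is formulated in terms of $\leqW$ (not $\leqTW$), so that applying preservation produces an actual $\leqW$-reduction, which is exactly what Corollary~\ref{cor:PC-PCC} rules out.
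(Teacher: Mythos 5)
Your argument is correct and matches the paper's own proof, which derives the non-preservation from $\PC_{2^\IN}\equivSTW\overline{\PC_{2^\IN}}$ (Corollary~\ref{cor:completion-total}) together with the strict separation $\PC_{2^\IN}\lW\overline{\PC_{2^\IN}}$ (Corollary~\ref{cor:PC-PCC}) and the lower-cone characterization of Las Vegas computability. One cosmetic slip: the implication $f\leqSTW g\TO f\leqTW g$ is not what Corollary~\ref{cor:partial-total} asserts (that corollary passes from partial to total reducibilities, not from strong to non-strong), but as you note it is indeed immediate from Definition~\ref{def:Weihrauch-reducibility}, so the argument stands.
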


Now we want to prove that $\overline{\PCC_{[0,1]}}\nleqW\PC_\IR$ holds. 
For this purpose it is useful to use fractality as a property.
We recall that a problem $f$ is called a {\em fractal}~\cite{BGM12}, if there is a problem $F:\In\IN^\IN\mto\IN^\IN$
such that $F\equivW f$ and $F|_A\equivW F$ holds for every clopen $A\In\IN^\IN$ with $A\cap\dom(F)\not=\emptyset$.
If $F$ can be chosen to be total, then $f$ is called a {\em total fractal} and if we can replace $\equivW$ by $\equivSW$, then
we speak of a strong (total) fractal. In \cite[Lemma~15.5]{BGH15a} it was proved that $\ConC_{[0,1]}$ is a total fractal.
We follow the lines of that proof to obtain the following result.

\begin{lemma}
\label{lem:PCC-strong-fractal}
$\overline{\PCC_{[0,1]}}$ is a strong total fractal.
\end{lemma}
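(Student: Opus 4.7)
The approach is to adapt the total-fractal construction of \cite[Lemma~15.5]{BGH15a} for $\ConC_{[0,1]}$, strengthening it to $\equivSW$ and accommodating the completion and the positive-measure restriction. I plan to take $F := \T\PCC_{[0,1]}^\r$, the totalized realizer form of $\PCC_{[0,1]}$ with respect to the precomplete representation of $\AA_-([0,1])$ from Lemma~\ref{lem:precomplete-closed} and a precomplete representation of $[0,1]$. By Lemma~\ref{lem:completion-totalization-realizer} this yields $F \equivSW \overline{\PCC_{[0,1]}}$ with $F$ total, so it suffices to verify that $F|_A \equivSW F$ for every nonempty clopen $A \In \IN^\IN$. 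The inclusion $A \into \IN^\IN$ immediately gives $F|_A \leqSW F$.

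For the reverse reduction $F \leqSW F|_A$ I plan a rescaling argument. I will fix a basic cylinder $w\IN^\IN \In A$ whose prefix $w$, interpreted under the precompleted representation, commits to a finite union of rational open balls $U_w$ that does not cover $[0,1]$, and compute a rational subinterval $[c,d] \In [0,1] \setminus U_w$ with $c < d$. I then define a computable $K : \IN^\IN \to w\IN^\IN$ that extends $w$ by a computable encoding which (i) adds a fixed finite list of balls covering $[0,1] \setminus ([c,d] \cup U_w)$ and (ii) encodes the affine rescalings $x \mapsto c + (d-c)x$ of the balls named by $p-1$. Then $K(p)$ names a rescaled copy $A_p' \In [c,d]$ of the set $A_p$ encoded by $p$, with positive measure and connectedness preserved by the affine rescaling. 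The output map $H$ applies the inverse affine rescaling on Cauchy names of points in $[c,d]$ and passes names of $\bot \in \overline{[0,1]}$ through unchanged, so that $HGK$ realizes $F$ for every total realizer $G$ of $F|_A$.

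The main obstacle is the selection of a cylinder $w\IN^\IN \In A$ with $U_w \subsetneq [0,1]$: this is easy when $A$ contains a cylinder whose prefix is all zeros (so $w - 1 = \varepsilon$ and $U_w = \emptyset$), but may fail if every cylinder contained in $A$ commits to a ball already covering $[0,1]$. I expect this to be resolved, following the construction of \cite[Lemma~15.5]{BGH15a}, by a more careful choice of Baire representative $F$ in the $\equivSW$-class of $\overline{\PCC_{[0,1]}}$---for instance via a delayed-commitment or padded encoding of $\overline{\AA_-([0,1])}$ that ensures no finite prefix forces full coverage of $[0,1]$---after which the rescaling argument applies uniformly to every nonempty clopen $A$. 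Adapting the argument from $\ConC_{[0,1]}$ to $\overline{\PCC_{[0,1]}}$ is then straightforward, since positive measure is preserved by affine rescaling with $c < d$ and the completion uniformly absorbs invalid inputs into the $\bot$-fiber.
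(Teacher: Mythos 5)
Your outline has the right shape---pass to a totalized realizer form, then rescale into a cylinder---but the step you flag as ``the main obstacle'' is precisely the mathematical content of the lemma, and your proposal leaves it unresolved. You correctly observe that with the $\AA_-([0,1])$-representation a finite prefix $w$ may already commit to a cover of $[0,1]$, in which case $F|_{w\IN^\IN}$ becomes trivially computable and cannot be $\equivSW F$. Saying you expect this to be fixed ``by a more careful choice of Baire representative'' via ``a delayed-commitment or padded encoding'' names the kind of fix needed but does not supply it, and the construction is not obvious.

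The paper's proof resolves exactly this point by changing coordinates: it uses $\PCC_{[0,1]}\equivSW\B_I^-$ where $\B_I^-:\In\IR_<\times\IR_>\to\IR$, $(a,b)\mapsto[a,b]$ on $\dom(\B_I^-)=\{(a,b):a<b\}$, with the input given as an increasing sequence $(a_n)$ and a decreasing sequence $(b_n)$. The only degenerate instances are those with $a=b$, and no finite prefix of a pair of rational sequences can force $a=b$; moreover, one can computably repair an arbitrary pair of rational sequences into a monotone, consistent one. After this repair the extension $F=GK$ is undefined only on names of pairs with $a=b$, and for any clopen $w\IN^\IN$ the prefix $u:=w-1$ determines a finite prefix of $K(u\IN^\IN)$ encoding a nondegenerate rational interval $[a,b]$, into which one rescales via a computable bijection $T:\IR\to(a,b)$. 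Notice also a smaller problem with your rescaling step: even when $U_w\subsetneq[0,1]$, the complement of a finite union of open balls need not contain a nondegenerate interval (it can be a finite set of points), so ``$U_w$ doesn't cover $[0,1]$'' is not enough to extract $[c,d]$ with $c<d$. The $\B_I^-$ coordinates avoid this too, since the finite prefix hands you a genuine rational interval $[a,b]$ with $a<b$ directly.
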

\begin{proof}
In \cite[Proposition~3.6]{BG11a} it was proved that $\PCC_{[0,1]}\equivSW\B_I^-$, where
\[\B_I^-:\In\IR_<\times\IR_>\to\IR,(a,b)\mapsto[a,b]\]
with $\dom(\B_I^-):=\{(a,b)\in\IR^2:a<b\}$. ($\PCC_{[0,1]}$ was called $\C_I^-$ in \cite{BG11a}.)
Here $\IR_<$ and $\IR_>$ are represented by representations $\rho_<$ and $\rho_>$
as limits of increasing and decreasing sequences of rational number, respectively~\cite{Wei00}. 
We consider the problem $G:\In\IN^\IN\mto\IN^\IN$ that maps every name of a pair $(a,b)\in\IR_<\times\IR_>$
with $a<b$ to any name of any point $y\in\IR$ with $a\leq y\leq b$ and that is undefined for other inputs. 
Then $G=(\B_I^-)^\r\equivSW\B_I^-$.
There is a computable function $K:\IN^\IN\to\IN^\IN$ that maps every pair $\langle p,q\rangle\in\IN^\IN$,
where $p,q\in\IN^\IN$ are interpreted as sequences $(a_n)_n$ and $(b_n)_n$ of rational numbers, to a pair $\langle p',q'\rangle$ that satisfies the following
conditions: $p'$ and $q'$ encode increasing and decreasing sequences $(c_n)_n$ and $(d_n)_n$ of rational numbers, respectively, with $c_n<d_n$
and if $(a_n)_n$ and $(b_n)_n$ are also increasing and decreasing, respectively, with $a_n<b_n$ and $\sup_{n\in\IN}a_n\leq\inf_{n\in\IN}b_n$, then 
$c_n=a_n$ and $d_n=b_n$ for all $n\in\IN$. Such a computable $K$ can be realized by going through the sequences $(a_n)_n$ and $(b_n)_n$
and as long as $a_0\leq a_1\leq...\leq a_k$ and $b_0\geq b_1\geq ...\geq b_k$ and $a_k<b_k$ we choose $c_i:=a_i$ and $d_i:=b_i$ for $i=0,...,k$
and as soon as one of the conditions is violated, we just continue with the last consistent pair (in the case that there is no such pair, we use $c_i:=0$ and $d_i:=1$).
Then $F:=GK:\In\IN^\IN\mto\IN^\IN$ is an extension of $G$, which is only undefined if the input is a name of a pair $(a,b)\in\IR_<\times\IR_>$ with $a=b$. 
We also have $F\equivSW G\equivSW\B_I^-\equivSW\PCC_{[0,1]}$.
Hence, it suffices to show that $\overline{F}: \overline{\IN^\IN}\mto\overline{\IN^\IN}$ is a strong total fractal.
In fact, we claim that $\overline{F}^\r\leqSW\overline{F}^r|_A$ for every clopen $A:=w\IN^\IN\In\IN^\IN$.
Let $u:=w-1$. Then $u$ determines a finite prefix $v$ of $K(u\IN^\IN)$ of the same length as $u$ and this
prefix encodes a rational interval $[a,b]$ with $a<b$; if $u=v=\varepsilon$, then we assume $[a,b]:=[0,1]$.
Now we can use a computable bijective map $T:\IR\to(a,b)$ and its computable inverse $T^{-1}$ to reduce $\overline{F}^\r$ to $\overline{F}^\r|_A$.
Hence, $\overline{F}^\r$ and $\overline{F}$ are strong total fractals.
\end{proof}

Using this lemma we can apply a choice elimination result by Le Roux and Pauly~\cite[Theorem~2.4]{LRP15a} to obtain the following corollary. 
We use the {\em compositional product} of problems defined by $f*g:=\max_{\leqW}\{f_0\circ g_0:f_0\leqW f,g_0\leqW g\}$ \cite{BGM12,BP18}.

\begin{corollary}
\label{cor:PCC-PCR}
$\overline{\PCC_{[0,1]}}\nleqW\PC_\IR$.
\end{corollary}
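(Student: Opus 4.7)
The plan is to show that $\PC_\IR$ factors through a compositional product of the form $\PC_{[0,1]} * \C_\IN$ (or $*\,\C_\IR$), and then to apply the choice elimination theorem of Le Roux and Pauly together with the strong total fractality established in Lemma~\ref{lem:PCC-strong-fractal} and the separation in Proposition~\ref{prop:PCC-PC}.

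First, I would establish a decomposition $\PC_\IR \leqW \PC_{[0,1]} * \C_\IN$. Given a closed $A\In\IR$ of positive Lebesgue measure presented in $\AA_-(\IR)$, there exists some $n\in\IN$ such that $A\cap[-n,n]$ has positive measure; from the negative information on $A$ one can enumerate the complement of the set of such $n$ (since strict lower bounds on $\lambda(A\cap[-n,n])$ are r.e.\ in the negative data). Thus the set of good $n$ is co-c.e., i.e.\ a point of $\AA_-(\IN)$, yielding an application of $\C_\IN$. Having located $n$, one then solves $\PC_{[-n,n]}$ on the restricted set, and $\PC_{[-n,n]}\equivSW\PC_{[0,1]}$ by an affine rescaling. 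This furnishes the claimed reduction, and in particular $\PC_\IR\leqW\PC_{[0,1]}*\C_\IN\leqW\PC_{[0,1]}*\C_\IR$.

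Second, I would argue by contradiction. Assume $\overline{\PCC_{[0,1]}}\leqW\PC_\IR$. Combined with the decomposition above, this yields
\[\overline{\PCC_{[0,1]}}\leqW\PC_{[0,1]}*\C_\IR.\]
By Lemma~\ref{lem:PCC-strong-fractal}, $\overline{\PCC_{[0,1]}}$ is a strong total fractal. The Le Roux--Pauly choice elimination theorem (\cite[Theorem~2.4]{LRP15a}) states that for a total fractal $f$ and any problem $g$, if $f\leqW g*\C_\IR$ (or $*\,\C_\IN$, depending on the formulation used), then already $f\leqW g$. Applying this with $g=\PC_{[0,1]}$ yields $\overline{\PCC_{[0,1]}}\leqW\PC_{[0,1]}$, contradicting Proposition~\ref{prop:PCC-PC}. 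Hence the assumption fails and $\overline{\PCC_{[0,1]}}\nleqW\PC_\IR$.

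The main obstacle is verifying that the hypotheses of the Le Roux--Pauly elimination theorem are met in the present setting: one needs a total fractal (strong total, even), which is exactly why Lemma~\ref{lem:PCC-strong-fractal} was proved with the stronger adjective, and one needs a decomposition of $\PC_\IR$ whose second factor is of the form permitted by the elimination theorem. The decomposition step requires care to ensure the ``search for a good $n$'' is genuinely realized by $\C_\IN$ (i.e.\ that co-c.e.\ information on the set of admissible $n$ can be extracted uniformly from the negative data on $A$ and a positive lower bound for $\lambda(A)$); this is the only non-formal computation needed, and it is routine.
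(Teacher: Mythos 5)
Your overall strategy matches the paper's exactly: decompose $\PC_\IR$ as $(\text{something})*\C_\IN$, invoke the Le~Roux--Pauly choice elimination theorem using the strong total fractality of $\overline{\PCC_{[0,1]}}$ from Lemma~\ref{lem:PCC-strong-fractal}, and then contradict Proposition~\ref{prop:PCC-PC}. The paper does this by citing the known equivalence $\PC_\IR\equivW\PC_{2^\IN}*\C_\IN$ from \cite{BGH15a} (and $\PC_{2^\IN}\equivSW\PC_{[0,1]}$, so it is the same decomposition as yours up to strong equivalence).

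However, the argument you give for the decomposition $\PC_\IR\leqW\PC_{[0,1]}*\C_\IN$ is not correct, and you should not treat it as ``routine.'' From an $\AA_-(\IR)$-name of $A$ one enumerates open balls in the complement of $A$, so what is semi-decidable about $\lambda(A\cap[-n,n])$ is strict \emph{upper} bounds, not strict lower bounds as you assert: $\lambda(A\cap[-n,n])$ is computable as an element of $\IR_>$, not $\IR_<$. Consequently the predicate ``$\lambda(A\cap[-n,n])=0$'' is $\Pi^0_2$ in the input (for every $\varepsilon$ there is a stage at which the upper bound drops below $\varepsilon$), not $\Sigma^0_1$, and the set of good $n$ is $\Sigma^0_2$, not co-c.e. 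Hence you cannot directly hand the set of good $n$ to $\C_\IN$ as an $\AA_-(\IN)$-name. (Note also that no positive lower bound on $\lambda(A)$ is part of the input to $\PC_\IR$ as defined here.) The correct move is simply to cite the equivalence $\PC_\IR\equivW\PC_{2^\IN}*\C_\IN$ from \cite[Corollary~6.4, Proposition~7.4]{BGH15a}, as the paper does; once that is in place, the rest of your argument (choice elimination via the strong total fractal, then Proposition~\ref{prop:PCC-PC}) goes through exactly as in the paper.
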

\begin{proof}
By~\cite[Corollary~6.4, Proposition~7.4]{BGH15a} we have $\PC_\IR\equivW\PC_{2^\IN}*\C_\IN$. 
Hence, $\overline{\PCC_{[0,1]}}\leqW\PC_\IR$ would imply $\overline{\PCC_{[0,1]}}\leqW\PC_{2^\IN}$ by \cite[Theorem~2.4]{LRP15a}, since $\overline{\PCC_{[0,1]}}$ is a total fractal by Lemma~\ref{lem:PCC-strong-fractal}.
This contradicts Proposition~\ref{prop:PCC-PC}. 
\end{proof}

This implies in particular $\overline{\PC_{2^\IN}}\nleqW\PC_\IR$.
In \cite[Proposition~3.8]{BG11a} it was proved that $\PCC_{[0,1]}\leqW\C_\IN$ holds.
This implies that $\PCC_{[0,1]}$ is not a total fractal (since otherwise $\PCC_{[0,1]}\leqW\id$ would follow by \cite[Theorem~2.4]{LRP15a},
which is incorrect as $\PCC_{[0,1]}$ is not computable).
The cited reduction also implies $\overline{\PCC_{[0,1]}}\leqW\overline{\C_\IN}$.
However, by Lemma~\ref{lem:PCC-strong-fractal} and \cite[Theorem~2.4]{LRP15a} we obtain the following conclusion.

\begin{corollary}
\label{cor:PCC-CN}
$\overline{\PCC_{[0,1]}}\nleqW\C_\IN$.
\end{corollary}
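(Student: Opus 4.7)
The plan is to argue by contradiction, exactly following the template of the preceding Corollary~\ref{cor:PCC-PCR}. Suppose $\overline{\PCC_{[0,1]}}\leqW\C_\IN$. By Lemma~\ref{lem:PCC-strong-fractal}, $\overline{\PCC_{[0,1]}}$ is a strong total fractal, so the hypotheses of the choice elimination result \cite[Theorem~2.4]{LRP15a} are met. That theorem lets us strip off the $\C_\IN$ factor from the right-hand side for total fractals, yielding $\overline{\PCC_{[0,1]}}\leqW\id$, i.e., $\overline{\PCC_{[0,1]}}$ would be computable.

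From here I would derive the contradiction by descending to the original $\PCC_{[0,1]}$. Since $\iota:[0,1]\to\overline{[0,1]}$ is a computable embedding by Corollary~\ref{cor:completion}, any computable realizer of $\overline{\PCC_{[0,1]}}$ immediately gives a computable realizer of $\PCC_{[0,1]}$ by precomposition with $\iota$ on inputs in $\dom(\PCC_{[0,1]})$ (note the output on such inputs is guaranteed to lie in $[0,1]\In\overline{[0,1]}$ by definition of $\overline{f}$). But $\PCC_{[0,1]}$ is not computable (as already noted in the proof of Corollary~\ref{cor:PCC-PCR}), a contradiction.

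The only non-routine ingredient is checking that Lemma~\ref{lem:PCC-strong-fractal} delivers precisely the kind of fractal needed to invoke \cite[Theorem~2.4]{LRP15a}, but this has already been done once in Corollary~\ref{cor:PCC-PCR} (with $\PC_{2^\IN}$ in place of $\id$), so no new obstacle arises. Thus the entire argument reduces to two lines, parallel to Corollary~\ref{cor:PCC-PCR} but simpler, since no compositional product decomposition of the right-hand side is needed.
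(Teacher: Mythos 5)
Your proof is correct and matches the paper's (implicit) argument exactly: the paper proves this corollary by invoking Lemma~\ref{lem:PCC-strong-fractal} together with the Le Roux--Pauly choice elimination theorem, after having noted in the surrounding text that $\PCC_{[0,1]}$ is not computable. One small simplification you could make in the final step: instead of reasoning via the embedding $\iota$ by hand, you can just cite that $\PCC_{[0,1]}\leqW\overline{\PCC_{[0,1]}}$ holds automatically since completion is a closure operator (this is also immediate from Corollary~\ref{cor:completion-choice} applied to the restriction $\PCC$), so $\overline{\PCC_{[0,1]}}\leqW\id$ directly forces $\PCC_{[0,1]}$ to be computable, a contradiction.
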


In order to get some upper bounds on $\overline{\PC_{2^\IN}}$ and in order to separate it from $\C_{2^\IN}$ it is useful to consider
the {\em negligibility problem}, i.e., the characteristic function of 
sets of measure zero:
\[\NEG:\AA_-(2^\IN)\to\{0,1\},A\mapsto\left\{\begin{array}{ll}
   1 & \mbox{if $\mu(A)=0$}\\
   0 & \mbox{otherwise}
\end{array}\right..\]

It is easy to see that the negligibility problem is equivalent to $\LPO'$.

\begin{lemma}[Negligibility]
\label{lem:negligibility}
$\LPO'\equivSW\NEG$.
\end{lemma}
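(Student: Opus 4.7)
The plan is to use Lemma~\ref{lem:infinity} to reduce the statement to $\NEG\equivSW\INF$, since the infinity problem is computationally more convenient to work with than $\LPO'$ itself. The two reductions can then be proved quite directly.

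For $\NEG\leqSW\INF$, given a name $p$ of $A\in\AA_-(2^\IN)$, the set $A$ is the complement of an open set $U=\bigcup_{n}B_{p(n)}$. Since the basic balls $B_n$ in Cantor space are clopen, the rational numbers $a_k:=\mu(B_{p(0)}\cup\ldots\cup B_{p(k)})$ are computable from $p$ and form an increasing sequence with $a_k\nearrow\mu(U)=1-\mu(A)$. Thus $\mu(A)=0$ iff $\forall n\,\exists k\,(a_k>1-2^{-n})$. First I would produce from $p$ a sequence $q\in\IN^\IN$ by a simple stage construction: maintain a counter $n$ starting at $0$ and, searching through $k$'s, write $1$'s into $q$ while no $k$ with $a_k>1-2^{-n}$ has been found yet, and write a single $0$ and increment $n$ as soon as such a $k$ shows up. Then $q$ has infinitely many zeros iff the counter $n$ is incremented infinitely often iff $\mu(A)=0$, so $\INF(q)=\NEG(A)$, giving a strong reduction.

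For $\INF\leqSW\NEG$, let $C_k:=2^\IN\setminus[0^k]$, so $(C_k)_{k\in\IN}$ is an increasing sequence of clopen sets with $\mu(C_k)=1-2^{-k}$ and $\bigcup_k C_k=2^\IN\setminus\{0^\IN\}$. Given $q\in\IN^\IN$, I would enumerate an open set $U\In 2^\IN$ by the rule: whenever the $k$-th zero appears in $q$, enumerate the finitely many basic cylinders constituting $C_k$ into $U$. If $q$ contains only finitely many zeros, say exactly $N$ of them, then $U=C_N$ and $\mu(2^\IN\setminus U)=2^{-N}>0$; if $q$ contains infinitely many zeros, then $U=\bigcup_k C_k$ has full measure, so $\mu(2^\IN\setminus U)=0$. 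Producing from $q$ a $\delta_{\AA_-(2^\IN)}$-name of $A:=2^\IN\setminus U$ is a routine enumeration, so this yields a strong reduction with $\NEG(A)=\INF(q)$.

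The combination of the two reductions with Lemma~\ref{lem:infinity} establishes $\NEG\equivSW\LPO'$. The main obstacle is not technical difficulty but rather the bookkeeping in the first reduction — one has to be careful to output a genuine sequence in $\IN^\IN$ stage by stage without blocking, which the counter construction handles cleanly. The choice of the nested cylinders $C_k=2^\IN\setminus[0^k]$ in the second reduction is the key trick that makes the measure of the resulting closed set track exactly whether there are finitely or infinitely many zeros in $q$.
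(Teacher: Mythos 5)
Your proof is correct, and both directions are strong reductions as required. The direction $\INF\leqSW\NEG$ is essentially the same as the paper's: the paper encodes $k$ many zeros into the closed set $A=2^\IN\setminus\bigcup_{i=0}^k 1^i0\IN^\IN$, whereas you use $A=[0^N]$ (equivalently $U=\bigcup_k C_k$ with $C_k=2^\IN\setminus[0^k]$); these are the same construction up to relabelling, with $\mu(A)$ equal to $2^{-(k+1)}$ or $2^{-N}$ respectively, vanishing exactly when the zeros are infinite. The direction $\NEG\leqSW\LPO'$ is where you take a genuinely different route. The paper observes that $\mu:\AA_-(2^\IN)\to\IR_>$ is computable, composes with $\lim$ (converting an upper real to a real) and then with $\LPO$ (deciding equality with $0$), directly producing $\NEG=\LPO\circ\lim\circ K$. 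You instead reduce $\NEG$ to $\INF$ by explicitly computing the increasing rational sequence $a_k=\mu(B_{p(0)}\cup\cdots\cup B_{p(k)})$ and running a stage construction that emits a zero each time a new threshold $1-2^{-n}$ is cleared, so that the output has infinitely many zeros iff $\sup_k a_k=1$ iff $\mu(A)=0$. Your version is more self-contained: it replaces the appeal to the measure's computability as a point of $\IR_>$ and the $\LPO\circ\lim$ normal form with a hands-on enumeration, and it routes entirely through Lemma~\ref{lem:infinity}, giving a pleasingly symmetric treatment of both directions. The paper's version is shorter given the cited infrastructure and makes the role of $\LPO$ (equality testing) and $\lim$ (type conversion) conceptually transparent. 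One small point worth keeping in mind: your stage construction must emit output unconditionally (the $1$'s during an unbounded search) so that $q$ is a total sequence even when $\mu(A)>0$; you note this explicitly, which is exactly the care the argument needs.
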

\begin{proof}
We note that $\mu:\AA_-(2^\IN)\to\IR_>$ is computable (see also \cite[Lemma~2.7]{BGH15a}), where $\IR_>$ denotes the set of upper reals
that are represented as an infimum of a decreasing sequence of rational numbers. 
Since the identity $\iota:\IR_>\to\IR,x\mapsto x$ is limit computable, i.e., $\iota\leqSW\lim$, 
and $\LPO$ can be used to decide equality on the reals,
we obtain a computable $K:\AA_-(2^\IN)\mto\IN^\IN$ such that $\NEG=\LPO\circ\lim\circ K$.
This proves $\NEG\leqSW\LPO'$. 

For the other direction we use Lemma~\ref{lem:infinity}
and we prove $\INF\leqSW\NEG$.
Given a sequence $p\in\IN^\IN$, we want to find out whether there are infinitely many $n\in\IN$ with $p(n)=0$.
Hence we compute a name $K(p)$ of the set $A$ with $A=2^\IN\setminus\bigcup_{i=0}^k 1^i0\IN^\IN$, provided that we find $k\in\IN\cup\{\infty\}$ many zeros in $p$.
Then $\mu(A)=0\iff k=\infty\iff \INF(p)=1$. 
Hence $\INF=\NEG\circ\delta_{\AA_-(2^\IN)}\circ K$, which proves $\INF\leqSW\NEG$.
\end{proof}

The negligibility problem can be used to reduce $\T\PC_{2^\IN}$ to $\PC_{2^\IN}''$ and in consequence
to separate $\overline{\PC_{2^\IN}}$ from $\C_{2^\IN}$.

\begin{corollary}[Positive choice]
$\PC_{2^\IN}\lW\overline{\PC_{2^\IN}}\leqW\T\PC_{2^\IN}\lW\C_{2^\IN}$ and we have $\T\PC_{2^\IN}\leqW\PC_{2^\IN}''$.
\end{corollary}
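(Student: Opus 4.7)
The plan is to handle the four parts of the statement separately, exploiting the structural results of this section. The strict reduction $\PC_{2^\IN}\lW\overline{\PC_{2^\IN}}$ is exactly the content of Corollary~\ref{cor:PC-PCC}, and the middle reduction $\overline{\PC_{2^\IN}}\leqW\T\PC_{2^\IN}$ is part of Corollary~\ref{cor:completion-choice}. For $\T\PC_{2^\IN}\leqW\C_{2^\IN}$, I would observe that $\T\PC_{2^\IN}(A)\supseteq\T\C_{2^\IN}(A)$ for every $A\in\AA_-(2^\IN)$ (when $\mu(A)=0$ any output is admissible for $\T\PC_{2^\IN}$), so that the trivial reduction $\PC_{2^\IN}\leqSW\C_{2^\IN}$ already witnesses $\T\PC_{2^\IN}\leqSW\T\C_{2^\IN}$; combining with $\T\C_{2^\IN}\equivSW\C_{2^\IN}$ from Proposition~\ref{prop:compact-choice} gives the claim.

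The bound $\T\PC_{2^\IN}\leqW\PC_{2^\IN}''$ is the central technical step. The plan is to leverage Lemma~\ref{lem:negligibility}: since $\NEG\equivSW\LPO'$ and $\LPO'\leqW\lim\circ\lim$, the value $\NEG(A)$ is computable from a name of $A$ via a double limit of $\{0,1\}$-valued approximations. From this I would construct, uniformly from a name $p$ of $A$, a double-limit-name of the modified set $B$ defined by $B:=A$ if $\mu(A)>0$ and $B:=2^\IN$ otherwise: at each double-stage $(n,m)$, emit either the given name $p$ of $A$ or a fixed computable name of $2^\IN$ (for instance $\widehat{0}$, assuming $B_0=\emptyset$) according to the current approximation of $\NEG(A)$; a coordinatewise check shows that the double limit is a valid name of $B$. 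Since $\PC_{2^\IN}''\equivSW\PC_{2^\IN}\circ\lim\circ\lim$ on realizers, feeding this name into $\PC_{2^\IN}$ yields a point of $B\subseteq\T\PC_{2^\IN}(A)$, which is a valid answer for $\T\PC_{2^\IN}$ in both the measure-positive and measure-zero cases.

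For the strict inequality $\C_{2^\IN}\nleqW\T\PC_{2^\IN}$, the plan is to derive it from the bound $\T\PC_{2^\IN}\leqW\PC_{2^\IN}''$ combined with a separation $\C_{2^\IN}\nleqW\PC_{2^\IN}''$. The main obstacle will be establishing this separation; I would approach it via the strong fractality of $\C_{2^\IN}$ and a choice-elimination argument in the spirit of the proofs of Corollaries~\ref{cor:PCC-PCR} and \ref{cor:PCC-CN}, ruling out that $\C_{2^\IN}$ factors through any compositional product built from $\PC_{2^\IN}$ and jumps. The delicate bookkeeping between the inner limit that carries the $\LPO'$-computation of $\NEG(A)$ and the outer limit that toggles between the two candidate names of $B$ is the only real computational subtlety in the positive direction.
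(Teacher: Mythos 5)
Your handling of the positive directions matches the paper's proof. The reduction $\PC_{2^\IN}\lW\overline{\PC_{2^\IN}}$ is indeed Corollary~\ref{cor:PC-PCC}; the reduction $\overline{\PC_{2^\IN}}\leqW\T\PC_{2^\IN}$ comes from Corollary~\ref{cor:completion-choice} (strictly speaking, from the remark following it, since $\PC_X$ is a restriction of $\C_X$); your observation that $\T\PC_{2^\IN}(A)\supseteq\T\C_{2^\IN}(A)$ so that any realizer of $\T\C_{2^\IN}$ realizes $\T\PC_{2^\IN}$, combined with $\T\C_{2^\IN}\equivSW\C_{2^\IN}$, is exactly what the paper uses implicitly. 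The reduction $\T\PC_{2^\IN}\leqW\PC_{2^\IN}''$ via a $\lim'$-computation of $\NEG(A)$ that toggles between the given name of $A$ and a fixed name of $2^\IN$ is the paper's intended argument (the paper phrases it as ``$\NEG$ can be used to decide whether the input is in the domain, and $\NEG\leqW\LPO'\leqW\lim'$''), and the ``bookkeeping'' you flag is routine.

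The gap is in the separation. You correctly isolate $\C_{2^\IN}\nleqW\PC_{2^\IN}''$ as the step that forces the strictness $\T\PC_{2^\IN}\lW\C_{2^\IN}$, but you then propose to establish it ``via the strong fractality of $\C_{2^\IN}$ and a choice-elimination argument in the spirit of Corollaries~\ref{cor:PCC-PCR} and \ref{cor:PCC-CN}.'' That tool does not apply here: the Le Roux--Pauly choice elimination \cite[Theorem~2.4]{LRP15a} lets a fractal $f$ absorb a $*\,\C_\IN$ factor (i.e.\ $f\leqW g*\C_\IN\TO f\leqW g$), which is why it works in those corollaries where $\PC_\IR\equivW\PC_{2^\IN}*\C_\IN$ and $\PCC_{[0,1]}\leqW\C_\IN$ appear. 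There is no $\C_\IN$ factor to eliminate from $\PC_{2^\IN}''$, and no analogous ``jump elimination'' is available in this paper's toolbox. The paper simply cites $\C_{2^\IN}\nleqW\PC_{2^\IN}''$ as a known result, namely \cite[Corollary~14.9]{BGH15a}, whose proof is a measure-theoretic/effective-randomness argument rather than a fractal-absorption one. Without that citation (or an independent proof of the separation by a genuinely different method), your proposal does not close.
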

\begin{proof}
By Corollary~\ref{cor:PC-PCC} we have $\PC_{2^\IN}\lW\overline{\PC_{2^\IN}}$.
By the remark after Corollary~\ref{cor:completion-choice}
we obtain $\overline{\PC_{2^\IN}}\leqW\T\PC_{2^\IN}$.
Clearly $\T\PC_{2^\IN}\leqW\PC_{2^\IN}''$, as $\NEG$ can be used to decide whether the input of $ \T\PC_{2^\IN}$ is in its domain, and 
Lemma~\ref{lem:negligibility} implies that $\NEG\leqW\LPO'\leqW\lim'$. 
By \cite[Corollary~14.9]{BGH15a} it is known that $\C_{2^\IN}\nleqW\PC_{2^\IN}''$. Hence,
$\T\PC_{2^\IN}\lW\T\C_{2^\IN}\equivW\C_{2^\IN}$ by Proposition~\ref{prop:compact-choice}.
\end{proof}

In particular, this result shows that $\T\PC_{2^\IN}$ and $\overline{\PC_{2^\IN}}$ are probabilistic 
in the sense defined in \cite{BGH15a}.

\section{Choice on the Natural Numbers}
\label{sec:choice-natural}

In this section we study choice on natural numbers.
Since $\lim_\IN\equivSW\C_\IN$, we get the following conclusion from Corollary~\ref{cor:co-complete-co-total}.

\begin{corollary}
\label{cor:CN-co-total-complete}
$\C_\IN$ is co-complete and co-total.
\end{corollary}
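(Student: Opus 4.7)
The plan is very short because almost all the work is already done. By Corollary~\ref{cor:co-complete-co-total}(1), $\lim_\IN$ is co-total and co-complete, and by the equivalence $\lim_\IN\equivSW\C_\IN$ invoked just before the corollary we have in particular $\lim_\IN\equivW\C_\IN$. So the only remaining step is to observe that co-completeness and co-totality are preserved under $\equivW$, and then to transfer the properties from $\lim_\IN$ to $\C_\IN$.

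For the preservation, I would spell out the following simple chain of equivalences. Suppose $f\equivW h$ and $h$ is co-complete. Then for every problem $g$ we have
\[f\leqW\overline{g}\iff h\leqW\overline{g}\iff h\leqW g\iff f\leqW g,\]
where the middle step uses the co-completeness of $h$ and the outer steps use $f\equivW h$. Hence $f$ is co-complete. The identical argument with $\T g$ in place of $\overline{g}$ shows that co-totality is also preserved under $\equivW$. Applying this to $f:=\C_\IN$ and $h:=\lim_\IN$, and using Corollary~\ref{cor:co-complete-co-total}(1), gives that $\C_\IN$ is co-complete and co-total.

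There is no real obstacle. The only background fact being used beyond what is stated in the excerpt is the standard equivalence $\lim_\IN\equivSW\C_\IN$, which is routine: from a sequence of natural numbers stabilizing at $n$ one reads off a name of the singleton $\{n\}\in\AA_-(\IN)$, and conversely a negative enumeration of $\IN\setminus A$ for a nonempty $A\in\AA_-(\IN)$ allows one to output in the limit the first natural number that never gets excluded.
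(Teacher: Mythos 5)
Your proposal is correct and matches the paper's own (one-line) argument: the paper deduces the corollary from $\lim_\IN\equivSW\C_\IN$ and Corollary~\ref{cor:co-complete-co-total}, leaving the invariance of co-completeness and co-totality under $\equivW$ implicit, which you have simply spelled out.
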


On the other hand, $\C_\IN$ is not complete.
Since it is known by \cite[Theorem~7.12]{BGM12} that $f\leqW\C_\IN$ holds if and only if $f$ is computable with finitely
many mind changes, it suffices to show that $\overline{\C_\IN}$ is not computable with finitely
many mind changes in order to conclude that $\C_\IN\lW\overline{\C_\IN}$ holds.

\begin{proposition}[Choice on natural numbers]
\label{prop:choice-natural}
$\overline{\C_\IN}$ is limit computable and not computable with finitely many mind changes, and $\T\C_\IN$ is not even limit computable. 
\end{proposition}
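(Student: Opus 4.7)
The plan is to treat the three claims separately.

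\emph{Limit computability of $\overline{\C_\IN}$.} Since $\C_\IN$ is finite mind change computable it is in particular limit computable, i.e.\ $\C_\IN\leqW\lim$. As $\lim$ is strongly complete by Proposition~\ref{prop:complete-problems}, Lemma~\ref{lem:completion-total} gives $\overline{\C_\IN}\leqW\overline{\lim}\equivSW\lim$, so $\overline{\C_\IN}$ is limit computable.

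\emph{$\T\C_\IN$ is not limit computable.} Here I will use a Baire class~1 continuity argument. Suppose for contradiction that $\T\C_\IN$ has a limit computable realizer $F=\lim\circ\, G$ with $G$ computable; then $F\colon\IN^\IN\to\IN^\IN$ is Baire class~1 and its set $C$ of continuity points is a dense $G_\delta$ in $\IN^\IN$. Using the standard representation $\delta_\IN(q)=q(0)$, the set $S_\emptyset\In\IN^\IN$ of names of $\emptyset\in\AA_-(\IN)$ is also a dense $G_\delta$, because for each $k\in\IN$ the set of $p$ that eventually enumerate $k$ is open dense. Pick $p_0\in C\cap S_\emptyset$ and set $m^*:=F(p_0)(0)\in\IN$. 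By continuity of $F$ at $p_0$ there is $M$ with $F(p)(0)=m^*$ for all $p$ in the cylinder $U:=p_0|_M\cdot\IN^\IN$. After enlarging $M$ so that $m^*$ is already enumerated by $p_0|_M$, pick some $n\neq m^*$ not yet enumerated and extend $p_0|_M$ to a name $p\in U$ of the singleton $\{n\}\in\AA_-(\IN)$; then $F(p)$ must name $n\in A_p$, giving $F(p)(0)=n\neq m^*$, a contradiction.

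\emph{$\overline{\C_\IN}$ is not finite mind change computable.} I aim to adapt the above. Suppose $\overline{\C_\IN}\leqW\C_\IN$ via computable $H,K$; composing with a finite mind change realizer $G$ of $\C_\IN$ gives a Baire class~1 realizer $F:=H\langle\id,GK\rangle$ of $\overline{\C_\IN}$ at the level of precompletion representations. At a continuity point $p_0\in C\cap S_\emptyset$, the output $F(p_0)$ is a name in $\overline{\IN}$ of either some $m\in\IN$ or of $\bot$. In the former case the first nonzero entry of $F(p_0)$ sits at some position $i$ with value $m+1$, and the argument of the previous paragraph adapts immediately by comparing prefixes of length $i+1$ on a cylinder around $p_0$ and picking $n\neq m$ not yet enumerated. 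The hard part will be the remaining case $F(p_0)=\widehat{0}$ (a name of $\bot$), where continuity only forces $F(p)$ to begin with a long stretch of zeros on a neighborhood of $p_0$, which is compatible with $F(p)$ naming any natural number. To eliminate this case I plan to exploit the single-query structure of a $\leqW\C_\IN$ reduction: along the sequence of singleton names $p_n\in S_n$ with $p_n|_n=p_0|_n$, the computability of $H$ and $K$ forces the value $n+1$ required at the first nonzero position $i_n$ of $F(p_n)$ to be determined by bounded-length prefixes of $p_n$ and of $GK(p_n)$, which cannot continuously track the unbounded parameter $n$, producing the contradiction.
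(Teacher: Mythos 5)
Your parts (1) and (2) are correct, and your Baire-category argument for (2) is a nice self-contained alternative to the paper's route, which derives (2) from (3) together with a cited fact that limit computable problems with output in $\IN$ are automatically finite-mind-change computable. However, part (3) has a real gap that you flag but do not close. When $F(p_0)=\widehat{0}$ at a continuity point $p_0\in S_\emptyset$, continuity only guarantees a long run of zeros in $F(p)$ on a cylinder about $p_0$, which is compatible with $F(p)$ naming any element of $\overline{\IN}$, so no contradiction follows. Your proposed patch --- that the value $n+1$ at the first nonzero position of $F(p_n)$ must be ``determined by bounded-length prefixes'' and ``cannot continuously track the unbounded parameter $n$'' --- does not hold up: the relevant prefix lengths have no reason to stay bounded as $n\to\infty$, and by your own part (1) $\overline{\C_\IN}$ \emph{is} limit computable, so Baire-class-1 realizers \emph{can} continuously track an unbounded output. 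Indeed there are Baire-class-1 realizers of $\overline{\C_\IN}$ with $F(p)=\widehat{0}$ at every continuity point $p\in S_\emptyset$, and a continuity-point argument cannot distinguish those from a finite-mind-change realizer; the distinction that (3) turns on is the mind-change count, not Baire class $1$, and nothing in your single-query sketch supplies it.

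Two ways to close the gap. One is the paper's direct adversarial argument: feed the finite-mind-change machine a name of a nonempty set, wait for the last mind change to commit to some $n_0\in\IN$, extend the prefix read so far to a name of a set excluding $n_0$, forcing a further mind change to some $n_1\neq n_0$, and iterate; the limiting input names an element of $\overline{\AA_-(\IN)}$, which is all of $\dom(\overline{\C_\IN})$, yet triggers infinitely many mind changes. Alternatively, you can actually \emph{derive} (3) from your (2): by Proposition~\ref{prop:retraction-N} there is a retraction $r\colon\overline{\IN}\mto\IN$ computable with finitely many mind changes, and $\iota\colon\AA_-(\IN)\to\overline{\AA_-(\IN)}$ is computable, so $\T\C_\IN\leqW r\circ\overline{\C_\IN}\circ\iota$; if $\overline{\C_\IN}\leqW\C_\IN$ held, then $\T\C_\IN\leqW\C_\IN*\C_\IN\equivW\C_\IN$ would make $\T\C_\IN$ limit computable, contradicting your (2).
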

\begin{proof}
$\C_\IN$ is computable with finitely many mind changes and hence, in particular, limit computable. 
By Corollary~\ref{cor:completion-total} we have $\C_\IN\equivSTW\overline{\C_\IN}$.
Since limit computability is preserved downwards by total Weihrauch reducibility \cite[Proposition~4.9]{BG20},
it follows that $\overline{\C_\IN}$ is limit computable. 
We prove that $\overline{\C_\IN}$ is not computable with finitely many mind changes.
This implies that $\T\C_\IN$ is also not computable with finitely many mind changes by Corollary~\ref{cor:completion-choice}.
Since the output space of $\T\C_\IN$ is $\IN$, this implies that $\T\C_\IN$ is not even limit computable by \cite[Proposition~13.10]{BGM12}.
Let us assume the contrary and let us consider a Turing machine that computes
$\overline{\C_\IN}$ with finitely many mind changes. 
Upon input of a name of $\IN\in\overline{\AA_-(\IN)}$, the machine
eventually has to produce a natural number $n_0$ as output after seeing only a finite
prefix of the input. After this finite prefix the input can be modified to a name
of the set $\IN\setminus\{n_0\}$, in which case the machine has to change its mind and produce
a new output $n_1\not=n_0$ after seeing a longer prefix of the input. Now one can change
the input to an input of $\IN\setminus\{n_0,n_1\}$, in which case the machine has to change
its mind again and it has to produce an output $n_2\not\in\{n_0,n_1\}$.
This process can be continued inductively and it produces a name of co-infinite (possibly empty) set $A\in\overline{\AA_-(\IN)}$
upon which the given machine has to change its mind infinitely often. Since $A\in\dom(\overline{\C_\IN})$,
the machine does not operate with finitely many mind changes on a valid input.
\end{proof}

Proposition~\ref{prop:choice-natural} implies that for the space $X=\IN$ the choice principle $\C_X$, its completion and
its totalization lead to three different degrees.

\begin{corollary}[Choice on natural numbers]
\label{cor:choice-natural}
$\C_\IN\lW\overline{\C_\IN}\lW\T\C_\IN$.
\end{corollary}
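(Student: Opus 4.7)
The plan is to derive the corollary as an immediate consequence of Proposition~\ref{prop:choice-natural} combined with the general inequality $\C_\IN\leqSW\overline{\C_\IN}\leqSW\T\C_\IN$ from Corollary~\ref{cor:completion-choice}. Since the weak reductions are already in hand, only the two strictness claims require work, and each of them is essentially a transfer via a class that is preserved downwards by $\leqW$.

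For the first strictness $\C_\IN\lW\overline{\C_\IN}$, I would argue by contradiction. Suppose $\overline{\C_\IN}\leqW\C_\IN$. By the characterization $f\leqW\C_\IN\iff f$ is computable with finitely many mind changes (cited from \cite[Theorem~7.12]{BGM12} in the paragraph preceding Proposition~\ref{prop:choice-natural}), this would force $\overline{\C_\IN}$ to be computable with finitely many mind changes. But Proposition~\ref{prop:choice-natural} states exactly that $\overline{\C_\IN}$ is \emph{not} computable with finitely many mind changes, a contradiction. Combined with $\C_\IN\leqW\overline{\C_\IN}$ from Corollary~\ref{cor:completion-choice}, this yields the strict reduction.

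For the second strictness $\overline{\C_\IN}\lW\T\C_\IN$, I would again argue by contradiction. Suppose $\T\C_\IN\leqW\overline{\C_\IN}$. Proposition~\ref{prop:choice-natural} tells us that $\overline{\C_\IN}$ is limit computable, and limit computability is preserved downwards by $\leqW$ (and even by $\leqTW$, see \cite[Proposition~4.9]{BG20}). Consequently $\T\C_\IN$ would be limit computable, which again directly contradicts Proposition~\ref{prop:choice-natural}. Together with the reduction $\overline{\C_\IN}\leqW\T\C_\IN$ from Corollary~\ref{cor:completion-choice}, this gives the second strict inequality.

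There is no genuine obstacle in this proof: all the substantial content has been absorbed into Proposition~\ref{prop:choice-natural}, and the corollary is just a compact restatement in terms of the reducibility $\leqW$ rather than in terms of the classes of computable problems. The only thing to be slightly careful about is to invoke the correct downward-preservation facts (finite mind change computability and limit computability under $\leqW$) and to note that the non-strict reductions come from the generic inequalities of Corollary~\ref{cor:completion-choice} applied to $X=\IN$.
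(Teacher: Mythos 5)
Your proof is correct and matches the paper's intent exactly: the paper itself derives this corollary as an immediate consequence of Proposition~\ref{prop:choice-natural} together with the reductions from Corollary~\ref{cor:completion-choice}, using precisely the finite-mind-change characterization of the lower cone of $\C_\IN$ and the downward preservation of limit computability. There is nothing to add.
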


This also means that finite mind change computability is not preserved downwards by total Weihrauch reducibility
and by a contrapositive version of the reasoning used for the proof of \cite[Proposition~4.9]{BG20}, it follows that finite mind change
computability does not respect precompleteness.

\begin{corollary}
\label{cor:finite-mind-change}
Finite mind change computability is not preserved downwards by (strong) total Weihrauch reducibility and does
not respect precompleteness.
\end{corollary}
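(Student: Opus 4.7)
The plan is to derive both statements as direct bookkeeping consequences of Corollary~\ref{cor:choice-natural} combined with the characterization of finite mind change computability as the lower cone of $\C_\IN$ from \cite[Theorem~7.12]{BGM12}.

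First I would handle downward preservation under $\leqSTW$ (which by Corollary~\ref{cor:partial-total} implies the same for $\leqTW$). By the cited characterization, $\C_\IN$ itself is computable with finitely many mind changes, since $\C_\IN \leqW \C_\IN$. On the other hand, by Corollary~\ref{cor:completion-total} we have $\overline{\C_\IN} \equivSTW \C_\IN$, so in particular $\overline{\C_\IN} \leqSTW \C_\IN$. If the class of finite mind change computable problems were preserved downwards by $\leqSTW$, this would force $\overline{\C_\IN}$ to be computable with finitely many mind changes, directly contradicting Proposition~\ref{prop:choice-natural}. This gives the first half of the claim.

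For the second half I would argue by contraposition using the sufficient criterion stated just before Corollary~\ref{cor:completion-total} (recalled from \cite[Proposition~4.9]{BG20}): any class $\CC$ of functions $F:\In\IN^\IN\to\IN^\IN$ that contains the identity, is closed under composition with computable functions, closed under juxtaposition with the identity, and respects precompleteness is automatically preserved downwards by $\leqTW$. The contrapositive: since we have just shown that finite mind change computability is \emph{not} preserved downwards by $\leqTW$, at least one of these four conditions must fail. The first three are easily checked: the identity requires zero mind changes; composing with computable maps on either side does not increase the mind change count (one simply reads/writes extra material without revising output); and juxtaposing with the identity does not introduce any new revisions of output either, since the identity component commits its output once and for all. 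Hence the only condition that can fail is respect for precompleteness, which is exactly the desired conclusion.

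There is no real obstacle here, as the argument is essentially combinatorial bookkeeping once Proposition~\ref{prop:choice-natural} and the cone characterization from \cite[Theorem~7.12]{BGM12} are in hand. The only point requiring a moment of care is the verification of the three trivial closure properties of finite mind change computability, since their routine nature is what makes the failure of precompleteness respect the forced conclusion.
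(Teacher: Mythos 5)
Your proof is correct and follows essentially the same route as the paper: both derive the non-preservation statement from the facts that $\C_\IN$ is finite mind change computable while $\overline{\C_\IN}$ is not (Proposition~\ref{prop:choice-natural}), using $\overline{\C_\IN}\equivSTW\C_\IN$ from Corollary~\ref{cor:completion-total}, and both obtain the failure of respect for precompleteness by contraposing the criterion from \cite[Proposition~4.9]{BG20} after checking the three routine closure properties. One small slip: the passage from $\leqSTW$ to $\leqTW$ non-preservation is the general fact that strong reducibility implies the ordinary one, not Corollary~\ref{cor:partial-total}, which relates \emph{partial} to \emph{total} reducibility rather than strong to non-strong.
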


As $\T\C_\IN\leqW\T\lim$ is easy to see, Propositions~\ref{prop:choice-natural} and \ref{prop:complete-problems}
imply the following.

\begin{corollary}
$\lim\equivSW\overline{\lim}\lW\T\lim$.
\end{corollary}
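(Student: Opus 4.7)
The plan is to establish the chain in three steps, each of which follows from results already in hand.

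First, I would note that $\lim\equivSW\overline{\lim}$ is immediate from Proposition~\ref{prop:complete-problems}, which explicitly lists $\lim$ as strongly complete.

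Second, for $\overline{\lim}\leqW\T\lim$, I would compose the equivalence above with the trivial restriction $\lim\leqSW\T\lim$. The latter holds with $H=K=\id$: any realizer $G$ of $\T\lim$, when fed a name of $x\in\dom(\lim)$, outputs a name of a point in $\T\lim(x)=\lim(x)$, hence serves as a realizer of $\lim$. Concatenating gives $\overline{\lim}\equivSW\lim\leqSW\T\lim$.

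Third, the strict separation $\T\lim\nleqW\overline{\lim}$ I would argue by a limit-computability obstruction. Since $\overline{\lim}\equivSW\lim$, the completion $\overline{\lim}$ is limit computable, and limit computability is preserved downwards by ordinary Weihrauch reducibility (cited as~\cite[Proposition~4.9]{BG20} in the excerpt). Hence a hypothetical reduction $\T\lim\leqW\overline{\lim}$ would make $\T\lim$ limit computable. Combining this with the easy reduction $\T\C_\IN\leqW\T\lim$ noted in the sentence immediately preceding the corollary would force $\T\C_\IN$ to be limit computable as well. This contradicts Proposition~\ref{prop:choice-natural}, which explicitly asserts that $\T\C_\IN$ is \emph{not} even limit computable. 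The whole separation is therefore a short contrapositive, and the only nontrivial ingredient it draws upon is the mind-change argument already carried out in the proof of Proposition~\ref{prop:choice-natural}; no new obstacle needs to be addressed here.
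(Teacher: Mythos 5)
Your proof is correct and follows exactly the route the paper intends: $\lim\equivSW\overline{\lim}$ by Proposition~\ref{prop:complete-problems}, $\overline{\lim}\leqSW\T\lim$ via the trivial restriction $\lim\leqSW\T\lim$, and the strictness by combining $\T\C_\IN\leqW\T\lim$ with the non-limit-computability of $\T\C_\IN$ from Proposition~\ref{prop:choice-natural} and downward preservation of limit computability. The only cosmetic point is that the cited \cite[Proposition~4.9]{BG20} states downward preservation for $\leqTW$, while what you invoke is the standard (and easier) preservation under $\leqW$; either suffices, since $\leqW$ implies $\leqTW$.
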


Since $\lim\equivSW\J$ by \cite[Lemma~8.9]{BBP12} and $\T\J=\J$, this also proves that $f\mapsto \T f$ is not a closure operator with respect to $\leqW$.

We mention that the given proof of Proposition~\ref{prop:choice-natural} does not change in presence of any oracle. 
Hence, $\overline{\C_\IN}$ is not finite mind change computable with respect to any oracle and hence
not even reducible to $\C_\IN$ with respect to the continuous version of Weihrauch reducibility. 
Using a jump inversion property \cite[Theorem~11]{BHK17} and Proposition~\ref{prop:completion-jump-choice}
this yields the following corollary.

\begin{corollary}
\label{cor:CNS}
$\C_\IN'\lW\overline{\C_\IN'}\equivSW{\overline{\C_\IN}\,}'$.
\end{corollary}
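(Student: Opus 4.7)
My plan is to split the corollary into three pieces and address each separately. The equivalence $\overline{\C_\IN'}\equivSW{\overline{\C_\IN}\,}'$ is the easy half: it is a direct instance of Proposition~\ref{prop:completion-jump-choice} with $X=\IN$, since $\IN$ is a (trivially) computable metric space. The reduction $\C_\IN'\leqW\overline{\C_\IN'}$ is immediate from the fact that completion is a closure operator on $\leqSW$ (and hence on $\leqW$), as recalled after Lemma~\ref{lem:completion-total}.

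The real content is the strictness $\overline{\C_\IN'}\nleqW\C_\IN'$. I would argue by contradiction: assume $\overline{\C_\IN'}\leqW\C_\IN'$, i.e., $\overline{\C_\IN}\,'\leqW\C_\IN'$ using the equivalence established above. Then I invoke the jump inversion property \cite[Theorem~11]{BHK17}, which in its standard formulation converts a uniform reduction between the jumps $f'\leqW g'$ into a continuous (topological, oracle-relative) Weihrauch reduction $f\leq^{\mathrm c}_{\mathrm W}g$. Applied to $f=\overline{\C_\IN}$ and $g=\C_\IN$, this would yield $\overline{\C_\IN}\leq^{\mathrm c}_{\mathrm W}\C_\IN$, i.e., reducibility in the presence of some oracle.

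To derive a contradiction, I appeal to the remark immediately preceding the corollary: the diagonalization used in the proof of Proposition~\ref{prop:choice-natural} to show that $\overline{\C_\IN}$ is not computable with finitely many mind changes is a purely topological mind-change argument that relativizes to every oracle. Since $f\leqW\C_\IN$ characterizes finite-mind-change computability (and this characterization relativizes), $\overline{\C_\IN}\leq^{\mathrm c}_{\mathrm W}\C_\IN$ would contradict the relativized version of Proposition~\ref{prop:choice-natural}.

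The main obstacle I expect is the precise invocation of the jump inversion theorem: I need to confirm that \cite[Theorem~11]{BHK17} indeed applies to $\C_\IN$ (it is formulated for a class of problems that must include $\C_\IN$, since the corollary claims to follow from it) and that its conclusion is exactly the continuous/oracle version of Weihrauch reducibility that the relativized Proposition~\ref{prop:choice-natural} refutes. Assuming the theorem gives the expected oracle-relative statement, the argument is a short chain of implications; if it only yields a weaker reducibility, some extra bookkeeping on which ingredients of the mind-change diagonalization relativize may be necessary.
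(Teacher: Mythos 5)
Your proposal matches the paper's argument exactly: the paper states that the proof of Proposition~\ref{prop:choice-natural} relativizes (so $\overline{\C_\IN}$ is not continuously Weihrauch reducible to $\C_\IN$), and then invokes the jump inversion theorem of \cite{BHK17} together with Proposition~\ref{prop:completion-jump-choice}, which is the same decomposition and the same two key ingredients you identify. The caveat you raise about the exact hypotheses of the jump inversion theorem is reasonable diligence, but both you and the paper rely on it in the same way, so there is no divergence.
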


Alternatively, we could also prove Proposition~\ref{prop:choice-natural} by showing that $\overline{\C_\IN}$ is a total fractal.
This fact is useful by itself and will be used later.

\begin{lemma}
\label{lem:CN-strong-fractal}
$\overline{\C_\IN}$ is a strong total fractal.
\end{lemma}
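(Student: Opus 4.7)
The plan is to exhibit a computable pair of functions witnessing that $\overline{\C_\IN}$ can be ``compressed'' into any clopen ball of its Baire-space domain. I take as the witness $F:=\overline{\C_\IN}^\r$, which is total on $\IN^\IN$ by construction and satisfies $F\equivSW\overline{\C_\IN}$ by Lemma~\ref{lem:completion-totalization-realizer}. The restriction $F|_A\leqSW F$ is trivial, so what needs to be shown is $F\leqSW F|_A$ for each non-empty clopen $A=w\IN^\IN$ with $w\in\IN^*$.

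The key is a relabelling bijection. Fix $w\in\IN^*$ and use the precomplete total representation of $\overline{\AA_-(\IN)}$ in which each nonzero entry of a name encodes the removal of a single natural number (equivalent to the standard one, since basic balls in $\IN$ can be taken to be singletons). Under this convention the prefix $w$ removes a finite set $S\In\IN$. Fix a computable bijection $\tau:\IN\to\IN\setminus S$. The computable preprocessing $K:\IN^\IN\to\IN^\IN$ on input $p$ first writes $w$ and then writes the relabelled version of $p$ entry by entry: if $p(n)=k+1$ encodes the removal of $k$, then $K$ writes $\tau(k)+1$ at the corresponding position after $w$, while zeros are passed through as zeros. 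Clearly $K(p)\in w\IN^\IN$.

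Three cases then have to be checked. If $p$ names a non-empty closed $C\in\AA_-(\IN)$, then $K(p)$ names $\tau(C)\In\IN\setminus S$, which is likewise a non-empty closed subset of $\IN$. If $p$ has only finitely many nonzero entries (so that $p$ is a name of $\bot$), then $K(p)$ also has only finitely many nonzero entries and hence names $\bot$. If $p$ names $\emptyset$, then $K(p)$ names $\emptyset$. The computable postprocessing $H:\IN^\IN\to\IN^\IN$ reads its input until a nonzero entry $j+1$ appears: if $j\in\IN\setminus S$ it outputs a name of $\tau^{-1}(j)\in\IN$, and otherwise (including when its input is $\widehat{0}$) it outputs $\widehat{0}$, a name of $\bot$. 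In the first case above, $F|_A(K(p))$ is forced to produce a name of some element of $\tau(C)\In\IN\setminus S$, so $H$ converts it to a name of an element of $C$; in the other two cases the output of $F(p)$ is unconstrained and any value of $H$ is acceptable. This shows $HGK\vdash F$ for all $G\vdash F|_A$, i.e., $F\leqSW F|_A$.

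The main obstacle is the representation bookkeeping: one has to pick a representation of $\overline{\AA_-(\IN)}$ in which an initial segment $w$ really corresponds to removing only a finite set of elements (otherwise $\IN\setminus S$ could degenerate and $\tau$ would not exist), and to check that the completion built on this representation is strongly equivalent to the $\overline{\C_\IN}$ used elsewhere in the paper. Once such a representation is fixed, the relabelling argument above is a clean finite manipulation that produces $K$ and $H$ uniformly in $w$ and yields strong total fractality of $\overline{\C_\IN}$.
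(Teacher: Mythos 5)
Your approach is essentially the same as the paper's: fix $F:=\overline{\C_\IN}^\r$ (realizer version built from a simple representation of $\AA_-(\IN)$ where name entries encode removed numbers), and for clopen $A=w\IN^\IN$ produce $K,H$ that relabel the ambient copy of $\IN$ by a bijection onto the part that survives the prefix $w$. The paper's bijection is the explicit shift $k\mapsto k+m-1$, which forces it to pad with a word $v$ so that the complement of the shift's range $\{0,\dots,m-2\}$ is removed up front; your general bijection $\tau:\IN\to\IN\setminus S$ dispenses with $v$, a modest streamlining of the same idea.

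There is, however, a concrete flaw in your representation bookkeeping that you flag as an ``obstacle'' but do not actually resolve, and the specific representation you describe does not exist. You want a total representation of $\overline{\AA_-(\IN)}=\AA_-(\IN)\cup\{\bot\}$ ``in which each nonzero entry of a name encodes the removal of a single natural number.'' Names of $\bot$ under a completion are exactly those $p$ with only finitely many nonzero entries, so $\widehat{0}$ names $\bot$. But a name of $\IN\in\AA_-(\IN)$ must remove nothing, which under your convention forces all its entries to be $0$, so $\IN$ would have the same name as $\bot$ — a contradiction with surjectivity onto $\overline{\AA_-(\IN)}$. The paper avoids this by working with $\delta^\wp$, the precompletion of $\delta(q):=\IN\setminus\range(q-1)$; in $\delta^\wp$ both $p(n)=0$ and $p(n)=1$ are non-removing, but $1$ additionally serves as an aliveness marker (so $\widehat{1}$ names $\IN$ while $\widehat{0}$ names $\bot$), and $p(n)=k+2$ removes $k$. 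Your construction is salvaged by shifting all your offsets accordingly ($p(n)=k+2\mapsto\tau(k)+2$, entries $0,1$ pass through unchanged, and $H$ decodes a first nonzero output entry $j+2$ to $\tau^{-1}(j)$); with that adjustment your three-case check and the conclusion $HGK\vdash F$ for all $G\vdash F|_A$ go through exactly as the paper's do.
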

\begin{proof}
We consider the representation $\delta$ of $\AA_-(\IN)$ given by $\delta(p):=\IN\setminus\range(p-1)$.
It is easy to see that we have $\delta\equiv\delta_{\AA_-(\IN)}$.
We consider $F:\IN^\IN\mto\IN^\IN$ defined by 
\[F(p):=\left\{\begin{array}{ll}
(\delta_\IN^\wp)^{-1}\circ\C_\IN\circ\delta^\wp(p) & \mbox{if $p\in\dom(\C_\IN\circ\delta^\wp)$}\\
\IN^\IN & \mbox{otherwise}
\end{array}\right.\]
As explained in the proof of Lemma~\ref{lem:completion-totalization-realizer} we have $F\equivSW\overline{\C_\IN}$.
If $w:=a_0...a_k\in\IN^*$ and hence $A:=w\IN^\IN$ is a clopen set, then we can also prove that $F\leqSW F|_A$. We define $K(p)$ such 
that the prefix $w$ is filled up with a word $v$ that contains all numbers up to $m:=\max\{a_0,...,a_k\}+1$ and then for all numbers $i\geq2$ in $\range(p)$ the number $i+m-1$ is added,
i.e., $K(p)=wvq$ with $q(n):=p(n)+m-1$ if $p(n)\geq2$ and $q(n)=p(n)$ if $p(n)\leq1$. Together with a computable function $H$ with $H(r)(n):=\max(0,r(n)-m+1)$ the function $K$ witnesses $F\leqSW F|_A$. 
\end{proof}

It is worth noting that there is also a specific interesting problem below $\overline{\C_\IN}$
that is not below $\C_\IN$. We recall that the {\em Bolzano-Weierstra\ss{} theorem} on the 
two point space $\{0,1\}$ is defined by {$\BWT_2:2^\IN\mto\{0,1\},p\mapsto\{i:i$ is a cluster point of $p\}$}.
This problem was studied in \cite{BGM12}. Above we have already introduced the
weak version $\WBWT_2:2^\IN\mto 2^\IN$ of it that has been studied in \cite{BHK17a}.

\begin{proposition}[Weak Bolzano-Weierstra\ss{} theorem]
\label{prop:WBWT}
We have $\WBWT_2\leqW\overline{\C_\IN}$ and $\WBWT_2\nleqW\C_\IN$.
\end{proposition}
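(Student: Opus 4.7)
The plan is to prove the reduction $\WBWT_2 \leqW \overline{\C_\IN}$ by a parity trick using the completion's tolerance of empty inputs, and to prove the separation $\WBWT_2 \nleqW \C_\IN$ by diagonalising against finite-mind-change realisers. For the reduction, given $p \in 2^\IN$ I construct a closed set $A_p \in \AA_-(\IN)$ by enumerating $\IN \setminus A_p$ on the fly: each time I read a $0$ in $p$ I remove the smallest yet-unremoved odd number from $A_p$, and each time I read a $1$ I remove the smallest yet-unremoved even number. If both symbols occur infinitely often in $p$ then every natural is eventually removed and $A_p = \emptyset$ (and both $0$ and $1$ are cluster points of $p$); if $p$ has only finitely many $1$s then $A_p$ is a cofinal set of even numbers (matching the unique cluster point $0$), and symmetrically if $p$ has only finitely many $0$s then $A_p$ is a cofinal set of odd numbers matching cluster point $1$. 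In every non-empty case the parity of any element of $A_p$ identifies the unique cluster point.

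Route $A_p$ through the computable embedding $\AA_-(\IN) \hookrightarrow \overline{\AA_-(\IN)}$ of Corollary~\ref{cor:completion} and apply $\overline{\C_\IN}$ to obtain a name $r \in \IN^\IN$ of a point in $\overline{\IN}$. I emit $q \in 2^\IN$ bit by bit from $r$: emit $0$s while $r$ contains only zero entries; upon seeing the first non-zero entry $m+1$ in $r$, continue emitting $0$s if $m$ is even and switch to emitting $1$s forever if $m$ is odd; if no non-zero entry ever appears then $r$ names $\bot$ and $q = \widehat{0}$. When $A_p \neq \emptyset$ any answer must lie in $A_p$, and the parity rule picks the correct cluster point; when $A_p = \emptyset$ the definition of $\overline{\C_\IN}$ (since $\emptyset \notin \dom(\C_\IN)$) permits any output, and both $0$ and $1$ are cluster points of $p$ so any emission is valid. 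Since $q$ is computed from $r$ alone, this is in fact a strong reduction.

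For the separation I use the characterisation from \cite{BGM12} that $f \leqW \C_\IN$ is equivalent to $f$ being computable with finitely many mind changes, and I diagonalise. Suppose for contradiction that $F \colon 2^\IN \to 2^\IN$ is a finite-mind-change realiser of $\WBWT_2$, and build $p = 0^{k_1} 1^{k_2} 0^{k_3} \ldots$ adaptively, choosing each block length $k_i$ large enough that by the end of the $i$-th block $F$ has committed output bits witnessing the ``eventually $b_i$'' limit, where $b_i$ is $0$ on odd-indexed blocks and $1$ on even-indexed blocks. The next block reverses the required limit and compels $F$ to revise a previously committed bit, that is, to perform a mind change. Iterating yields a single input $p$ on which $F$ performs infinitely many mind changes, contradicting the finite-mind-change property on this input. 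The main obstacle is the rigorous execution of the ``force a commitment'' step: one must verify that the $k_i$ can really be chosen so that the opposite-limit extension cannot be accommodated merely by appending new bits, but requires revising already-emitted ones. This uses continuity of $F$'s computation together with the observation that a sequence converging to $b$ must be identically $b$ on some tail, so a sufficiently long prefix of $F$'s output is bound to contain bits whose values are dictated by the current limit and hence must be revised when that limit flips.
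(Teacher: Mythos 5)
For the reduction $\WBWT_2\leqW\overline{\C_\IN}$ your parity encoding is correct and is a genuinely different construction from the paper's. The paper removes a fresh natural number from the set on each $0$ read in $p$, so the produced set is empty iff $p$ has infinitely many zeros, and then outputs $1$s until the chosen answer is verified to have been removed. Your scheme instead removes odds on $0$s and evens on $1$s, so $A_p=\emptyset$ happens exactly when \emph{both} digits are cluster points; this makes the ``arbitrary answer on the empty set'' case automatically safe, in particular defaulting to $\widehat{0}$ when the realizer returns a name of $\bot$ is manifestly correct, and your remark that the outer function depends only on $r$, so that the reduction is in fact strong, is also right. This is a clean alternative, and arguably a bit more robust than the paper's phrasing.

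For the separation $\WBWT_2\nleqW\C_\IN$ you take a genuinely different route: the paper argues algebraically that $\BWT_2\leqW\lim_\IN*\WBWT_2$, so $\WBWT_2\leqW\C_\IN$ would give $\BWT_2\leqW\lim_\IN*\C_\IN\equivW\C_\IN$, contradicting that $\BWT_2$ is not limit computable. Your direct diagonalization can be made to work, but the justification you offer for the crucial step is wrong. You claim the opposite-limit extension ``cannot be accommodated merely by appending new bits, but requires revising already-emitted ones.'' That is false: if $F$ has (since its last reset) written $v b_i^a$, then after the block flip it may simply continue with $(1-b_i)^\omega$, producing $v b_i^a(1-b_i)^\omega$, which converges to $1-b_i$ and revises nothing. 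A single flip never forces a mind change. The correct argument is a dichotomy over the infinite run: either $F$ resets infinitely often, which already contradicts finite mind change; or from some round on $F$ never resets, in which case each block $b_i^{k_{i+1}}$ (chosen using continuity on the input $w_i b_i^\omega$, whose only cluster point is $b_i$) forces $F$ to append a fresh occurrence of $b_i$ at a new output position, so the monotone output contains infinitely many $0$s and infinitely many $1$s and hence does not converge -- contradicting that any $q\in\WBWT_2(p)$ must have $\lim q$ defined. You should replace the ``forced revision'' reasoning with this convergence-failure argument; as stated, the plan asks you to verify something that is not true.
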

\begin{proof}
Given a binary sequence $p\in 2^\IN$, we generate a list $K(p)$ of all natural numbers $n=0,1,2,...$
as long as we see zeros in $p$. Whenever we see a one in $p$, then we repeat the previous digit on the output (or zero, if no output has been written yet).
That is we compute a name $K(p)$ of a set $A\In\IN$ which is empty if and
only if $p$ contains infinitely many zeros. Given a point $n\in\overline{\C_\IN}(A)$ together
with $p$ we try to check whether the set represented by $K(p)$ contains $n$. As long as $n$ has not been removed from this set, we build an infinite
binary sequence $q\in 2^\IN$ that consists of digits $1$. In the moment where we find that $n$ is removed from the set represented by $K(p)$,
we change to producing digits $0$. That will happen if and only if $A$ is empty, i.e., if and only if $p$ contains infinitely many zeros.
In this case the output is of the form $q=1^k\widehat{0}$. In the case that $A$ is not empty, $q=\widehat{1}$.
In any case, we have that $\lim_{i\to\infty}q(i)$ is a cluster point of $p$, i.e., $q\in\BWT_2(p)$.
This proves $\WBWT_2\leqW\overline{\C_\IN}$.

It is easy to see that $\WBWT_2\nleqW\C_\IN$, as $\BWT_2\leqW\lim_\IN*\WBWT_2$ and $\BWT_2$ is not limit computable by~\cite[Proposition~12.5]{BGM12},
whereas $\lim_\IN*\C_\IN\equivW\C_\IN$ by \cite[Proposition~3.8]{BGM12} and \cite[Corollary~7.6]{BBP12} and $\C_\IN$ is limit computable.
\end{proof}

As a direct corollary we can conclude that $\WBWT_2$ is not co-complete.

\begin{corollary}
$\WBWT_2$ is not co-complete.
\end{corollary}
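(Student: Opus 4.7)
The plan is to read off the conclusion directly from Proposition~\ref{prop:WBWT}, which already supplies both halves of the required witness. Recall that by Definition~\ref{def:co-complete-total}, $\WBWT_2$ is co-complete iff for every problem $g$ the equivalence $\WBWT_2\leqW\overline{g}\iff\WBWT_2\leqW g$ holds. To refute co-completeness it suffices to exhibit a single $g$ for which $\WBWT_2\leqW\overline{g}$ holds but $\WBWT_2\leqW g$ fails.

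The natural candidate is $g:=\C_\IN$. Proposition~\ref{prop:WBWT} states exactly the two ingredients we need, namely $\WBWT_2\leqW\overline{\C_\IN}$ and $\WBWT_2\nleqW\C_\IN$. Together these immediately contradict the defining biconditional of co-completeness for the instance $g=\C_\IN$, so $\WBWT_2$ is not co-complete. There is effectively no obstacle here; the work was already done in proving Proposition~\ref{prop:WBWT}, and the corollary is merely the observation that those two reductions, taken jointly, separate the lower cones of $\C_\IN$ and $\overline{\C_\IN}$ as experienced from $\WBWT_2$.
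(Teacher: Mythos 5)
Your proof is correct and is exactly the paper's argument: the corollary is stated as a direct consequence of Proposition~\ref{prop:WBWT}, and taking $g=\C_\IN$ in the definition of co-completeness is precisely the intended reading.
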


There are also specific interesting problems that are below $\T\C_\IN$ and not below $\overline{\C_\IN}$, and problems that are
below $\C_\IN*\overline{\C_\IN}$ and not below $\T\C_\IN$.
The problem $\C_\IN*\overline{\C_\IN}$  belongs to a class that is interesting by itself and that was already studied by 
Neumann and Pauly~\cite{NP18}.

\begin{corollary}
\label{cor:CN-!CN}
$\C_\IN*\overline{\C_\IN}\equivW\C_\IN*\T\C_\IN\equivW\C_\IN*\SORT\equivW\C_\IN*\LPO'$.
\end{corollary}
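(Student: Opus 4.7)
The plan is to exhibit three key Weihrauch reductions---namely $\T\C_\IN \leqW \C_\IN * \overline{\C_\IN}$, $\LPO' \leqW \C_\IN * \T\C_\IN$, and $\SORT \leqW \C_\IN * \LPO'$---which together with $\overline{\C_\IN} \leqW \T\C_\IN$ (Corollary~\ref{cor:completion-choice}) and $\overline{\C_\IN} \leqW \SORT$ (Corollary~\ref{cor:CN-SORT}) will close the loop after invoking associativity of $*$ and the standard idempotency $\C_\IN * \C_\IN \equivW \C_\IN$.

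For the first reduction I proceed directly: given $A \in \AA_-(\IN)$ viewed as a point of $\overline{\AA_-(\IN)}$, pass $A$ to $\overline{\C_\IN}$ and obtain a name $q \in \IN^\IN$ of some element of $\overline{\IN}$. From $q$ I build a co-c.e.\ set $B \In \IN$ by scanning $q$: enumerate nothing as long as $q$ shows only zeros, and upon the first non-zero entry $n+1$ enumerate $\IN \setminus \{n\}$ into $B^c$. Then $B = \{n\}$ if $q$ decodes to a natural $n$ (which must happen when $A \neq \emptyset$, since $\overline{\C_\IN}(A) \In \IN$ in that case), and $B = \IN$ if $q = \widehat{0}$ names $\bot$ (possible only when $A = \emptyset$, and then any natural is a valid $\T\C_\IN$-output); in both cases $\C_\IN(B)$ is a witness for $\T\C_\IN(A)$.

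For the second reduction, apply $\T\C_\IN$ to $A_p := \{k \in \IN : |\{n : p(n) = 0\}| \leq k\}$, whose complement is uniformly c.e., and call the output $n$: it is an upper bound on the zero-count of $p$ when finite, arbitrary otherwise. Under this promise $\INF(p) = 1$ becomes the $\Sigma^0_1$-statement that the partial count of zeros in $p$ eventually exceeds $n$, i.e.\ a single $\LPO$-query; since $\LPO \leqW \C_\IN$ (because $\LPO$ is computable with one mind change; cf.~\cite[Theorem~7.12]{BGM12}), this $\LPO$-query fits into the $\C_\IN$ slot, and Lemma~\ref{lem:infinity} converts $\INF$ to $\LPO'$. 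For the third reduction, first apply $\LPO'$ to $p \in 2^\IN$ through $\INF$ to obtain a bit $b$; if $b = 1$ emit $\widehat{0}$, otherwise the partial counts $(c_s)_s$ converge to the zero-count $k$, so one application of $\lim_\IN \leqW \C_\IN$ extracts $k$ and we emit $0^k \widehat{1}$; both branches are uniformly arranged into a single $\C_\IN$-call parametrised by $b$.

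Combining these ingredients, the first reduction and $\overline{\C_\IN} \leqW \T\C_\IN$ give $\C_\IN * \overline{\C_\IN} \equivW \C_\IN * \T\C_\IN$; the second gives $\C_\IN * \LPO' \leqW \C_\IN * \T\C_\IN$; the third gives $\C_\IN * \SORT \leqW \C_\IN * \LPO'$; the cited $\overline{\C_\IN} \leqW \SORT$ gives $\C_\IN * \overline{\C_\IN} \leqW \C_\IN * \SORT$; and the identity $\INF(p) = \LPO(1 - \SORT(p'))$, where $p' \in 2^\IN$ is the binary sequence with $p'(n) = 0$ iff $p(n) = 0$, together with Lemma~\ref{lem:infinity} and $\LPO \leqW \C_\IN$, yields $\LPO' \leqW \C_\IN * \SORT$ and hence $\C_\IN * \LPO' \leqW \C_\IN * \SORT$. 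Chaining these inclusions forces the four expressions into a single Weihrauch degree. The main obstacle will be the second reduction: a naive two-element target set for $\C_\IN$ lets an adversarial realizer return the wrong value in the finite case, and this is circumvented by routing $\LPO$ through a carefully engineered co-c.e.\ target whose $\C_\IN$-output post-processes unambiguously to the $\LPO$-answer under the upper-bound promise delivered by $\T\C_\IN$.
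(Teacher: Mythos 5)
Your proof is correct, but it takes a genuinely different route from the paper's. The paper's proof is short: it cites Neumann--Pauly's \cite[Corollary~30]{NP18} wholesale for $\C_\IN*\T\C_\IN\equivW\C_\IN*\SORT\equivW\C_\IN*\INF_\IS$, then converts $\INF_\IS$ to $\LPO'$ via $\INF_\IS\equivSW\LPO_\IS'$, $\LPO'\leqW\LPO*\LPO_\IS'$, and $\LPO_\IS'\leqSW\LPO'$, and finally adds $\C_\IN*\overline{\C_\IN}\equivW\C_\IN*\T\C_\IN$ by combining the finite-mind-change retraction $\overline{\IN}\to\IN$ of Proposition~\ref{prop:retraction-N} with the computable injection $\AA_-(\IN)\to\overline{\AA_-(\IN)}$. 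You instead bypass the external citation for the $\SORT$/$\LPO'$ part by exhibiting the explicit reductions $\LPO'\leqW\C_\IN*\T\C_\IN$ (via the engineered target $A_p=\{k:\text{zero-count}(p)\leq k\}$, whose $\T\C_\IN$-output provides an upper bound that turns $\INF$ into a single $\LPO$-query) and $\SORT\leqW\C_\IN*\LPO'$ (first decide finitude with $\INF$, then extract the count with $\lim_\IN\leqW\C_\IN$), closing the cycle with $\overline{\C_\IN}\leqW\SORT$ from Corollary~\ref{cor:CN-SORT}. Your $\T\C_\IN\leqW\C_\IN*\overline{\C_\IN}$ is essentially an explicit implementation of the paper's retraction argument: scanning the $\delta_{\overline{\IN}}$-name of the $\overline{\C_\IN}$-output for its first non-zero entry and encoding that as a co-c.e.\ singleton (or all of $\IN$ if none appears) is exactly the one-mind-change retraction, wrapped in a $\C_\IN$-call. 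Both proofs are sound; the paper's is more economical by leaning on Neumann--Pauly, while yours is more self-contained and makes the promise/upper-bound mechanism in the $\LPO'\leqW\C_\IN*\T\C_\IN$ step visible, which is a genuinely instructive detail. (The supplementary reduction $\LPO'\leqW\C_\IN*\SORT$ you list is not actually needed to close the cycle, since $\overline{\C_\IN}\leqW\SORT$ together with $\SORT\leqW\C_\IN*\LPO'$ and $\LPO'\leqW\C_\IN*\T\C_\IN$ already does it; and citing Corollary~\ref{cor:CN-SORT} here is not circular, since its proof depends only on Neumann--Pauly's $\C_\IN\leqSW\SORT$ and the completeness of $\SORT$ from Proposition~\ref{prop:complete-problems}.)
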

\begin{proof}
The equivalences $\C_\IN*\T\C_\IN\equivW\C_\IN*\SORT\equivW\C_\IN*\INF_\IS$ were proved in \cite[Corollary~30]{NP18}.
We note that $\INF_\IS\equivW\LPO_\IS'$ by Lemma~\ref{lem:infinity} and $\LPO'\leqW\LPO*\LPO_\IS'\leqW\C_\IN*\INF_\IS$. 
This proves $\C_\IN*\LPO'\equivW\C_\IN*\INF_\IS$, as $\LPO_\IS\leqSW\LPO$ and $\C_\IN*\C_\IN\equivW\C_\IN$.
It is clear that $\overline{\C_\IN}\leqW\T\C_\IN$ implies $\C_\IN*\overline{\C_\IN}\leqW\C_\IN*\T\C_\IN$.
By Proposition~\ref{prop:retraction-N} there is a retraction $\overline{\IN}\to\IN$ that is computable with finitely many 
mind changes and by Corollary~\ref{cor:completion} there is a computable injection $\iota:\AA_-(\IN)\to\overline{\AA_-(\IN)}$.
This implies $\T\C_\IN\leqW\C_\IN*\overline{\C_\IN}$ and hence $\C_\IN*\T\C_\IN\equivW\C_\IN*\overline{\C_\IN}$ as $\C_\IN*\C_\IN\equivW\C_\IN$.
\end{proof}

As a corollary we obtain the following.

\begin{corollary}
\label{cor:LPO'-CN} We obtain
\begin{enumerate}
\item  $\LPO'\leqW\C_\IN*\overline{\C_\IN}$, but $\LPO'\nleqW\T\C_\IN$.
\item $\LPO_\IS'\leqW\T\C_\IN$, but $\LPO_\IS'\nleqW\overline{\C_\IN}$.
\end{enumerate}
\end{corollary}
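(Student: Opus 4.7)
The plan is to prove the four statements of the corollary separately, combining Corollary~\ref{cor:CN-!CN}, the co-totality and co-completeness recorded in Corollary~\ref{cor:co-complete-co-total}, the infinity-problem characterizations from Lemma~\ref{lem:infinity}, and the bound $\LPO'\nleqW\lim$ from Lemma~\ref{lem:LPO-lim}.

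Part~(1) is the easier half. The reduction $\LPO'\leqW\C_\IN*\overline{\C_\IN}$ follows from Corollary~\ref{cor:CN-!CN}, which yields $\C_\IN*\overline{\C_\IN}\equivW\C_\IN*\LPO'$, together with $\LPO'\leqW\C_\IN*\LPO'$; the latter holds because $\C_\IN$ has a computable point in its domain (for instance $\{0\}$). For the separation $\LPO'\nleqW\T\C_\IN$ I would use that $\LPO'$ is co-total by Corollary~\ref{cor:co-complete-co-total}(2), so a hypothetical reduction $\LPO'\leqW\T\C_\IN$ would collapse to $\LPO'\leqW\C_\IN$. Since $\C_\IN$ is limit computable, $\C_\IN\leqW\lim$, and composing would give $\LPO'\leqW\lim$, contradicting Lemma~\ref{lem:LPO-lim}.

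For the reduction $\LPO_\IS'\leqW\T\C_\IN$ in part~(2) I would use Lemma~\ref{lem:infinity} to reduce $\INF_\IS$ to $\T\C_\IN$ instead. Given $p\in\IN^\IN$, compute a name in $\AA_-(\IN)$ of the set $A_p:=\{m\in\IN:m\geq z(p)\}$, where $z(p)\in\IN\cup\{\infty\}$ denotes the number of zeros of $p$, by enumerating $0,1,2,\ldots$ into the complement as each new zero of $p$ is discovered. Then $A_p\neq\emptyset$ iff $z(p)<\infty$. Feeding $A_p$ into $\T\C_\IN$ yields some $n\in\IN$, which in the nonempty case satisfies $n\geq z(p)$. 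From $\langle p,n\rangle$ output the sequence $q\in\IN^\IN$ that is constant $0$ until the $(n+1)$-st zero of $p$ appears and is constant $1$ thereafter. If $A_p\neq\emptyset$, the $(n+1)$-st zero never appears and $q=\widehat{0}$; if $A_p=\emptyset$, it does appear and $q\neq\widehat{0}$. Under $\delta_\IS$ this produces precisely $\INF_\IS(p)$.

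For the final separation $\LPO_\IS'\nleqW\overline{\C_\IN}$ I would invoke that $\LPO_\IS'$ is co-complete by Corollary~\ref{cor:co-complete-co-total}(3), reducing matters to $\LPO_\IS'\nleqW\C_\IN$; since $\C_\IN\leqW\lim$ it suffices to show $\INF_\IS\nleqW\lim$. The hard part will be precisely this last step. A reduction $\INF_\IS\leqW\lim$ would furnish a limit-computable, hence $\emptyset'$-computable, single-valued realizer $R:\IN^\IN\to\IN^\IN$ of $\INF_\IS$; as the output is Sierpi\'nski-valued, the set $\{p:R(p)\neq\widehat{0}\}=\bigcup_n\{p:R(p)(n)\neq 0\}$ would be a countable union of $\dO{2}$ sets and therefore $\sO{2}$. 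But this set equals $\INF_\IS^{-1}\{1\}=\{p:\forall k\,\exists n\geq k\ p(n)=0\}$, which is $\pO{2}$-complete and hence not $\sO{2}$ by strictness of the arithmetical hierarchy --- contradiction.
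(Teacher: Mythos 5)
Your proof is correct, and Part~(1) matches the paper's argument exactly: the reduction via Corollary~\ref{cor:CN-!CN} together with pointedness of $\C_\IN$, and the separation via co-totality of $\LPO'$ plus $\LPO'\nleqW\lim$. In Part~(2), however, you take a genuinely different route at both steps. For $\LPO_\IS'\leqW\T\C_\IN$ the paper simply cites Neumann--Pauly; you instead give a self-contained reduction of $\INF_\IS$ to $\T\C_\IN$, using $A_p=\{m:m\geq z(p)\}$ and then checking whether the $(n{+}1)$-st zero of $p$ ever appears. This is a correct and rather tidy direct proof (it mirrors the idea the paper uses in Proposition~\ref{prop:WBWT}, applied in the $\T$-setting). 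For $\LPO_\IS'\nleqW\overline{\C_\IN}$, both you and the paper first invoke co-completeness of $\LPO_\IS'$ to reduce to $\LPO_\IS'\nleqW\C_\IN$, but then diverge: the paper lifts back up to $\LPO'$ via $\LPO'\leqW\C_\IN*\LPO_\IS'$ (a fact extracted from the proof of Corollary~\ref{cor:CN-!CN}) and lands on $\LPO'\nleqW\lim$; you argue directly that $\INF_\IS\nleqW\lim$, because any limit-computable (hence continuous) realizer $R$ would make $\INF_\IS^{-1}\{1\}=\bigcup_n\{p:R(p)(n)\neq 0\}$ a countable union of clopen sets, hence $\sO{2}$ (indeed open), contradicting $\pO{2}$-completeness. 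Your descriptive-complexity argument is sound and has the advantage of being self-contained and not depending on the auxiliary compositional-product fact; the paper's approach has the advantage of routing everything through the single already-established separation $\LPO'\nleqW\lim$. Both are valid.

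One small cosmetic remark: in the last step you say each $\{p:R(p)(n)\neq 0\}$ is $\dO{2}$; since $R$ is $\emptyset'$-computable it is continuous, so these sets are in fact clopen, which gives the sharper conclusion that $\INF_\IS^{-1}\{1\}$ would be open. The weaker $\sO{2}$ bound you stated is already enough for the contradiction, so nothing breaks, but the sharper statement is essentially free and matches the spirit of Lemma~\ref{lem:LPO-lim}.
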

\begin{proof}
$\LPO'\leqW\C_\IN*\overline{\C_\IN}$ holds by Corollary~\ref{cor:CN-!CN}.
Since $\LPO'$ is co-total by Corollary~\ref{cor:co-complete-co-total}, it follows that $\LPO'\nleqW\T\C_\IN$ holds,
since otherwise $\LPO'\leqW\C_\IN$ would follow, which is false as $\LPO'$ is not limit computable by Lemma~\ref{lem:LPO-lim}.
$\LPO_\IS'\leqW\T\C_\IN$ was proved in \cite[Proposition~24]{NP18}.
Since $\LPO_\IS'$ is co-complete by Corollary~\ref{cor:co-complete-co-total}, it follows that $\LPO_\IS'\nleqW\overline{\C_\IN}$ holds,
since otherwise $\LPO_\IS'\leqW\C_\IN$ would follow, which is false as $\LPO'\leqW\C_\IN*\LPO_\IS'$.
\end{proof}

As a direct corollary we obtain that $\LPO_\IS'$ is not co-total.

\begin{corollary}
\label{cor:LPOS-co-total}
$\LPO_\IS'$ is not co-total.
\end{corollary}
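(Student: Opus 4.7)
The plan is to witness failure of co-totality by exhibiting a single problem $g$ with $\LPO_\IS'\leqW\T g$ and $\LPO_\IS'\nleqW g$. The natural candidate is $g=\C_\IN$, since the previous corollary already gives the first reduction $\LPO_\IS'\leqW\T\C_\IN$ via \cite[Proposition~24]{NP18}, so only the non-reduction $\LPO_\IS'\nleqW\C_\IN$ needs to be established.

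To prove $\LPO_\IS'\nleqW\C_\IN$, I would argue by contradiction and composition. Assume $\LPO_\IS'\leqW\C_\IN$. Then $\C_\IN*\LPO_\IS'\leqW\C_\IN*\C_\IN$, and by \cite[Proposition~3.8]{BGM12} together with \cite[Corollary~7.6]{BBP12} we have $\C_\IN*\C_\IN\equivW\C_\IN$. But in the proof of Corollary~\ref{cor:CN-!CN} it was noted that $\LPO'\leqW\LPO*\LPO_\IS'\leqW\C_\IN*\LPO_\IS'$, so combining we obtain $\LPO'\leqW\C_\IN$. Since $\C_\IN$ is limit computable (indeed finite mind change computable), this would make $\LPO'$ limit computable, i.e., $\LPO'\leqW\lim$, contradicting Lemma~\ref{lem:LPO-lim}. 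Hence $\LPO_\IS'\nleqW\C_\IN$.

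Putting the two facts together, $\LPO_\IS'\leqW\T\C_\IN$ while $\LPO_\IS'\nleqW\C_\IN$, so $g:=\C_\IN$ witnesses that the equivalence $\LPO_\IS'\leqW\T g\iff \LPO_\IS'\leqW g$ fails, which by Definition~\ref{def:co-complete-total} means $\LPO_\IS'$ is not co-total.

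There is no serious obstacle here since all the ingredients are already in place: the reduction $\LPO_\IS'\leqW\T\C_\IN$ is cited, the idempotency $\C_\IN*\C_\IN\equivW\C_\IN$ is standard, the inequality $\LPO'\leqW\C_\IN*\LPO_\IS'$ was extracted inside Corollary~\ref{cor:CN-!CN}, and Lemma~\ref{lem:LPO-lim} supplies the final contradiction. The only point requiring a little care is citing the correct form of the compositional-product manipulation, but this is essentially bookkeeping.
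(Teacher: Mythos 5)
Your proof is correct and follows essentially the same route as the paper: the paper derives the statement directly from Corollary~\ref{cor:LPO'-CN}~(2), where $\LPO_\IS'\leqW\T\C_\IN$ and $\LPO_\IS'\nleqW\overline{\C_\IN}$ (hence $\LPO_\IS'\nleqW\C_\IN$) were established via the same chain $\LPO'\leqW\C_\IN*\LPO_\IS'$ and the non-limit-computability of $\LPO'$. The only cosmetic difference is that you re-derive $\LPO_\IS'\nleqW\C_\IN$ inline rather than reading it off the stated non-reduction to $\overline{\C_\IN}$.
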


Completeness can also be used as a separation tool, as we illustrate with the following result.
Basically, the point is that an incomplete problem cannot be factorized into complete problems.
We use that $\C_\IN'\equivW\C_\IN'*\C_\IN'$ holds; this is easy to see and was already stated in the proof of \cite[Proposition~21]{BHK17}.

\begin{proposition}
\label{prop:KN-CN}
$\K_\IN'\lW\K_\IN'*\K_\IN'\lW\C_\IN'$ and $\C_\IN*\overline{\C_\IN}\leqW\K_\IN'*\K_\IN'$.
\end{proposition}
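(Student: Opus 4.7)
The proposition has one auxiliary reduction and two strict separations, which I treat in order. The upper reduction $\K_\IN'*\K_\IN'\leqW\C_\IN'$ is immediate from monotonicity of $*$ applied to the jump $\K_\IN'\leqSW\C_\IN'$ of the standard inclusion $\K_\IN\leqSW\C_\IN$, combined with the identity $\C_\IN'*\C_\IN'\equivW\C_\IN'$ recalled just before the statement.

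For the reduction $\C_\IN*\overline{\C_\IN}\leqW\K_\IN'*\K_\IN'$, I plan to reduce each factor to $\K_\IN'$ and use monotonicity. To obtain $\C_\IN\leqW\K_\IN'$, note that on input $A\in\AA_-(\IN)$ with $A\not=\emptyset$ a stagewise computation of $m_n:=\min(A_n\cap[0,n])$ produces a non-decreasing sequence of naturals stabilising at $\min A$, and the corresponding sequence of names of the $\K_\IN$-instances $(m_n+1,\{m_n\})$ converges to a name of $(\min A+1,\{\min A\})$; applying $\K_\IN'$ then returns $\min A\in A$. The reduction $\overline{\C_\IN}\leqW\K_\IN'$ is then automatic: since $\K_\IN'$ is strongly complete by Corollary~\ref{cor:KN}, Corollary~\ref{cor:partial-total} and Lemma~\ref{lem:completion-total} give $\C_\IN\leqW\K_\IN'\Rightarrow\overline{\C_\IN}\leqW\overline{\K_\IN'}\equivW\K_\IN'$.

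The strict separation $\K_\IN'*\K_\IN'\lW\C_\IN'$ exploits the conceptual point highlighted in the preamble: an incomplete problem cannot factor as a product of two complete ones. I would isolate the auxiliary lemma that the compositional product of strongly complete problems is complete. Concretely, if $f\equivSW\overline{f}$, $g\equivSW\overline{g}$ and $h\leqW f*g$, then $h\leqW f_0\circ g_0$ for some $f_0\leqW f$, $g_0\leqW g$; Lemma~\ref{lem:composition} then gives $\overline{h}\leqW\overline{f_0}\circ\overline{g_0}\leqW\overline{f}*\overline{g}\equivW f*g$, using that $\leqW$-reducibility passes to completions via Corollary~\ref{cor:partial-total} and Lemma~\ref{lem:completion-total}. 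Applied to $\K_\IN'$ (Corollary~\ref{cor:KN}), this makes $\K_\IN'*\K_\IN'$ complete; an equivalence $\K_\IN'*\K_\IN'\equivW\C_\IN'$ would then force $\C_\IN'$ to be complete, contradicting Corollary~\ref{cor:CNS}.

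The strict separation $\K_\IN'\lW\K_\IN'*\K_\IN'$ requires more work, since both sides are complete and the preceding trick does not apply. I would combine the reduction above with a non-reducibility: if $\K_\IN'*\K_\IN'\equivW\K_\IN'$ held, then $\C_\IN*\overline{\C_\IN}\leqW\K_\IN'$ and hence $\T\C_\IN\leqW\K_\IN'$, using $\T\C_\IN\leqW\C_\IN*\overline{\C_\IN}$ from the proof of Corollary~\ref{cor:CN-!CN}. The \emph{main obstacle} is thus to exclude $\T\C_\IN\leqW\K_\IN'$, which I expect to establish by a continuity argument in the spirit of the mind-change proof of Proposition~\ref{prop:choice-natural}: a hypothetical reducing pair $(H,K)$ would have to commit, after reading a finite prefix of a $\T\C_\IN$-input, to values that the single-jump structure of $\K_\IN'$ cannot reconcile with the successive deletions from $\IN$ that build a co-infinite closed set, yielding the required contradiction.
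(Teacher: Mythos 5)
Your overall architecture matches the paper's: both reductions, the completeness-preservation argument for $\K_\IN'*\K_\IN'\lW\C_\IN'$, and a contradiction-from-idempotency argument for $\K_\IN'\lW\K_\IN'*\K_\IN'$. The small deviations — giving an explicit construction for $\C_\IN\leqW\K_\IN'$ where the paper cites \cite[Proposition~7.2]{BR17}, and re-deriving inline the fact that the compositional product preserves completeness where the paper cites \cite[Proposition~7.6]{BG20} — are fine and self-contained, though the singleton-based construction of the $\K_\IN'$-instance should be phrased in terms of the paper's definition $\K_\IN\equivSW\C_2^*$ to typecheck cleanly.

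The genuine gap is in the first separation, $\K_\IN'\lW\K_\IN'*\K_\IN'$. You correctly reduce the problem to excluding $\T\C_\IN\leqW\K_\IN'$, but you do not actually prove this; you only announce that it should follow ``by a continuity argument in the spirit of the mind-change proof of Proposition~\ref{prop:choice-natural}.'' That sketch does not go through as stated: the mind-change argument for $\T\C_\IN$ exploits that a finite-mind-change machine must commit after a finite prefix, but $\K_\IN'$ is a \emph{jump}, so the inner reduction $K$ only has to produce a $\lim$-name of a $\K_\IN$-instance and is never committed after any finite prefix. So the obstruction you are counting on evaporates, and establishing $\T\C_\IN\nleqW\K_\IN'$ (if true) would require a substantively different argument. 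The paper sidesteps this entirely: from $\K_\IN'*\K_\IN'\leqW\K_\IN'$ it derives $\SORT\leqW\C_\IN*\SORT\leqW\C_\IN*\overline{\C_\IN}\leqW\K_\IN'$ via Corollary~\ref{cor:CN-!CN}, which contradicts the already-known separation $\SORT\nleqW\K_\IN'$ from \cite[Propositions~16 and 20]{BHK17}. Using $\SORT$ as the separating witness is the key idea you are missing; with $\T\C_\IN$ as the witness you have replaced a known separation with an open-ended one.
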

\begin{proof}
The first reduction is obvious, the second reduction follows as $\K_\IN\leqSW\C_\IN$ and $\C_\IN'*\C_\IN'\equivW\C_\IN'$.
The reduction $\C_\IN*\overline{\C_\IN}\leqW\K_\IN'*\K_\IN'$ holds, as 
$\C_\IN\leqW\K_\IN'$ by \cite[Proposition~7.2]{BR17}, 
completion is a closure operator
and $\K_\IN'$ is complete by Corollary~\ref{cor:KN}.
Let us assume that $\K_\IN'*\K_\IN'\leqW\K_\IN'$ holds, then $\SORT\leqW\C_\IN*\SORT\leq\C_\IN*\overline{\C_\IN}\leqW\K_\IN'$ follows by Corollary~\ref{cor:CN-!CN},
in contradiction to \cite[Propositions~16 and 20]{BHK17}. Hence, the first reduction is strict.
The second reduction is strict, as $\K_\IN'$ is complete by Corollary~\ref{cor:KN}, $\C_\IN'$ is not complete by Corollary~\ref{cor:CNS}
and the compositional product preserves completeness by \cite[Proposition~7.6]{BG20}.
\end{proof}

Altogether we obtain the following reduction chain for some of the discussed classes.

\begin{corollary}
\label{cor:TCN-CNS}
$\T\C_\IN\lW\C_\IN*\overline{\C_\IN}\lW\C_\IN'$.
\end{corollary}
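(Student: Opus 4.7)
The plan is to assemble this chain directly from the various auxiliary results established just above the corollary, so the argument is essentially a bookkeeping exercise with two strictness witnesses.

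First I would establish the two non-strict reductions. The reduction $\T\C_\IN\leqW\C_\IN*\overline{\C_\IN}$ has in effect already appeared inside the proof of Corollary~\ref{cor:CN-!CN}: one uses the finite-mind-change retraction $\overline{\IN}\to\IN$ from Proposition~\ref{prop:retraction-N}(1) together with the computable embedding $\iota:\AA_-(\IN)\to\overline{\AA_-(\IN)}$ from Corollary~\ref{cor:completion}. So applying $\overline{\C_\IN}$ to $\iota(A)$ gives, via the retraction (which is computable with finitely many mind changes, hence reducible to $\C_\IN$), a point in the totalized choice, yielding the reduction. For the second non-strict reduction $\C_\IN*\overline{\C_\IN}\leqW\C_\IN'$ I would simply chain $\C_\IN*\overline{\C_\IN}\leqW\K_\IN'*\K_\IN'\leqW\C_\IN'$, both of which are given by Proposition~\ref{prop:KN-CN}.

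Next I would verify the two strictness claims. For $\T\C_\IN\lW\C_\IN*\overline{\C_\IN}$, Corollary~\ref{cor:LPO'-CN}(1) gives $\LPO'\leqW\C_\IN*\overline{\C_\IN}$ while $\LPO'\nleqW\T\C_\IN$, so the left-hand side cannot reduce to $\T\C_\IN$. For $\C_\IN*\overline{\C_\IN}\lW\C_\IN'$, I would argue by contradiction: if $\C_\IN'\leqW\C_\IN*\overline{\C_\IN}$, then combining with $\C_\IN*\overline{\C_\IN}\leqW\K_\IN'*\K_\IN'$ from Proposition~\ref{prop:KN-CN} would yield $\C_\IN'\leqW\K_\IN'*\K_\IN'$, contradicting the strict separation $\K_\IN'*\K_\IN'\lW\C_\IN'$ also proven in Proposition~\ref{prop:KN-CN}.

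There is no real obstacle here: every needed building block is already in place, and the only thing to be careful about is citing the correct parts of Corollary~\ref{cor:CN-!CN}, Corollary~\ref{cor:LPO'-CN}, and Proposition~\ref{prop:KN-CN} and in the right direction. The slightly less obvious step is the second strictness, which one might be tempted to try to prove by asking whether $\C_\IN*\overline{\C_\IN}$ is itself complete; the cleaner route is the transitivity argument through $\K_\IN'*\K_\IN'$, which bypasses that question entirely.
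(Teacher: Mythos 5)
Your proof is correct and follows essentially the same route as the paper's: the first reduction and both strictness witnesses come from Corollary~\ref{cor:CN-!CN}, Corollary~\ref{cor:LPO'-CN}, and Proposition~\ref{prop:KN-CN} in the same way. The only cosmetic difference is in the second non-strict reduction, where you chain directly through $\K_\IN'*\K_\IN'$ (both halves from Proposition~\ref{prop:KN-CN}), while the paper routes through $\C_\IN*\overline{\C_\IN}\equivW\C_\IN*\LPO'\leqW\C_\IN*\C_\IN'\equivW\C_\IN'$; both are valid and draw on the same prior material.
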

\begin{proof}
The reduction $\T\C_\IN\leqW\C_\IN*\overline{\C_\IN}$ was proved in the proof of Corollary~\ref{cor:CN-!CN}
as well as $\C_\IN*\overline{\C_\IN}\equivW\C_\IN*\LPO'$.
Since $\LPO'\leqW\C_\IN'$ and $\C_\IN'\equivW\C_\IN'*\C_\IN'$, we obtain $\C_\IN*\LPO'\leqW\C_\IN*\C_\IN'\equivW\C_\IN'$.
The strictness of the first reduction follows from Corollary~\ref{cor:LPO'-CN} and the strictness of the second reduction from Proposition~\ref{prop:KN-CN}.
\end{proof}

Since $\C_\IN$ is not complete, the cone below $\C_\IN$ in the Weihrauch lattice differs from the cone below $\overline{\C_\IN}$ and hence
it is important to check how this impacts on separation results. 
An important separation is $\ConC_{[0,1]}\nleqW\C_\IN$~\cite[Proposition~4.9]{BG11a}.
This was strengthened to $\ConC_{[0,1]}\nleqW\K_\IN'$ in \cite[Proposition~20]{BHK17}.
With the help of Corollary~\ref{cor:CCI-co-total} we can strengthen the separation in another direction.

\begin{corollary}
\label{cor:CCI-TCN}
$\ConC_{[0,1]}\nW\T\C_\IN$.
\end{corollary}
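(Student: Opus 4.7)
The statement $\ConC_{[0,1]}\nW\T\C_\IN$ asserts Weihrauch incomparability, so the plan splits into two non-reductions.

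For the direction $\ConC_{[0,1]}\nleqW\T\C_\IN$, I would invoke the co-totality of $\ConC_{[0,1]}$ established in Corollary~\ref{cor:CCI-co-total}. By Definition~\ref{def:co-complete-total}, co-totality applied with $g:=\C_\IN$ gives $\ConC_{[0,1]}\leqW\T\C_\IN\iff\ConC_{[0,1]}\leqW\C_\IN$. Hence, were $\ConC_{[0,1]}\leqW\T\C_\IN$ to hold, we would obtain $\ConC_{[0,1]}\leqW\C_\IN$, contradicting the classical separation $\ConC_{[0,1]}\nleqW\C_\IN$ cited from \cite[Proposition~4.9]{BG11a} in the paragraph immediately preceding the statement. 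This is the strengthening that the opening sentence of the corollary alludes to.

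For the direction $\T\C_\IN\nleqW\ConC_{[0,1]}$, my plan is to exploit the gap in descriptive complexity. The reduction $\ConC_{[0,1]}\leqW\C_{2^\IN}$ is recorded in Figure~\ref{fig:choice}, and $\C_{2^\IN}\leqW\lim$ is standard (and visible via the chain $\C_{2^\IN}\leqW\Low\leqW\lim$ in the same figure), so $\ConC_{[0,1]}$ is limit computable. Therefore, if $\T\C_\IN\leqW\ConC_{[0,1]}$, transitivity yields $\T\C_\IN\leqW\lim$, i.e., $\T\C_\IN$ is limit computable, directly contradicting the last clause of Proposition~\ref{prop:choice-natural}, which states that $\T\C_\IN$ is not even limit computable.

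I do not anticipate any substantive obstacle; both halves of the argument are one-line consequences of machinery already developed in this article. The only point requiring care is to apply co-totality to the correct problem $g=\C_\IN$ (and not to $\T\C_\IN$ itself), so that the conclusion lands on the known separation $\ConC_{[0,1]}\nleqW\C_\IN$ rather than a trivial statement.
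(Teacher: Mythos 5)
Your proof is correct and follows essentially the same route as the paper: the direction $\ConC_{[0,1]}\nleqW\T\C_\IN$ via co-totality from Corollary~\ref{cor:CCI-co-total} together with $\ConC_{[0,1]}\nleqW\C_\IN$, and the direction $\T\C_\IN\nleqW\ConC_{[0,1]}$ via limit computability of $\ConC_{[0,1]}$ against Proposition~\ref{prop:choice-natural}. The only cosmetic difference is that you spell out the derivation of $\ConC_{[0,1]}\leqW\lim$ which the paper treats as known.
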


Here $\T\C_\IN\nleqW\ConC_{[0,1]}$ follows since $\ConC_{[0,1]}$ is limit computable and
$\T\C_\IN$ is not by Proposition~\ref{prop:choice-natural}. 
Since $\ConC_{[0,1]}\leqW\SORT$ by \cite[Proposition~16]{BHK17}, we obtain $\SORT\nleqW\T\C_\IN$.
Since $\SORT$ is also limit computable, we obtain $\T\C_\IN\nleqW\SORT$ and $\SORT\nW\T\C_\IN$.
This was also proved by Neumann and Pauly~\cite[Proposition~24]{NP18}.
In this context it is interesting to note that $\overline{\C_\IN}\leqW\SORT$ holds.

\begin{corollary}
\label{cor:CN-SORT}
$\overline{\C_\IN}\leqSW\SORT$.
\end{corollary}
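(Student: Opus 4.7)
The plan is to exhibit a strong Weihrauch reduction $\overline{\C_\IN} \leqSW \SORT$ by producing computable $K, H : \IN^\IN \to \IN^\IN$ such that $H \circ \SORT \circ K$ realizes $\overline{\C_\IN}$. The key idea is that $\SORT$ serves as a zero-counter, which we exploit to transmit $\min A$ when $A$ is non-empty; the permissiveness of the completion on $\emptyset$ and on $\bot$ absorbs the remaining cases automatically.

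Adopting the representation from the proof of Lemma~\ref{lem:CN-strong-fractal}, so that a name $p \in \IN^\IN$ encodes $A = \IN \setminus \range(p-1)$ when $p - 1$ is an infinite sequence and $A = \bot$ when $p - 1$ is a finite word, I would define $K : \IN^\IN \to 2^\IN$ to process $p$ online while keeping a counter $c$ of zeros emitted so far. At each stage $i$, $K$ computes $m_i := \min\bigl(\IN \setminus \{p(j) - 1 : j \leq i,\; p(j) \geq 1\}\bigr)$, emits zeros until $c = m_i + 1$, and then writes a single $1$ before moving to stage $i+1$. Correspondingly, $H : 2^\IN \to \IN^\IN$ writes $0$s while reading $0$s; upon encountering the first $1$ at position $k \geq 1$ of its input, it writes the single number $k$ (so that, under $\delta_\IN^\wp$, the output names $k - 1 \in \IN$) followed by zeros forever. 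If no $1$ ever appears, $H$ outputs $\widehat{0}$, the canonical name of $\bot \in \overline{\IN}$.

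Verification proceeds by a three-case analysis on $A$. If $A$ is a non-empty proper closed subset of $\IN$ with $m := \min A$, then $m_i$ increases to $m$, so $K(p)$ contains exactly $m + 1$ zeros, $\SORT(K(p)) = 0^{m+1}\widehat{1}$, and $H$ outputs a name of $m \in A$. If $A = \emptyset$, then $m_i \to \infty$, so $K(p)$ contains infinitely many zeros, $\SORT(K(p)) = \widehat{0}$, and $H$ outputs $\widehat{0}$, which names $\bot \in \overline{\IN} = \overline{\C_\IN}(\emptyset)$. If $A = \bot$, then $p - 1$ is a finite word, $m_i$ stabilizes at some finite value $m$, and the output names $m \in \IN \In \overline{\C_\IN}(\bot)$. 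The main obstacle is the bookkeeping required to confirm that $K$ remains productive at every stage (in particular in the $\bot$ case, where $p$ is eventually all zeros) and that the image lands in $\overline{\C_\IN}(A)$ throughout; the freedom granted by $\overline{\C_\IN}$ on $\emptyset$ and $\bot$ makes this analysis straightforward once the construction is in place.
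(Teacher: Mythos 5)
Your construction is a direct, explicit strong reduction, which is a genuinely different route from the paper's proof. The paper instead invokes the known reduction $\C_\IN\leqSW\SORT$ (Neumann--Pauly), applies that completion is a closure operator to get $\overline{\C_\IN}\leqSW\overline{\SORT}$, and then uses the strong completeness of $\SORT$ from Proposition~\ref{prop:complete-problems} to conclude $\overline{\SORT}\equivSW\SORT$. Your argument is more self-contained and makes the mechanism (zero-counting with $\SORT$) concrete, whereas the paper's is a one-line consequence of machinery already in place. Both ideas are valid, and your $K$ is sound: $m_i$ is non-decreasing, converges to $\min A$ when $A\neq\emptyset$, diverges when $A=\emptyset$, and stabilizes when the input names $\bot$; $K$ emits one symbol per stage and is therefore productive.

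There is, however, a genuine slip in $H$. You claim that after writing $\ell$ zeros followed by the single digit $k$ and then ``zeros forever,'' the output names $k-1\in\IN$ under $\delta_\IN^\wp$. But the output $q=0^\ell k\,\widehat{0}$ has $q-1$ equal to the \emph{finite} word consisting of the single digit $k-1$, since every $0$ contributes the empty word. A finite word is not in $\dom(\delta_\IN)$, so $\delta_\IN^\wp(q)$ is undefined and $\delta_{\overline{\IN}}(q)=\bot$. In the case $A\neq\emptyset$ with $m=\min A$, the reduction must output a name of an element of $A$, not of $\bot$, so as written the reduction fails exactly where it matters. The fix is minor but necessary: after emitting $k$, have $H$ pad with nonzero digits (say $\widehat{1}$, or more generally whatever tail $\delta_\IN$ needs so that $q-1$ is an infinite, valid name of $k-1$). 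With that one-symbol change the three-case verification goes through as you describe.
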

\begin{proof}
Neumann and Pauly~\cite[Proposition~24]{NP18} proved $\C_\IN\leqW\SORT$ and the proof even
shows $\C_\IN\leqSW\SORT$ (see also \cite[Proposition~12]{BHK17}).
Since completion is a closure operator and by Proposition~\ref{prop:complete-problems} we obtain $\overline{\C_\IN}\leqSW\overline{\SORT}\equivSW\SORT$.
\end{proof}

\section{Lowness}
\label{sec:lowness}

Proposition~\ref{prop:retraction-N} can also be used to prove that $\overline{\C_\IN}$ is not low.
We recall that a problem $f:\In X\mto Y$ is called {\em low},
if it has a realizer of the form $F=\Low\circ G$ with some computable $G:\In\IN^\IN\to\IN^\IN$ and $\Low:=\J^{-1}\circ\lim$.
Lowness was studied, for instance, in \cite{BBP12,Bra18}. By \cite[Theorem~8.10]{BBP12} $f$ is low if and only if $f\leqSW\Low$.
Likewise, $f$ is called low$_2$, if $f\leqSW\Low_2$, where $\Low_2:=\J^{-1}\circ\J^{-1}\circ\lim\circ\lim$.

\begin{corollary}
\label{cor:CN-lowness}
$\overline{\C_\IN}$ is low$_2$ but not low.
\end{corollary}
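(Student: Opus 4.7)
We treat the two claims independently.

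\textbf{Not low:} The plan is to combine $\WBWT_2\leqW\overline{\C_\IN}$ from Proposition~\ref{prop:WBWT} with $\WBWT_2\nleqW\Low$ from the cited Proposition~\ref{prop:WBWT-L}. If $\overline{\C_\IN}\leqSW\Low$ held, transitivity of $\leqW$ would yield $\WBWT_2\leqW\Low$, a contradiction. Hence $\overline{\C_\IN}$ is not low.

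\textbf{Low$_2$:} The plan starts from two facts: $\C_\IN\leqSW\Low$ (Brattka--de Brecht--Pauly, cited as \cite[Corollary~4.9, Theorem~8.7]{BBP12} for $\C_\IR\leqW\Low$ and hence $\C_\IN\leqW\Low$) and the limit computability of $\overline{\C_\IN}$ (Proposition~\ref{prop:choice-natural}). Given a name $p$ of some $A\in\overline{\AA_-(\IN)}$, fix a canonical output $y\in\overline{\IN}$: $y:=\min A$ if $A\in\AA_-(\IN)\sm\{\emptyset\}$, and $y:=\bot$ otherwise. The case distinction ``$A$ is a nonempty valid closed set'' is a Boolean combination of a $\Pi^0_2(p)$ event (``$A\not=\bot$'') and a $\Sigma^0_2(p)$ event (``$A\not=\emptyset$''), while conditionally $\min A\leqT p'$ by the uniform low basis reduction for $\C_\IN$. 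Together, the standard name $q_y$ of $y$ in $\overline{\IN}$ (namely $\widehat{0}$ for $\bot$, and $(n+1)\widehat{0}$ for $n\in\IN$) satisfies $q_y\leqT p''$ uniformly in $p$.

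To realize this as $\overline{\C_\IN}\leqSW\Low_2$, one engineers a computable $K:\IN^\IN\to\IN^\IN$ so that $\lim\circ\lim\circ K(p)$ encodes $q_y$ together with enough padding to land at a degree $\geq\emptyset'$ (so that $\J^{-1}\circ\J^{-1}$ is nonempty). Then any $s\in\Low_2(K(p))$ satisfies $s\leqT s''$, and because the output space $\overline{\IN}$ is essentially discrete with canonical names carrying only one digit of essential information, a computable $H$ can extract $y$ from $s$ alone. The technically delicate step, and the main obstacle, is arranging uniformity against the multi-valuedness of $\Low_2$: different realizers may produce different $s$ with the same $s''$, so the encoding must be rigid enough that every such $s$ admits the same computable extraction of $y$. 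The simplicity of $\overline{\IN}$ and the canonical shape of $q_y$ are what make this rigidity achievable.
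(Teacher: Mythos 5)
Your ``not low'' argument is correct and runs by a route the paper also uses, but only later: $\WBWT_2\leqW\overline{\C_\IN}$ (Proposition~\ref{prop:WBWT}) together with $\WBWT_2\nleqW\Low$ (Proposition~\ref{prop:WBWT-L}) is exactly how the paper derives $\overline{\C_\IN}\nleqW\Low$ in Corollary~\ref{cor:L-not-complete}, which is stated \emph{after} Corollary~\ref{cor:CN-lowness}. The paper's own proof at this point instead uses Proposition~\ref{prop:special-retractions}: the finite-mind-change retraction $r:\overline{\IN}\to\IN$ gives $\T\C_\IN\leqSW r\circ\overline{\C_\IN}$, and since $\T\C_\IN$ is not limit computable (Proposition~\ref{prop:choice-natural}) while a low problem composed with a limit computable one is limit computable (\cite[Corollary~8.16]{BBP12}), $\overline{\C_\IN}$ cannot be low. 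Either way is sound; yours just reaches forward in the exposition.

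The ``low$_2$'' part has a genuine gap. Your construction sketch never establishes the crucial point: that $H$ can compute (a name of) $y$ from the single oracle $s=\Low_2 K(p)=\J^{-1}\J^{-1}\lim\lim K(p)$. The remark ``any $s\in\Low_2(K(p))$ satisfies $s\leqT s''$'' is vacuous and does no work; what you actually know is $s''=\lim\lim K(p)$, which lets you compute $q_y$ from $s''$ but gives no computable procedure from $s$. You also never address the exactness requirement that $\lim\lim K(p)$ must lie in the image of $\J\circ\J$ for $\Low_2 K(p)$ to be defined at all, and this is not a matter of Turing degree — it has to be \emph{exactly} a double jump, which is a strong structural constraint on what $K$ can output. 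The worry about ``multi-valuedness of $\Low_2$'' is a red herring: $\Low_2=\J^{-1}\circ\J^{-1}\circ\lim\circ\lim$ is single-valued, so the only freedom a realizer has is on names outside $\dom(\Low_2)$, which is precisely the exactness issue you have not handled. The paper sidesteps all of this: it shows $\overline{\C_\IN}\leqSW\SORT$ (Corollary~\ref{cor:CN-SORT}), invokes the Neumann--Pauly result $\lim*\lim*\SORT\equivW\lim*\lim$ to get $\lim'*\SORT\leqSW\lim'$, and then applies \cite[Proposition~14.16]{BHK17a} to conclude $\SORT$, and hence $\overline{\C_\IN}$, is low$_2$. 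You should use this compositional-product characterization of lowness$_2$ rather than attempt a direct realizer construction, which is considerably more delicate than your plan suggests.
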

\begin{proof}
We first prove that $\overline{\C_\IN}$ is not low.
By Proposition~\ref{prop:retraction-N} there is a retraction $r:\overline{\IN}\to\IN$ that is computable with finitely many mind changes and together with
Corollary~\ref{cor:completion} we obtain $\T\C_\IN\leqSW r\circ\overline{\C_\IN}$.
Since $r$ is computable with finitely many mind changes, it is in particular limit computable, and since $\T\C_\IN$ is not limit computable by Proposition~\ref{prop:choice-natural},
it follows that $\overline{\C_\IN}$ cannot be low since the composition of a limit computable problem with a low problem is limit computable by \cite[Corollary~8.16]{BBP12}.

Neumann and Pauly~\cite[Corollary~32]{NP18} proved $\lim*\lim*\SORT\equivW\lim*\lim$, and since $\lim*\lim\equivW\lim'$ is a cylinder,
we obtain $\lim'*\SORT\leqSW\lim'$.
By \cite[Proposition~14.16]{BHK17a} this implies that $\SORT$ is low$_2$.
By Corollary~\ref{cor:CN-SORT} we have $\overline{\C_\IN}\leqSW\SORT$,
which implies $\overline{\C_\IN}$ is also low$_2$.
\end{proof}

By \cite[Corollary~8.14]{BBP12} we have $\C_\IN\leqSW\Low$ and hence $\overline{\C_\IN}\leqSW\overline{\Low}$.
Hence Corollary~\ref{cor:CN-lowness} implies that $\Low$ is not strongly complete.

\begin{corollary}[Low map]
\label{cor:L}
$\Low\lSW\overline{\Low}$.
\end{corollary}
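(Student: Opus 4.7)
The plan is to extract the strictness from the fact that $\overline{\C_\IN}$ is not low, which was just established in Corollary~\ref{cor:CN-lowness}. The reduction $\Low \leqSW \overline{\Low}$ is immediate because completion is a closure operator with respect to $\leqSW$ (as recalled after Lemma~\ref{lem:completion-total}), so the entire content of the statement lies in the separation $\overline{\Low} \nleqSW \Low$.

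To obtain the separation, I would argue by contradiction. Suppose $\overline{\Low} \leqSW \Low$. Since $\C_\IN \leqSW \Low$ by \cite[Corollary~8.14]{BBP12} and completion is monotone with respect to $\leqSW$, we have $\overline{\C_\IN} \leqSW \overline{\Low}$. Chaining the two reductions would then yield $\overline{\C_\IN} \leqSW \Low$, which by the characterization of lowness as the lower cone of $\Low$ with respect to $\leqSW$ \cite[Theorem~8.10]{BBP12} would mean that $\overline{\C_\IN}$ is low. This directly contradicts Corollary~\ref{cor:CN-lowness}.

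There is no real obstacle here: the proof is essentially a bookkeeping argument that combines three ingredients already present in the excerpt, namely the inclusion $\C_\IN \leqSW \Low$, the monotonicity of completion under strong reducibility, and the non-lowness of $\overline{\C_\IN}$ coming from the mind-change retraction of Proposition~\ref{prop:retraction-N}. The whole argument fits comfortably in two or three lines of LaTeX.
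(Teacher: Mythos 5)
Your proof is correct and is essentially the same as the paper's. The paper derives $\overline{\C_\IN}\leqSW\overline{\Low}$ from $\C_\IN\leqSW\Low$ by monotonicity of completion, and then uses the non-lowness of $\overline{\C_\IN}$ from Corollary~\ref{cor:CN-lowness} (via the $\leqSW$-characterization of lowness) to rule out $\overline{\Low}\leqSW\Low$ — exactly the chain you describe.
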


This in turn implies that lowness is not preserved downwards by total Weihrauch reducibility.

\begin{corollary}
\label{cor:lowness}
Lowness is not preserved downwards by (strong) total Weihrauch reducibility and does not respect precompleteness.
\end{corollary}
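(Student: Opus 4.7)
The plan is to derive both assertions from Corollary~\ref{cor:L} and the general relationship between completion and total Weihrauch reducibility established earlier in the paper.

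For the first assertion, I would argue by contradiction. Suppose lowness is preserved downwards by $\leqSTW$. By Corollary~\ref{cor:completion-total} we have $\overline{\Low}\equivSTW\Low$, and $\Low$ is trivially low (since $\Low\leqSW\Low$ and, by \cite[Theorem~8.10]{BBP12}, a problem is low iff it reduces strongly to $\Low$). Downward preservation would then force $\overline{\Low}$ to be low, i.e., $\overline{\Low}\leqSW\Low$, directly contradicting Corollary~\ref{cor:L}. The same argument, using Corollary~\ref{cor:completion-total} in the $\leqTW$ form together with downward preservation by $\leqTW$, rules out the non-strong version as well. So lowness is preserved downwards by neither $\leqTW$ nor $\leqSTW$.

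For the second assertion, I would invoke the contrapositive of the argument given for \cite[Proposition~4.9]{BG20}, exactly as indicated in the passage preceding Corollary~\ref{cor:finite-mind-change}. The forward direction there shows that any class $\CC$ of functions $F:\In\IN^\IN\to\IN^\IN$ that contains the identity, is closed under composition with computable functions, is closed under juxtaposition with the identity, and respects precompleteness, is automatically preserved downwards by $\leqTW$. The class of low functions plainly satisfies the first three closure conditions (one uses that $\Low$ is a cylinder in the relevant sense, together with the composition calculus established in \cite{BBP12}). Hence if lowness respected precompleteness, it would be preserved downwards by $\leqTW$, contradicting the first assertion. Therefore lowness does not respect precompleteness.

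The main obstacle, as far as there is one, is checking that the class of low functions really does satisfy the three closure conditions required to invoke the contrapositive of the \cite[Proposition~4.9]{BG20} argument; this is a bookkeeping matter and follows from the fact that $\Low=\J^{-1}\circ\lim$ together with standard closure properties of strong Weihrauch reducibility. Once this is in place, both statements are immediate consequences of Corollary~\ref{cor:L}.
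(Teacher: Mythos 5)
Your argument for the first assertion matches the paper's exactly (and is correct): if lowness were preserved downwards by $\leqSTW$, then $\overline{\Low}\equivSTW\Low$ together with $\Low$ being low would force $\overline{\Low}\leqSW\Low$, contradicting Corollary~\ref{cor:L}.

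Your argument for the second assertion has a genuine gap, and it sits exactly at the point you wave away as ``a bookkeeping matter.'' To invoke the contrapositive of the reasoning in \cite[Proposition~4.9]{BG20} in the $\leqTW$ form, you need the class of low functions to be closed under juxtaposition with the identity, and you assert this holds because ``$\Low$ is a cylinder in the relevant sense.'' But that is false: Lemma~\ref{lem:low-cylinder} of the paper proves precisely that $\Low$ is \emph{not} a cylinder, i.e.\ $\id\times\Low\nleqSW\Low$, so low functions are \emph{not} closed under juxtaposition with the identity. This is not a minor hiccup — it is the reason the straightforward $\leqTW$ version of the argument does not go through. The paper's proof explicitly routes around this by using the strong variant: for $\leqSTW$ the closure condition ``closed under juxtaposition with the identity'' is not needed (the outer reduction function in a strong reduction does not see the original input), so one can conclude that lowness would be preserved downwards by $\leqSTW$ if it respected precompleteness, and then the first assertion for $\leqSTW$ gives the contradiction. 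You should replace your appeal to cylinder-ness with the observation that the strong-reducibility version of the criterion drops the juxtaposition requirement; as written, your proof of the second assertion rests on a premise the paper itself disproves.
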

\begin{proof}
By \cite[Theorem~8.10]{BBP12} $f$ is low if and only $f\leqSW\Low$.
By Corollary~\ref{cor:L} it follows that $\overline{\Low}$ is not low and since $\overline{\Low}\equivSTW\Low$, it follows
that strong total Weihrauch reducibility does not preserve lowness. Hence total Weihrauch reducibility also does
not preserve lowness. By the reasoning used for the proof of \cite[Proposition~4.9]{BG20},
one can conclude that lowness would preserve strong total Weihrauch reducibility if it did respect precompleteness (in the
strong case one does not need closure under juxtaposition with the identity, which is not given for low functions by Lemma~\ref{lem:low-cylinder}).
Hence, lowness does not respect precompleteness. 
\end{proof}

We mention that $\Low$ is not a cylinder. 

\begin{lemma}
\label{lem:low-cylinder}
$\Low$ and $\overline{\Low}$ are not cylinders.
\end{lemma}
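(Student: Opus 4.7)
The plan is to prove both parts by showing $\id_{\IN^\IN}\times f\nleqSW f$ for $f=\Low$ and $f=\overline{\Low}$, which is equivalent to $f$ not being a cylinder (a cylinder is characterized by $\id_{\IN^\IN}\times f\leqSW f$, equivalently by the coincidence of $\leqW$ and $\leqSW$ on the lower cone).

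For $\Low$, I would argue by contradiction. Suppose computable $H,K$ witness $\id_{\IN^\IN}\times\Low\leqSW\Low$. Fix a computable $q_0\in\dom(\Low)$ with $\lim q_0=\emptyset'$, and for each $p\in\IN^\IN$ set $t(p):=\lim K(\langle p,q_0\rangle)\in\range(\J)$. The strong reduction forces the computable Turing functional $\Phi$ given by the first projection of $H$ to satisfy $\Phi(v)=p$ for every $v\in\J^{-1}(t(p))$, which immediately makes $t$ injective, because two distinct $p$'s sharing a value of $t$ would force $\Phi$ to take two distinct values on the same nonempty fiber. The contradiction then comes from combining continuity of $\Phi$ with a suitable jump-inversion: for a chosen $n\in\IN$ one produces $p_1\neq p_2$ differing at coordinate $n$ together with $v_1,v_2$ sharing an initial segment of length greater than the use of $\Phi$ at bit $n$, with $v_i'=t(p_i)$. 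Continuity then yields $\Phi(v_1)(n)=\Phi(v_2)(n)$, contradicting $\Phi(v_i)=p_i$.

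For $\overline{\Low}$ the same strategy applies, with an added preliminary case distinction. If $K$ sends some input $\langle\widehat{0},p\rangle$ (representing $(\bot,p)\in\overline{\IN^\IN}\times\IN^\IN$) to a name of $\bot$ or of a point outside $\dom(\Low)$, then $\overline{\Low}$ allows an adversarial realizer returning the constant sequence $\widehat{0}$, and $H(\widehat{0})$ would then have to depend on the varying $p$, which is impossible since $H(\widehat{0})$ is a single fixed sequence. Otherwise $K$ factors through $\dom(\Low)\subseteq\overline{\IN^\IN}$ and the argument for $\Low$ transfers verbatim via the computable embedding $\iota\colon\IN^\IN\to\overline{\IN^\IN}$ from Corollary~\ref{cor:completion}.

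The main obstacle is the constrained jump-inversion used above. Finite prefixes may already force some initial bits of the jump, so one needs $t(p_1)$ and $t(p_2)$ compatible on those forced bits. I would finesse this by first fixing $v_1$ with $v_1'=t(p_1)$, letting $m$ be the use of $\Phi$ on bit $n$, and then choosing $p_2$ via a pigeonhole-style argument: since the map $p\mapsto t(p)\upharpoonright m$ takes $\IN^\IN$ into the countable set $\IN^m$, there are distinct $p_1,p_2$ with $t(p_1)\upharpoonright m=t(p_2)\upharpoonright m$, and one can further arrange that they differ at a coordinate $n$ for which the use of $\Phi$ is at most $m$. Once this compatibility is secured, the finite-extension method yields the required $v_2$ extending $v_1\upharpoonright m$ with $v_2'=t(p_2)$.
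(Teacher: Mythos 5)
The paper's proof is short and indirect: from \cite[Theorem~8.8]{BBP12} it takes $\Low\lSW\Low\times\Low$, writes $\Low\times\Low=(\Low\times\id)\circ(\id\times\Low)$, and invokes \cite[Proposition~8.16]{BBP12} (low problems are closed under composition) to conclude that $\id\times\Low$ cannot be $\leqSW\Low$, since otherwise $\Low\times\Low$ would be low; the statement for $\overline{\Low}$ is then obtained from \cite[Proposition~6.19]{BG20}. Your attempt instead tries a direct continuity-plus-jump-inversion argument, and that argument has a genuine gap.

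The problem is in the final step, the ``constrained jump inversion.'' Since $\J$ is injective, the preimage $v_p:=\J^{-1}(t(p))$ is a \emph{single, fully determined} real for each $p$; there is no freedom to ``construct'' $v_2$. After you fix $v_1$, a use bound $m$, and then pick $p_2\neq p_1$ with $t(p_1)\upharpoonright m=t(p_2)\upharpoonright m$, the real $v_2=\J^{-1}(t(p_2))$ is already fixed, and nothing forces it to extend $v_1\upharpoonright m$. Agreement of the jumps $t(p_1)$ and $t(p_2)$ on their first $m$ bits gives no control over the first $m$ bits of their jump-preimages $v_1,v_2$ — two reals can have jumps sharing a long prefix while the reals themselves disagree immediately. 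The ``finite-extension method'' can build a real with a prescribed jump only when one is free to choose both the real and the jump target; here the jump target $t(p_2)$ is imposed and already determines the real uniquely. Moreover, even if one could choose the target jump freely, a given prefix $v_1\upharpoonright m$ may already enforce memberships $e\in v'$ for various $e$ (well beyond index $m$), so compatibility with $t(p_2)$ cannot be secured by matching only the first $m$ bits.

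There is also a structural reason to expect a purely topological argument to fail: the function $p\mapsto v_p=\J^{-1}\lim K(\langle p,q_0\rangle)$ is Baire class~1 with a continuous left inverse $\Phi$, and no contradiction follows from that alone. What actually blocks $\Low$ from being a cylinder is computability-theoretic: low Turing degrees are not closed under join, which is exactly what $\Low\lSW\Low\times\Low$ encodes. Your proof never brings that input to bear, and without it the argument cannot close. (The case split you propose for $\overline{\Low}$ also mislabels the $\bot$ component — for $\id_{\IN^\IN}\times\overline{\Low}$ the $\bot$ can only appear in the second coordinate — but this is a secondary issue compared with the gap above.)
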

\begin{proof}
By \cite[Theorem~8.8]{BBP12} we have $\Low\lSW\Low\times\Low=(\Low\times\id)\circ(\id\times\Low)$.
By \cite[Proposition~8.16]{BBP12} low problems are closed under composition. Hence $\id\times\Low\equivSW\Low\times\id$ cannot
be low, i.e., $\id\times\Low\nleqSW\Low$ and hence $\Low$ is not a cylinder.
By \cite[Proposition~6.19]{BG20} this implies that $\overline{\Low}$ is also not a cylinder.
\end{proof}

Finally, we prove that $\Low$ is not complete, i.e., $\Low\lW\overline{\Low}$.
We even prove a more general result.

\begin{proposition}
\label{prop:WBWT-L}
$\WBWT_2\nleqW\Low$.
\end{proposition}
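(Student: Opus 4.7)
The plan is to argue by contradiction. Assume $\WBWT_2\leqW\Low$ is witnessed by computable functionals $H,K:\In\IN^\IN\to\IN^\IN$; my goal is to build a computable $p\in 2^\IN$ and a realizer $G$ of $\Low$ such that $\lim H\langle p,GK(p)\rangle$ is not a cluster point of $p$, violating the reduction. Two sources of flexibility are exploited: the tail of $p$, which determines its cluster points, and the multi-valuedness of $\Low$, which permits $G(K(p))$ to be chosen as any $s\in\IN^\IN$ with $s'=\lim K(p)$.

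For any computable $p$ the limit $\ell:=\lim K(p)$ satisfies $\ell\leqT\emptyset'$, while the required membership $K(p)\in\dom(\Low)$ forces $\ell\in\range(\J)$ and hence $\emptyset'\leqT\ell$; therefore $\ell\equivT\emptyset'$, and Friedberg's jump inversion theorem provides substantial freedom in choosing $s$ with $s'=\ell$. I construct $p$ and $s$ jointly in stages by a finite-injury priority argument, splitting the coordinates of $s$ into two disjoint infinite substreams. Jump-inversion requirements are handled on the first substream: the bits of $\ell$ are coded into reserved positions by the standard permitting technique, ensuring $s'=\ell$. Diagonal requirements are handled on the second substream together with the tail extensions of $p$: at each diagonal stage, I use the effective continuity of $H$ and $K$ to locate a finite extension that commits a further bit of $H\langle p,s\rangle$ to disagree with whichever cluster point the current tail commitments of $p$ prescribe, thereby eventually preventing $\lim H\langle p,s\rangle$ from equalling any cluster point of the final $p$.

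The main obstacle is verifying that the diagonal requirements can always be met, i.e., that at each stage some compatible extension of $p$'s prefix and of $s$'s free coordinates forces the next `wrong' bit of $H\langle p,s\rangle$. Were this to fail, $H$ would have to map an entire cone of jump-correct sequences compatible with the current $p$-commitments uniformly to outputs converging to the `correct' value, a rigidity incompatible with $H$ being merely a computable functional and with the Friedberg coding leaving enough genuinely free coordinates; a straightforward continuity/compactness argument then produces the desired extension. Running the construction yields a computable $p$ with a well-defined eventual cluster point and a sequence $s\in\Low(K(p))$ such that $\lim H\langle p,s\rangle$ differs from it; extending $s$ arbitrarily to a full realizer $G$ of $\Low$ on the remainder of $\dom(\Low)$ produces the promised contradiction.
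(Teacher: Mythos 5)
Your proposal rests on a premise that is simply false: you claim that "the multi-valuedness of $\Low$ \ldots permits $G(K(p))$ to be chosen as any $s\in\IN^\IN$ with $s'=\lim K(p)$," and the entire finite-injury/jump-inversion diagonalization is built on exploiting that freedom. But $\Low=\J^{-1}\circ\lim$ is a single-valued partial function: the Turing jump operator $\J:p\mapsto p'$ is injective (one can uniformly recover $p$ from $p'$), so $\J^{-1}$ is single-valued, and whenever $K(p)\in\dom(\Low)$ the value $\Low(K(p))$ is the unique $s$ with $s'=\lim K(p)$, with no choice at all. The paper's proof leans on exactly this fact, noting explicitly that "up to extension the only realizer of $\Low$ is $\Low$ itself." Consequently there is no Friedberg-style family of admissible $s$ to range over, the priority argument has nothing to diagonalize against, and the construction cannot get started.

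Beyond that fatal issue, even in a hypothetical multi-valued setting the crucial step — that "a straightforward continuity/compactness argument then produces the desired extension" meeting each diagonal requirement — is stated as an assertion rather than argued, and it is exactly the kind of claim that a careful priority construction must actually establish; as written it does not constitute a proof. For contrast, the paper avoids all of this by a Gödelization trick: it passes to $\WBWT_{2\varphi}$, uses that any realizer of $\Low$ agrees with $\Low$ on its domain and that $\J\Low=\lim$, and then applies the smn-theorem to convert a putative reduction $\WBWT_{2\varphi}\leqW\Low$ into $\BWT_{2\varphi}\leqW\lim$, contradicting the known non-limit-computability coming from $\widehat{\BWT_2}\equivW\WKL'$ and the relativized Kleene tree. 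You should revisit the definition of $\Low$ and take seriously the rigidity of its realizers before attempting any diagonalization.
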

\begin{proof}
By $\varphi$ we denote a G\"odel numbering of all computable functions ${\varphi_n:\In\IN\to\IN}$.
For every problem $f:\In\IN^\IN\mto Y$ we denote by $f_\varphi:\In\IN\mto Y$ its {\em G\"odelization}
defined by $f_\varphi(n):=f(\varphi_n)$ with $\dom(f_\varphi):=\{n\in\IN:\varphi_n\mbox{ total and }
\varphi_n\in\dom(f)\}$. By the universal Turing machine theorem one obtains $f_\varphi\leqW f$.
Hence, it suffices to show $W:=\WBWT_{2\varphi}\nleqW\Low$ in order to prove our claim.
We prove that $W\leqW\Low$ implies $B:=\BWT_{2\varphi}\leqW\lim$. 
But the latter implies $(\widehat{\BWT_2})_\varphi\leqW\widehat{B}\leqW\lim$, where the first reduction
holds by the smn-theorem. But this is a contradiction,
since it is known that $\widehat{\BWT_2}\equivW\WKL'$ \cite[Corollaries~11.6, 11.7 and 11.12]{BGM12} 
and hence by the relativized Kleene tree construction
$\WKL'$ and thus $\widehat{\BWT_2}$ have computable inputs with no limit computable solution.
We now show that $W\leqW\Low$ implies $B\leqW\lim$.
To this end, let $H,K:\In\IN^\IN\to\IN^\IN$ be computable functions such that
$H\langle\id,GK\rangle$ is a realizer for $W$ whenever $G$ is a realizer for $\Low=\J^{-1}\circ\lim$.
Up to extension the only realizer of $\Low$ is $\Low$ itself, hence we can assume $G=\Low$.
By the smn-theorem there are two computable functions $r_0,r_1:\IN\to\IN$ such that
$\J \Low K(\varphi_n)(r_i\langle n,k\rangle)=1$ if and only if
$H\langle\varphi_n,\Low K(\varphi_n)\rangle(m)=i$ for all $m\geq k$.
Intuitively speaking, $r_i$ inspects the outcome of $H\langle\varphi_n,\Low K(\varphi_n)\rangle$ with 
respect to the question whether it is eventually constant with value $i$, which is possible if the Turing jump
of $\Low K(\varphi_n)$ is known, since $q\mapsto H\langle\varphi_n,q\rangle$ is computable uniformly in $n$.
Now $\J \Low=\lim$ and hence the inner reduction function $K$ also witnesses the reduction
$B\leqW\lim$. More precisely, given an input $\langle n,q\rangle$ the corresponding outer reduction function $H'$ only
has to search for some $(i,k)$ with $i\in\{0,1\}$ and $k\in\IN$ such that 
$q(r_i\langle n,k\rangle)=1$ and output (a name of) $i$ in this case.
Such a function $H'$ is clearly computable and satisfies
$H'\langle n,\lim K(\varphi_n)\rangle\in B(n)$. 
\end{proof}

We obtain the following corollary with Proposition~\ref{prop:WBWT} and Corollary~\ref{cor:CN-SORT}.

\begin{corollary}
\label{cor:L-not-complete}
$\Low$ is not complete, $\overline{\C_\IN}\nleqW\Low$ and $\SORT\nleqW\Low$.
\end{corollary}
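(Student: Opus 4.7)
The plan is to obtain the three assertions in a single chain from Propositions~\ref{prop:WBWT-L} and \ref{prop:WBWT} together with Corollary~\ref{cor:CN-SORT}. The key intermediate fact is $\overline{\C_\IN}\nleqW\Low$, from which the other two statements follow by easy transitivity arguments.

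First I would establish $\overline{\C_\IN}\nleqW\Low$. By Proposition~\ref{prop:WBWT} we have $\WBWT_2\leqW\overline{\C_\IN}$, so any reduction $\overline{\C_\IN}\leqW\Low$ would yield $\WBWT_2\leqW\Low$ by transitivity of $\leqW$, contradicting Proposition~\ref{prop:WBWT-L}.

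Next I would deduce $\SORT\nleqW\Low$. By Corollary~\ref{cor:CN-SORT} we have $\overline{\C_\IN}\leqSW\SORT$, hence in particular $\overline{\C_\IN}\leqW\SORT$. If $\SORT\leqW\Low$ held, transitivity would give $\overline{\C_\IN}\leqW\Low$, contradicting the previous step.

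Finally I would show that $\Low$ is not complete, i.e.\ $\overline{\Low}\nleqW\Low$. Since $\C_\IN\leqSW\Low$ by \cite[Corollary~8.14]{BBP12}, and since completion is monotone with respect to $\leqSW$ (being a closure operator), we get $\overline{\C_\IN}\leqSW\overline{\Low}$. If $\overline{\Low}\leqW\Low$, then by transitivity $\overline{\C_\IN}\leqW\Low$, contradicting the first step again. Hence $\Low\lW\overline{\Low}$, so $\Low$ is not complete.

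None of the steps is a serious obstacle since all three conclusions are formal consequences of previously established reductions and non-reductions; the only mildly subtle point is remembering that completion is monotone in order to propagate $\C_\IN\leqSW\Low$ to $\overline{\C_\IN}\leqSW\overline{\Low}$, and that $\leqSW$ implies $\leqW$, so that the contradictions can all be derived uniformly through $\overline{\C_\IN}\nleqW\Low$.
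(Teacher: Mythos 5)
Your proof is correct and follows essentially the same route as the paper: the key non-reduction $\overline{\C_\IN}\nleqW\Low$ is obtained by combining $\WBWT_2\leqW\overline{\C_\IN}$ (Proposition~\ref{prop:WBWT}) with $\WBWT_2\nleqW\Low$ (Proposition~\ref{prop:WBWT-L}), and the other two claims then follow by transitivity using $\overline{\C_\IN}\leqSW\SORT$ (Corollary~\ref{cor:CN-SORT}) and $\overline{\C_\IN}\leqSW\overline{\Low}$ (which the paper also derives, right before Corollary~\ref{cor:L}, from $\C_\IN\leqSW\Low$). The only minor imprecision is attributing $\leqSW$-monotonicity of completion directly to the closure-operator property, which the paper states for $\leqSTW$; the $\leqSW$-monotonicity is a consequence via $f\leqSW g\leqSW\overline{g}$ and Lemma~\ref{lem:completion-total}, but the conclusion is correct.
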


\section{Choice on Euclidean Space}
\label{sec:choice-Euclidean}

By Lemma~\ref{lem:CN-strong-fractal} $\overline{\C_\IN}$ is a total fractal.
This fact allows us to give a very simple proof of the following result.

\begin{proposition}[Choice on Euclidean space]
\label{prop:choice-euclidean}
$\C_\IR\lW\overline{\C_\IR}$ and $\PC_\IR\lW\overline{\PC_{\IR}}$.
\end{proposition}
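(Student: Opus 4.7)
The plan is to handle both separations in parallel by exploiting $\overline{\C_\IN}$ as a separating witness. The upward reductions $\C_\IR\leqW\overline{\C_\IR}$ and $\PC_\IR\leqW\overline{\PC_\IR}$ are immediate since completion is a closure operator, so the work is entirely in the strictness.

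For the lower bounds, I would first note that $\C_\IN\leqW\C_\IR$ because $\IN$ is a computable closed subset of $\IR$, and that $\C_\IN\leqW\PC_\IR$ follows from the decomposition $\PC_\IR\equivW\PC_{2^\IN}*\C_\IN$ cited from \cite{BGH15a}. Monotonicity of completion with respect to $\leqW$ then lifts these to $\overline{\C_\IN}\leqW\overline{\C_\IR}$ and $\overline{\C_\IN}\leqW\overline{\PC_\IR}$.

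For the upper bounds, the key external input is $\C_\IR\leqW\Low$ from \cite[Corollary~4.9]{BBP12}, which combined with the trivial $\PC_\IR\leqW\C_\IR$ (restriction of domain) yields $\PC_\IR\leqW\Low$ as well. Meanwhile Corollary~\ref{cor:L-not-complete} asserts $\overline{\C_\IN}\nleqW\Low$. Putting the pieces together by transitivity: if $\overline{\C_\IR}\leqW\C_\IR$ held, then $\overline{\C_\IN}\leqW\overline{\C_\IR}\leqW\C_\IR\leqW\Low$ would contradict Corollary~\ref{cor:L-not-complete}, and the same chain with $\PC_\IR$ in place of $\C_\IR$ handles the second separation.

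There is no real obstacle; everything is an assembly of facts already in hand. The role of the total fractal property of $\overline{\C_\IN}$ (Lemma~\ref{lem:CN-strong-fractal}) in the hint before the proposition is not strictly required by this route. A fractality-based alternative would proceed by writing $\C_\IR\equivW\C_{2^\IN}*\C_\IN$, applying Le Roux--Pauly choice elimination \cite[Theorem~2.4]{LRP15a} to the hypothetical $\overline{\C_\IN}\leqW\C_\IR$ to obtain $\overline{\C_\IN}\leqW\C_{2^\IN}$, and then invoking $\C_{2^\IN}\leqW\Low$ to reach the same contradiction with Corollary~\ref{cor:L-not-complete}.
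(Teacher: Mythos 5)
Your proof is correct, and the primary route you sketch is genuinely different from the one the paper takes. The paper assumes $\overline{\C_\IR}\leqW\C_\IR$, uses $\overline{\C_\IN}\leqW\overline{\C_\IR}$ and $\C_\IR\equivW\C_{2^\IN}*\C_\IN$, and then invokes the Le Roux--Pauly choice elimination theorem \cite[Theorem~2.4]{LRP15a} (which requires the total-fractality of $\overline{\C_\IN}$ from Lemma~\ref{lem:CN-strong-fractal}) to conclude $\C_\IN\leqW\overline{\C_\IN}\leqW\C_{2^\IN}$, contradicting the elementary separation $\C_\IN\nleqW\C_{2^\IN}$. You instead bypass the fractality/elimination machinery entirely by routing through lowness: $\overline{\C_\IN}\leqW\overline{\C_\IR}\leqW\C_\IR\leqW\Low$ contradicts Corollary~\ref{cor:L-not-complete}, and likewise for $\PC_\IR$. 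Both routes are legitimate and both are implicitly acknowledged by the paper itself, since after the proof the authors remark that the separation $\overline{\C_\IN}\nleqW\C_\IR$ (Corollary~\ref{cor:CN-CR}) is ``also a consequence of Corollary~\ref{cor:L-not-complete}.'' What your route buys is a shorter argument that avoids the fractal verification and the choice-elimination theorem; what it costs is a dependence on the deeper separation $\WBWT_2\nleqW\Low$ (Proposition~\ref{prop:WBWT-L}) underlying Corollary~\ref{cor:L-not-complete}, whereas the paper's route terminates in the more elementary fact $\C_\IN\nleqW\C_{2^\IN}$. The paper's route also has the advantage, noted in the remark right after the proof, that the same method shows $\C_\IN\lW\overline{\C_\IN}$ directly, something the lowness route cannot do since $\C_\IN\leqSW\Low$ makes $\Low$ useless as a separator there. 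Your secondary ``fractality-based alternative'' is essentially the paper's proof with a cosmetic change in the final contradiction (using $\C_{2^\IN}\leqW\Low$ and Corollary~\ref{cor:L-not-complete} rather than $\C_\IN\nleqW\C_{2^\IN}$).
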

\begin{proof}
Let us assume that $\overline{\C_\IR}\leqW\C_\IR$. Then we obtain
\[\overline{\C_\IN}\leqW\overline{\C_\IR}\leqW\C_\IR\equivW\C_{2^\IN}*\C_\IN,\]
where the first reduction holds since $\C_\IN\leqW\C_\IR$ and completion is a closure operator and the last mentioned equivalence is known~\cite[Example~4.4~(2)]{BGM12}.
By the choice elimination principle~\cite[Theorem~2.4]{LRP15a} it follows that $\C_\IN\leqW\overline{\C_\IN}\leqW\C_{2^\IN}$, which is known to be false~\cite[Corollary~4.2]{BG11a}. 
$\PC_\IR\lW\overline{\PC_{\IR}}$ can be proved analogously, since $\PC_\IR\equivW\PC_{2^\IN}*\C_\IN$ by \cite[Corollary~6.4, Proposition~7.4]{BGH15a}.
\end{proof}

Analogously, one could also prove $\C_\IN\lW\overline{\C_\IN}$.
As a corollary of the proof of Proposition~\ref{prop:choice-euclidean} we also obtain the following separation,
which is also a consequence of Corollary~\ref{cor:L-not-complete}.

\begin{corollary} 
\label{cor:CN-CR}
$\overline{\C_\IN}\nleqW\C_\IR$.
\end{corollary}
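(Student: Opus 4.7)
The plan is to observe that the proof of Proposition~\ref{prop:choice-euclidean} already contains essentially the desired argument; one just needs to extract the part that does not require going through $\overline{\C_{\IR}}$. So I would argue directly: assume for contradiction $\overline{\C_\IN}\leqW\C_\IR$. Using the known factorization $\C_\IR\equivW\C_{2^\IN}*\C_\IN$ (from \cite{BGM12}), this becomes $\overline{\C_\IN}\leqW\C_{2^\IN}*\C_\IN$.

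Now I invoke the choice elimination principle of Le Roux and Pauly~\cite[Theorem~2.4]{LRP15a}, whose hypothesis requires that $\overline{\C_\IN}$ be a total fractal. This is precisely the content of Lemma~\ref{lem:CN-strong-fractal}. Elimination of the outer factor $\C_\IN$ then yields $\overline{\C_\IN}\leqW\C_{2^\IN}$. Since $\C_\IN\leqSW\overline{\C_\IN}$ by Corollary~\ref{cor:completion-choice}, this would imply $\C_\IN\leqW\C_{2^\IN}$, which is known to be false (\cite[Corollary~4.2]{BG11a}).

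As an alternative route, I would point out that the corollary also follows from the lowness material of the previous section: the reduction $\C_\IR\leqW\Low$ is recorded in the introduction (from \cite[Corollary~4.9, Theorem~8.7]{BBP12}), and Corollary~\ref{cor:L-not-complete} gives $\overline{\C_\IN}\nleqW\Low$, so the non-reduction $\overline{\C_\IN}\nleqW\C_\IR$ is immediate by transitivity. I expect no real obstacle here, since all needed ingredients (the fractality of $\overline{\C_\IN}$, the choice elimination theorem, and the factorization of $\C_\IR$) are already in place; the main point is simply to recognize that Proposition~\ref{prop:choice-euclidean}'s proof secretly proves this stronger statement along the way.
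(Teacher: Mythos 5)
Your proposal is correct and matches the paper exactly: the paper justifies this corollary with the one-line remark that it follows ``from the proof of Proposition~\ref{prop:choice-euclidean}'' and is ``also a consequence of Corollary~\ref{cor:L-not-complete},'' and you have spelled out both of those routes accurately. One small terminological slip: in $\C_{2^\IN}*\C_\IN$ the factor $\C_\IN$ being eliminated by~\cite[Theorem~2.4]{LRP15a} is the \emph{inner} (second) one, not the outer, but this does not affect the argument.
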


Since $\overline{f\times g}\leqSW\overline{f}\times\overline{g}$ by \cite[Proposition~6.3]{BG20} and using Proposition~\ref{prop:compact-choice} we obtain 
\[\overline{\C_\IR}\leqSW\overline{\C_{2^\IN}\times\C_\IN}\leqSW\overline{\C_{2^\IN}}\times\overline{\C_\IN}\leqSW\C_{2^\IN}\times\overline{\C_\IN}\]
and one could ask whether the inverse reduction holds too.
The following choice and completion elimination principle is quite useful and can be used to prove that this is not so.
It has some similarities to the displacement principle formulated in~\cite[Theorem~8.3, Corollary~8.4]{BLRMP19}
and shows that if the completion of a problem $g$ can compute another problem $f$ together with $\C_2$, then the 
uncompleted problem by itself can already compute $f$.

\begin{proposition}[Depletion]
\label{prop:choice-completion-elimination}
$f\times\C_2\leqW\overline{g}\TO f\leqW g$ holds for all problems $f,g$. 
An analogous property holds for $\leqSW$ instead of $\leqW$ in both instances.
\end{proposition}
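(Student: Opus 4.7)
The plan is to exploit the two-valued distinguishability of the output of $\C_2$ to force the completion on the right-hand side to collapse. Assume $f\times\C_2\leqW\overline{g}$ is witnessed by computable $H,K$. Fix, for each $n\in\IN$ and $i\in\{0,1\}$, a computable name $q_i^n\in\IN^\IN$ of $\{i\}\in\AA_-(\{0,1\})$ such that $q_0^n$ and $q_1^n$ agree on their first $n$ symbols (for example $q_0^n:=0^n 2\widehat{0}$ and $q_1^n:=0^n 1\widehat{0}$, with the digits $1$ and $2$ coding the basic balls $\{0\}$ and $\{1\}$ respectively). Given a name $p$ of a point $x\in\dom(f)$, I would define $K'(p)$ by dovetailing the computations $K(\langle p,q_i^n\rangle)$ over all $(n,i)\in\IN\times\{0,1\}$ in a fixed enumeration and committing to the first output that reveals itself to be a $\delta_W^\wp$-name of a point in $\dom(g)$ (rather than of $\bot\in\overline{W}$); $K'(p)$ is then that committed output with the shift $r\mapsto r-1$ applied.

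The main step is to show that this dovetailing always succeeds. Suppose otherwise: every $K(\langle p,q_i^n\rangle)$ is a name of $\bot$. Then for any realizer $G$ of $\overline{g}$ the values $G(K(\langle p,q_i^n\rangle))$ are completely unconstrained, so one may choose $G$ with $G(K(\langle p,q_0^n\rangle))=G(K(\langle p,q_1^n\rangle))=r$ for a common $r\in\IN^\IN$. By continuity of $H$, its $\C_2$-output component is fixed after $H$ reads some finite prefix of length $M=M(p,r)$ of its arguments. Choose $n$ with $n>M$; then $\langle p,q_0^n\rangle$ and $\langle p,q_1^n\rangle$ coincide on their first $M$ symbols, so that $H\langle\langle p,q_0^n\rangle,r\rangle$ and $H\langle\langle p,q_1^n\rangle,r\rangle$ must commit to the same $\C_2$-value; yet these are required to be $0$ and $1$ respectively, a contradiction.

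For the $\leqW$-reduction, I would define $H'\langle p,s\rangle$ by re-running the dovetailing on $p$ to recover the selected pair $(n,i)$ and returning the first component of $H\langle\langle p,q_i^n\rangle,s+1\rangle$, where $s+1$ inverts the shift used to form $K'(p)$. Any realizer $G_0$ of $g$ extends to a realizer $G$ of $\overline{g}$ by setting $G$ arbitrarily on names of $\bot$, and the relation $\delta_{\overline{Y}}(\cdot)=\delta_Y((\cdot)-1)$ forces $G(K(\langle p,q_i^n\rangle))=G_0(K'(p))+1$; hence $H\langle\langle p,q_i^n\rangle,G_0(K'(p))+1\rangle$ is a name of some pair $(z,i)$ with $z\in f(x)$, whose first component supplies the required $f$-output. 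The strong case uses the same $K'$: in $\leqSW$ the outer reducer $H$ is a single-argument map, so $H'(s)$ is simply the first component of $H(s+1)$ and needs no access to $p$ or to the choice of $(n,i)$, since the $Y$-component of the output is a valid name of a point in $f(x)$ regardless of which $i$ was selected. The main obstacle throughout is the uniformization over $p$, since the continuity threshold $M$ in the key claim depends on $p$; unbounded dovetailing addresses this transparently.
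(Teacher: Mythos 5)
Your overall plan --- force the completion on the right-hand side to collapse by varying the $\C_2$-input and invoking continuity of $H$ --- is the right idea, and the conclusion of your key claim (not every $K(\langle p,q_i^n\rangle)$ can be a name of $\bot$) is in the right direction. But the step where you build $K'$ is a genuine gap: ``dovetailing the computations $K(\langle p,q_i^n\rangle)$ $\ldots$ and committing to the first output that reveals itself to be a $\delta_W^\wp$-name of a point in $\dom(g)$'' is not a computable search. To certify that a sequence is a $\delta_W^\wp$-name of a point in $\dom(g)$ (as opposed to a name of $\bot\in\overline{W}$, or of a point of $W\setminus\dom(g)$) you would need to semi-decide membership in $\dom(\delta_W^\wp)$ and then in $\dom(g)$. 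Neither is semi-decidable in general: after any finite stage of the computation you have seen only finitely many non-zero symbols in $K(\langle p,q_i^n\rangle)$, which is always still compatible with the output being a $\bot$-name, and $\dom(g)$ can be an arbitrarily complicated subset of $W$. So the $K'$ you describe is not a computable function, and the reduction does not go through.

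There is also a circularity in the argument for your key claim. You write that, by continuity, the $\C_2$-component is fixed after a prefix of length $M=M(p,r)$, and then ``choose $n>M$.'' But the continuity threshold for $H$ at the argument $\langle\langle p,q_i^n\rangle,r\rangle$ depends on the entire argument, including $q_i^n$, so $M$ depends on $n$, and picking $n>M(n)$ is not justified as written; you would need to compute the modulus at the limit input $\langle\langle p,\widehat{0}\rangle,\cdot\rangle$ first and only then vary the tail.

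The paper avoids both problems at once by fixing a \emph{single} computable name $q$ of the full set $\{0,1\}\in\AA_-(\{0,1\})$ and proving the sharper statement that $K\langle p,q\rangle$ \emph{always} names a point in $\dom(g)$. If it did not, then some realizer $G$ of $\overline{g}$ can output $\widehat{0}$ on that input; by continuity $H$ commits to some $i\in\{0,1\}$ after reading a finite prefix of $q$ (and of $\widehat{0}$); that prefix of $q$ can be extended to a name $q'$ of the singleton $\{1-i\}$, and --- because names over $\overline{Y}$ may start with arbitrarily long blocks of $0$s --- one can pick a realizer $G'$ whose value on $K\langle p,q'\rangle$ still begins with the same prefix of zeros, so $H$ still answers $i$, which is now a wrong answer for $\C_2$. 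With the ``always'' in hand, no recognition or search is needed: $K'(p):=K\langle p,q\rangle-1$ and $H'\langle p,r\rangle:=\pi_1\circ H\langle\langle p,q\rangle,r+1\rangle$ are computable and witness $f\leqW g$ directly, and the $\leqSW$ case is the same without the $p$-component. The crucial structural difference is that the paper proves a universal statement about the single input $q$, whereas your proposal proves only an existential statement about the family $(q_i^n)$ and then would need an uncomputable search to locate the witness.
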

\begin{proof}
If $f$ is nowhere defined, then the statement holds obviously. Hence, let $f$ be defined somewhere.
We consider $g:\In X\mto Y$ and $\overline{g}:\overline{X}\mto\overline{Y}$. 
Let $p\in\IN^\IN$ be a name of some point $x\in\dom(f)$
and let $q\in\IN^\IN$ be a computable name of the set $\{0,1\}\in\AA_-(\{0,1\})$. 
Let $f\times\C_2\leqW\overline{g}$ be witnessed by some computable functions $H,K$.
Let us assume that $K\langle p,q\rangle$ is a name of a point outside of $\dom(g)$.
Then a realizer $G$ of $\overline{g}$ can produce any value on this input, for instance, $GK\langle p,q\rangle=\widehat{0}$.
Then $H\langle\langle p,q\rangle,\widehat{0}\rangle=\langle s,t\rangle$ where $s$ is a name of a point in $f(x)$ and $i=t(0)\in\{0,1\}$.
Since $H$ is continuous, there are finite prefixes $w\prefix p$ and $v\prefix q$ that suffice to produce the value $i=t(0)$.
Moreover, there is a name $q'$ of $\{i-1\}$ with $v\prefix q'$, and there is a realizer $G$ of $\overline{g}$ that produces a value $r=GK\langle p,q'\rangle$ that starts with
sufficiently many zeros (which is possible with respect to the representation $\delta_{\overline{Y}}$) such that $H\langle\langle p,q'\rangle,r\rangle=\langle s',t'\rangle$ with the same $i=t'(0)$ as above.
But in this case the result is incorrect. Hence, the assumption was incorrect and $K\langle p,q\rangle$ is always a $\delta_{\overline{X}}$--name of a point in $\dom(g)$ for every name $p$ of a point in $\dom(f)$ and the fixed computable $q$.
Hence $K'$ with $K'(p):=K\langle p,q\rangle-1$ is a name of the same point with respect to $\delta_X$ and $H'\langle p,r\rangle:=\pi_1\circ H\langle\langle p,q\rangle,r-1\rangle$ is a computable function
such that $K'$ and $H'$ witness $f\leqW g$. The proof for $\leqSW$ is analogous.
 \end{proof}

One application of Proposition~\ref{prop:choice-completion-elimination} shows that $\overline{\C_\IN}\times\C_{2^\IN}\leqW\overline{\C_\IR}$
would imply $\overline{\C_\IN}\leqW\C_\IR$, which is false by Corollary~\ref{cor:CN-CR}.
An analogous observation holds for $\PC_\IR$.

\begin{corollary}
\label{cor:CR-product}
$\overline{\C_\IR}\lW\C_{2^\IN}\times\overline{\C_\IN}$ and $\overline{\PC_\IR}\lW\PC_{2^\IN}\times\overline{\C_\IN}$.
\end{corollary}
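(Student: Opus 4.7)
\textit{Plan.}  The corollary asserts two strict reductions, each requiring an upper bound together with the failure of its converse.  The upper bound $\overline{\C_\IR}\leqSW\C_{2^\IN}\times\overline{\C_\IN}$ is exactly the chain displayed immediately before the corollary, and the analogous $\overline{\PC_\IR}\leqSW\PC_{2^\IN}\times\overline{\C_\IN}$ is to be obtained by the same template starting from $\PC_\IR\equivW\PC_{2^\IN}*\C_\IN$ and the product-completion inequality $\overline{f\times g}\leqSW\overline{f}\times\overline{g}$ of \cite[Proposition~6.3]{BG20}.  Both separations are then driven uniformly by the depletion principle of Proposition~\ref{prop:choice-completion-elimination}, which reads $f\times\C_2\leqW\overline{g}\TO f\leqW g$, combined with $\overline{\C_\IN}\nleqW\C_\IR$ from Corollary~\ref{cor:CN-CR}.

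For the $\C_\IR$ case, suppose toward a contradiction that $\C_{2^\IN}\times\overline{\C_\IN}\leqW\overline{\C_\IR}$.  Using $\C_2\leqW\C_{2^\IN}$ together with the idempotence of parallelization $\C_{2^\IN}\times\C_{2^\IN}\equivW\C_{2^\IN}$, one absorbs an extra factor to get $\overline{\C_\IN}\times\C_2\leqW\overline{\C_\IN}\times\C_{2^\IN}\equivW\C_{2^\IN}\times\overline{\C_\IN}\leqW\overline{\C_\IR}$, and depletion with $f:=\overline{\C_\IN}$ and $g:=\C_\IR$ forces $\overline{\C_\IN}\leqW\C_\IR$, contradicting Corollary~\ref{cor:CN-CR}.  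The $\PC_\IR$ case follows the same script once one observes that $\C_2\leqW\PC_{2^\IN}$:  any nonempty co-c.e.\ $A\In\{0,1\}$ lifts to the co-c.e.\ closed set $\{aw:a\in A,w\in 2^\IN\}\In 2^\IN$ of measure at least $1/2$, and the first bit of any member is a valid choice in $A$.  Assuming $\PC_{2^\IN}\times\overline{\C_\IN}\leqW\overline{\PC_\IR}$, the same manipulations give $\overline{\C_\IN}\times\C_2\leqW\overline{\PC_\IR}$; depletion with $g:=\PC_\IR$ yields $\overline{\C_\IN}\leqW\PC_\IR$, and the obvious $\PC_\IR\leqW\C_\IR$ again contradicts Corollary~\ref{cor:CN-CR}.

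The main obstacle I anticipate lies not in the strictness arguments but in the upper bound for the $\PC$-variant.  The $\overline{\C_\IR}$ chain closes because $\overline{\C_{2^\IN}}\equivSW\C_{2^\IN}$ by Proposition~\ref{prop:compact-choice}, whereas $\PC_{2^\IN}$ fails to be complete by Corollary~\ref{cor:PC-PCC}, so the naive analog stops at $\overline{\PC_{2^\IN}}\times\overline{\C_\IN}$.  Bridging $\overline{\PC_{2^\IN}}\leqW\PC_{2^\IN}\times\overline{\C_\IN}$, or equivalently constructing a direct parallel reduction from $\overline{\PC_\IR}$ to the right-hand side, is the delicate step.
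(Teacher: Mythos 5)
Your treatment of the two strictness claims is correct and is precisely the paper's argument: the upper bound $\overline{\C_\IR}\leqSW\C_{2^\IN}\times\overline{\C_\IN}$ comes from the displayed chain, and both separations follow from the Depletion Proposition~\ref{prop:choice-completion-elimination} by inserting $\C_2\leqW\C_{2^\IN}$ (resp.\ $\C_2\leqW\PC_{2^\IN}$) and invoking Corollary~\ref{cor:CN-CR} together with $\PC_\IR\leqW\C_\IR$. (Your parenthetical appeal to idempotence of $\C_{2^\IN}$ is superfluous: one only needs $\C_2\leqW\C_{2^\IN}$ and commutativity of $\times$, but this does no harm.)

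Your flag on the $\PC$--variant upper bound is, however, a genuine gap that you do not close, and it is worth stressing that it is a real one, not merely a presentational worry. The chain that the paper displays,
\[\overline{\C_\IR}\leqSW\overline{\C_{2^\IN}\times\C_\IN}\leqSW\overline{\C_{2^\IN}}\times\overline{\C_\IN}\leqSW\C_{2^\IN}\times\overline{\C_\IN},\]
closes at the last step only because $\overline{\C_{2^\IN}}\equivSW\C_{2^\IN}$ (Proposition~\ref{prop:compact-choice}). For the $\PC$--analog, starting from $\PC_\IR\leqSW\PC_{2^\IN}\times\C_\IN$, the same moves yield only $\overline{\PC_\IR}\leqSW\overline{\PC_{2^\IN}}\times\overline{\C_\IN}$, and since $\PC_{2^\IN}\lW\overline{\PC_{2^\IN}}$ (Corollary~\ref{cor:PC-PCC}) there is no way to pass to $\PC_{2^\IN}\times\overline{\C_\IN}$ along this route. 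The paper's proof of the corollary consists of the chain, the depletion application, and the phrase ``an analogous observation holds for $\PC_\IR$''; that phrase clearly covers the separation but, as you observe, does \emph{not} clearly justify $\overline{\PC_\IR}\leqW\PC_{2^\IN}\times\overline{\C_\IN}$. Your blind reconstruction thus matches the paper's explicit argument exactly on the three pieces the paper actually writes out, and correctly isolates the one piece that the paper leaves implicit. If the intended right-hand side were $\overline{\PC_{2^\IN}}\times\overline{\C_\IN}$ (the natural reading of the subsequent remark that ``the product of the respective completions is stronger than the completion of the products''), then both the upper bound and the separation go through with no extra work; as stated, the $\PC$--half of the corollary requires an additional argument that neither you nor the visible text of the paper supplies, and your instinct to call this the ``delicate step'' is sound.
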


This corollary provides natural examples of problems such that the product of the respective completions is stronger
than the completion of the products. The existence of such examples was already proved in \cite[Lemma~6.9]{BG20}.
Another conclusion that we can draw from Proposition~\ref{prop:choice-completion-elimination} is
that every incomplete problem above $\C_2$ has a completion that is not idempotent.
We recall that a problem $f$ is called {\em idempotent} if $f\times f\equivW f$ holds.

\begin{corollary}[Idempotency and completeness]
\label{cor:idempotency-completeness}
If $f$ is incomplete and $\C_2\leqW f$, then $\overline{f}$ is not idempotent.
\end{corollary}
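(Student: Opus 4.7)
The plan is to prove the contrapositive: assume $\overline{f}$ is idempotent and $\C_2\leqW f$, and show that $f$ must be complete. The whole argument is a direct application of the Depletion principle (Proposition~\ref{prop:choice-completion-elimination}).

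First I would observe that since $\C_2\leqW f$ and $f\leqW\overline{f}$ (the latter holding because completion is a closure operator), we have $\C_2\leqW\overline{f}$. Combined with $\overline{f}\leqW\overline{f}$, this gives $\overline{f}\times\C_2\leqW\overline{f}\times\overline{f}$ by the monotonicity of $\times$ with respect to $\leqW$. Using the assumed idempotency $\overline{f}\times\overline{f}\equivW\overline{f}$, I chain these to obtain
\[\overline{f}\times\C_2\leqW\overline{f}.\]

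Next I would invoke Proposition~\ref{prop:choice-completion-elimination} with the role of ``$f$'' in that statement played by $\overline{f}$ and the role of ``$g$'' played by $f$. Since the right-hand side $\overline{f}$ is exactly $\overline{g}$ in the notation of that proposition, Depletion yields $\overline{f}\leqW f$. Combined with the trivial reduction $f\leqW\overline{f}$ from the fact that completion is a closure operator, this gives $f\equivW\overline{f}$, i.e., $f$ is complete, contradicting the hypothesis.

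There is essentially no obstacle here; the only thing to verify is the applicability of Depletion, which requires no side conditions beyond the reduction $\overline{f}\times\C_2\leqW\overline{f}$ that idempotency and $\C_2\leqW f$ supply. Thus the entire proof reduces to a three-line diagram chase once one notices that Proposition~\ref{prop:choice-completion-elimination} is the right tool: idempotency is what lets one absorb the extra $\C_2$ factor into $\overline{f}$, and the Depletion principle is precisely what turns such an absorption on the completion side into a reduction on the uncompleted side.
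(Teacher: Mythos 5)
Your proof is correct and is precisely the argument the paper intends: the corollary is stated without a separate proof immediately after Proposition~\ref{prop:choice-completion-elimination}, and your chain $\overline{f}\times\C_2\leqW\overline{f}\times\overline{f}\equivW\overline{f}$ followed by Depletion (with $\overline{f}$ in the role of ``$f$'' and $f$ in the role of ``$g$'') to get $\overline{f}\leqW f$, hence $f\equivW\overline{f}$, is the expected instantiation.
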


In particular, this means that our incomplete choice problems are not idempotent.
By \cite[Proposition~6.19]{BG20} a problem that is incomplete has a completion which is not a cylinder.
We recall that a problem $f$ is called a {\em cylinder} if $f\equivSW \id\times f$.

\begin{corollary}
\label{cor:CR-CN-idempotency}
$\overline{\C_\IN}$, $\overline{\C_\IR}$, $\overline{\PCC_{[0,1]}}$, $\overline{\PC_{2^\IR}}$, $\overline{\PC_\IR}$ and $\overline{\C_{\IN^\IN}}$ are not idempotent and not cylinders.
\end{corollary}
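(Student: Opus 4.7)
The plan is to apply Corollary~\ref{cor:idempotency-completeness} together with \cite[Proposition~6.19]{BG20} uniformly to the six problems on the list. Both results hinge on the incompleteness of the problem below the bar, and Corollary~\ref{cor:idempotency-completeness} additionally requires the hypothesis $\C_2\leqW f$. Hence the proof reduces to assembling the incompleteness facts already established in the previous sections and checking the essentially trivial reductions $\C_2\leqW f$ for each of the six problems.

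First I would recall the incompleteness results: $\C_\IN\lW\overline{\C_\IN}$ by Corollary~\ref{cor:choice-natural}, $\PCC_{[0,1]}\lW\overline{\PCC_{[0,1]}}$ and $\PC_{2^\IN}\lW\overline{\PC_{2^\IN}}$ by Corollary~\ref{cor:PC-PCC}, $\C_\IR\lW\overline{\C_\IR}$ and $\PC_\IR\lW\overline{\PC_\IR}$ by Proposition~\ref{prop:choice-euclidean}, and $\C_{\IN^\IN}\lW\overline{\C_{\IN^\IN}}$ by Theorem~\ref{thm:choice-Baire} (which is cited for the separation). This already yields the non-cylinder part of the corollary by a direct invocation of \cite[Proposition~6.19]{BG20}, which states that a problem whose completion is a cylinder is necessarily complete.

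Second I would verify $\C_2\leqW f$ for each problem $f$ in the list. For $\C_\IN$, $\C_\IR$, and $\C_{\IN^\IN}$ the reduction is immediate from the computable embedding $\{0,1\}\to X$ into the respective underlying space, which turns every $\C_2$--input into a $\C_X$--input with the same solution set. For $\PC_{2^\IN}$ and $\PC_\IR$ the same embedding works since a two-point set can be presented as a closed set of positive measure by replacing each selected point with a sufficiently small closed neighborhood of positive measure around it. For $\PCC_{[0,1]}$ we encode the binary input by a closed subinterval $[a,b]\In[0,\tfrac{1}{3}]$ or $[a,b]\In[\tfrac{2}{3},1]$ of positive length, which is automatically connected and of positive measure and allows us to recover the chosen bit. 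Applying Corollary~\ref{cor:idempotency-completeness} to each of the six cases then yields non-idempotency of the corresponding completion.

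There is no real obstacle in this proof beyond unifying the references: all of the heavy work has been performed in the earlier sections, and the present corollary is essentially a compilation. The only bookkeeping that needs a moment of thought is $\C_2\leqW\PCC_{[0,1]}$, since connectedness forbids returning a two-point set directly, but the subinterval trick sketched above resolves this at once.
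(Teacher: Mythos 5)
Your proof follows exactly the route the paper intends: pair the incompleteness facts collected in the earlier sections with Corollary~\ref{cor:idempotency-completeness} for non-idempotency and with \cite[Proposition~6.19]{BG20} for the non-cylinder part, after observing $\C_2\leqW f$ for each of the six uncompleted problems. The incompleteness references you cite (Corollary~\ref{cor:choice-natural}, Corollary~\ref{cor:PC-PCC}, Proposition~\ref{prop:choice-euclidean}, Theorem~\ref{thm:choice-Baire}) are the correct ones, and the $\C_2$-reductions are indeed routine for all cases except $\PCC_{[0,1]}$.

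One small point on that last reduction deserves care. As phrased, encoding a $\C_2$-instance ``by a closed subinterval $[a,b]\In[0,\tfrac13]$ or $[a,b]\In[\tfrac23,1]$'' suggests the output interval is committed to one of these two regions from the start, which cannot be done computably: with input $A=\{0,1\}$ no negative information ever arrives, and if the reduction has already committed to $[0,\tfrac13]$ while it is still possible that $A=\{1\}$, it cannot later migrate to $[\tfrac23,1]$ since names in $\AA_-([0,1])$ only allow the set to shrink. The fix is the standard one: begin outputting a name of $[0,1]$; if the negative information reveals $1\notin A$, shrink to $[0,\tfrac13]$; if it reveals $0\notin A$, shrink to $[\tfrac23,1]$. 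At most one of these events occurs, so the result is always a nondegenerate closed subinterval, and from a point $x$ of it one can compute $x$ to precision $\tfrac18$ and thereby certify $x<\tfrac23$ (so $0\in A$) or $x>\tfrac13$ (so $1\in A$). With this correction your argument is complete and matches the paper's.
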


Here the statement for $\C_{\IN^\IN}$ already uses $\C_{\IN^\IN}\lW\overline{\C_{\IN^\IN}}$, which is only proved in Theorem~\ref{thm:choice-Baire}.

\section{Choice on Baire Space}
\label{sec:choice-Baire}

Next we want to study the choice problem on Baire space $\C_{\IN^\IN}$.
For this purpose we consider the {\em wellfounded tree problem}, i.e., the characteristic function of the singleton with the empty set as its member:
\[\WFT:\AA_-(\IN^\IN)\to\{0,1\},A\mapsto\left\{\begin{array}{ll}
   1 & \mbox{if $A=\emptyset$}\\
   0 & \mbox{otherwise}
\end{array}\right..\]

By \cite[Theorem~5.2]{BG09} the set $\{\emptyset\}\In\AA_-(\IN^\IN)$ is equivalent to the set of wellfounded trees that is known to be $\Pi^1_1$--complete.
By $\WFT_\IS:\AA_-(\IN^\IN)\to\IS$ we denote the wellfounded tree problem with target space $\IS$.

We start with proving that for every closed set $A\In\IN^\IN$ that is given with respect to the jump representation,
we can compute a closed set $B\In\IN^\IN$ such that $\pi_1(B)=A$. Here $\pi_1:\IN^\IN\to\IN^\IN,\langle p,q\rangle\mapsto p$.

\begin{proposition}[Projections]
\label{prop:projection}
There following problem is computable:
\[P:\AA_-(\IN^\IN)'\mto\AA_-(\IN^\IN), A\mapsto\{B:\pi_1(B)=A\}\]
\end{proposition}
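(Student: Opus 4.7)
The plan is to exhibit, for each $A\in\AA_-(\IN^\IN)'$ presented as a jumped negative-information name $p=\langle p_0,p_1,\ldots\rangle$ with $p^*:=\lim_n p_n$ a $\delta_{\AA_-(\IN^\IN)}$-name of $A$, an explicit closed set $B\In\IN^\IN\times\IN^\IN\cong\IN^\IN$ with $\pi_1(B)=A$ whose negative-information name is uniformly computable from $p$. The naive candidate $B=A\times\IN^\IN$ works set-theoretically, but its name cannot be enumerated directly from $(p_n)_n$, since ``$u\IN^\IN\cap A=\emptyset$'' is only $\Sigma^0_2$ in the jumped input while a $\delta_{\AA_-(\IN^\IN)}$-name requires a $\Sigma^0_1$-enumeration of complementary cylinders.

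The key idea is to use the second coordinate as bookkeeping for convergence witnesses. Let $(w_k)_{k\in\IN}$ enumerate the finite words so that $\IN^\IN\sm\delta_{\AA_-(\IN^\IN)}(r)=\bigcup_{i\in\IN}w_{r(i)}\IN^\IN$, and define
\[T_B:=\left\{(u,v)\in\IN^*\times\IN^*:|u|=|v|\AND(\forall k<|v|)(\forall s\geq v(k))\;u\IN^\IN\not\subseteq\bigcup_{i\leq k}w_{p_s(i)}\IN^\IN\right\}.\]
Set $B:=[T_B]$, viewed as a subset of $\IN^\IN$ via the pairing. The intended interpretation is that $v(k)$ guesses a stage past which the approximation $p_s$ has fixed its first $k+1$ values, so that the covering relation encoded by those values can be read off any $p_s$ with $s\geq v(k)$. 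The implication $u'\prefix u\TO(u\IN^\IN\not\subseteq X\TO u'\IN^\IN\not\subseteq X)$ confirms that $T_B$ is genuinely a tree.

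Verification that $\pi_1(B)=A$: given $p\in A$, the witness $v$ with $v(k):=\min\{s:(\forall t\geq s)(\forall i\leq k)\;p_t(i)=p^*(i)\}$ yields $\langle p,v\rangle\in B$, so $A\In\pi_1(B)$. Conversely, if $\langle p,v\rangle\in B$ but $p\notin A$, choose $i_0$ with $w_{p^*(i_0)}$ a prefix of $p$, set $k:=i_0$ and $n:=\max(k+1,|w_{p^*(i_0)}|)$, and pick $s\geq v(k)$ large enough that $p_s(i)=p^*(i)$ for $i\leq k$; then $(p\restriction n)\IN^\IN\In w_{p^*(i_0)}\IN^\IN\In\bigcup_{i\leq k}w_{p_s(i)}\IN^\IN$, contradicting $(p\restriction n,v\restriction n)\in T_B$.

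For computability: the condition $(u,v)\notin T_B$ is witnessed by a single triple $(k,s)$ with $k<|v|$, $s\geq v(k)$, and $u\IN^\IN\In\bigcup_{i\leq k}w_{p_s(i)}\IN^\IN$, all verifiable from finite data of $p$; hence one enumerates all $(u,v)\notin T_B$ uniformly in $p$ and outputs the index of the corresponding basic cylinder $u\IN^\IN\times v\IN^\IN$ of $\IN^\IN\times\IN^\IN$ (a single basic ball of $\IN^\IN$ under the interleaving pairing). The union of these cylinders equals $\IN^\IN\sm[T_B]$, yielding a $\delta_{\AA_-(\IN^\IN)}$-name of $B$. The main obstacle is the backward inclusion $\pi_1(B)\In A$: without the stabilization witnesses in $v$, spurious branches of $T_B$ with first coordinate outside $A$ could arise, and the clause ``$\forall s\geq v(k)$'' is exactly what rules them out.
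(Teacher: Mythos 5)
Your proof is correct, and it takes a genuinely different route from the paper's. The paper first invokes a prior result (that from a jumped name of $A\in\AA_-(\IN^\IN)'$ one can compute a sequence $(p_i)_{i\in\IN}$ whose set of cluster points is exactly $A$), and then builds $B$ by an explicit combinatorial bookkeeping scheme: it enumerates cylinders on the output side, reserving deeper and deeper branches for points $p_i$ according to how long a common prefix they share with previously seen ones, so that only cluster points survive in the projection. You, by contrast, work directly from the definition of the jump as the limit of a sequence of $\delta_{\AA_-(\IN^\IN)}$-names, and use the second coordinate of $B$ to record stabilization moduli of that limiting process: $v(k)$ is a guess for a stage past which the first $k+1$ entries of the name have settled, and the tree condition $(\forall s\geq v(k))\,u\IN^\IN\not\subseteq\bigcup_{i\leq k}w_{p_s(i)}\IN^\IN$ forces the first coordinate to avoid the limiting covering. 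Your verification that $T_B$ is a tree, that $\pi_1([T_B])=A$ (using the canonical modulus $v(k):=\min\{s:(\forall t\geq s)(\forall i\leq k)\,p_t(i)=p^*(i)\}$ for the forward inclusion and the stabilization clause for the backward one), and that membership in the complement of $T_B$ is a $\Sigma^0_1$-event in the input, all check out. The advantage of your approach is self-containedness — it bypasses the cluster-point characterization from the earlier literature — at the cost of a slightly less visual construction. One tiny point worth adding: if the search never finds any $(u,v)\notin T_B$, your enumeration would be finite, so you should interleave codes for $\emptyset$ (which $\delta_{\AA_-}$ permits, as noted in Lemma~\ref{lem:precomplete-closed}) to produce a total name; this is routine and does not affect the argument.
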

\begin{proof}
Given $A\in\AA_-(\IN^\IN)'$ we can compute by \cite[Proposition~3.6]{BHK18}, \cite[Proposition~9.2]{BGM12} a (possibly empty) sequence $(p_i)_{i\in\IN}$
of points $p_i\in\IN^\IN$ such that $A$ is the set of cluster points of $A$. 
We now start to generate a list of all balls $n\IN^\IN$ with $n\in\IN$ as output while we inspect the sequence $(p_i)_{i\in\IN}$
in stages $i=0,1,2,...$. We say that a $p_i$ is {\em fresh} if it has no common non-empty prefix with any other
previous $p_j$, $j<i$. If, in stage $i$, we encounter some fresh $p_i$, then we select some $k\in\IN$ such that  $\langle p_i(0),k\rangle\IN^\IN$
was not yet enumerated as output and we skip the corresponding ball on the output side (while we continue to enumerate all other balls $n\IN^\IN$).
Additionally we enumerate all balls of the form $\langle p_i(0),k\rangle n\IN^\IN$ with $n\in\IN$. If at some later stage $j$, we encounter some $p_j$ that is
not fresh, but that has a prefix in common with $p_i$ of length greater or equal than $2$, then we select an $l\in\IN$ such that $\langle p_i(0),k\rangle\langle p_i(1),l\rangle\IN^\IN$
was not yet enumerated, we skip this ball on the output side and we additionally start enumerating all balls of the form $\langle p_i(0),k\rangle\langle p_i(1),l\rangle n\IN^\IN$ with $n\in\IN$.
We continue like this inductively. Whenever a non-fresh $p_i$ has a prefix in common with an already enumerated $p_j$ that is longer than the depth of the corresponding
sequence of balls that is enumerated on the output side, then we propagate the corresponding enumeration to the next deeper layer. 
As a result of this, the enumeration of balls on the output side describes the complement of a closed set $B\In\IN^\IN$ such that $\pi_1(B)$ is the set of cluster
points of $(p_i)_{i\in\IN}$, i.e., $\pi_1(B)=A$.
\end{proof}

If $B\in P(A)$ then $B=\emptyset\iff A=\emptyset$. Hence we obtain the following corollary.

\begin{corollary}[Jump of wellfoundedness]
\label{cor:WFT}
$\WFT'\equivSW\WFT$ and $\WFT_\IS'\equivSW\WFT_\IS$.
\end{corollary}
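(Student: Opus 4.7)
The plan is to prove the equivalences in both directions, using Proposition~\ref{prop:projection} for the nontrivial direction and a straightforward diagonalization for the easy direction. Since the arguments for $\WFT$ and $\WFT_\IS$ are identical (only the output space changes, and the output is not manipulated at all), I will focus on $\WFT$ and note that the same proof covers $\WFT_\IS$.

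For the easy direction $\WFT\leqSW\WFT'$, note that the jump representation of $\AA_-(\IN^\IN)$ is $\delta_{\AA_-(\IN^\IN)}\circ\lim$, so from any $\delta_{\AA_-(\IN^\IN)}$--name $p$ of some $A\in\AA_-(\IN^\IN)$ the computable map $K:p\mapsto\langle p,p,p,\ldots\rangle$ produces a name of $A$ with respect to the jump representation. Together with $H=\id$ on the output space $\{0,1\}$, the pair $(H,K)$ witnesses $\WFT\leqSW\WFT'$, since for any realizer $G\vdash\WFT'$ we have $HGK(p)=G\langle p,p,p,\ldots\rangle$ is a name of $\WFT'(A)=\WFT(A)$.

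For the nontrivial direction $\WFT'\leqSW\WFT$, let $K:\In\IN^\IN\to\IN^\IN$ be a computable realizer of the projection problem $P$ from Proposition~\ref{prop:projection}. Given a name of $A\in\AA_-(\IN^\IN)'$, $K$ produces a $\delta_{\AA_-(\IN^\IN)}$--name of some $B\in\AA_-(\IN^\IN)$ with $\pi_1(B)=A$. The key observation is that $\pi_1$ is a set-theoretic projection, hence $B=\emptyset$ if and only if $\pi_1(B)=\emptyset$, i.e., $\WFT(B)=\WFT(A)$. Thus, taking $H=\id$ on the output, for every realizer $G\vdash\WFT$ we obtain that $HGK$ on any name of $A$ yields a name of $\WFT(B)=\WFT(A)=\WFT'(A)$. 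This witnesses $\WFT'\leqSW\WFT$.

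Since $\WFT_\IS$ differs from $\WFT$ only in that the output space $\{0,1\}$ is replaced by Sierpi\'nski space $\IS$, and neither reduction above acts on the output (both use $H=\id$), the same $K$ witnesses both $\WFT_\IS\leqSW\WFT_\IS'$ and $\WFT_\IS'\leqSW\WFT_\IS$. There is no real obstacle; the heavy lifting has already been done in Proposition~\ref{prop:projection}. The only subtle point is ensuring that the argument respects the strong reducibility $\leqSW$, i.e., that we only compose $K$ with $G$ and then with a fixed computable $H$ (here simply $\id$), which the construction does by design.
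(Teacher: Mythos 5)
Your proof is correct and takes the same approach as the paper: the hard direction $\WFT'\leqSW\WFT$ follows from Proposition~\ref{prop:projection} together with the observation that $B\in P(A)$ implies $B=\emptyset\iff A=\emptyset$, while the easy direction $\WFT\leqSW\WFT'$ is the standard constant-sequence embedding of a representation into its jump. The paper leaves this argument implicit in a single sentence preceding the corollary; you have simply spelled it out.
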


Now we can easily derive the following completeness and co-completeness properties of the wellfoundedness problem.

\begin{corollary}[Wellfoundedness]
\label{cor:wellfoundedness}\ 
\begin{enumerate}
\item $\WFT$ is strongly complete, co-total and co-complete,
\item $\WFT_\IS$ is strongly complete and co-complete.
\end{enumerate}
\end{corollary}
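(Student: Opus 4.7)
The plan is to derive both parts directly from the tools already assembled, without much new work. For strong completeness, I would first note that both $\WFT$ and $\WFT_\IS$ are \emph{total} on $\AA_-(\IN^\IN)$, so $\T\WFT=\WFT$ and $\T\WFT_\IS=\WFT_\IS$. By Lemma~\ref{lem:precomplete-closed} the space $\AA_-(\IN^\IN)$ carries a precomplete total representation, which makes it multi-retraceable via Proposition~\ref{prop:multi-retraceability}. Lemma~\ref{lem:completion-totalization}(1) then yields $\overline{\WFT}\leqSW\T\WFT=\WFT$ and $\overline{\WFT_\IS}\leqSW\T\WFT_\IS=\WFT_\IS$. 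Combined with the converse reductions $\WFT\leqSW\overline{\WFT}$ and $\WFT_\IS\leqSW\overline{\WFT_\IS}$, which hold because completion is a closure operator (Corollary~\ref{cor:completion-total}), both strong completeness claims follow.

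For co-completeness and co-totality, the key is to exploit the jump invariance from Corollary~\ref{cor:WFT}: $\WFT\equivSW\WFT'$ and $\WFT_\IS\equivSW\WFT_\IS'$. The map $\WFT$ is single-valued, takes values in $\{0,1\}\subseteq\IN$, and is not constant (it returns both $0$ and $1$); identifying its output with a subset of $\IN$ via the computable embedding $\{0,1\}\hookrightarrow\IN$, Proposition~\ref{prop:single-valuedness}(1) applies to show that $\WFT'$ is co-complete and co-total. Likewise, $\WFT_\IS$ is single-valued, non-constant, and takes values in $\IS$, so Proposition~\ref{prop:single-valuedness}(2) shows that $\WFT_\IS'$ is co-complete. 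Transferring these properties across the strong equivalences $\WFT\equivSW\WFT'$ and $\WFT_\IS\equivSW\WFT_\IS'$ gives the desired conclusions for $\WFT$ and $\WFT_\IS$ themselves.

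The only conceptual point that needs verification, and the sole candidate for a real obstacle, is that co-completeness and co-totality are preserved by strong Weihrauch equivalence. This is immediate from Definition~\ref{def:co-complete-total}: strongly equivalent problems have identical upper cones in both the partial and total lattices, so the defining biconditionals transfer verbatim. I do not anticipate any further difficulty; the result is essentially a synthesis of three ingredients (multi-retraceability of $\AA_-(\IN^\IN)$, the jump invariance of wellfoundedness, and the single-valuedness criterion). The asymmetry in the statement—$\WFT$ is co-total but $\WFT_\IS$ is only claimed co-complete—reflects exactly the corresponding asymmetry between parts (1) and (2) of Proposition~\ref{prop:single-valuedness}, which is itself witnessed by Corollary~\ref{cor:LPOS-co-total}.
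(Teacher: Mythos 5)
Your proof is correct and follows essentially the same route as the paper: multi-retraceability of $\AA_-(\IN^\IN)$ (Lemma~\ref{lem:precomplete-closed}) together with totality of $\WFT$ and $\WFT_\IS$ yields $\overline{\WFT}\leqSW\T\WFT=\WFT$ and likewise for $\WFT_\IS$, and the jump invariance $\WFT\equivSW\WFT'$, $\WFT_\IS\equivSW\WFT_\IS'$ from Corollary~\ref{cor:WFT} lets Proposition~\ref{prop:single-valuedness} supply co-completeness and co-totality. One small citation slip: the converse inequality $\WFT\leqSW\overline{\WFT}$ is not what Corollary~\ref{cor:completion-total} says (that corollary gives $\equivSTW$, not $\leqSW$); it follows instead from the computable embedding $\iota$ of Corollary~\ref{cor:completion}, but the fact itself is of course correct, so the argument stands.
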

\begin{proof}
We first prove both statements on strong completeness.
The space $\AA_-(\IN^\IN)$ is multi-retraceable by Lemma~\ref{lem:precomplete-closed}.
Hence there is a computable multi-valued retraction $r:\overline{\AA_-(\IN^\IN)}\mto\AA_-(\IN^\IN)$.
By Corollary~\ref{cor:completion} the identity $\iota:\IN^\IN\to\overline{\IN^\IN}$ is a computable embedding.
Altogether this shows that $\overline{\WFT}\leqSW\WFT$, i.e., $\WFT$ is strongly complete.
Also the representation $\delta_\IS$ is easily seen to be precomplete and total and hence $\IS$ is multi-retraceable too by Proposition~\ref{prop:multi-retraceability}.
Hence $\WFT_\IS$ is strongly complete by Corollary~\ref{cor:completeness}.
Co-totality and co-completeness of $\WFT$ and co-completeness of $\WFT_\IS$ follow
from Proposition~\ref{prop:single-valuedness} with the help of Corollary~\ref{cor:WFT}.
\end{proof}

Here we are in particular interested in the co-completeness and co-totality results,
since they help us to establish the following result by an interesting bootstrapping argument.

\begin{proposition}[Wellfoundedness]
\label{prop:wellfoundedness}\
\begin{enumerate}
\item $\WFT_\IS\leqW\T\C_{\IN^\IN}$ and $\WFT_\IS\nleqW\overline{\C_{\IN^\IN}}$, 
\item $\WFT\leqW\C_{\IN^\IN}*\T\C_{\IN^\IN}$ and $\WFT\nleqW\T\C_{\IN^\IN}$.
\end{enumerate}
\end{proposition}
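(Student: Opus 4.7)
The plan is to establish the two upper bounds via explicit reductions that exploit what can be verified effectively about membership $q\in A$ from a negative name of $A\in\AA_-(\IN^\IN)$, and to derive both lower bounds as consequences of the co-completeness and co-totality already obtained in Corollary~\ref{cor:wellfoundedness}.

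For $\WFT_\IS\leqW\T\C_{\IN^\IN}$, I would feed the input closed set $A$ directly to $\T\C_{\IN^\IN}$ to obtain some $q\in\IN^\IN$, and in parallel monitor the enumeration of the basic clopen balls $B_{p(i)}=w_i\IN^\IN$ whose union is $A^c$, checking after each stage whether one of them contains $q$. This event is decidable since $q\in w\IN^\IN$ holds iff $w$ is a prefix of $q$. The Sierpi{\'n}ski output would be $\widehat{0}$ by default, switching to a non-zero digit the first time some $w_i$ is found to be a prefix of $q$. If $A\neq\emptyset$ then any realizer of $\T\C_{\IN^\IN}$ is forced to return $q\in A$, no ball of $A^c$ ever contains $q$, and the output stays $\widehat{0}$, representing $0\in\IS=\WFT_\IS(A)$. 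If $A=\emptyset$ then the listed balls exhaust $\IN^\IN$, one eventually contains $q$, and the output attains a non-zero value, correctly representing $1\in\IS$.

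For $\WFT\leqW\C_{\IN^\IN}*\T\C_{\IN^\IN}$, I would factor through the reduction just described: a single call to $\T\C_{\IN^\IN}$ yields a Sierpi{\'n}ski name $s$ of $\WFT_\IS(A)$ with $s=\widehat{0}$ iff $A\neq\emptyset$, and it then suffices to apply an $\LPO$-style decision to $s$ in order to convert this semidecision into a definitive $\{0,1\}$-valued output. Since $\LPO\leqW\C_\IN\leqW\C_{\IN^\IN}$, this second step fits into the compositional product $\C_{\IN^\IN}*\T\C_{\IN^\IN}$ and produces $\WFT(A)$.

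For the two non-reductions I would invoke Corollary~\ref{cor:wellfoundedness}. Co-completeness of $\WFT_\IS$ makes $\WFT_\IS\leqW\overline{\C_{\IN^\IN}}$ equivalent to $\WFT_\IS\leqW\C_{\IN^\IN}$, and co-totality of $\WFT$ makes $\WFT\leqW\T\C_{\IN^\IN}$ equivalent to $\WFT\leqW\C_{\IN^\IN}$. By the characterization of the lower cone of $\C_{\IN^\IN}$ as the effectively Borel measurable problems recalled in Table~\ref{fig:reverse}, either such reduction would force the respective problem to be Borel measurable; but $\WFT^{-1}\{1\}$ corresponds to the set of wellfounded trees, which is $\Pi^1_1$-complete as cited at the beginning of this section via \cite[Theorem~5.2]{BG09}, so neither $\WFT$ nor $\WFT_\IS$ is Borel measurable. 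The main obstacle will be the correctness argument for the first reduction: one has to ensure that the ``$q\notin A$'' test really is semidecidable from the negative data and that every valid realizer of $\T\C_{\IN^\IN}$ is forced to pick a point of $A$ whenever $A\neq\emptyset$, which together make the Sierpi{\'n}ski output switch in exactly the correct cases.
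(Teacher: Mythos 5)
Your upper bound constructions and your use of co-completeness/co-totality to descend from $\overline{\C_{\IN^\IN}}$ and $\T\C_{\IN^\IN}$ to $\C_{\IN^\IN}$ follow the paper's approach closely. The gap is in the last step for $\WFT_\IS$. The Borel characterization of the lower cone of $\C_{\IN^\IN}$ (Theorem~7.7 of \cite{BBP12}, also reflected in Table~\ref{fig:reverse}) is a statement about single-valued functions $g:X\to Y$ between \emph{complete computable metric spaces}. The space $\IS$ is not a metric space (not even $T_1$), so you cannot directly invoke this theorem to conclude ``$\WFT_\IS\leqW\C_{\IN^\IN}$ would force $\WFT_\IS$ to be effectively Borel measurable, contradiction.'' The cited theorem simply does not address functions into $\IS$. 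Your argument for $\WFT$ is fine, since $\{0,1\}$ is a complete computable metric space; for $\WFT_\IS$ you need an additional step.

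The paper handles this with a short bootstrapping argument that you should borrow: first establish $\WFT\nleqW\C_{\IN^\IN}$ (exactly as you do, via non-Borel-measurability of $f=\WFT\circ\delta_{\AA_-(\IN^\IN)}:\IN^\IN\to\{0,1\}$, applying the theorem on complete metric spaces). Then suppose for contradiction that $\WFT_\IS\leqW\C_{\IN^\IN}$; since $\iota:\IS\to\{0,1\}$ is equivalent to $\LPO\leqW\C_{\IN^\IN}$ and $\WFT=\iota\circ\WFT_\IS$, one gets $\WFT\leqW\LPO*\WFT_\IS\leqW\C_{\IN^\IN}*\C_{\IN^\IN}\equivW\C_{\IN^\IN}$ (using idempotence of $\C_{\IN^\IN}$ under compositional product, \cite[Corollary~7.6]{BBP12}), contradicting what was just shown. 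This avoids any appeal to Borel measurability of $\IS$-valued maps. Once that is patched, your proof matches the paper's.
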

\begin{proof}
It is easy to see that $\WFT_\IS\leqW\T\C_{\IN^\IN}$:
given $A\in\AA_-(\IN^\IN)$ we determine a point $p\in\T\C_{\IN^\IN}(A)$ and we check whether $p\in A$. If not, then we will eventually
recognize that, in which case $A=\emptyset$. If $A\not=\emptyset$, then the search will never terminate, but this is sufficient to compute $\WFT_\IS(A)\in\IS$.
The identity $\iota:\IS\to\{0,1\}$ is easily seen to be equivalent to $\LPO$ and $\WFT=\iota\circ\WFT_\IS$.
Hence the function $f:\IN^\IN\to\{0,1\}$ with $f:=\WFT\circ\delta_{\AA_-(\IN^\IN)}$
satisfies $\WFT\equivW f\leqW\LPO*\WFT_\IS\leqW\C_{\IN^\IN}*\T\C_{\IN^\IN}$. On the other hand, $f$ is not Borel measurable by~\cite[Theorem~5.2]{BG09}
(essentially, since the set of wellfounded trees is $\Pi^1_1$--complete and hence not Borel).
Hence $\WFT\equivW f\nleqW\C_{\IN^\IN}$ by \cite[Theorem~7.7]{BBP12} (this theorem says that the single-valued functions
$g:X\to Y$ on complete computable metric spaces $X,Y$ with $g\leqW\C_{\IN^\IN}$ are exactly the effectively Borel measurable $g$.)
Since $\WFT$ is co-total by Corollary~\ref{cor:wellfoundedness}, $\WFT\nleqW\C_{\IN^\IN}$ implies $\WFT\nleqW\T\C_{\IN^\IN}$.
Now suppose that $\WFT_\IS\leqW\overline{\C_{\IN^\IN}}$. Since $\WFT_\IS$ is co-complete by Corollary~\ref{cor:wellfoundedness},
this would imply $\WFT_\IS\leqW\C_{\IN^\IN}$, which, as above, leads to $\WFT\leqW\LPO*\WFT_\IS\leqW\C_{\IN^\IN}*\C_{\IN^\IN}\equivW\C_{\IN^\IN}$,
where the last equivalence holds by \cite[Corollary~7.6]{BBP12}. Since $\WFT\nleqW\C_{\IN^\IN}$, we obtain $\WFT_\IS\nleqW\overline{\C_{\IN^\IN}}$.
\end{proof}

We can also conclude that $\WFT_\IS$ is not co-total, since otherwise $\WFT_\IS\leqW\C_{\IN^\IN}$ would follow.

\begin{corollary}
\label{cor:WFTS-co-total}
$\WFT_\IS$ is not co-total.
\end{corollary}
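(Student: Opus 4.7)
The plan is to mimic the strategy used in Corollary~\ref{cor:LPOS-co-total} and exploit the incompleteness witness already established in Proposition~\ref{prop:wellfoundedness}. Concretely, I aim to derive a contradiction from the assumption that $\WFT_\IS$ is co-total, by combining a known upper bound for $\WFT_\IS$ (via $\T\C_{\IN^\IN}$) with a known lower bound for $\WFT$ (namely $\WFT\nleqW\C_{\IN^\IN}$).

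First, I would suppose for contradiction that $\WFT_\IS$ is co-total. Since Proposition~\ref{prop:wellfoundedness}(1) gives $\WFT_\IS\leqW\T\C_{\IN^\IN}$, applying the co-totality hypothesis to $g:=\C_{\IN^\IN}$ would yield $\WFT_\IS\leqW\C_{\IN^\IN}$.

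Next, I would bootstrap this up to $\WFT$ using the factorization already observed in the proof of Proposition~\ref{prop:wellfoundedness}. Writing $\iota:\IS\to\{0,1\}$ for the canonical identity (which is equivalent to $\LPO$), one has $\WFT\equivW\iota\circ\WFT_\IS\leqW\LPO*\WFT_\IS$. Since $\LPO\leqW\C_\IN\leqW\C_{\IN^\IN}$, combining this with $\WFT_\IS\leqW\C_{\IN^\IN}$ gives $\WFT\leqW\C_{\IN^\IN}*\C_{\IN^\IN}\equivW\C_{\IN^\IN}$, where the last equivalence is the standard idempotency of $\C_{\IN^\IN}$ under $*$ (cited in the proof of Proposition~\ref{prop:wellfoundedness} from \cite[Corollary~7.6]{BBP12}).

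Finally, this contradicts the separation $\WFT\nleqW\C_{\IN^\IN}$ already recorded in the proof of Proposition~\ref{prop:wellfoundedness} (which rests on the fact that the wellfounded tree set is $\Pi^1_1$--complete and hence not Borel, together with the characterization of $\leqW\C_{\IN^\IN}$ as effective Borel measurability from \cite[Theorem~7.7]{BBP12}). The contradiction forces $\WFT_\IS$ to fail to be co-total. There is no real obstacle here, since all the nontrivial ingredients have been assembled in the preceding results; the only care needed is to verify that the compositional product step uses only tools already cited earlier in the paper, which is indeed the case.
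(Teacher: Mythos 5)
Your proposal is correct and follows essentially the same route as the paper's (very terse) inline proof: the paper likewise derives $\WFT_\IS\leqW\C_{\IN^\IN}$ from co-totality together with $\WFT_\IS\leqW\T\C_{\IN^\IN}$, and rejects it via the chain $\WFT\leqW\LPO*\WFT_\IS\leqW\C_{\IN^\IN}*\C_{\IN^\IN}\equivW\C_{\IN^\IN}$ contradicting $\WFT\nleqW\C_{\IN^\IN}$. You have merely expanded the one-line argument the paper leaves implicit.
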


Proposition~\ref{prop:wellfoundedness} leads to the following classification of choice problems related to choice on Baire space.

\begin{theorem}[Choice on Baire space]
\label{thm:choice-Baire}
We obtain:\\
$\C_{\IN^\IN}\lW\overline{\C_{\IN^\IN}}\lW\T\C_{\IN^\IN}\lW\C_{\IN^\IN}*\overline{\C_{\IN^\IN}}\equivW\C_{\IN^\IN}*\T\C_{\IN^\IN}$.
\end{theorem}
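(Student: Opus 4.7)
The plan is to establish the four reductions from left to right, the three strictness claims, and the final equivalence one step at a time. The easy left-to-right reductions are essentially packaged by earlier results: $\C_{\IN^\IN}\leqW\overline{\C_{\IN^\IN}}$ holds since completion is a closure operator, $\overline{\C_{\IN^\IN}}\leqW\T\C_{\IN^\IN}$ is Corollary~\ref{cor:completion-choice}, and $\C_{\IN^\IN}*\overline{\C_{\IN^\IN}}\leqW\C_{\IN^\IN}*\T\C_{\IN^\IN}$ follows from monotonicity of the compositional product. The more substantive reduction $\T\C_{\IN^\IN}\leqW\C_{\IN^\IN}*\overline{\C_{\IN^\IN}}$ goes through Lemma~\ref{lem:retraction-LPO}, which supplies a retraction $r\colon\overline{\IN^\IN}\to\IN^\IN$ with $r\leqW\LPO'$; since $\LPO'\leqW\lim'\leqW\C_{\IN^\IN}$ (the latter recorded as item (a) in the introduction), one obtains $\T\C_{\IN^\IN}\leqW r\circ\overline{\C_{\IN^\IN}}\leqW\C_{\IN^\IN}*\overline{\C_{\IN^\IN}}$. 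The rightmost equivalence $\C_{\IN^\IN}*\overline{\C_{\IN^\IN}}\equivW\C_{\IN^\IN}*\T\C_{\IN^\IN}$ then falls out by combining this with the standard idempotence $\C_{\IN^\IN}*\C_{\IN^\IN}\equivW\C_{\IN^\IN}$ (cf.\ \cite[Corollary~7.6]{BBP12}): applying $\C_{\IN^\IN}*(-)$ to the reduction just proved and absorbing the extra factor yields $\C_{\IN^\IN}*\T\C_{\IN^\IN}\leqW\C_{\IN^\IN}*\overline{\C_{\IN^\IN}}$.

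Two of the three strictness claims are immediate from Proposition~\ref{prop:wellfoundedness}. The middle separation $\overline{\C_{\IN^\IN}}\lW\T\C_{\IN^\IN}$ is witnessed by $\WFT_\IS$, which satisfies $\WFT_\IS\leqW\T\C_{\IN^\IN}$ and $\WFT_\IS\nleqW\overline{\C_{\IN^\IN}}$ according to part~(1) of that proposition. The right-most separation $\T\C_{\IN^\IN}\lW\C_{\IN^\IN}*\overline{\C_{\IN^\IN}}$ is witnessed by $\WFT$ via part~(2): $\WFT\leqW\C_{\IN^\IN}*\T\C_{\IN^\IN}\equivW\C_{\IN^\IN}*\overline{\C_{\IN^\IN}}$ while $\WFT\nleqW\T\C_{\IN^\IN}$.

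The only delicate step, and the place where I expect the main difficulty, is the leftmost strictness $\C_{\IN^\IN}\lW\overline{\C_{\IN^\IN}}$, since no single problem in the paper is directly identified as separating these two degrees. The plan is to argue by contradiction: suppose $\overline{\C_{\IN^\IN}}\leqW\C_{\IN^\IN}$. Then the reduction $\T\C_{\IN^\IN}\leqW\C_{\IN^\IN}*\overline{\C_{\IN^\IN}}$ established in the first paragraph collapses, via $\C_{\IN^\IN}*\C_{\IN^\IN}\equivW\C_{\IN^\IN}$, to $\T\C_{\IN^\IN}\leqW\C_{\IN^\IN}\leqW\overline{\C_{\IN^\IN}}$, which directly contradicts the $\WFT_\IS$ witness of Proposition~\ref{prop:wellfoundedness}(1). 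The subtlety is that this bootstrapping uses all three of the earlier ingredients at once (the retraction lemma, $\LPO'\leqW\C_{\IN^\IN}$, and idempotence of $\C_{\IN^\IN}$ under $*$); once these are marshalled, the contradiction is clean, and the chain of strict reductions is complete.
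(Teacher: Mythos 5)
Your proof is correct and follows essentially the same route as the paper. The one small divergence is in the middle reduction $\T\C_{\IN^\IN}\leqW\C_{\IN^\IN}*\overline{\C_{\IN^\IN}}$: you invoke Lemma~\ref{lem:retraction-LPO} (a retraction reducible to $\LPO'$) and then chain $\LPO'\leqW\lim'\leqW\C_{\IN^\IN}$, whereas the paper uses Proposition~\ref{prop:retraction-N}, which directly provides a limit computable retraction $r:\overline{\IN^\IN}\to\IN^\IN$, combined with the simpler fact $\lim\leqW\C_{\IN^\IN}$. Both chains reach the same conclusion; yours routes through a marginally stronger intermediate bound ($\LPO'$ and $\lim'$) than strictly necessary, but this is immaterial. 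Your leftmost strictness argument — collapsing the whole chain to $\T\C_{\IN^\IN}\leqW\overline{\C_{\IN^\IN}}$ via idempotence and contradicting the $\WFT_\IS$ separation — spells out the same contradiction the paper invokes more tersely ("would follow, which is not correct"), and is valid.
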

\begin{proof}
By Corollary~\ref{cor:completion-choice} the reductions $\C_{\IN^\IN}\leqW\overline{\C_{\IN^\IN}}\leqW\T\C_{\IN^\IN}$ are clear
and this implies $\C_{\IN^\IN}*\overline{\C_{\IN^\IN}}\leqW\C_{\IN^\IN}*\T\C_{\IN^\IN}$.
By Proposition~\ref{prop:retraction-N} there is a limit computable retraction $r:\overline{\IN^\IN}\to\IN^\IN$
and $\lim\leqW\C_{\IN^\IN}$ holds by \cite[Example~3.10]{BBP12}.
By Corollary~\ref{cor:completion} the identity $\iota:\AA_-(\IN^\IN)\to\overline{\AA_-(\IN^\IN)}$ is a computable embedding.
Altogether, this implies $\T\C_{\IN^\IN}\leqW\C_{\IN^\IN}*\overline{\C_{\IN^\IN}}$.
Since $\C_{\IN^\IN}\equivW\C_{\IN^\IN}*\C_{\IN^\IN}$ by \cite[Corollary~7.6]{BBP12}, this in turn implies
$\C_{\IN^\IN}*\T\C_{\IN^\IN}\leqW\C_{\IN^\IN}*\C_{\IN^\IN}*\overline{\C_{\IN^\IN}}\equivW\C_{\IN^\IN}*\overline{\C_{\IN^\IN}}$.
The separation statements in $\overline{\C_{\IN^\IN}}\lW\T\C_{\IN^\IN}\lW\C_{\IN^\IN}*\overline{\C_{\IN^\IN}}$ follow from Proposition~\ref{prop:wellfoundedness}.
If we had $\overline{\C_{\IN^\IN}}\leqW\C_{\IN^\IN}$, then $\C_{\IN^\IN}*\overline{\C_{\IN^\IN}}\leqW\C_{\IN^\IN}*\C_{\IN^\IN}\equivW\C_{\IN^\IN}$
would follow, which is not correct.
\end{proof}

By Corollary~\ref{cor:single-valuedness} we obtain the following conclusion on single-valued functions.
We note that for constant $f$ one can easily prove the statement directly.
 
\begin{corollary}[Single-valuedness]
\label{cor:Borel}
Let $X,Y$ be complete computable metric spaces and $f:X\to Y$ a function. Then
$f\leqSW\C_{\IN^\IN}\iff f\leqSW\overline{\C_{\IN^\IN}}\iff f\leqSW\T\C_{\IN^\IN}$.
\end{corollary}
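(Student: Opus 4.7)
The plan is to derive the result directly from Corollary~\ref{cor:single-valuedness} specialized at $g=\C_{\IN^\IN}$. The corollary's hypothesis asks that $f$ be single-valued and non-constant, and its strong co-totality clause additionally requires a total representation of the codomain, so the main work is to verify these preliminaries and to treat the constant case by hand.

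First I handle the non-constant case. By Corollary~\ref{cor:single-valuedness} a single-valued non-constant $f$ is strongly co-complete, and evaluating the defining equivalence at $g=\C_{\IN^\IN}$ yields $f\leqSW\C_{\IN^\IN}\iff f\leqSW\overline{\C_{\IN^\IN}}$. To upgrade this to strong co-totality, I use that the Cauchy representation of the complete computable metric space $Y$ is computably equivalent to a total precomplete representation (for instance the precompletion $\delta_Y^\wp$ of the Cauchy representation), and that strong Weihrauch reducibility is invariant under computably equivalent representations~\cite[Lemma~2.11]{BG11}. Replacing the representation of $Y$ by such a total equivalent does not affect any of the reductions under consideration, so the second clause of Corollary~\ref{cor:single-valuedness} applies and yields $f\leqSW\C_{\IN^\IN}\iff f\leqSW\T\C_{\IN^\IN}$, completing the chain.

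It remains to cover the constant case $f(x)\equiv y_0$. The implications $f\leqSW\C_{\IN^\IN}\TO f\leqSW\overline{\C_{\IN^\IN}}\TO f\leqSW\T\C_{\IN^\IN}$ follow from Corollary~\ref{cor:completion-choice}. Conversely, suppose $f\leqSW\T\C_{\IN^\IN}$ is witnessed by computable $H,K$. If $K(p)$ names a non-empty closed set for every $\delta_X$-name $p$ of a point in $X$, then every realizer of $\C_{\IN^\IN}$ extends to a realizer of $\T\C_{\IN^\IN}$ agreeing with it on the range of $K$, so the same $H,K$ already witness $f\leqSW\C_{\IN^\IN}$. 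Otherwise some $K(p_0)$ names the empty set, and $\T\C_{\IN^\IN}$ may respond with any $q\in\IN^\IN$; applying $H$ to the constant zero sequence then produces a computable $\delta_Y$-name of $y_0$, so $y_0$ is computable, $f$ is computable, and $f\leqSW\C_{\IN^\IN}$ is trivial. The main obstacle is really only this bookkeeping for the constant case; the substantive content is entirely packaged in Corollary~\ref{cor:single-valuedness}.
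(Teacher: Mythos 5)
Your overall structure matches the paper's terse proof: invoke Corollary~\ref{cor:single-valuedness} for non-constant $f$, and treat the constant case by hand. The constant case bookkeeping you give is correct (note that since $\delta_{\AA_-(\IN^\IN)}$ and the identity on $\IN^\IN$ are both total, arbitrary total extensions of a realizer of $\C_{\IN^\IN}$ do realize $\T\C_{\IN^\IN}$, and when $K(p_0)$ names $\emptyset$ you may indeed feed $\widehat{0}$ to $H$).

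However, the step that purports to verify the hypothesis of the co-totality clause of Corollary~\ref{cor:single-valuedness} is wrong, and this is the substantive content of the whole argument. You claim that the precompletion $\delta_Y^\wp$ is a \emph{total} precomplete representation of $Y$. It is not: by definition $\delta_Y^\wp(p)=\delta_Y(p-1)$, with domain $\{p : p-1 \in \dom(\delta_Y)\}$, so $\delta_Y^\wp$ inherits exactly the partiality of $\delta_Y$ (its only virtue is precompleteness). Worse, the stronger claim that $Y$ has a representation that is simultaneously total and precomplete cannot be repaired: by Proposition~\ref{prop:multi-retraceability} such a representation exists exactly when $Y$ is multi-retraceable, and by Proposition~\ref{prop:compactness} multi-retraceable spaces are compact. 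So for the typical non-compact codomains of interest (say $Y=\IN^\IN$ or $Y=\IR$) no total precomplete representation exists at all. What Corollary~\ref{cor:single-valuedness} actually requires is only totality, not precompleteness. To fill the gap you should instead prove (or cite) that the Cauchy representation of a complete computable metric space is computably equivalent to a total representation: this uses completeness in an essential way, e.g., by a "rounding" procedure that turns an arbitrary $p\in\IN^\IN$ into a genuine fast Cauchy sequence by only accepting the next term when a sufficiently precise rational approximation confirms the required distance bound, and otherwise repeating the last accepted term; the resulting sequence converges by completeness, and one checks computable equivalence with $\delta_Y$ by appropriately shifting indices. Without some such argument, the chain linking $\leqSW\T\C_{\IN^\IN}$ back to $\leqSW\C_{\IN^\IN}$ for non-constant $f$ is not established.
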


By \cite[Theorem~7.7]{BBP12} the first given condition is exactly satisfied for the effectively Borel measurable functions $f$.

\begin{question}
Can we replace $\leqSW$ by $\leqW$ in Corollary~\ref{cor:Borel}?
\end{question}

\bibliographystyle{plain}
\bibliography{C:/Users/Vasco/Dropbox/Bibliography/lit}

\end{document}